\definecolor{lime}{HTML}{A6CE39}
\DeclareRobustCommand{\orcidicon}{
	\begin{tikzpicture}
	\draw[lime, fill=lime] (0,0) 
	circle [radius=0.16] 
	node[white] {{\fontfamily{qag}\selectfont \tiny ID}};
	\draw[white, fill=white] (-0.0625,0.095) 
	circle [radius=0.007];
	\end{tikzpicture}
	\hspace{-2mm}
}
\title{Generalised analytical results on $n$-ejection-collision orbits in the RTBP. Analysis of bifurcations}
\author{T. M-Seara \hspace{-2mm}\href{https://orcid.org/0000-0001-8421-8717}{\orcidicon}\hspace{-1.5mm}, 
M. Ollé \hspace{-2mm}\href{https://orcid.org/0000-0002-8050-9055}{\orcidicon}\hspace{-1.5mm}, 
\'O. Rodr\'iguez\footnote{\href{mailto:oscar.rodriguez@upc.edu}{oscar.rodriguez@upc.edu}} \hspace{-1.5mm}\href{https://orcid.org/0000-0002-4545-5135}{\orcidicon}\hspace{-1.5mm}, 
J. Soler \hspace{-2mm}\href{https://orcid.org/0000-0002-6220-5170}{\orcidicon}}
\date{\vspace{-5ex}}
\begin{document}

\maketitle	

{\bf Abstract}
In the planar circular restricted three-body problem and for any value of the mass parameter $\mu \in (0,1)$ and $n\ge 1$, we prove the existence of four families of $n$-ejection-collision ($n$-EC) orbits, that is, orbits where the particle ejects from a primary, reaches $n$ maxima in the distance with respect to it and finally collides with the primary. Such EC orbits have a  value of the Jacobi constant of the form $C=3\mu +Ln^{2/3}(1-\mu)^{2/3}$, where $L>0$ is big enough but independent of $\mu$ and $n$. 
In order to prove this optimal result, we consider Levi-Civita's transformation to regularize
the collision with one primary and a perturbative approach using an ad hoc small parameter once a suitable scale in the configuration plane and time has previously been applied. This result improves a previous work where the existence of the $n$-EC orbits was stated when the mass parameter $\mu>0$ was small enough. In this paper, any possible value of    $\mu\in (0,1)$ and $n\ge 1$ is considered.
Moreover, for decreasing values of $C$, there appear some bifurcations which are first numerically investigated and afterwards explicit expressions for the approximation of the bifurcation values of $C$ are discussed.  
Finally, a detailed analysis of the existence of $n$-EC orbits when $\mu \to 1$ is also described. In a natural way Hill's problem shows up. 
For this problem, we prove an 
analytical result on the existence of four families of $n$-EC orbits and numerically we describe them as well as the appearing bifurcations.

%%%%%%%%%%%%%%%%%%%%%%%%%%%%%%%%%%%%%%%
%             Introduction:           %
%%%%%%%%%%%%%%%%%%%%%%%%%%%%%%%%%%%%%%%
\section{Introduction}

This paper studies the existence of ejection-collision orbits in  the planar circular Restricted three-body problem (RTBP), which describes the motion of a particle (of neglectible mass) under the attraction of two point massive bodies $P_1$ and $P_2$, called primaries, that describe circular orbits around their common center of mass. 
Introducing a rotating system of coordinates that rotates with the primaries, and using suitable units of length, time and mass, an autonomous system of four ODE of first order is derived, depending on a unique parameter $\mu \in (0,1)$, in such a way that the primaries have masses $1-\mu$ and $\mu$ respectively.
Such system of ODE has the well known Jacobi first integral (equal to $C$ along each solution) and is a regular system everywhere except when the particle collides with each of the primaries.

$n$-ejection-collision orbits ($n$-EC orbits from now on) are orbits which eject from a primary and its distance to it reaches $n$  relative maxima before colliding with it (see Definition \ref{defECO}).
Since the $n$-EC orbits are the main target of this paper and  collisions between the particle and one primary lead to singularities in the system of ODE, some kind of regularization, that transforms the original system to a new one which is regular at collisions, is necessary. Among the different possible choices, ranging from local to global regularizations (see \cite{Devaney,erdi,SS,Szebehely}) we will use along the paper the (local) Levi-Civita regularization \cite{LeviCivita1906}, because it is conceptually simple, suitable for our theoretical purposes and efficient for numerical simulations.

%Idea:Teorema 1 l'obtenim a partir d'un teorema previ i utilizant els resultats analítics.Hill

The main analytical result of this paper is Theorem \ref{maintheoremN}, where we prove that there exists an $\hat{L}$ such that for $L\geq\hat{L}$ and for any value of $\mu \in (0,1)$, $n\in\mathbb{N}$ and the Jacobi constant $C=3\mu +Ln^{2/3}(1-\mu)^{2/3}$, there exist four $n$-EC orbits, and we characterize them.

This improves a recent result (see \cite{orspaper2}) where the existence of four $n$-EC orbits ejecting (and colliding) from the {\it big} primary (of mass $1-\mu$) is proved but only for small enough $\mu >0$.

To prove this main result we first consider a weaker version in Theorem \ref{th:maintheorem2} where we show that for all $n\in\mathbb{N}$, there exists a  $\hat K(n)$ such that for  $K\ge \hat K (n)$ and for any value of $\mu \in (0,1)$ and  $C=3\mu +K(1-\mu)^{2/3}$, there exist four $n$-EC orbits.
This weaker version also improves the result of \cite{orspaper2} since we cover {\sl any} value of $\mu$ so we can eject from (and collide with) any of the primaries, irrespective of its mass. 
Another improvement is the proof's approach. 
In the previous paper a perturbative approach for small enough $\mu >0$ and big enough $C$ was considered. 
There, the authors computed the series expansion, with respect to the mass parameter $\mu$, of the ejection (collision) manifold. So the explicit analytical expansion, up to certain order,  of this manifold integrated up to a suitable Poincar\'e section $\Sigma$ (maximum distance to the ejecting primary)  was obtained. For suitable number of crossings with $\Sigma$, $i$ for the ejection manifold and $j$ for the collision one (with $i+j=n+1$),  the resulting two curves $C^+_i$ and $C^-_j$ were computed.
Achieving such curves required some technicalities, in particular,  the computation of terms up to order 9 (at least) in such expansions and the expressions of them in the usual polar coordinates (instead of the initial angle $\theta _0$).  
The application of the Implicit Function Theorem (IFT) to analyze the intersection of both curves gave rise to the existence of four $n$-EC orbits for any $n$, $C$ big enough and  $\mu >0$ small enough. 

In this paper, the perturbative approach considers a suitable small parameter, related with the inverse of the Jacobi constant, regardless of the value of $\mu$. 
Moreover, instead of computing the {\sl two} curves $C^+_i$ and $C^-_j$, we consider the angular momentum at the $n$-th passage with the minimum distance to the primary (the particle ejected from). We characterize  an  $n$-EC orbit by the zero value of that angular momentum.
This strategy to use the angular momentum simplifies the computations in three directions: first only expansions up to order 6 are required, second obtaining just one function instead of two different curves, and third the parametrization of the angular momentum directly in terms of $\theta _0$ (thus avoiding the technical issue of the transformation to usual polar coordinates).

The second part of the paper focuses on the bifurcations that may appear when doing the continuation of families of $n$-EC orbits.
It is clear that, given any value of $\mu>0$, and fixed $n$, we can continue the four families of $n$-EC orbits for $C$ big enough, from the IFT. 
According to previous papers (\cite{ors,orspaper2}) we will name such families as $\alpha _n$, $\beta _n$, $\delta _n$ and $\gamma _n$. 
However, as long as $C$ decreases, the IFT does not apply and bifurcations may appear for suitable values of $C$. 
We analyze such bifurcations from the analytical expressions obtained in the series expansions for order higher than 6.
A rich variety of bifurcations  show up. They are discussed and numerically described.

Precisely the results derived from this numerical exploration provides inspiration to obtain the main result: an explicit expression of the bifurcating value of $C$ as $\hat{C}=3\mu+\hat{L}n^{2/3}(1-\mu)^{2/3}$, i.e. we prove that $\hat{K}(n) = \hat{L}n^{2/3}$.

Finally, taking $\mu \to 1$ gives rise to the Hill problem. Quite naturally the same kind of proof developed previously applies to the Hill problem. So as a corollary we
obtain an analytical result that establishes the existence of four families of $n$-EC in this problem. Moreover the existence of the successive bifurcations when decreasing $C$
for all $n\in[1,100]$ are also numerically discussed.

Concerning previous published results on this subject for the circular planar RTBP, we distinguish between analytical and numerical results. Focusing on the theoretical analysis of $n$-EC orbits, only the case for $n=1$ is considered in Llibre \cite{Llibre}, Chenciner and Llibre \cite{CHLL} and Lacomba and Llibre \cite{Lacomba}. 
The general case $n\ge 1$ is studied in \cite{orspaper2}, but for small enough values of $\mu >0$. Regarding a numerical approach, and for $n=1$, we mention the papers by Bozis \cite{bo}, H\'enon \cite{henon1,henon2}, where the authors compute some particular EC orbits that naturally appear when doing the continuation of families of periodic orbits.
For the general case $n\ge 1$, we mention Oll\'e, Rodriguez and Soler papers \cite{ors,cadis}, where the authors compute and analyze the continuation of families of $n$-EC orbits for $n=1,....,25$ and discuss the advantages and disadvantages of Levi-Civita's versus McGehee's \cite{McGehee} regularization. 
Very recently, Oll\'e, Rodriguez and Soler \cite{orspaper3} analyze the global behavior of the whole set of ejection orbits and the dynamical consequences resulting from the interaction between ejection orbits and the Lyapunov periodic orbit around the collinear equilibrium point $L_1$. In particular infinitely many (in a chaotic way) EC orbits show up.  
 
% The main analytical results of this paper  (Theorems 1 and 2) claim the existence of 4 n-EC orbits for any $n\ge 1$ and any value of $\mu \in (0,1)$, with precise expressions of  the Jacobi constant $C$ {...\color{red}enfatitzar?}

We finally remark that the EC orbits appear quite naturally in astronomical applications. Let us mention that EC orbits allow to explain a mechanism of transfer of mass in binary star systems (see  \cite{Hurley,Mod,Pringle,Witjers}), to describe regions of capture of irregular moons by giant planets (\cite{asetal}) or to discuss temporary capture (\cite{PaezGuzzo}). Other applications include  the probability of crash motion (see \cite{Nagler1,Nagler2}) or the role of ejection orbits to explain a mechanism for ionization in atomic problems (see \cite{bru,olle}).

The paper is organized as  follows:  In Section 2 we recall some basics of the RTBP, we introduce the Levi-Civita coordinates and the new normalized variables that will become useful to  prove the existence of the $n$-EC orbits for any value of $\mu>0$. 
Section 3 recalls the topics described in Section 2 but for the Hill problem.
In Section 4 we state the two main theorems, Theorem 1 and Theorem 2, concerning the existence of $n$-EC orbits in the RTBP.
We provide the analytical proof of Theorem 2 in Section 5.
Section 6 is devoted to numerically analyse the bifurcations of families of $n$-EC orbits in the RTBP. 
In Section 7  we provide the analytical proof of Theorem \ref{maintheoremN}. 
 Section 8 is devoted to the Hill problem.

Finally, we observe that all the numerical computations have been done using double precision and the numerical integration of the systems of ODE rely on an own implemented Runge-Kutta (7)8 integrator with an adaptive step size control described in \cite{DPrince} and a  Taylor method implemented on a robust, fast and accurate software package in \cite{JZ}.

%%%%%%%%%%%%%%%%%%%%%%%%%%%%%%%%%%%%%%%
%              RTBP & LC              %
%%%%%%%%%%%%%%%%%%%%%%%%%%%%%%%%%%%%%%%
\section{The planar RTBP and the Levi Civita regularization}

%\textcolor{magenta}{ no es diu RPCTBP???? En veritat es tipi no dir res però si s'especifica se sol posar PCRTBP\\. A LA 1a LINIA DE LA INTRO JA ES DIU}

%In this Section we shortly recall the circular, restricted three-body problem (RTBP) as well as some well known properties that will be used along the paper. In the RTBP we take two massive bodies $P_1$ and $P_2$, called primaries, that describe circular orbits around their common center of mass, located at the origin.
%We consider a particle $P$ with  infinitesimal mass that moves on the same plane as the primaries under their gravitational forces. It will be useful to consider  a coordinate system (called rotating or synodical system)  that rotates with the primaries, in such a way, that for suitable units  of length, mass and time, the primaries will have mass $1-\mu$ and $\mu$, $\mu \in (0,1)$, their positions will be $P_1=(\mu ,0)$ and $P_2=(\mu -1,0)$ respectively, and the period of their motion will be $2\pi$. In such context, the equations of motion for the particle in the rotating system are given by

%\ComentariMerce{ATENCIO: aquest paragraf anterior es repeticio del 1r paragraf de la introduccio. PROPOSO treure'l i escriure:}

As mentioned in the Introduction, we consider the RTPB. In the rotating (synodical) system, 
 the primaries with mass $1-\mu$ and $\mu$, $\mu \in (0,1)$, have  positions  $P_1=(\mu ,0)$ and $P_2=(\mu -1,0)$ respectively, and the period of their motion will be $2\pi$. In such context, the equations of motion for the particle in the rotating system are given by

\begin{equation} \label{eq:equacions3cos}
    \left\{
	\begin{aligned}
	    &\ddot x -2\dot y = \Omega_x(x,y),\\
		&\ddot y +2\dot x = \Omega_y(x,y),
	\end{aligned}
	\right.
\end{equation}
where $\dot{} =d/dt$ and
\begin{equation} \label{eq:oumega}
	\begin{split}    
	    \Omega(x,y)&=\frac{1}{2}(x^2+y^2)+\frac{1-\mu}{\sqrt{(x-\mu)^2+y^2}}+\frac{\mu}{\sqrt{(x-\mu+1)^2+y^2}}+\frac{1}{2}\mu(1-\mu)\\
	    & = \frac{1}{2}\left[(1-\mu)r_1^2+\mu r_2^2\right] + \frac{1-\mu}{r_1} + \frac{\mu}{r_2},
	\end{split}
\end{equation}
with $r_1 = \sqrt{(x-\mu)^2+y^2} $ and $r_2 = \sqrt{(x-\mu+1)^2+y^2} $. So, the equations become singular when $r_1$ or $r_2\rightarrow 0$.

The main properties of this system used later on are the following (see 
\cite{Szebehely} for details):
\begin{enumerate}
	\item There exists a first integral, defined by
	\begin{equation}  \label{constantJacobi}
	    C = 2\Omega(x,y) - \dot{x}^2-\dot{y}^2,
	\end{equation}
    and known as  Jacobi integral.

	\item System \eqref{eq:equacions3cos} has the symmetry
	\begin{equation} \label{simetria}
	    (t, x, y, \dot{x},\dot{y}) \rightarrow (-t, x, -y, -\dot{x},\dot{y}). 
	\end{equation}
	A geometrical interpretation of it is that given an orbit in the configuration space $(x, y)$, the symmetrical orbit with respect to the $x$ axis will also exist.

	\item The simplest solutions are 5 equilibrium points:  the so called collinear ones $L_{i}$, $i=1,2,3$, and the triangular ones $L_{i}$, $i=4,5$. On the plane $(x,y)$, $L_{1,2,3}$  are located on the $x$ axis, with $x_{L_2}<\mu -1<x_{L_1}<\mu <x_{L_3}$ and $L_{4,5}$ forming an equilateral triangle with the primaries.  $C_{L_i}$ will stand for the value of $C$ at $L_i$, $i=1,...,5$.

	\item  Depending on the value of the Jacobi constant $C$, the particle can move on specific regions of the plane $(x,y)$, called Hill regions and defined by
	\begin{equation}
	    \mathcal{R}(C) = \left\{(x, y) \in \mathbb{R}^2\,|\,2\Omega(x,y)\geq C\right\}.
	\end{equation}
\end{enumerate}

In order to deal with the singularity of the primary $P_1=(\mu ,0)$ ($r_1=0$) we will consider the Levi-Civita regularization (see \cite{Szebehely}). The well known  transformation of coordinates and time is given by: 
\[
    \left\{
    \begin{aligned}
        & x = \mu + u^2-v^2,\\
        & y = 2uv,\\
        & \frac{dt}{ds} = 4\left(u^2+v^2\right),
    \end{aligned}
    \right.
\]
and we remark that, taking $\mu \in (0,1)$ we are regularizing the big primary (if $\mu \in (0,1/2]$) or the small one  (if $\mu \in [1/2,1)$). In this new system of coordinates, the solutions of system \eqref{eq:equacions3cos} with Jacobi constant equal to $C$ satisfy:
\begin{equation}\label{LCres}
    \left\{
    \begin{aligned}
        &u'' - 8(u^2+v^2)v' = \left(4\mathcal{U}(u^2+v^2)\right)_u\\
        & \hspace{25mm}= 4\mu u +16\mu u^3 +12(u^2+v^2)^2u + \frac{8\mu u}{r_2} - \frac{8 \mu u (u^2+v^2) (u^2+v^2+1)}{r_2^3}-4Cu,\\
        &v'' + 8(u^2+v^2)u' = \left(4\mathcal{U}(u^2+v^2)\right)_v\\
        & \hspace{25mm}= 4\mu v -16\mu v^3 +12(u^2+v^2)^2v + \frac{8\mu v}{r_2} - \frac{8 \mu v (u^2+v^2) (u^2+v^2-1)}{r_2^3}-4Cv,\\
        & C'=0
    \end{aligned}
    \right.
\end{equation}
where $' = d/ds$ and
\[
     \mathcal{U} = \frac{1}{2}\left[(1-\mu)\left(u^2 + v^2\right)^2+\mu \left((1+u^2-v^2)^2+4u^2v^2\right)\right] + \frac{1-\mu}{u^2 + v^2}+\frac{\mu}{r_2}-\frac{C}{2}.
\]
with $r_2 = \sqrt{(1+u^2-v^2)^2+4u^2v^2}$. 

The system of ODE is now regular everywhere except at the collision with the  primary $P_2$ ($r_2=0$).

We observe that when studying  the system of ODE \eqref{LCres}, a value of a Jacobi constant $C$ is fixed. Thus  to take an initial condition of this system, we will take $(u(0),v(0),u'(0),v'(0),C(0))$. Nevertheless, along the paper, we will actually study system \eqref{LCres} removing the last equation in $C$, and we will consider the corresponding solution for a fixed $C$ and initial condition simply  given by $(u(0),v(0),u'(0),v'(0))$.

In this new system of variables, the previous properties of the RTBP are translated as:
\begin{enumerate}
    \item Jacobi Integral:
    \begin{equation}\label{prjac}
        u'^2+v'^2 = 8\left(u^2+v^2\right)\mathcal{U},
    \end{equation}
    which is regular at the collision with the primary $P_1$. In particular (see \cite{Szebehely}), the velocity at the position of the first primary $(u=0,v=0)$ satisfies:
    \begin{equation} \label{C0}
        u'^2+v'^2 = 8(1-\mu),
    \end{equation}
and therefore the velocities at the collision lie in a circle of radius $\sqrt{8(1-\mu)}$. 
    \begin{figure}[ht!]
        \centering
        \includegraphics[trim= 4mm 0 4mm 1mm, clip, width=0.485\textwidth]{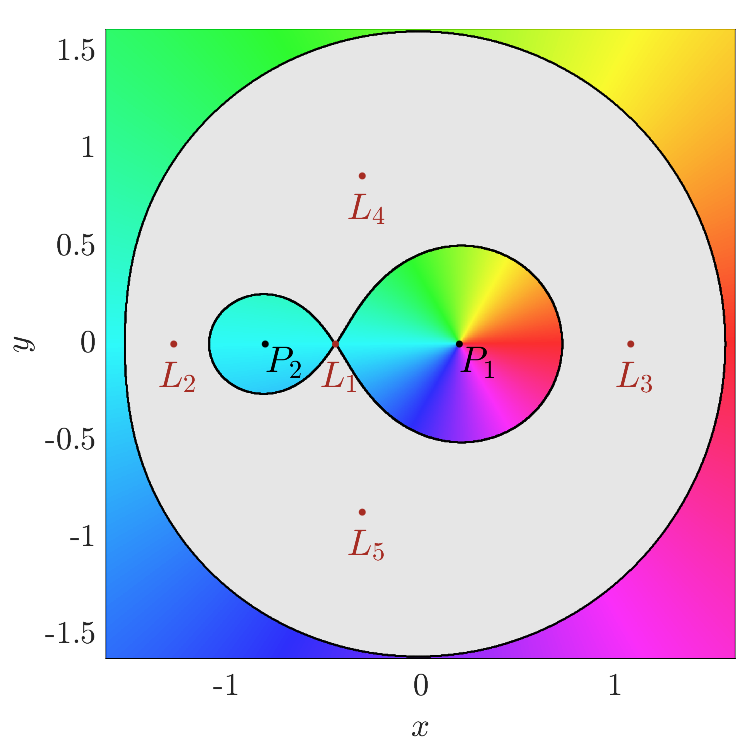}
        \includegraphics[trim= 4mm 0 4mm 1mm, clip, width=0.485\textwidth]{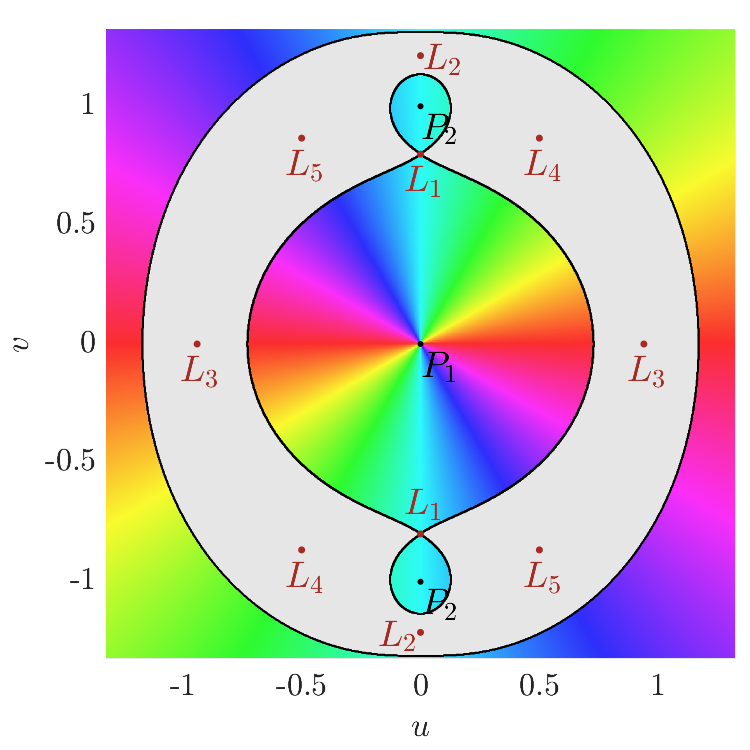}
        \caption{Levi-Civita transformation. Hill's region for $\mu=0.2$ and $C_{L_1}$. Left.  Synodic $(x,y)$ coordinates. Right. Levi-Civita ones $(u,v)$. In grey the forbidden region.}
        \label{fig:Hill}
    \end{figure}
    
    \item As the Levi-Civita transformation duplicates the configuration space (see Figure \ref{fig:Hill}) the equations of motion satisfy two symmetries, \eqref{simetriaLC1} as a  consequence of the duplication of space and \eqref{simetriaLC2} due to \eqref{simetria}:
    \begin{subequations}
        \label{simetriaLC}
        \begin{gather}
            (s,u,v,u',v') \rightarrow (-s,u,-v,-u',v'), \label{simetriaLC1}  \\
            (s,u,v,u',v') \rightarrow (-s,-u,v,u',-v'). \label{simetriaLC2}
        \end{gather}
    \end{subequations}
           
    \item The equilibrium points are now duplicated and they are located on the plane $(u,v)$. In particular, the collinear points now are located in the $u$ axis and in the $v$ axis. See Figure \ref{fig:Hill}.

    \item  Similarly,  given a value of the Jacobi constant $C$,  the Hill’s region in variables $(u,v)$ now becomes
    \begin{equation}\label{hillenuv}
        \mathcal{R}(C) = \left\{(u, v) \in \mathbb{R}^2\,|\,(u^2+v^2)\mathcal{U}\geq0\right\}.
    \end{equation}
    In particular we will consider values of the Jacobi constant $C\geq C_{L_1}$, the value of the Jacobi constant associated to the equilibrium point $L_1$. In this way, it will be enough to regularize only the position of $P_1$ because the Hill's region associated to these values of $C$ avoids collisions with the second primary (assuming the particle moves in a neighbourhood of $P_1$), see Figure \ref{fig:Hill}). 
\end{enumerate}

%%%%%%%%%%%%%%%%%%%%%%%%%%%%%
%    HILL
%%%%%%%%%%%%%%%%%%%%%%%%%%%%%

\section{The Hill problem and the Levi-Civita regularization}\label{sec:Hill}

    The \emph{Hill problem} is a simplified limiting case of the RTBP that allows to study the vicinity of the small primary when this mass tends to 0 (when mass parameter $\mu$ is very small or very close to 1). We can obtain easily the equation of Hill problem making a translation of the small primary (denoted by $P_h$) to the origin, and rescaling the coordinates by a factor $\mu^{1/3}$ if $\mu\rightarrow 0$ or $(1-\mu)^{1/3}$ if $\mu\rightarrow 1$.
    
    For our purpose we will consider this second case, so the first step is to introduce new variables $(x_h,y_h)$ defined by the relation
    \[
        x = \mu + (1-\mu)^{1/3}x_h,\quad\quad y = (1-\mu)^{1/3}y_h.
    \]
    In this way the expression \eqref{eq:oumega} becomes
    \begin{equation}
        \frac{1}{(1-\mu)^{2/3}}\left(\Omega -\frac{3}{2}\right) = \frac{3}{2}x_h^2+\frac{1}{\sqrt{x_h^2+y_h^2}} + \mathcal{O}\left((1-\mu)^{1/3}\right),
    \end{equation}
    and taking the limit $\mu\rightarrow 1$ we obtain the Hill's potential
    \begin{equation}\label{eq:OmegaHill}
        \Psi(x_h,y_h) = \frac{3}{2}x_h^2+\frac{1}{\sqrt{x_h^2+y_h^2}}.
    \end{equation}
    Thus the equations of motion are given by
    \begin{equation} \label{eq:eqHill}
    \left\{
	\begin{aligned}
		&\ddot x_h -2\dot y_h = \Psi_{x_h}(x_h,y_h),\\
		&\ddot y_h +2\dot x_h = \Psi_{y_h}(x_h,y_h).
	\end{aligned}
	\right.
    \end{equation}
    
    The Hill problem also has some interesting properties for our purposes:
    \begin{enumerate}
        \item The system \eqref{eq:eqHill} has a first integral defined by
        \begin{equation} \label{eq:cttHill}
            K = 2 \Psi(x_h,y_h) - \dot{x}_h^2- \dot{y}_h^2,
        \end{equation}
    	where $K$ is related with the Jacobi integral by: 
        \begin{equation}\label{eq:relCiK}
            C = 3\mu + (1-\mu)^{2/3}K + \mathcal{O}(1-\mu).
        \end{equation} 
    
    \item The equations \eqref{eq:eqHill} not only inherit the symmetry of the problem, that is a symmetry with respect to the $x_h$-axis, but also has an extra one with respect to the $y_h$-axis. In this way the system \eqref{eq:eqHill} has the symmetries:
	\begin{subequations}\label{eq:simetriesHill}
        \begin{gather}
            (t,x_h,y_h,\dot{x}_h,\dot{y}_h) \rightarrow (-t,x_h,-y_h,-\dot{x}_h,\dot{y}_h), \label{eq:simetriesHill1}  \\
            (t,x_h,y_h,\dot{x}_h,\dot{y}_h) \rightarrow (-t,-x_h,y_h,\dot{x}_h,-\dot{y}_h). \label{eq:simetriesHill2}
        \end{gather}
    \end{subequations}
    
    \item The Hill problem only preserves two equilibrium points, which are those that are in the vicinity of the small primary $P_h$.
    That is $L_1$ and $L_2$ if we consider $\mu\rightarrow 0$ or $ L_1$ and $L_3$ if  $\mu\rightarrow 1$. 
    For historical consistency, we will call these equilibrium points $L_1$ and $L_2$, which have positions $(\pm1/3^{1/3},0)$ (see Figure \ref{fig:ProblemaHill}) and we will denote by $K_L=3^{4/3}$ the value of $K$ at $L_1$ and $L_2$.
    
    \item In a similar way, from the first integral and taking into account that $2 \Psi(x_h,y_h)-K\ge 0$, given a value of $K$, the motion can only take place in the Hill’s region defined by
    \begin{equation}
    \mathcal{R}_h(K) = \left\{(x_h, y_h) \in \mathbb{R}^2\,|\,2\Psi(x_h,y_h)\geq K\right\}.
    \end{equation}
    %\ComentariOscar{Dir algo de $K\geq K_L$? Proposta:}
We notice that, similarly as we do in the RTBP, we will consider values of $K\ge K_L$ to guarantee
that if the particle starts in a region around $P_h$, it will always remain there. 
    
    \end{enumerate}

In order to regularize the Hill problem we only have to consider the Levi-Civita regularization
\[
    \left\{
    \begin{aligned}
    &x_h = u_h^2-v_h^2,\\
    &y_h = 2u_hv_h,\\
    &\frac{dt}{ds} = 4\left(u_h^2+v_h^2\right),
    \end{aligned}
    \right.
\]
and the system \eqref{eq:eqHill} becomes:
\begin{equation}\label{eq:LChill}
	\left\{
	\begin{aligned}
		u_h'' - 8(u_h^2+v_h^2)v_h' &= \left(4\mathcal{U}_h(u_h^2+v_h^2)\right)_{u_h}\\
		&=-4Ku_h + 12\left(2(u_h^4-2u_h^2v_h^2-v_h^4)+(u_h^2+v_h^2)^2\right)u_h,\\[1.2ex]
		v_h'' + 8(u_h^2+v_h^2)u_h' &= \left(4\mathcal{U}_h(u_h^2+v_h^2)\right)_{v_h}\\
		&=-4Kv_h + 12\left(2(v_h^4-2u_h^2u_h^2-u_h^4)+(u_h^2+v_h^2)^2\right)v_h,
		\\
	\end{aligned}
	\right.
\end{equation}
with 
\begin{equation}
    \mathcal{U}_h = \frac{3(u_h^2 - v_h^2)^2}{2} + \frac{1}{u_h^2 + v_h^2} - \frac{K}{2}.
\end{equation}

\begin{figure}[ht!]
    \centering
    \includegraphics[trim= 0mm 0 0mm 0mm, clip, width=0.45\textwidth]{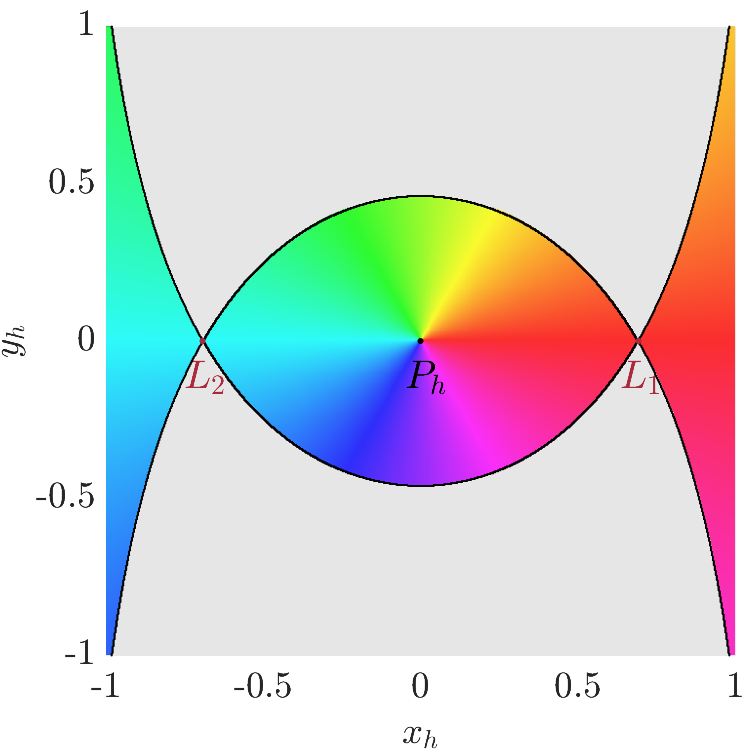}
    \includegraphics[trim= 0mm 0 0mm 0mm, clip, width=0.45\textwidth]{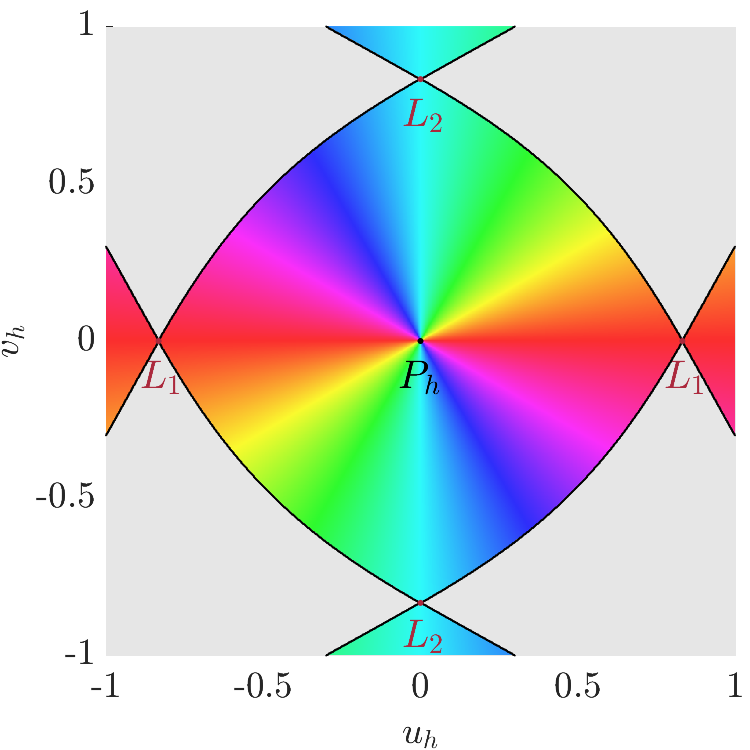}
    \caption{Levi-Civita transformation. Hill's region for $K = K_{L}= 3^{4/3}$. Left.  Synodic $(x_h,y_h)$ coordinates. Right. Levi-Civita ones $(u_h,v_h)$. In grey the forbidden region.}
    \label{fig:ProblemaHill}
\end{figure}

Under this transformation the previous properties of the Hill problem are translated as:

\begin{enumerate}
    \item The first integral of \eqref{eq:LChill} is given by
    \begin{equation}
        u_h'^2+v_h'^2 = 8(u_h^2+v_h^2)\mathcal{U}_h,
    \end{equation}
    which is regular at the collision with $P_h$. In particular the velocity at the position of the primary $(u_h=0,v_h=0)$ satisfies
    \begin{equation}
        u_h'^2+v_h'^2 = 8.
    \end{equation}

    \item As the transformation duplicates the configuration space (see Figure \ref{fig:ProblemaHill}), the equation \eqref{eq:LChill} has an extra symmetry:
    \begin{subequations}\label{eq:simetriesHillLC}
        \begin{gather}
            (s,u_h,v_h,u'_h,v'_h) \rightarrow (-s,u_h,-v_h,-u'_h,v'_h), \label{eq:simetriesHillLC1}  \\
            (s,u_h,v_h,u'_h,v'_h) \rightarrow (-s,-u_h,v_h,u'_h,-v'_h), \label{eq:simetriesHillLC2}  \\
            (s,u_h,v_h,u'_h,v'_h) \rightarrow (-s,v_h,u_h,-v'_h,-u'_h). \label{eq:simetriesHillLC3}
        \end{gather}
    \end{subequations}
    
    \item For the same reason, the equilibrium points are duplicated, and we have $L_1 = (\pm3^{-1/6},0)$ and $L_2 = (0,\pm3^{-1/6})$.

    \item In a similar way, depending on the value of $K$ we can define the valid region of motion (see Figure \ref{fig:ProblemaHill}) in the plane $(u_h, v_h)$ as:
    \begin{equation}
	    \mathcal{R}(K) = \left\{(u_h, v_h) \in \mathbb{R}^2\,|\,(u_h^2+v_h^2)\mathcal{U}_h\geq0\right\}.
	\end{equation}
\end{enumerate}

%%%%%%%%%%%%%%%%%%%%%%%%%%%%%%%%%%%%%%%
%          n-EC and Theorems          %
%%%%%%%%%%%%%%%%%%%%%%%%%%%%%%%%%%%%%%%
\section{$n$-EC orbits in the RTBP and main theorems}

In this paper we will focus on a specific type of EC orbits, the $n$-EC orbits, formally defined as
\begin{definition}\label{defECO}
    We call $n$-ejection-collision orbit of a primary, simply noted by $n$-EC orbit, to the orbit that the particle describes when ejects from a primary and reaches $n$ times a relative maximum in the distance with respect to this primary before colliding with it.  
\end{definition}

As we will consider any value of $\mu\in(0,1)$ we will study only the $n$-EC orbits associated to the first primary $P_1$. 
 Notice that from relation \eqref{C0} it is easy to compute the initial conditions of the ejection orbits (and the collision orbits):
\begin{equation}\label{eq:nECOs_ini}
(0, 0, 2\sqrt{2(1 -\mu)}\cos\theta_0, 2\sqrt{2(1 -\mu)}\sin\theta_0),\quad\quad \theta_0\in[0,2\pi) 
\end{equation}
and we can compute the manifold of the ejection (collision) orbits integrating forward (backward) in time. 
Observe that in this case it is enough to consider a value of $\theta_0\in[0,\pi)$ due to the duplication of the configuration plane.

Concerning the existence of 
$n$-EC orbits, we mentioned above that 
in \cite{orspaper2}, the existence of four $n$-EC orbits ejecting from (and colliding with) the big primary for any $n\ge 1$, given  $C$ big enough and   $\mu>0$ small enough, was proved.
The proof was based on a perturbative approach in $\mu$ and assuming that the orbits ejected from the big primary of mass $1-\mu$.

The first goal of this paper is to improve this previous result and prove the existence of four $n$-EC orbits ejecting from (and colliding with)  the big or small primary, for any $n\ge 1$ given and $C$ big enough. So {\sl any value of the mass parameter $\mu \in (0,1)$} is possible in this context.

For analytical and numerical purposes, though, we will use a characterization for an EC orbit, based upon the zero value of its angular momentum, defined from now on as $M:=U\dot V-V\dot U$ (for some suitable variables $(U,V)$ to be defined later), at a minimum distance with the primary the particle ejected from (see Lemma \ref{caraceco} below). 
So in order to obtain an $n$-EC orbit, for $n\ge 1$, $\mu \in(0,1)$ and $C$ given, first we will compute the corresponding ejection solution for each initial condition (that is, for each value $\theta _0$). 
Second we will determine the precise time $\tau ^* = \tau ^*(\theta_0)$ %depending on the initial $\theta _0$ 
when the particle reaches  the $n$-th minimum in the distance to $P_1$. 
At time $\tau^*$  we will compute the value of the angular momentum that is, $(U\dot{V} -V\dot{U})(\tau^*)$.
Varying $\theta _0\in [0,\pi )$ we will obtain the corresponding angular momentum,
%so we will have a function depending on the initial $\theta _0$, and we %
that will denote by $M(n,\theta _0)=(U\dot{V} -V\dot{U})(\tau^*)$ (overlooking the additional dependence on $\mu$). 
The zeros of  $M(n,\theta _0)=0$ will provide us with the precise values of $\theta _0$ such that the corresponding ejection orbit is precisely an $n$-EC orbit.
Just to show this idea, we plot in Figure \ref{fig:exMomAng} left the behaviour of the angular momentum $M(1,\theta _0)$ for  $\mu =0.1$ and $C=5$. 
In the right figure we plot the corresponding ejection orbits for three chosen values of  $\theta _0$: the red one and green one plotted for  a range of time $[0,\tau ^* + \delta ]$ (a small suitable $\delta >0$) to see the change of sign in the angular momentum (shown in the zoom area) and the blue one which is a $1$-EC orbit.

\begin{figure}[ht!]
    \centering
    \includegraphics[width=0.475\textwidth]{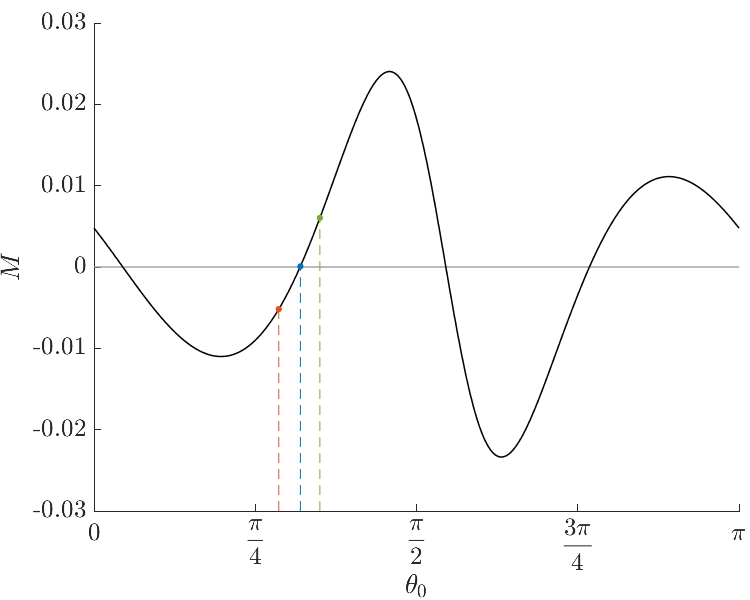}
    \includegraphics[width=0.475\textwidth]{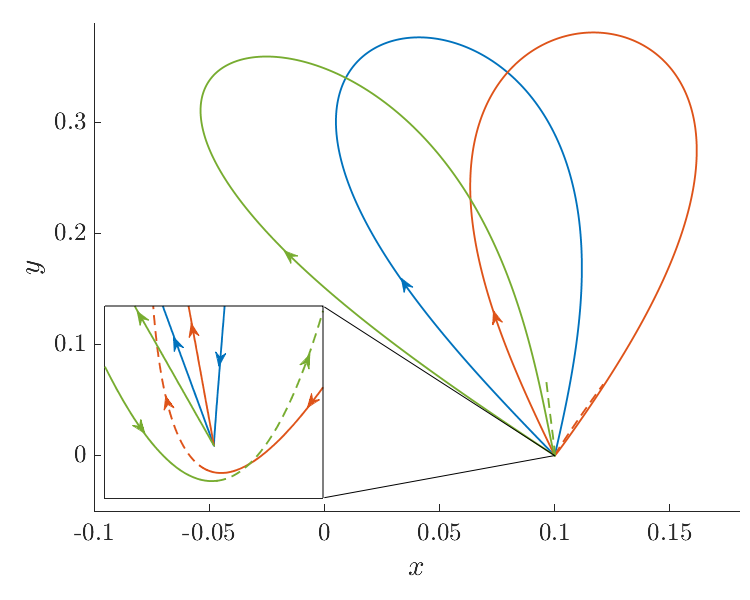}
    \caption{$n=1$, $\mu=0.1$ and $C=5$. Left. Angular momentum  $M(1,\theta _0)$. Right. Three ejection orbits corresponding to the initial values of $\theta _0$ labelled in colours on the left plot. The blue orbit is precisely a $1$-EC orbit.}
    \label{fig:exMomAng}
\end{figure}
Now, we proceed to state the main result of this paper about the existence, the number and the characteristics of the $n$-ejection-collision orbits for any value of the mass parameter and $n\in \mathbb{N}$, for sufficiently restricted Hill regions (i.e. $C$ big enough).
\begin{theorem}\label{maintheoremN}
	There exists an $\hat{L}$ such that for $L\geq\hat{L}$ and for any value of $\mu \in (0,1)$, $n\in\mathbb{N}$ and $C=3\mu +Ln^{2/3}(1-\mu)^{2/3}$, there exist four $n$-EC orbits, which can be characterized by:
	\begin{itemize}
		\item Two $n$-EC orbits symmetric with respect to the $x$  axis.
		\vspace{1mm}
		\item Two $n$-EC orbits, one  symmetric of the other with respect to the $x$ axis.
	\end{itemize}
	The respective families are labelled by 	$\alpha_n$, $\gamma_n$, $\beta_n$ and $\delta_n$ (corresponding to increasing values of $\theta _0$ respectively).
\end{theorem}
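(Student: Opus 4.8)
The statement to be proven strengthens Theorem~\ref{th:maintheorem2}: once we set $K=Ln^{2/3}$, so that $C=3\mu+Ln^{2/3}(1-\mu)^{2/3}$, the claim is precisely that the threshold $\hat K(n)$ of Theorem~\ref{th:maintheorem2} may be chosen of the form $\hat L n^{2/3}$ with $\hat L$ independent of both $\mu$ and $n$. I would therefore not treat Theorem~\ref{th:maintheorem2} as a black box but revisit its construction, tracking the dependence on $n$ explicitly. The starting point is Lemma~\ref{caraceco}: an $n$-EC orbit is a zero of the angular-momentum function $\theta_0\mapsto M(n,\theta_0)$, the value of $U\dot V-V\dot U$ at the $n$-th minimum of the distance to $P_1$ along the ejection orbit \eqref{eq:nECOs_ini}. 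Collecting the linear-in-$(u,v)$ terms of \eqref{LCres} (using $r_2=1+O(u^2+v^2)$ near $P_1$) shows that the dominant linear coefficient is $-4(C-3\mu)u=-4K(1-\mu)^{2/3}u$, so after the rescaling $u=AU$, $v=AV$ with $A\sim K^{-1/2}$ and a reparametrization of time by the frequency $\omega=2\sqrt{K}(1-\mu)^{1/3}$, the unperturbed system is the harmonic oscillator whose solutions are exactly the radial ejection orbits $U=\cos\theta_0\sin\tau$, $V=\sin\theta_0\sin\tau$, with zero angular momentum and consecutive distance minima at $\tau=k\pi$.

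In these variables every remaining term of \eqref{LCres} carries a positive power of $K^{-1/2}$, the leading perturbation being the Coriolis term, of relative size $O(K^{-3/2})$ per unit rescaled time. Since the $n$-th distance minimum occurs at $\tau^\ast\approx n\pi$, the perturbation is integrated over a $\tau$-interval of length $\sim n\pi$, so the quantity that actually governs the whole $n$-loop construction is the product $\eta:=nK^{-3/2}$. This is the Keplerian bookkeeping behind the exponent $2/3$: the physical orbit has size $\sim 1/K$ and Kepler period $\sim K^{-3/2}$, so $n$ loops accumulate a deviation from the radial motion of order $nK^{-3/2}$. Substituting $K=Ln^{2/3}$ gives $\eta=nK^{-3/2}=L^{-3/2}$, independent of $n$; hence demanding $\eta$ small is the single condition $L\geq\hat L$, uniform in $n$ and $\mu$, equivalent to $K\geq\hat L n^{2/3}$, i.e. to $\hat K(n)=\hat L n^{2/3}$.

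I would then expand $M(n,\theta_0)$ so that, to the orders relevant for the argument, its dependence on $n$ enters only through $\eta$, and isolate the leading nontrivial profile $m(\theta_0)$ in the expansion in powers of $\eta$, given by an explicit trigonometric integral along the harmonic solution. The Levi-Civita symmetries \eqref{simetriaLC1}--\eqref{simetriaLC2} descend to the reflection $\theta_0\mapsto\pi-\theta_0$ of the ejection angle (interchanging ejection and collision) and impose a parity on $M(n,\cdot)$; on the circle $[0,\pi)$ this reflection has two fixed angles, at which the imposed parity forces $M$ to vanish, producing the two $n$-EC orbits symmetric with respect to the $x$ axis, while the remaining zeros occur in a reflected pair. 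I would verify from the explicit form of $m(\theta_0)$ that there are exactly four zeros in $[0,\pi)$ and that each is simple, $m'(\theta_0)\neq0$; ordering them by increasing $\theta_0$ reproduces the labelling $\alpha_n,\gamma_n,\beta_n,\delta_n$.

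Existence for the full system would then follow from the Implicit Function Theorem applied to the normalized equation obtained from $M(n,\theta_0)=0$, continued from $\eta=0$. The decisive point, and the main obstacle, is uniformity in $n$: because the integration time $\tau^\ast\approx n\pi$ grows without bound, naive variation-of-constants estimates accumulate per loop, so one cannot simply fix $n$ and invoke Theorem~\ref{th:maintheorem2}. I would instead prove a Gronwall-type bound controlling, loop by loop, the growth of the solution and of its $\theta_0$-derivative, and show that the total remainder in the normalized angular momentum stays $O(\eta)$ while its $\theta_0$-derivative stays bounded away from zero near each of the four approximate roots --- all with constants independent of $n$ and $\mu$. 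Granted this uniform control, for $L\geq\hat L$ the four simple zeros of $m$ persist as simple zeros of $M(n,\cdot)$, yielding the four $n$-EC orbits with the stated symmetry classification and simultaneously establishing the sharp threshold $\hat K(n)=\hat L n^{2/3}$.
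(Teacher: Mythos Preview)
Your identification of the governing small quantity $\eta=nK^{-3/2}=L^{-3/2}$ is exactly the right heuristic, and it is what drives both your outline and the paper's proof. The executions, however, are genuinely different.

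You keep the variables and time of Theorem~\ref{th:maintheorem2}, take the harmonic oscillator as the unperturbed problem, treat the Coriolis term as the leading perturbation, and propose to control the resulting secular growth over the unbounded interval $[0,n\pi]$ by a loop-by-loop Gronwall argument. The paper does three things differently. First, it rescales time once more, $\hat{\mathcal{T}}=\tau/n$, so that the $n$-th minimum sits at the \emph{fixed} time $\hat{\mathcal{T}}_0^\ast=\pi$; the integration interval no longer grows with $n$. Second, it takes as unperturbed system not the harmonic oscillator but the full two-body problem in rotating Levi--Civita coordinates (the part $\bm{\mathcal F}_0$ in \eqref{eq:F01}), which is solved in closed form via passage to sidereal coordinates (see \eqref{eq:solucioEcosKeplerRaro}); this absorbs the Coriolis and quartic terms exactly, so no secular terms appear and the only genuine perturbation is $\mu\bm{\mathcal F}_1$, coming from the second primary and of size $\mathcal O(\xi^6)$ uniformly in $n$ and $\mu$. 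Third, instead of Gronwall, the paper sets up a contraction on a Banach space of functions on $[0,2\pi]$ with the weighted norm $\|\bm{\mathcal U}\|=\sup(n|\mathcal U|+n|\mathcal V|+|\dot{\mathcal U}|+|\dot{\mathcal V}|)$; the factor $n$ on positions is what makes the bounds in Lemmas~\ref{Hfitatlema}--\ref{le:H0} uniform.

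What each approach buys: the paper's route trades analytic input (an exact Kepler solution and a tailored norm) for a clean fixed-point estimate on a fixed time interval, which makes uniformity in $n$ automatic. Your route is conceptually lighter (no exact solution needed) but pushes all the difficulty into the long-time estimate. Your sketch is plausible in principle---because the linearisation around the harmonic oscillator has bounded fundamental matrix, a bootstrap over $[0,n\pi]$ can indeed give deviations of order $n\varepsilon^3=\eta$---but the phrase ``Gronwall-type bound loop by loop'' hides real work: you must simultaneously control the solution, its $\theta_0$-derivative, and the location $\tau^\ast$ of the $n$-th minimum, with constants that do not leak factors of $n$. The paper's rescaling and choice of unperturbed problem sidestep precisely these complications.
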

In order to prove Theorem \ref{maintheoremN} we will first state a  weaker version of this theorem, Theorem \ref{th:maintheorem2}, but the proof of this second version will provide light on the approach, mainly a suitable scaling in the configuration variables, time and the Jacobi constant $C$, used to prove the more optimal result  in Theorem \ref{maintheoremN}.
\begin{theorem}\label{th:maintheorem2}
    For all $n\in\mathbb{N}$, there exists a  $\hat K(n)$ such that for  $K\ge \hat K (n)$ and for any value of $\mu \in (0,1)$ and  $C=3\mu +K(1-\mu)^{2/3}$, there exist four $n$-EC orbits, which can be characterized in the same way as in Theorem \ref{maintheoremN}.
\end{theorem}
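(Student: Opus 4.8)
The plan is to characterise an $n$-EC orbit as a zero of the angular momentum $M(n,\theta_0)$ of Lemma~\ref{caraceco} and to compute this function in a regime where the regularised system \eqref{LCres} is a perturbation of a harmonic oscillator. First I would exploit the hypothesis $C=3\mu+K(1-\mu)^{2/3}$: the linear part of the force in \eqref{LCres} near $P_1$ has coefficient $12\mu-4C=-4K(1-\mu)^{2/3}$, so rescaling $(u,v)=\alpha(U,V)$, $s=\beta\sigma$ with $\alpha\sim(1-\mu)^{1/6}K^{-1/2}$ and $\beta=\big(2\sqrt{K}(1-\mu)^{1/3}\big)^{-1}$ turns the leading equations into $U''+U=0$, $V''+V=0$, with $\varepsilon:=K^{-1/2}$ the natural small parameter. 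Normalising the ejection speed from \eqref{C0} and \eqref{eq:nECOs_ini}, the unperturbed ejection orbit is the rectilinear solution $U_0=R\cos\theta_0\sin\sigma$, $V_0=R\sin\theta_0\sin\sigma$, which reaches the origin at $\sigma=k\pi$ and its $n$-th minimal distance to $P_1$ at $\sigma=n\pi$. Its angular momentum $M_0=U_0V_0'-V_0U_0'$ vanishes identically: the zeroth-order problem is degenerate and $M(n,\theta_0)$ is itself of positive order in $\varepsilon$.

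To find its leading part I would integrate $M'=U P_V-V P_U$ (here $P_U,P_V$ collect the perturbative terms of the scaled \eqref{LCres}) from ejection to the $n$-th closest approach $\tau^*=n\pi+\mathcal{O}(\varepsilon^3)$, the endpoint shift being negligible because $M$ and $M'$ vanish to high order at $\sigma=n\pi$. Three observations organise the calculation. The quartic self-term $12(u^2+v^2)^2(u,v)$ is radial (parallel to $(U,V)$) and so contributes nothing to $M$. The dominant $\mathcal{O}(\varepsilon^3)$ Coriolis term yields an integrand $\propto\sin^3\sigma\cos\sigma$, whose integral over $[0,n\pi]$ is zero, so the largest perturbation produces no first-order contribution to $M$ at the collision times. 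Crucially for uniformity in $\mu$, the $\mu$-dependent cubic terms ($+16\mu u^3$ in the $u$-equation, $-16\mu v^3$ in the $v$-equation) cancel exactly against the cubic part of the Taylor expansion of the $r_2$-terms $8\mu u/r_2-8\mu u(u^2+v^2)(u^2+v^2+1)/r_2^3$ and its $v$-analogue; this removes precisely the contributions carrying the dangerous factor $(1-\mu)^{-1/3}$. The first surviving contribution is therefore of order $\varepsilon^6=K^{-3}$ (which explains why expansions to order six are needed): it comes from the quintic nonlinearity, which gives $\propto n\sin4\theta_0$ since $\int_0^{n\pi}\sin^6\sigma\,d\sigma=5n\pi/16$, together with the second iterate of the Coriolis term and a uniformly bounded $\mu$-dependent piece from the quintic part of the $r_2$-expansion.

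I expect to obtain $M(n,\theta_0)=K^{-3}\big(A_n\sin2\theta_0+B_n\sin4\theta_0\big)\big(1+\mathcal{O}(\varepsilon)\big)$ uniformly in $\mu\in(0,1)$, with $B_n\neq0$. The leading factor equals $\sin2\theta_0\,(A_n+2B_n\cos2\theta_0)$ and has exactly four simple zeros in $[0,\pi)$ whenever $|A_n|<2|B_n|$: the two values $\theta_0=0,\pi/2$ (present even if $A_n=0$), and a mirror pair $\theta_\beta$, $\pi-\theta_\beta$ solving $\cos2\theta_0=-A_n/(2B_n)$. By the Implicit Function Theorem each simple zero of the leading factor persists as a simple zero of $M(n,\cdot)$ once $K\ge\hat K(n)$, giving four $n$-EC orbits; the $n$-dependence of $\hat K$ reflects the accumulation of the $\mathcal{O}(\varepsilon)$ error over the interval $[0,n\pi]$. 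The reversible symmetries \eqref{simetriaLC1}--\eqref{simetriaLC2} act by $\theta_0\mapsto\pi-\theta_0$ and identify $\theta_0=0,\pi/2$ as the two orbits symmetric with respect to the $x$ axis and $\theta_\beta,\pi-\theta_\beta$ as a symmetric pair; ordering by increasing $\theta_0$ labels them $\alpha_n,\beta_n,\gamma_n,\delta_n$, as claimed.

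The main obstacle is precisely the order-$\varepsilon^6$ computation carried out uniformly in $\mu$. Because the leading Coriolis term contributes nothing at first order, $M$ is genuinely a second-order effect: one must solve the first-order variational equation for the Coriolis-induced correction $(U_1,V_1)$ and substitute it back into $UP_V-VP_U$ while simultaneously tracking the quintic terms. Establishing the exact cubic cancellation that removes the $(1-\mu)^{-1/3}$ singularity, and then bounding the remainder uniformly on $\mu\in(0,1)$ and checking the sign condition $|A_n|<2|B_n|$ on the explicit coefficients, is the delicate part; the counting of zeros and the final Implicit Function Theorem argument are then routine.
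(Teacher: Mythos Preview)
Your approach is essentially the paper's: the same rescaling to a perturbed harmonic oscillator with small parameter $\varepsilon=K^{-1/2}$, the same use of Lemma~\ref{caraceco}, and the same order-six expansion followed by the Implicit Function Theorem. Integrating $M'=UP_V-VP_U$ directly (rather than computing the solution $\bm U_j$ up to order six as the paper does) is a legitimate and in fact slightly cleaner variant: the Coriolis contribution satisfies $M'_{\text{Cor}}=-2\varepsilon^3\frac{d}{d\tau}(U^2+V^2)^2$ exactly, so it integrates to a boundary term $-2\varepsilon^3(U^2+V^2)^2(\tau^*)$, which is $\mathcal O(\varepsilon^{15})$ at the $n$-th minimum and plays no role at order $\varepsilon^6$. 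Your ``cubic cancellation'' is exactly what makes the $\varepsilon^4$ term vanish in the paper's expansion \eqref{SistemaSerieDelta}--\eqref{SistemaSerie}.

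The gap is that you stop short of the one computation that closes the argument. If you actually evaluate the quintic contribution with $U_0=\cos\theta_0\sin\sigma$, $V_0=\sin\theta_0\sin\sigma$, the radial part $(U^2+V^2)^2(U,V)$ drops out (as you note), the Coriolis iterate drops out (by the exact-derivative remark above), and the \emph{only} surviving piece is the $\mu$-dependent quintic $24\mu(U^4-2U^2V^2-V^4,\,V^4-2U^2V^2-U^4)$, giving $M'=-12\mu\sin4\theta_0\,\sin^6\sigma\,\varepsilon^6$ and hence $M(n,\theta_0)=-\tfrac{15\mu n\pi}{4}\sin4\theta_0\,\varepsilon^6+\mathcal O(\varepsilon^8)$. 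In your notation $A_n=0$: the condition $|A_n|<2|B_n|$ is vacuous and the four simple zeros are exactly $\theta_0=m\pi/4$, $m=0,1,2,3$. One further point for uniformity in $\mu$: the leading term carries an overall factor $\mu$, so you must divide by $\mu\varepsilon^6$ \emph{before} invoking the Implicit Function Theorem (as the paper does in \eqref{eqppal}); otherwise the derivative of $M$ at the zeros degenerates as $\mu\to 0$ and you lose the uniform $\hat K(n)$.
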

We remark that, in Theorem \ref{th:maintheorem2}, we have a uniform  constant $K=K(n)$ for any value of $\mu \in (0,1)$. This implies that when  $\mu \rightarrow 1$ the value of the Jacobi constant  (for which Theorem \ref{th:maintheorem2} holds) tends to 3, as $C_{L_1}$ does as well. 
Precisely, and as shown in the proof of Theorem  \ref{th:maintheorem2}, the expansion of $C_{L_1}$ was the inspiration to choose a suitable scaling in the variables, time and $C$.
Finally in Theorem \ref{maintheoremN} an expression for $K(n)$ as $Ln^{2/3}$ is provided.

%%%%%%%%%%%%%%%%%%%%%%%%%%%%%%%%%%%%%%%
%           Proof Theorem 2           %
%%%%%%%%%%%%%%%%%%%%%%%%%%%%%%%%%%%%%%%
\section{Proof of Theorem \ref{th:maintheorem2}}

In order to prove Theorem \ref{th:maintheorem2}, let us fix $C\geq C_{L_1}$ and consider the following change of variables and time:
\begin{equation}\label{cvarnouUV}
    \left\{
    \begin{aligned}
    	u & = \sqrt{\frac{2(1-\mu)}{C-3\mu}}U,\\
    	v & = \sqrt{\frac{2(1-\mu)}{C-3\mu}}V,\\
    	\tau &= 2\sqrt{C-3\mu} s,
	\end{aligned}
    \right.
\end{equation}
that corresponds to the change that normalizes the linear term of \eqref{LCres} and the initial condition of the ejection orbits.
Denoting by $\dot{} = \frac{d}{d\tau}$ the new time derivative the system \eqref{LCres} transforms to  the following:
\begin{equation}\label{ode_UV}
    \left\{
    \begin{aligned}
	    \ddot{U} & =  -\frac{(C - \mu)U}{C - 3\mu}
	        +\frac{8(1-\mu)\left(U^2+V^2\right)\dot{V}}{(C-3\mu)^{3/2} }
	        +\frac{12{\left(1-\mu\right)}^2{\left(U^2+V^2\right)}^2U}{{\left(C-3\mu \right)}^3}
	        +\frac{8\mu(1-\mu)U^3}{\left(C-3\mu \right)^2}\\
	        & \hspace{4mm}+\frac{2\mu U}{(C-3\mu)R_2}  - \frac{4\mu(1-\mu)U(U^2+V^2)[2(1-\mu)(U^2+V^2)+(C-3\mu)]}{(C-3\mu)^3R_2^3},\\[1.5ex]
	    \ddot{V} & =  -\frac{(C - \mu)V}{C - 3\mu}
	        -\frac{8(1-\mu)\left(U^2+V^2\right)\dot{U}}{(C-3\mu)^{3/2} }
	        +\frac{12{\left(1-\mu\right)}^2{\left(U^2+V^2\right)}^2V}{{\left(C-3\mu \right)}^3}
	        -\frac{8\mu(1-\mu)V^3}{\left(C-3\mu \right)^2}\\
	        & \hspace{4mm}+\frac{2\mu V}{(C-3\mu)R_2}  - \frac{4\mu(1-\mu)V(U^2+V^2)[2(1-\mu)(U^2+V^2)-(C-3\mu)]}{(C-3\mu)^3R_2^3},
	\end{aligned}
    \right.
\end{equation}
where $R_2 =\sqrt{1 + \frac{4(1-\mu)(U^2-V^2)}{C-3\mu} +\frac{4(1-\mu)^2(U^2+V^2)^2}{(C-3\mu)^2}}$. 

It is important to remark that the  properties \eqref{prjac}, \eqref{C0},  \eqref{hillenuv} are preserved (translated to the new variables), and so are the symmetries obtained in the Levi-Civita regularization, i. e.:
\begin{subequations} \label{simetriaNewLC}
    \begin{gather}
        (\tau,U,V,\dot{U},\dot{V}) \rightarrow (\tau,-U,-V,-\dot{U},-\dot{V}), \label{simetriaNewLC1}  \\
        (\tau,U,V,\dot{U},\dot{V}) \rightarrow (-\tau,-U,V,\dot{U},-\dot{V}).  \label{simetriaNewLC2}
    \end{gather}
\end{subequations}

At this point, the two main ideas to prove the theorem are:
 (i) a  perturvative approach taking $\delta = 1/\sqrt{C-3\mu}$  as a small parameter, and (ii) the requirement of the angular momentum to be zero at a minimum distance with the primary the particle ejected from.

First of all we observe that the functions $1/R_2$ and $1/R_2^3$ are are analytic for $U$, $V$ bounded, $0\le \mu\le 1$, and $\delta$ small enough. In fact, the expansions of $1/R_2$ and $1/R_2^3$ are of the form:
\begin{equation}
	\begin{aligned}
	\frac{1}{R_2} &= 
	    1 
	    - 2(1-\mu)(U^2-V^2)\delta^2
	    + 8(1-\mu)^2(11(U^2-V^2)^2-4U^2V^2)\delta^4
	    + \sum_{k\geq3}(1-\mu)^kP_{2k}(U,V)\delta^{2k},\\[1.7ex]
	\frac{1}{R_2^3} &=
	    1 
	    - 6(1-\mu)(U^2-V^2)\delta^2 + \sum_{k\geq2}(1-\mu)^kQ_{2k}(U,V)\delta^{2k},
	\end{aligned}
\end{equation}
where $P_{2k}(U,V)$ and $Q_{2k}(U,V)$ are polynomials sum of monomials of degree $2k$.

So if we expand the system \eqref{ode_UV} with respect to $\delta$  we obtain:
\begin{equation}\label{SistemaSerieDelta}
    \left\{
	\begin{aligned}
	\ddot{U} &= - U + 8(1-\mu)(U^2 + V^2)\dot{V}\delta^3 + 12(1-\mu)^2\left[2\mu\left(U^4-2U^2V^2-V^4\right) + (U^2+V^2)^2\right]U   \delta^6\\
	&\hspace{5mm} + \mu\sum_{k\geq4}(1-\mu)^{k-1}\bar{P}_{2k-1}(U,V)\delta^{2k},\\[1.7ex]
	%%%%
	\ddot{V} &= - V - 8(1-\mu)(U^2 + V^2)\dot{U}\delta^3 + 12(1-\mu)^2\left[2\mu\left(V^4-2U^2V^2-U^4\right) + (U^2+V^2)^2\right]V   \delta^6\\
	&\hspace{5mm} + \mu\sum_{k\geq4}(1-\mu)^{k-1}\bar{Q}_{2k-1}(U,V)\delta^{2k},\\[1.7ex]
	%%%%
	\end{aligned}
	\right.
\end{equation}
which is an analytical system of ODE in $\delta$ and $\bar{P}_{2k-1}(U,V)$ and $\bar{Q}_{2k-1}(U,V)$ are polynomials sum of monomials of degree $2k-1$.

Before proceeding it is important to make two observations:
\begin{enumerate}
    \item 
    We can introduce the parameter $\varepsilon = (1-\mu)^{1/3}\delta$. So we have, using that $\delta=\frac{1}{C-3\mu}$:
    \begin{equation}\label{eq:eps2}
        \varepsilon^2 = (1-\mu)^{2/3}\delta^2 = \frac{(1-\mu)^{2/3}}{C-3\mu}.
    \end{equation}
    \item 
    We also know that $C\geq C_{L_1}(\mu)$ since otherwise the Hill region of motion allows transits between both primaries and, in this sense, Hill's region is not regular anymore. As it is well known the expansion of $C_{L_1}(\mu)$ is (see \cite{Szebehely})
    \begin{equation}
        C_{L_1}(\mu) = 3 +9\left(\frac{1-\mu}{3}\right)^{2/3} -7\frac{1-\mu}{3} + \mathcal{O}((1-\mu)^{4/3}),
    \end{equation}
    therefore, if it is possible we will like to have a uniform parameter $K$ with the same order in $(1-\mu)$. So, introducing the variable $K$ as 
    \begin{equation}\label{eq:C}
        C = 3\mu + K(1-\mu)^{2/3},
    \end{equation}
    we have that the previous expression \eqref{eq:eps2} becomes:
    \[
    \varepsilon^2 %=  \frac{(1-\mu)^{2/3}}{3\mu + K(1-\mu)^{2/3}-3\mu} 
    %= \frac{(1-\mu)^{2/3}}{K(1-\mu)^{2/3}} 
    = \frac{1}{K}.
    \]
\end{enumerate}

The change \eqref{cvarnouUV} using \eqref{eq:C} becomes:
\begin{equation}\label{cvarnouUVC}
    \left\{
    \begin{aligned}
    	u & = \frac{\sqrt{2}(1-\mu)^{1/6}}{\sqrt{K}}U,\\
    	v & = \frac{\sqrt{2}(1-\mu)^{1/6}}{\sqrt{K}}V,\\
    	\tau &= 2\sqrt{K}(1-\mu)^{1/3} s,\\
    	C & = 3\mu + K(1-\mu)^{2/3}.
	\end{aligned}
    \right.
\end{equation}

Note that the value $K$ is related with the Hill constant by \eqref{eq:relCiK} when $\mu$ tends to 1.

So, in terms of $\varepsilon = 1/\sqrt{K}$ the system \eqref{SistemaSerieDelta} has the following expression:
\begin{equation} \label{SistemaSerie}
	\left\{
	\begin{aligned}
	    \ddot{U} &= - U + 8(U^2 + V^2)\dot{V}\varepsilon^3 + 12\left[2\mu\left(U^4-2U^2V^2-V^4\right) + (U^2+V^2)^2\right]U   \varepsilon^6\\
	        &\hspace{5mm} + \mu\sum_{k\geq4}(1-\mu)^{\frac{k-3}{3}}\bar{P}_{2k-1}(U,V)\varepsilon^{2k},\\[1.7ex]
	    %%%%
	    \ddot{V} &= - V - 8(U^2 + V^2)\dot{U}\varepsilon^3 + 12\left[2\mu\left(V^4-2U^2V^2-U^4\right) + (U^2+V^2)^2\right]V   \varepsilon^6\\
	        &\hspace{5mm} + \mu\sum_{k\geq4}(1-\mu)^{\frac{k-3}{3}}\bar{Q}_{2k-1}(U,V)\varepsilon^{2k}.\\[1.7ex]
	\end{aligned}
	\right.
\end{equation}

Second let us prove the following characterization for an EC orbit, based upon the zero value of the angular momentum at a minimum distance with the primary.
\begin{lemma}\label{caraceco}
    Assume $C$ large enough. An ejection orbit is an EC orbit if and only if  it satisfies that at  a minimum in the distance (with the primary) the angular momentum $M=U\dot V-V\dot U=0$. 
    %$M=0$ implies that $U = 0$ and $V = 0$.
\end{lemma}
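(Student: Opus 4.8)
The plan is to reduce the statement to a single pointwise identity linking the angular momentum, the radial velocity and the speed, and then to use the Jacobi integral to control the speed. First I would pass to polar coordinates in the rescaled Levi--Civita plane, writing $U=\rho\cos\phi$ and $V=\rho\sin\phi$ with $\rho=\sqrt{U^2+V^2}$. A direct differentiation gives $M=U\dot V-V\dot U=\rho^2\dot\phi$ and $\dot U^2+\dot V^2=\dot\rho^2+\rho^2\dot\phi^2$. Evaluating at a critical point of the distance, where by definition $\dot\rho=0$, these two relations combine into $\dot U^2+\dot V^2=M^2/\rho^2$ whenever $\rho>0$. Thus at any interior extremum of the distance (in particular at a minimum with $\rho>0$) the angular momentum $M$ vanishes if and only if the velocity $(\dot U,\dot V)$ vanishes.

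The second ingredient is the Jacobi integral \eqref{prjac}, rewritten in the rescaled variables \eqref{cvarnouUV}, which expresses the squared speed $\dot U^2+\dot V^2$ as a positive multiple of $(U^2+V^2)\mathcal U$. Hence the speed is strictly positive at every point of the interior of the Hill region \eqref{hillenuv}, i.e. wherever $\rho>0$ and $\mathcal U>0$; it can only vanish at $\rho=0$ (the collision) or on the zero-velocity curve $\mathcal U=0$ bounding the admissible region. Combining this with the identity above: if a distance-minimum occurs at an interior point ($\rho>0$, $\mathcal U>0$), then $M^2/\rho^2=\dot U^2+\dot V^2>0$, so $M\neq 0$ there. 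Consequently the only way a distance-minimum can have $M=0$ is that it is attained at $\rho=0$, which is precisely a collision with the primary.

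With these two facts the two implications follow immediately. If the ejection orbit is an EC orbit, then it reaches the primary, and at the collision instant $\rho=0$ is a (global) minimum of the distance at which $M=U\dot V-V\dot U=0$ trivially, since $U=V=0$. Conversely, if at some minimum of the distance one has $M=0$, the discussion forces that minimum to be attained at $\rho=0$, so the orbit collides; being an ejection orbit that collides, it is an EC orbit. This yields the claimed equivalence.

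The hypothesis ``$C$ large enough'' enters, and is where the only real care is needed, in guaranteeing that the relevant distance-minima are genuinely interior, so that the zero-velocity alternative $\mathcal U=0$ is excluded. For $\delta=1/\sqrt{C-3\mu}$ small the rescaled system \eqref{SistemaSerie} is an $\mathcal O(\varepsilon^3)$ perturbation of the harmonic oscillator $\ddot U=-U,\ \ddot V=-V$, whose ejection solution with unit initial velocity satisfies $\rho\le 1$; hence, up to the finitely many extrema of interest (the first $n$ minima in the application), the orbit remains in a bounded neighbourhood of the primary, while the zero-velocity curve $\mathcal U=0$ recedes to distances of order $1/\varepsilon$ as $C\to\infty$. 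Therefore, for $C$ large the speed stays bounded away from zero at every positive-distance extremum, which is exactly what the argument requires; making this control uniform in $\mu\in(0,1)$ up to the minimum being tested is the main technical obstacle.
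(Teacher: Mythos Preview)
Your polar-coordinate reduction is clean and correct up to the point where you must exclude the possibility that a distance minimum lies on the zero-velocity curve. There, however, your argument breaks: in the rescaled $(U,V)$ variables the zero-velocity curve does \emph{not} recede to distances of order $1/\varepsilon$. A quick check shows that the Hill region in $(u,v)$ around $P_1$ has radius $\approx\sqrt{2(1-\mu)/C}$ for large $C$, while the scaling factor is $\alpha=\sqrt{2(1-\mu)/(C-3\mu)}$, so in $(U,V)$ the boundary sits near $\rho\approx 1$---exactly where the unperturbed ejection orbit attains its maxima. Thus ``orbit stays near $\rho\le 1$'' and ``ZVC far away'' cannot be separated the way you claim.

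The paper closes this gap differently and more directly: it observes (case $\dot U=\dot V=0$) that at any zero-velocity point one has, from the equations of motion \eqref{SistemaSerie},
\[
U\ddot U+V\ddot V=-(U^2+V^2)\bigl[1+\mathcal O(\varepsilon^6)\bigr]<0
\]
for $\varepsilon$ small and $\rho>0$, so the second-order condition for a minimum fails---such a point is necessarily a maximum. This one line replaces your entire last paragraph and works for every extremum, not only the first $n$. With that substitution your polar-coordinate argument is complete and in fact more transparent than the paper's case-by-case algebraic analysis ($\dot V\neq 0$, $\dot V=0\neq\dot U$, $\dot U=\dot V=0$), which ultimately encodes the same identity $\dot U^2+\dot V^2=M^2/\rho^2$ at a radial critical point.
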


\begin{proof}
    The minimum distance condition is given by:
    \begin{equation}\label{mindist}
        \begin{aligned}
            &U\dot{U} + V\dot{V} = 0,\\
            &U\ddot{U} + \dot{U}^2 + V\ddot{V} + \dot{V}^2 > 0,
        \end{aligned}
    \end{equation}
    and the angular momentum condition $M=U\dot V-V\dot U=0$:
    \begin{equation}\label{maval0}
        U\dot{V} = V\dot{U}.
    \end{equation}
    We will distinguish between two cases:
    \begin{enumerate}
        \item  $\dot{V} \neq 0$. Then, from \eqref{maval0}:
        \[
        U = \frac{V\dot{U}}{\dot{V}}\ \mbox{ and by \eqref{mindist}}\ \Longrightarrow \frac{V\dot{U}}{\dot{V}}\dot{U} + V\dot{V} = 0 \Longrightarrow V\dot{U}^2 + V\dot{V}^2 =  V(\dot{U}^2 + \dot{V}^2)= 0 \Longrightarrow V = 0 ,
        \]
        and, by \eqref{maval0}  also $U=0$.

        \item $\dot{V} = 0$, we will have two subcases:
        \begin{enumerate}
            \item if $\dot U \ne 0$, then  by  \eqref{mindist} and  \eqref{maval0} we get $U=V=0$.
            \item $\dot U=0$  then, using equations \eqref{SistemaSerie}:
            %$$ U\ddot{U}+  V\ddot{V} =-(U^2+V^2) +\mathcal {O}(\varepsilon ^6(|U|^4+|V|^4)(U^2+V^2)),$$
            %o bé
            \[ U\ddot{U}+  V\ddot{V} =-(U^2+V^2)\left[1 +\mathcal {O}\left(\varepsilon ^6(|U|^4+|V|^4)\right)\right],\]
            but this quantity is negative for $\varepsilon$ small enough, if $U^2+V^2>0$, which contradicts the second item of \eqref{mindist}. We conclude that $U=V=0$.
        \end{enumerate}
    \end{enumerate}
    On the other hand, it is clear that if a collision takes place, i. e.  $U=V=0$ and $\dot U^2+\dot V^2=\sqrt{8(1-\mu)}$, then conditions \eqref{mindist} and \eqref{maval0} are trivially satisfied.
\end{proof}

Now let us proceed. Using the vectorial notation $\bm{U} =(U,V,\dot U,\dot V)^T$,  the second order system of ODE \eqref{SistemaSerie} can be written as 
\begin{equation}\label{eqU}
        \dot{\bm{U}} = \bm{G}(\bm{U})
            =\bm{G}_0(\bm{U}) +  \varepsilon^3\bm{G}_3(\bm{U}) + \varepsilon^6\bm{G}_6(U,V) + \sum_{k\geq4}\varepsilon^{2k}G_{2k}(U,V), 
\end{equation}
where
\begin{equation}
    \begin{matrix}
        \bm{G}_0(\bm{U}) =
        \left(
        \begin{matrix}
            \dot U\\
            \dot V\\
            -U\\
            -V
        \end{matrix}
        \right),
        \quad\quad
        \bm{G}_3(\bm{U}) =
        8
        \left(
        \begin{matrix}
            0\\
            0\\
            (U^2+V^2)\dot{V}\\
            -(U^2+V^2)\dot{U}
        \end{matrix}
        \right),\\[6ex]
        \bm{G}_6(U,V) = 12
        \left(
        \begin{matrix}
            0\\
            0\\
            \left[2\mu\left(U^4-2U^2V^2-V^4\right) + (U^2+V^2)^2\right]U\\
            \left[2\mu\left(V^4-2U^2V^2-U^4\right) + (U^2+V^2)^2\right]V
        \end{matrix}
        \right),\\[6ex]
        %\quad
        \bm{G}_{2k}(U,V) = \mu(1-\mu)^{\frac{k-3}{3}}
        \left(
        \begin{matrix}
            0\\
            0\\
            \bar{P}_{2k-1}(U,V)\\
            \bar{Q}_{2k-1}(U,V)
        \end{matrix}
        \right),
        \text{ for }k\geq 4.
    \end{matrix}
\end{equation}

We remark that  $\bm{G}_0$ and $\bm{G}_3$ are the only functions that depend on $\dot U$ and $\dot V$, the remaining ones depending only on $U$ and $V$. Moreover we observe that the expressions appearing in the expansions are polynomials. Both properties allow to significantly simplify the computations.

The  next natural step consists on obtaining a solution  $\bm{U}=\bm{U}(\tau )$ as a series expansion in $\varepsilon$:
\begin{equation}
    \bm{U} = \sum_{j\ge 0}\bm{U}_{j} \varepsilon^j.
\end{equation}
As a usual procedure to obtain the functions  $\bm{U}_j$, we plug $\bm{U}$ in system \eqref{eqU}, and comparing the powers in $ \varepsilon$, we obtain a system of ODE for  $\bm{U}_j$.

\subsection*{Computation of the functions $\bm{U}_{j}$} 

Now we proceed to compute the explicit expressions for $\bm{U}_{j}(\tau)=(U_{j}(\tau),V_{j}(\tau),\dot U_{j}(\tau),
\dot V_{j}(\tau))$, for any $j$.
Actually we will show that, in order to prove Theorem \ref{th:maintheorem2}, we only need to find explicitly the functions $\bm{U}_j$ up to order $j=6$. 

From Definition \ref{defECO} and the scaling \eqref{cvarnouUVC}, any ejection orbit $\bm{U}(\tau)=\bm{U}(\tau,\theta_0)$, has the initial condition
%\textcolor{blue}{proposo treure el $\theta_0$ de la notació i posarlo només al començament i al final.
%Com vulgueu, pero un cop presa la decisio cal ser  coherent amb la notacio arreu}
\begin{equation}\label{condini}
    \bm{U}(0)=(0,0,\cos\theta_0,\sin\theta_0),\quad \theta_0\in[0,2\pi),
\end{equation}
 so we have
\begin{equation}\label{cidef}
    \bm{U}_{0}(0)=(0,0,\cos\theta_0,\sin\theta_0),\qquad \bm{U}_{j}(0)=\bm{0},  \qquad j\ge 1.
\end{equation}

\subsection*{Solution for $\varepsilon = 0$:}

We must solve the linear  system:
\begin{equation}
\left\{
    \begin{aligned}
        \ddot{U}_0 = -U_0,\\
        \ddot{V}_0 = -V_0,
    \end{aligned}
    \right.
\end{equation}
which is a harmonic oscillator, with initial condition \eqref{condini}. %For an ejection orbit with initial conditions as \eqref{condini}, we obtain $\bm{U}_0(0,\theta_0)$ given by \eqref{cidef}. 
Then the ejection orbit $\bm{U}_0$ is given by
$\bm{U}_0=(U_0,V_0,\dot{U}_0,\dot{V}_0)$, with:
\begin{equation}
    \begin{aligned}
        &U_{0}(\tau) = \cos\theta_0\sin\tau,\\
        &V_{0}(\tau) = \sin\theta_0\sin\tau,\\
        &\dot{U}_{0}(\tau) = \cos\theta_0\cos\tau,\\
        &\dot{V}_{0}(\tau) = \sin\theta_0\cos\tau.\\
    \end{aligned}
\end{equation}

\subsection*{Solution for $\varepsilon \neq0$:}

In order to find the functions $\bm{U}_{j}$, we must solve the successive resulting ODE when substituting $\bm{U}$ by the series expansion in \eqref{SistemaSerie} up to the desired order.

We observe that, for $j\ge 1$, the linear non homogeneous system of  ODE to be solved is
\[\dfrac{d\bm{U}_{j}}{d\tau}=D \bm{G}_0(\bm{U}_{0})\bm{U}_{j}+\bm{F}_j(\bm{U}_{0},
\bm{U}_{1},...,\bm{U}_{j-3})
=\bm{G}_0(\bm{U}_{j})+\bm{F}_j(\bm{U}_{0},
\bm{U}_{1},...,\bm{U}_{j-3}),\]
where the homogeneous system is always the same but the independent term changes and increases in complexity with $j$.

Since a fundamental matrix for the homogeneous system (the first order variational equations) is given by
\begin{equation}
    X(\tau) =
    \left(
    \begin{matrix}
        \cos\tau & 0 & \sin\tau & 0 \\[1.1ex]
        0 & \cos\tau & 0 & \sin\tau \\[1.1ex]
        -\sin\tau & 0 & \cos\tau & 0\\[1.1ex]
        0 & -\sin\tau & 0 & \cos\tau\\
    \end{matrix}
    \right),
\end{equation}
and the initial conditions are $\bm{U}_j(0)=\bm{0}$  for $j\ge 1$, we obtain the following well known formula
\begin{equation}
    \bm{U}_{j}(\tau) = X(\tau)\int_0^\tau X^{-1}(s)\bm{F}_j(\bm{U}_{0}(s),...,\bm{U}_{j-3}(s))ds .
\end{equation}

The corresponding explicit expressions are the following:
\begin{equation}
	\begin{aligned}
        U_3(\tau) &= (\tau\sin\tau - \cos\tau\sin\tau)\sin\theta_0,\\[1.5ex]
	    V_3(\tau) &= -(\tau\sin\tau - \cos\tau\sin\tau)\cos\theta_0,\\[1.5ex]
	    U_6(\tau) & = -\frac{(\tau-\cos\tau\sin\tau)^2\sin\tau - \mu( 15\tau\cos\tau - (8 + 9\cos^2\tau - 2\cos^4\tau)\sin\tau)(1-2\cos^4\theta_0)}{2}\cos\theta_0,\\[1.5ex]
	    V_6(\tau) & = -\frac{(\tau-\cos\tau\sin\tau)^2\sin\tau - \mu( 15\tau\cos\tau - (8 + 9\cos^2\tau - 2\cos^4\tau)\sin\tau)(1-2\sin^4\theta_0)}{2}\sin\theta_0,\\[1.5ex]
	\end{aligned}
\end{equation}
with $U_i(\tau) = V_i(\tau) = 0$ for $i=1,2,4,5(,7)$. 
Once we have the ejection solution up to order $j=6$, the next step consists of computing the $n$-th minimum in the distance to the primary (located at the origin) the particle ejected from as a function of the initial $\theta _0$. Equivalently we want to compute the $n$-th minimum of the function $\left(U^2+V^2\right)(\tau)$. This requires to compute the precise time denoted by $\tau ^*$, needed to reach the $n$-th minimum in distance. We apply the Implicit Function Theorem to the function $(U\dot{U} +V\dot{V})(\tau^*) = 0$ in order to obtain an expansion series in $\varepsilon$, i.e.:
\[
    \tau^* = \sum_{i=0}^{6}\tau^*_i\varepsilon^i + \mathcal{O}(\varepsilon^{7}).
\]

We can easily compute $\tau^*_0$, since we have a harmonic oscillator:
\[
    \tau^*_0 = n\pi.
\]

Writing the function $(U\dot{U}+V\dot{V})(\tau)$  as an expansion series in $\varepsilon$  and collecting terms of the same order, we can successively find  the terms $\tau_i^*$ (up to order 6, higher order terms in Appendix \ref{app:valorsUs}):
\begin{equation}
	\tau_6^*(n) = \frac{15\mu n\pi(1+ 3\cos(4\theta_0))}{8},
\end{equation}
with $\tau_i(n,\theta_0) = 0$ for $i=1,2,3,4,5(,7)$.

Now we are ready to compute the angular momentum $M(n,\theta _0)=(U\dot{V}-V\dot{U})(\tau ^*)$ whose expansion is:
%up to order 7:
\begin{equation}\label{Maordre7}
    M(n,\theta_0) = \mu \varepsilon^6 \left(-\frac{15 n\pi\sin(4\theta_0)}{4} + \mathcal{O}\left(\varepsilon^{2}\right)\right),
\end{equation}
or in short, since we look for the zeros of $M (n,\theta _0) =0$, we write, dividing the previous equation by $\mu \varepsilon^6$,
\begin{equation}\label{eqppal}
    \hat{M}(n,\theta_0) = -\frac{15 n\pi\sin(4\theta_0)}{4} + \mathcal{O}\left(\varepsilon^2\right).
\end{equation}

Now we apply the Implicit Function Theorem  and for $\varepsilon >0$ small enough we obtain that the equation \eqref{eqppal} has four and only four roots in $[0,\pi)$ given by
\begin{equation}\label{zerosTFI}
    \theta_0 =\dfrac{\pi m}{4}+\mathcal{O} (\varepsilon^2),\quad m=0,1,2,3.
\end{equation}
regardless of the value of the parameter $\mu$. It is clear from \eqref{eqppal} that the roots $\theta _0$ are simple.

So we have proved that there exist four $n$-EC orbits. Moreover, applying the symmetries of the system we can conclude that those EC orbits with an intersection angle with $m=0,2$ correspond to symmetric  $n$-EC orbits (in the sense that the $(x,y)$ projection is symmetric with respect to the $x$ axis). 
Such EC orbits belong to families $\gamma_n$ and $\alpha_n$ respectively when varying $\varepsilon$ (or, equivalently, $K$ and therefore $C$ in \eqref{eq:C}). 
Those EC orbits with an intersection angle with $m=1,3$ correspond to symmetric $n$-EC orbits (in the sense that the $(x,y)$ projection is symmetric one with respect to the other one). Such EC orbits belong to families $\delta_n$ and $\beta_n$ respectively.

This finishes the proof of Theorem \ref{th:maintheorem2}.

In order to illustrate the results of Theorem \ref{th:maintheorem2}, in Figure \ref{fig:ex4ECOs} top we plot the function $M(n,\theta_0)$ for $\mu =0.1$, $C=6$ and the values of $n=2$ (continuous line) and $n=4$ (discontinuous line). We remark its sinusoidal behaviour in accordance with equation \eqref{eqppal}. Consistently with Theorem \ref{th:maintheorem2}, the curve $M(n,\theta _0)$ intersects four times $M(n,\theta _0)=0$. The four specific values of $\theta _0$ give rise to four $n$-EC orbits belonging to families $\alpha _n$, $\beta _n$, $\gamma _n$ and $\delta _n$. The corresponding EC orbits are shown in the bottom figure in usual synodical coordinates $(x,y)$.
 
\begin{figure}[ht!]
    \centering
    \includegraphics[width=0.95\textwidth]{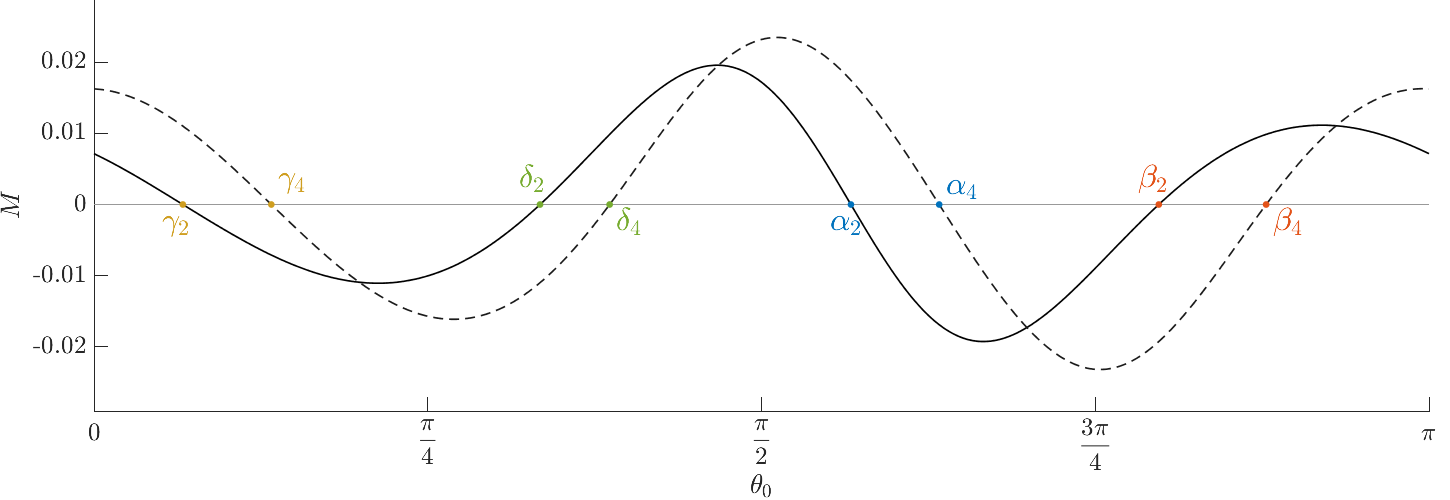}
    \includegraphics[width=0.475\textwidth]{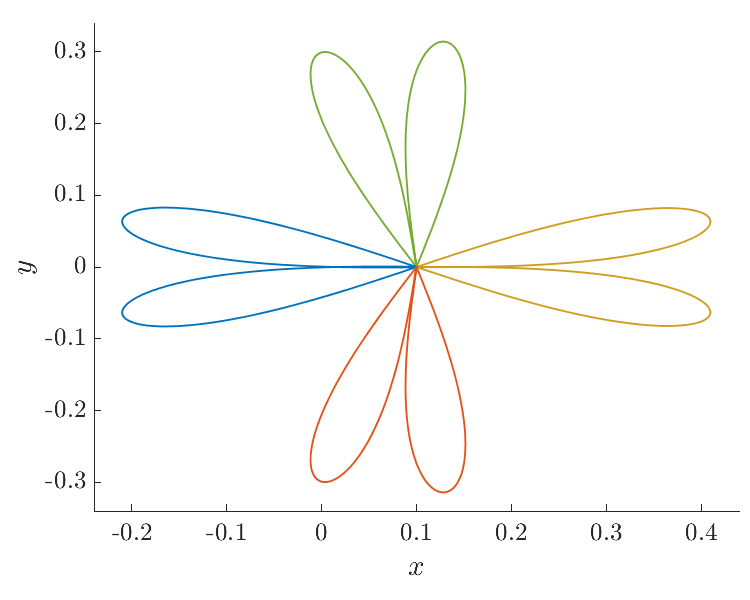}
    \includegraphics[width=0.475\textwidth]{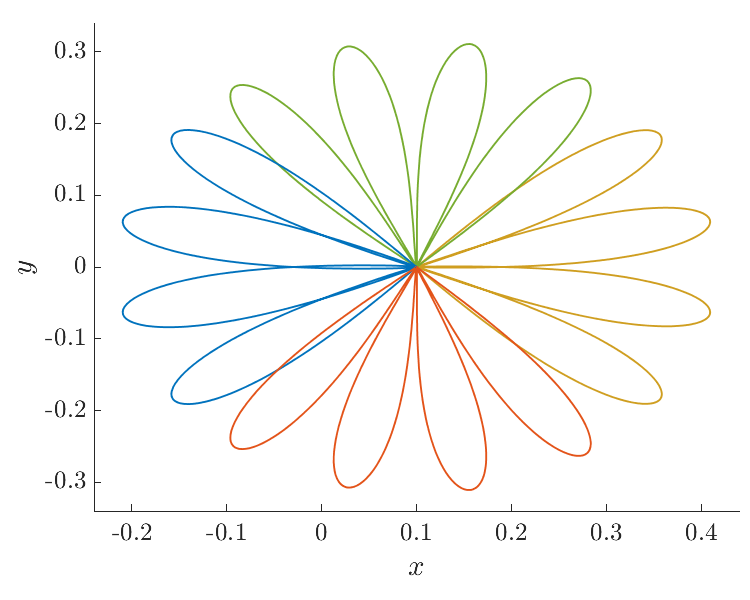}
    \caption{$\mu=0.1$, $C=6$. Top. Angular momentum $M(n,\theta _0)$ for $n=2$ (continuous line) and $n=4$ (discontinuous line). Bottom. The corresponding four $n$-EC orbits in the plane $(x,y)$ (left for $n=2$ and right for $n=4$).}
    \label{fig:ex4ECOs}
\end{figure}

%%%%%%%%%%%%%%%%%%%%%%%%%%%%%%%%%%%%%%%
%             Bifurcations            %
%%%%%%%%%%%%%%%%%%%%%%%%%%%%%%%%%%%%%%%
\section{Analysis of Bifurcations}\label{sec_bif}

So far we have applied the Implicit Function Theorem to infer the existence of four and only four $n$-EC orbits, for any value of $\mu$ and $C=3\mu +K(1-\mu)^{1/3}$ (see \eqref{eq:C}) with $K$ big enough, that is $\varepsilon =1/\sqrt{K}$ small enough. 
In this  procedure  the minimum order required  in the $\varepsilon$ expansions for both the functions  $\bm{U}_{j}$ and $\tau_j^*$ was order 6. 
Of course, when $\varepsilon$ becomes bigger, the Implicit Function Theorem may not be applied anymore and bifurcations can appear.  This section is focused on such bifurcations.

We will focus on two purposes: on the one hand, the illustration of the appearance and collapsing of bifurcating families of $n$-EC orbits when doing the continuation of families varying $C$ as parameter; 
and on the other hand, the behavior of $\hat{K}(n)$ and its associated value $\hat C(\mu, n)=3\mu +\hat K(n)(1-\mu)^{1/3}$ provided by Theorem \ref{th:maintheorem2}, for any value of $\mu \in (0,1)$ and varying $n$.

\subsection{Bifurcating families}

The first task is to compute the angular momentum $M (n, \theta _0)$ to higher order.
To do so we need higher order terms for both the functions  $\bm{U}_{j}$ and  $\tau_j^*$. We have proceeded as in the previous Section; however,  there, expressions up to order $6$ were enough. To analyze the bifurcations, we provide their expressions for $j$ up to order 10 in the Appendix.
Now we are ready to compute the explicit expression for the angular momentum $M(n,\theta _0)=(U\dot{V}-V\dot{U})(\tau ^*)$ up to order 10 which is the following:

\begin{equation}\label{Maordre10}
    \begin{aligned}
	    M(n,\theta_0) &= -\frac{15\mu n\pi\sin(4\theta_0)}{4}\varepsilon^6 + \frac{105\mu(1-\mu)^{1/3}n\pi\left(\sin(2\theta_0) + 5\sin(6\theta_0)\right)}{64}\varepsilon^8 +\frac{15\mu n^2\pi^2\cos(4\theta_0)}{2}\varepsilon^9\\
	    &\hspace{6mm}- \frac{315\mu(1-\mu)^{2/3}n\pi(2\sin(4\theta_0) + 7\sin(8\theta_0))}{128}\varepsilon^{10} +\mathcal{O}(\varepsilon^{11}).
    \end{aligned}
\end{equation}

It is clear that if $\varepsilon$ is small enough, the dominant term is $\varepsilon ^6$, and the zeros of $M(n,\theta _0)$ are related to the term $\sin (4\theta _0)$. Therefore we obtain four $n$-EC orbits.

\begin{figure}[ht!]
    \centering
    \includegraphics[width=0.95\textwidth]{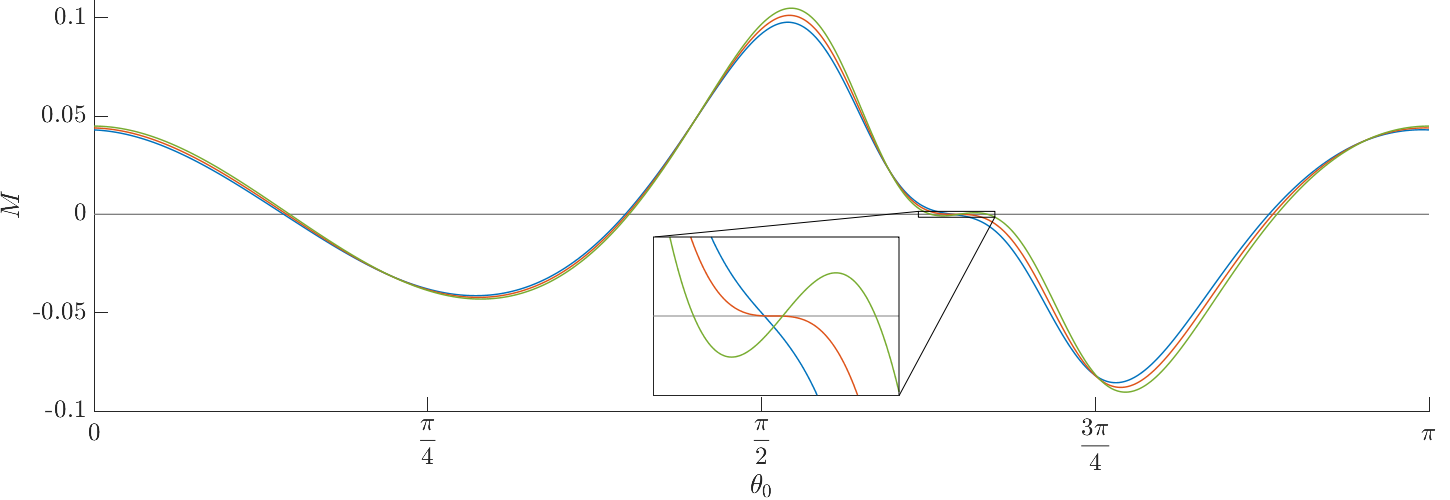}
    \includegraphics[width=0.32\textwidth]{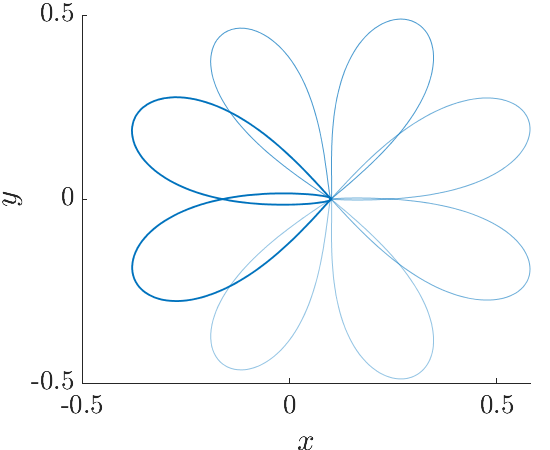}
    \includegraphics[width=0.32\textwidth]{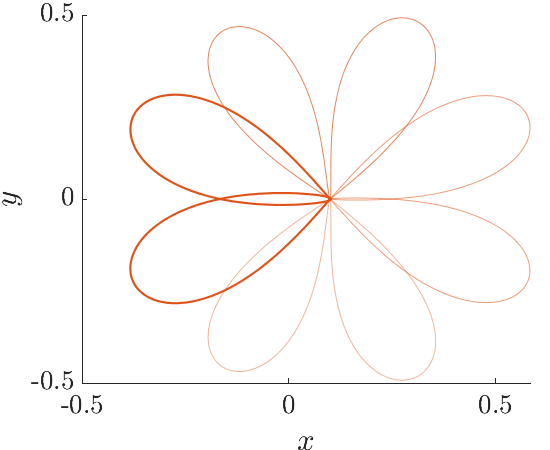}
    \includegraphics[width=0.32\textwidth]{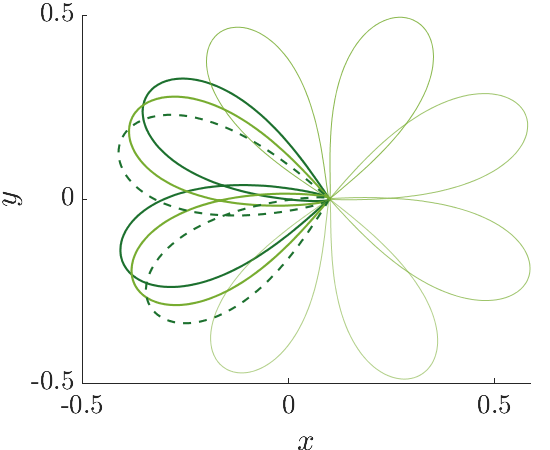}
    \caption{$\mu=0.1$, Top. We plot the angular momentum $M(2,\theta _0)$. Notice the zoom area where the appearance of two new bifurcating orbits (in green), besides the family $\alpha _2$ is observed when decreasing $C$. Bottom. Left, middle and right. Four $2$-EC orbits (the colour code corresponds to the top figure) for  $C = 3.76$ (in blue), $C_{bif} = 3.72442505$ (the bifurcating value, in red),   $C = 3.69$ (in green).  Darker colour: those  $2$-EC orbits belonging to family $\alpha _2$.  In the right plot, also the two new bifurcated $2$-EC orbits are plotted.}
    \label{fig:bif2ECOs}
\end{figure}

\begin{figure}[ht!]
    \centering
    \includegraphics[width=0.95\textwidth]{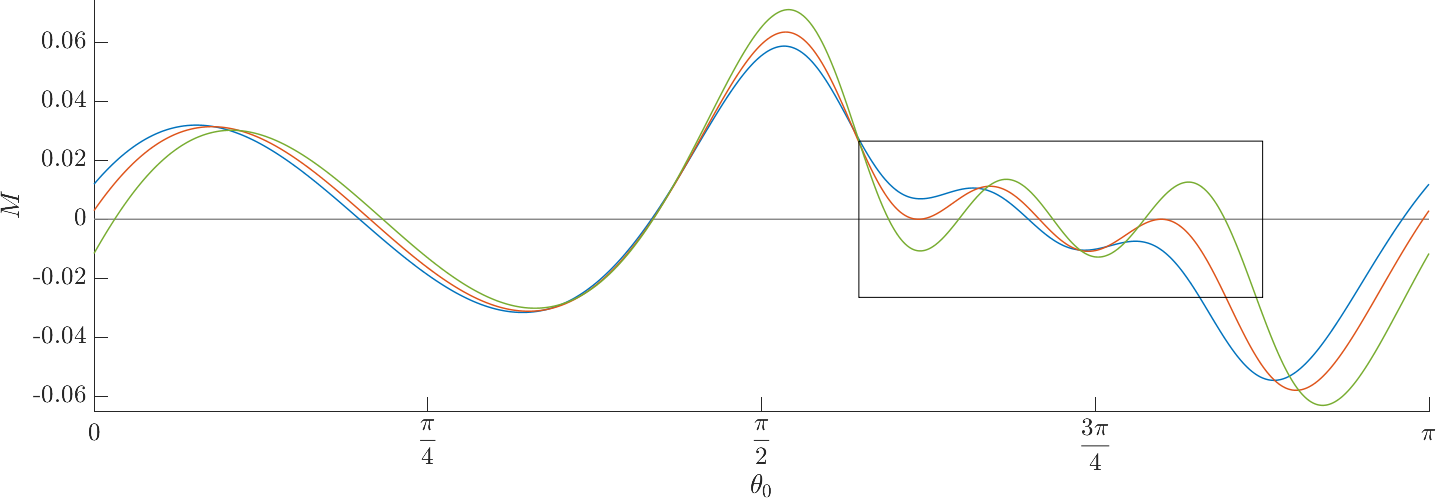}
    \includegraphics[width=0.32\textwidth]{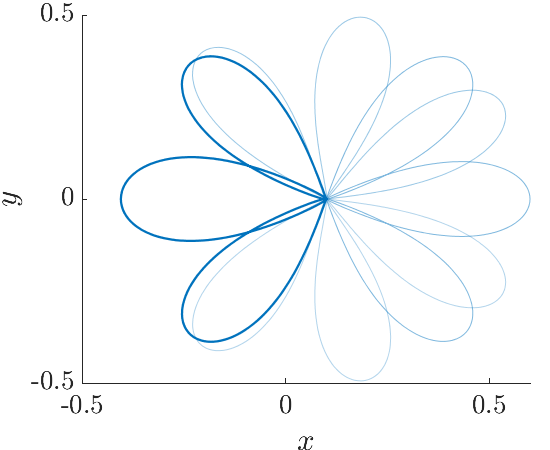}
    \includegraphics[width=0.32\textwidth]{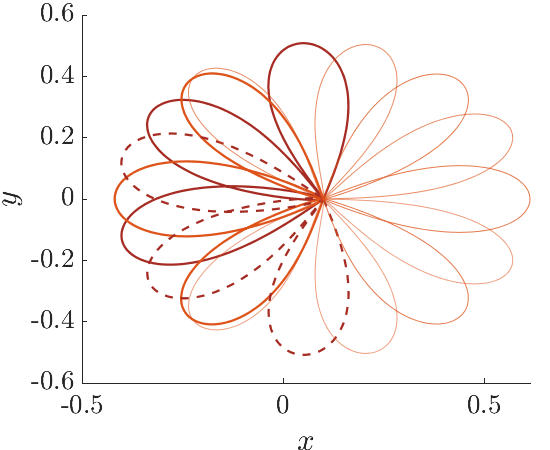}
    \includegraphics[width=0.32\textwidth]{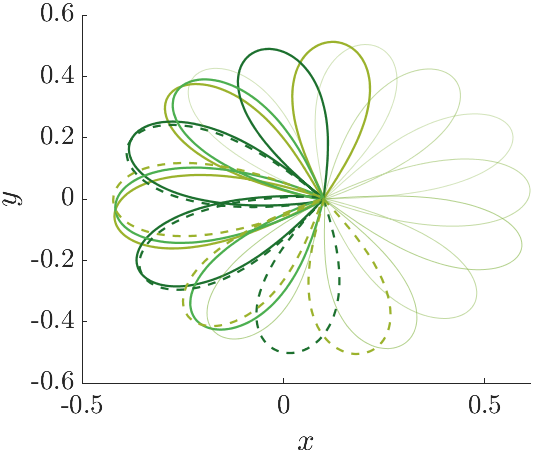}
    \caption{$\mu=0.1$, $n=3$. Top. We plot the angular momentum $M(n,\theta _0)$. Notice the zoom area where the appearance of four new bifurcating orbits (in green), besides the family $\alpha _3$ is observed when decreasing $C$. Bottom. Left, middle and right. Four $3$-EC orbits (the colour code corresponds to the top figure) for  $C = 3.9$ (in blue), $C_{bif} = 3.80644009$ (the bifurcating value, in red), $C = 3.7$ (in green). Darker colour: those  $3$-EC orbits belonging to family $\alpha _3$. In the middle plot, also the two tangent new bifurcated $3$-EC orbits are plotted. In the right plot, also the four new bifurcated $3$-EC orbits are plotted.}
    \label{fig:bif3ECOs}
\end{figure}

However let us discuss what happens for bigger values of $\varepsilon$, or equivalently for smaller values of $C$.  
We will illustrate two different kind of bifurcations that take place when doing the continuation of families of $n$-EC orbits and that can be explained precisely from the analytical expression of $M(n,\theta _0)$ to higher order just obtained.

The first kind of bifurcation can be inferred just taking into account the terms of $M(n,\theta _0)$ up to order 8 in \eqref{Maordre10}. The bifurcation is associated with the term $\sin (6\theta _0)$. See Figure \ref{fig:bif2ECOs} top for $\mu =0.1$ and $n=2$. 
We can clearly see how increasing $\varepsilon$ (decreasing $C$), the bifurcation takes place. 
Let us describe the bifurcation close to the $2$-EC orbit belonging to family $\alpha _2$.
See the zoom area in Figure \ref{fig:bif2ECOs} top. 
Locally, at a neighbourhood of the value of $\theta _0$ of such EC orbit, for some value of $C$ the angular momentum has a unique transversal intersection with the $x$-axis (that is $M(2,\theta _0)=0$,  $M'(2,\theta _0)=0$). 
For $C=3.76$ this intersection corresponds to the $2$-EC orbit belonging to the family $\alpha _2$ (see the blue curve). 
For the bifurcating value $C_{bif}=3.72442505$,  $M(2, \theta _0)$ crosses tangently the $x$ axis (see the red curve).
For smaller values of $C$, $M(2, \theta _0)$ crosses the $x$ axis three times, giving rise to two new bifurcating families of $2$-EC orbits (see the green curve) besides family $\alpha _2$ which persists. 
The new $2$-EC orbits are (obviously due to symmetry  \eqref{simetria}) one symmetric with respect to the other. 
From a global point of view, for a range $C<C_{bif}$,  varying $\theta _0\in [0,\pi )$, $M(2, \theta _0)$ crosses six times,  that is, we obtain six $2$-EC orbits, and this is related to the term $\sin (6\theta _0)$, which becomes the dominant term in $M(2, \theta _0)$. 
We show these $2$-EC orbits in Figure \ref{fig:bif2ECOs} bottom. More precisely, on the three plots, the four $2$-EC orbits are shown (in the plane $(x,y)$) and those $2$-EC orbits of family $\alpha _2$ are plotted in a darker colour. Since the family $\alpha _2$ persists after the bifurcation, the $2$-EC orbits are plotted in the left, middle and right plots.  The two new bifurcating $2$-EC orbits after the bifurcation are also shown on the right plot.

The second kind of bifurcation can be inferred from the expression of $M(n, \theta _0)$ up to order 10 given in \eqref{Maordre10}. The bifurcation is associated with the term $\sin (8\theta _0)$. See Figure \ref{fig:bif3ECOs} top for $\mu =0.1$ and $n=3$.
We can clearly see how increasing $\varepsilon$ (decreasing $C$), the angular momentum $M(3, \theta _0)$ typically crosses four times the $x$-axis (for $\theta _0\in [0,\pi )$), as expected (see the blue curve in the top figure).
However at some bifurcating value $C_{bif}$ there appear two tangencies (say from nowhere, see the red curve in
the zoom area in Figure  \ref{fig:bif3ECOs} top); each tangency  gives rise to two families
when doing the continuation of families decreasing $C$. See the green curve in the zoom area in  Figure  \ref{fig:bif3ECOs} top. 
So from a global point of view, for a range of $C<C_{bif}$ and $\theta _0\in [0,\pi )$,  the angular momentum $M(3, \theta _0)=0$ crosses eight times the $x$-axis, giving rise to eight $3$-EC orbits related to the term  $\sin (8\theta _0)$.
We show these $3$-EC orbits in Figure \ref{fig:bif3ECOs} bottom. More specifically, on the three plots, the four $3$-EC orbits are shown (in the plane $(x,y)$) and those $3$-EC orbits of family $\alpha _3$ are plotted in a darker colour. 
The two $3$-EC orbits that appear due to the tangency of $M(3, \theta _0)$ with the $x$-axis are also plotted on the  middle  plot.
Moreover, the four new bifurcating $3$-EC orbits after the bifurcation are also shown on the right plot.
A continuous and discontinuous line with the same colour correspond to EC orbits that are symmetric one with respect to the other one. In fact, due to the symmetry of the problem, we might only consider the two intersection points  (those on the left hand side or on the right one of the value of $\theta_0$ in $\alpha_3$), and the other two intersection points would be obtained by symmetry.

\begin{figure}[ht!]
    \centering
    \includegraphics[width=0.44\textwidth]{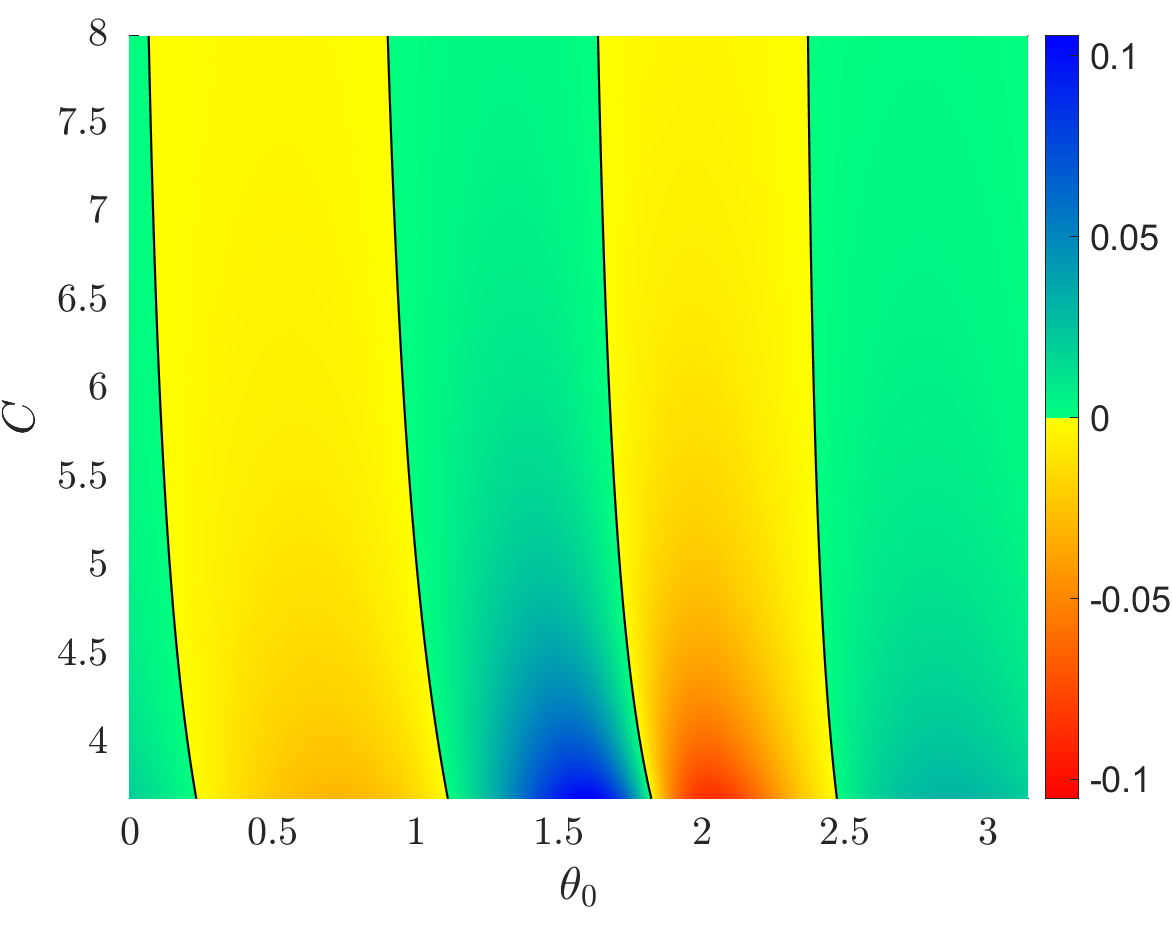}
    \includegraphics[width=0.44\textwidth]{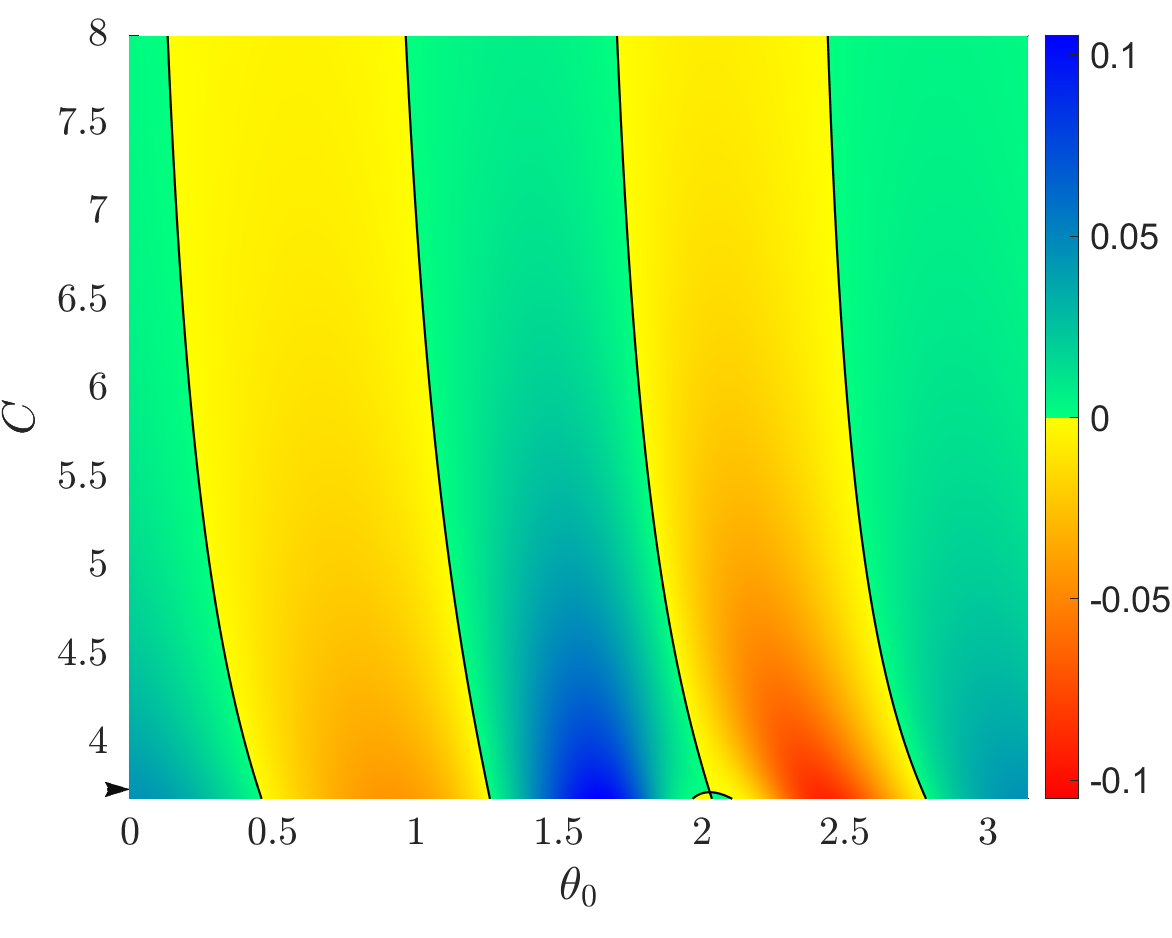}\\
    \vspace{-2mm}
    \includegraphics[width=0.44\textwidth]{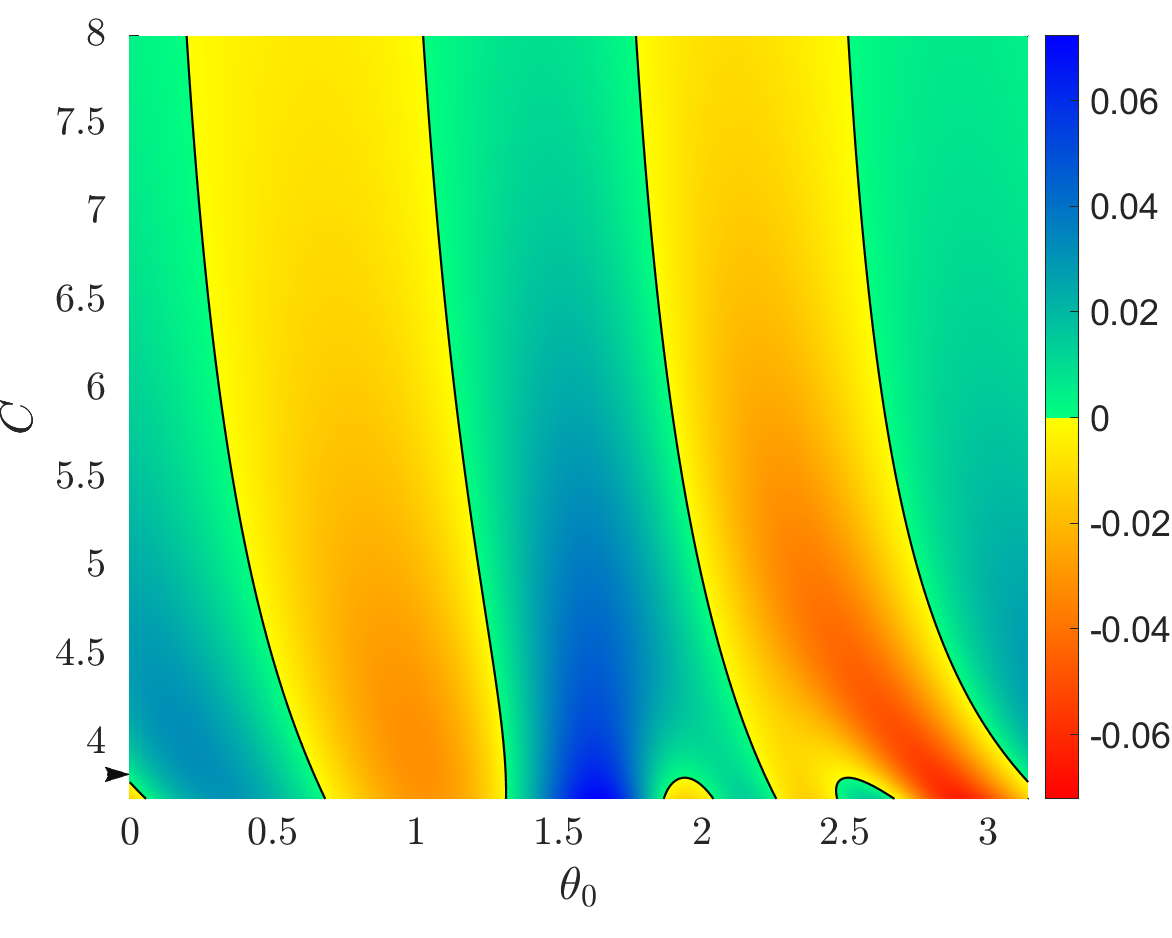}
    \includegraphics[width=0.44\textwidth]{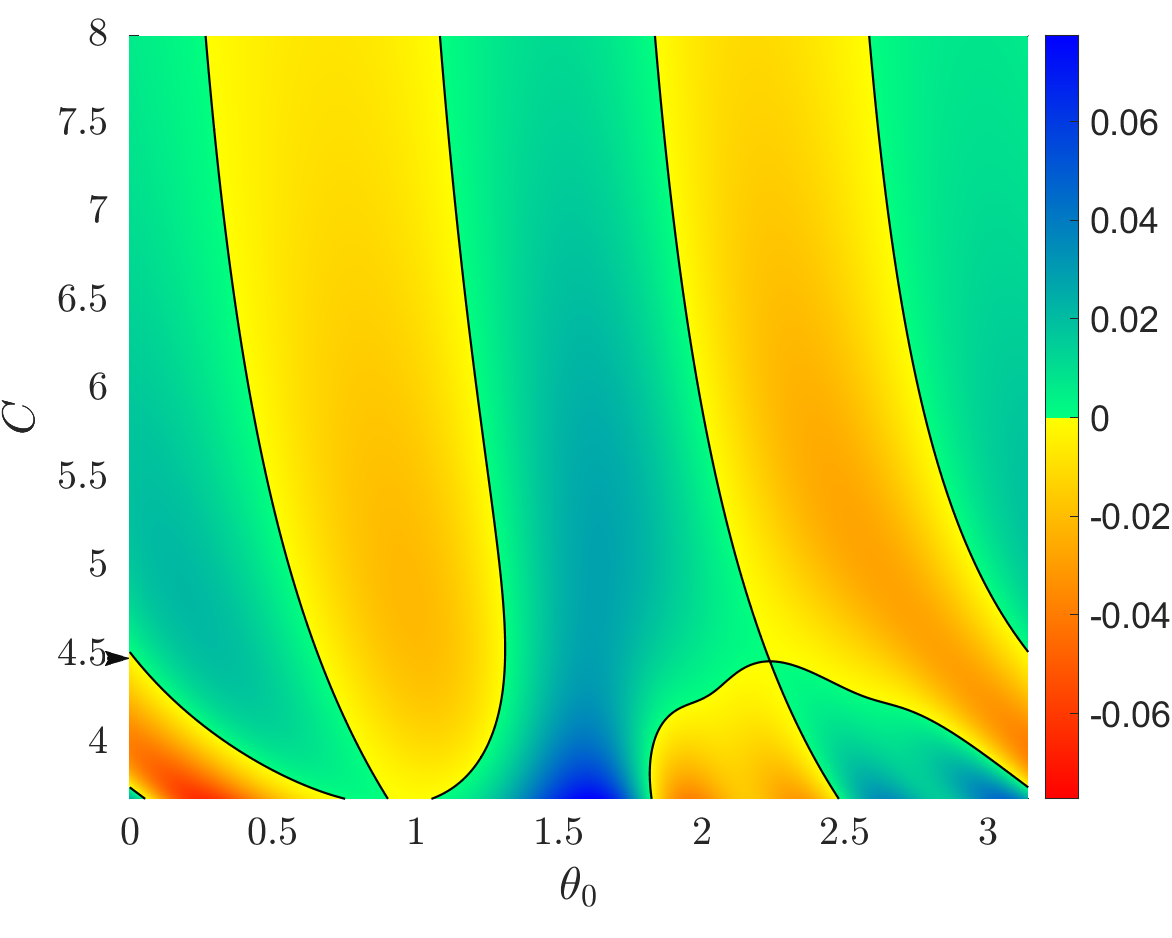}\\
    \vspace{-2mm}
    \includegraphics[width=0.44\textwidth]{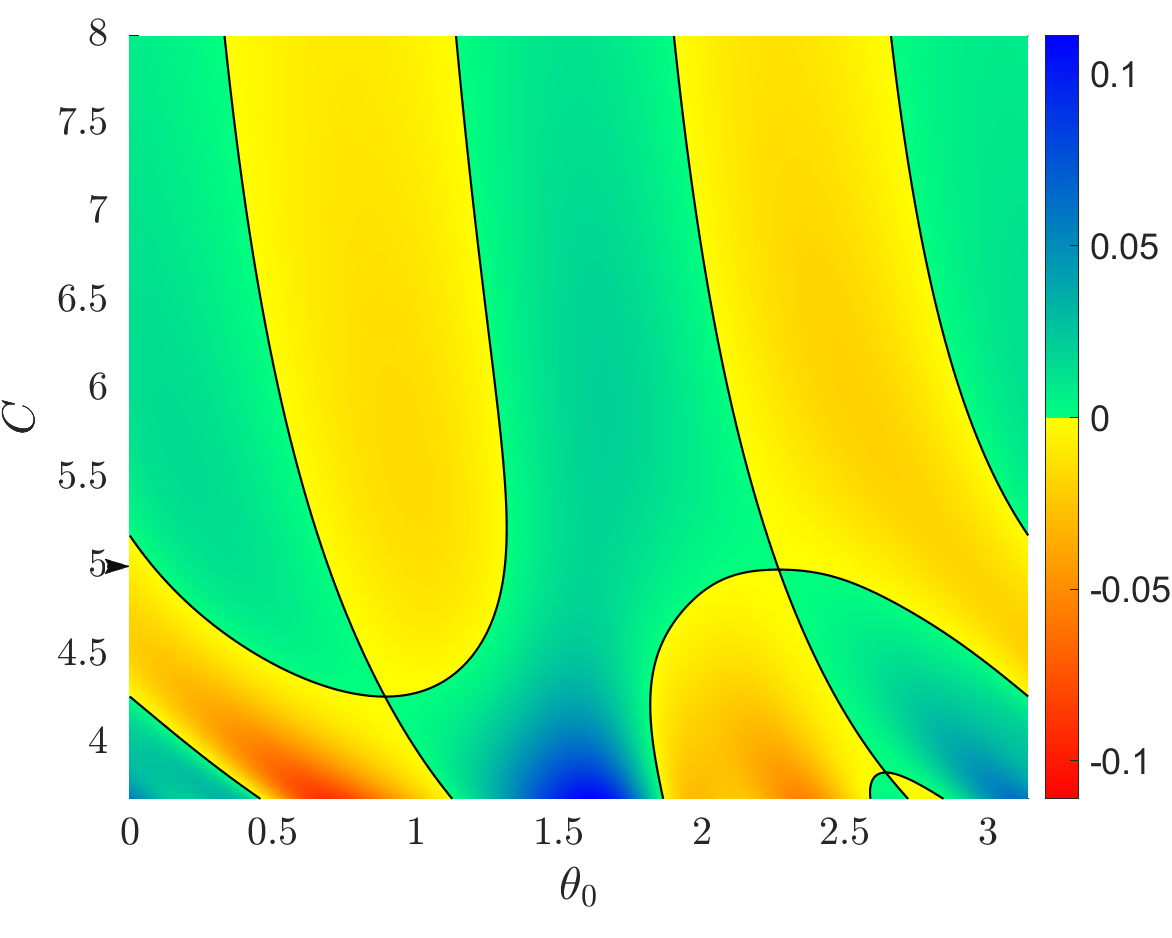}
    \includegraphics[width=0.44\textwidth]{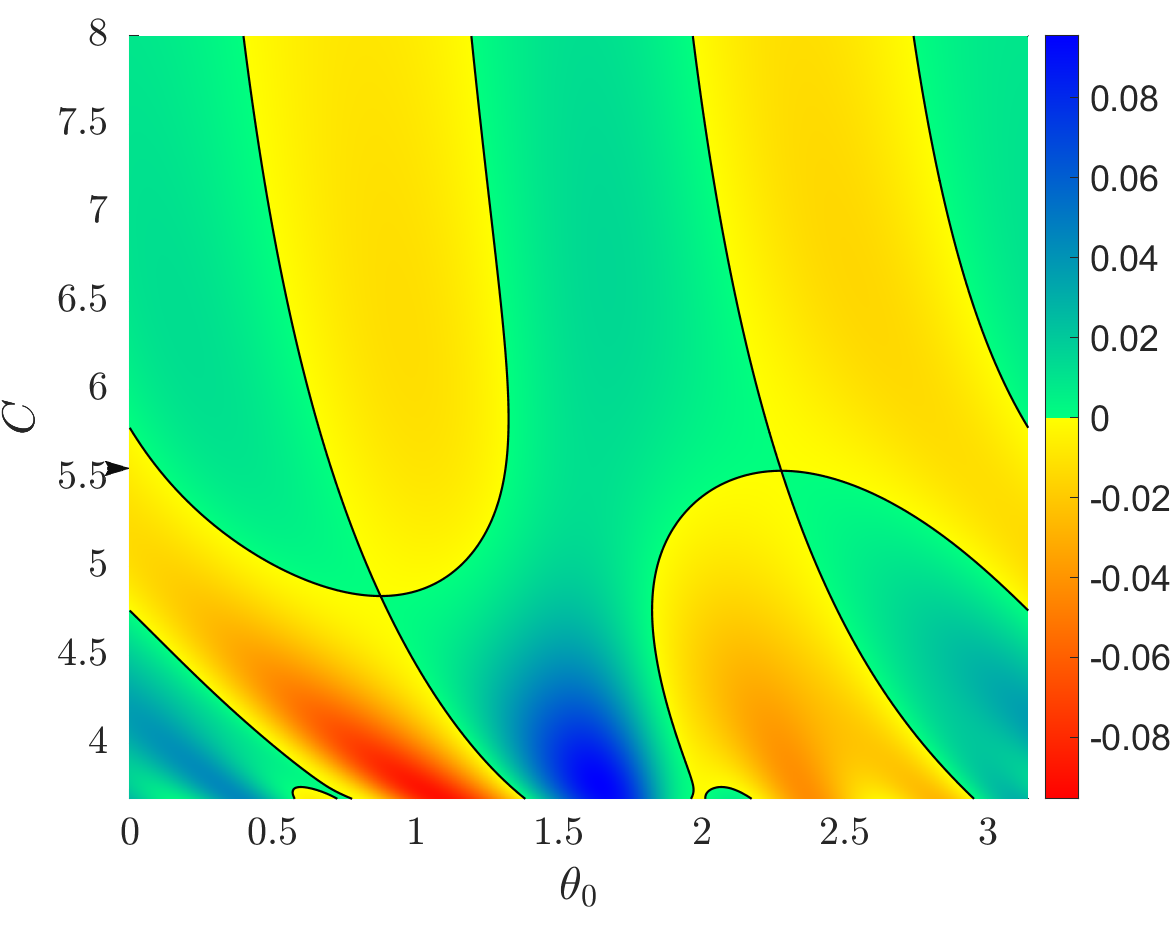}\\
    \vspace{-2mm}
    \includegraphics[width=0.44\textwidth]{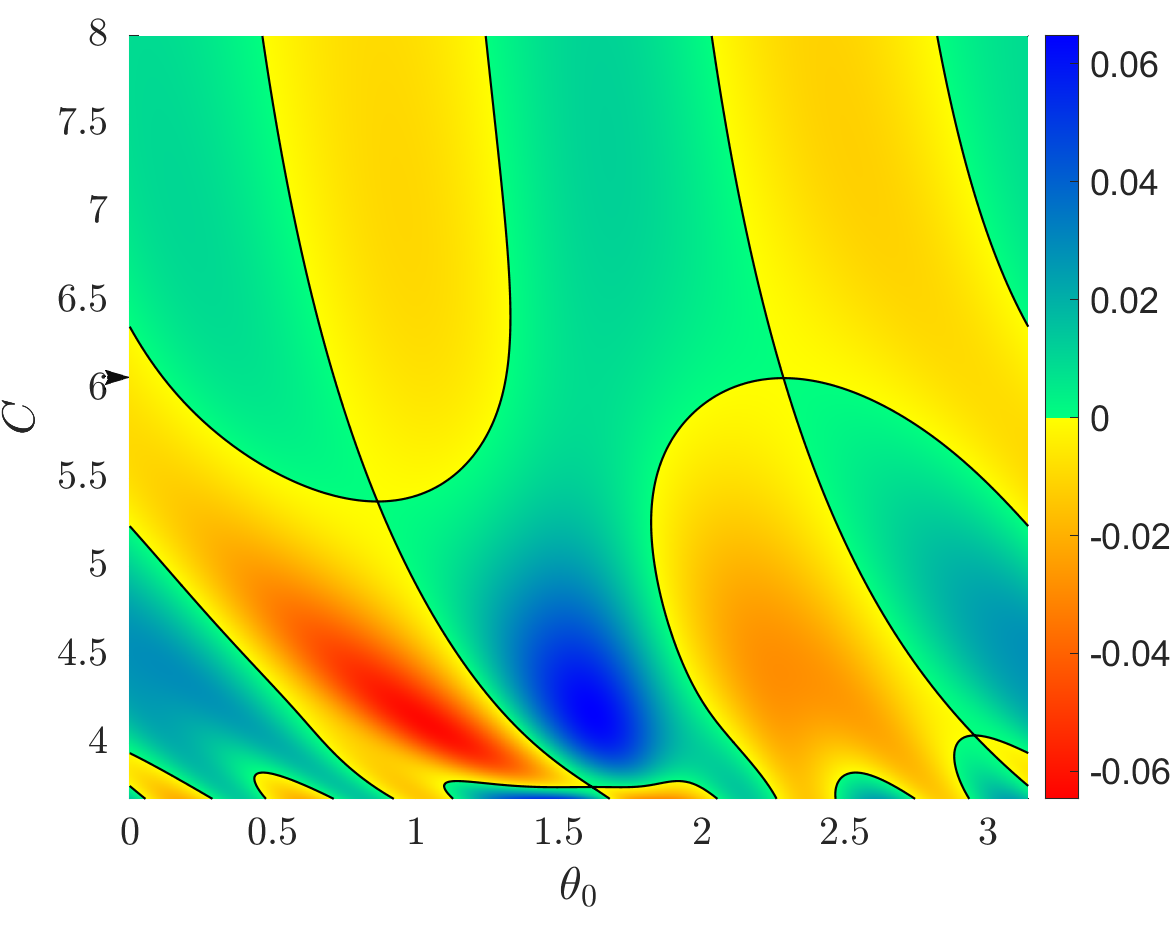}
    \includegraphics[width=0.44\textwidth]{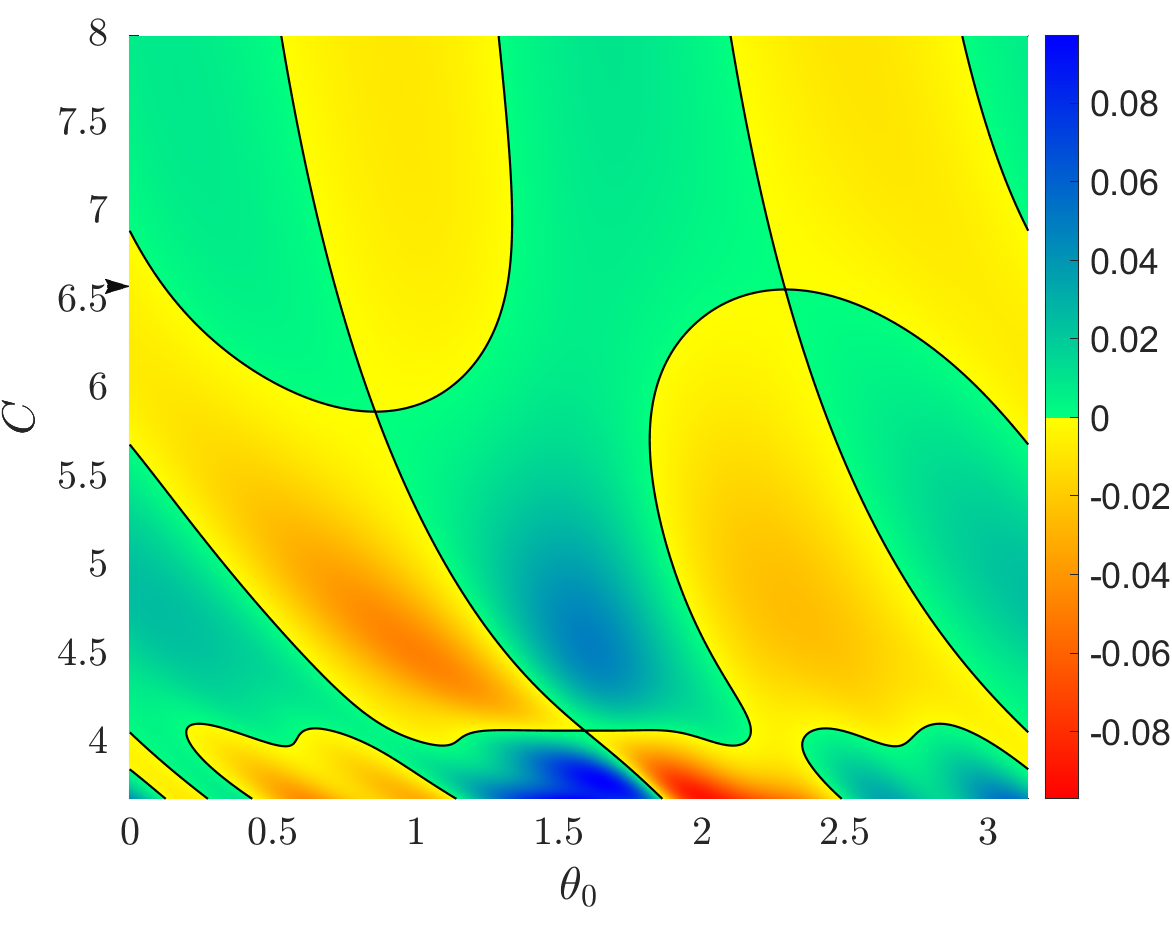}\\
    \vspace{-4mm}
    \caption{Bifurcation diagrams for $\mu=0.1$, $n=1,...,8$ and $C$ in $[C_{L_1},8]$. The value of $\hat{C}(\mu,n)$, for $\mu =0.1$ is also indicated in each plot  with an arrow in the vertical axis.}
    \label{fig:evolucioM}
\end{figure}

\begin{figure}[ht!]
    \centering
    \includegraphics[width=0.44\textwidth]{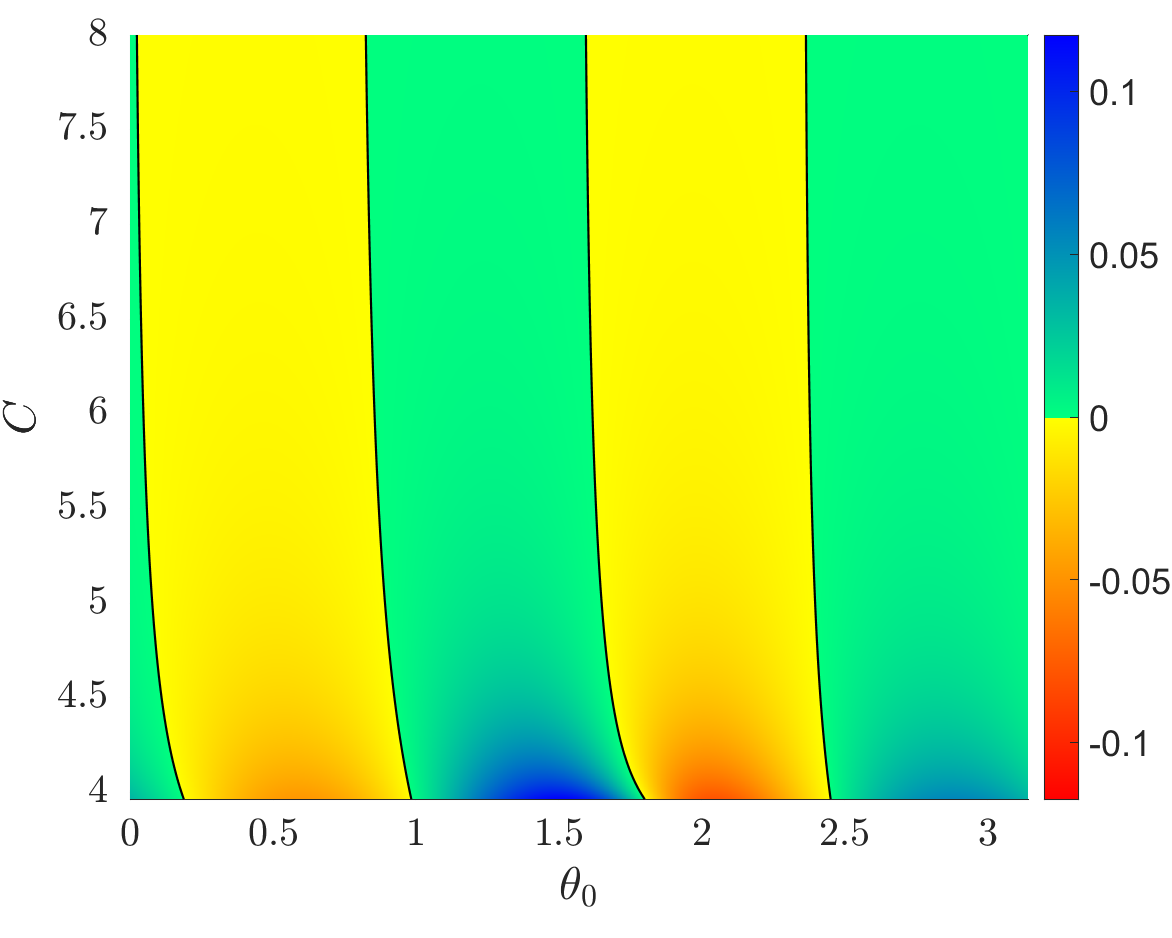}
    \includegraphics[width=0.44\textwidth]{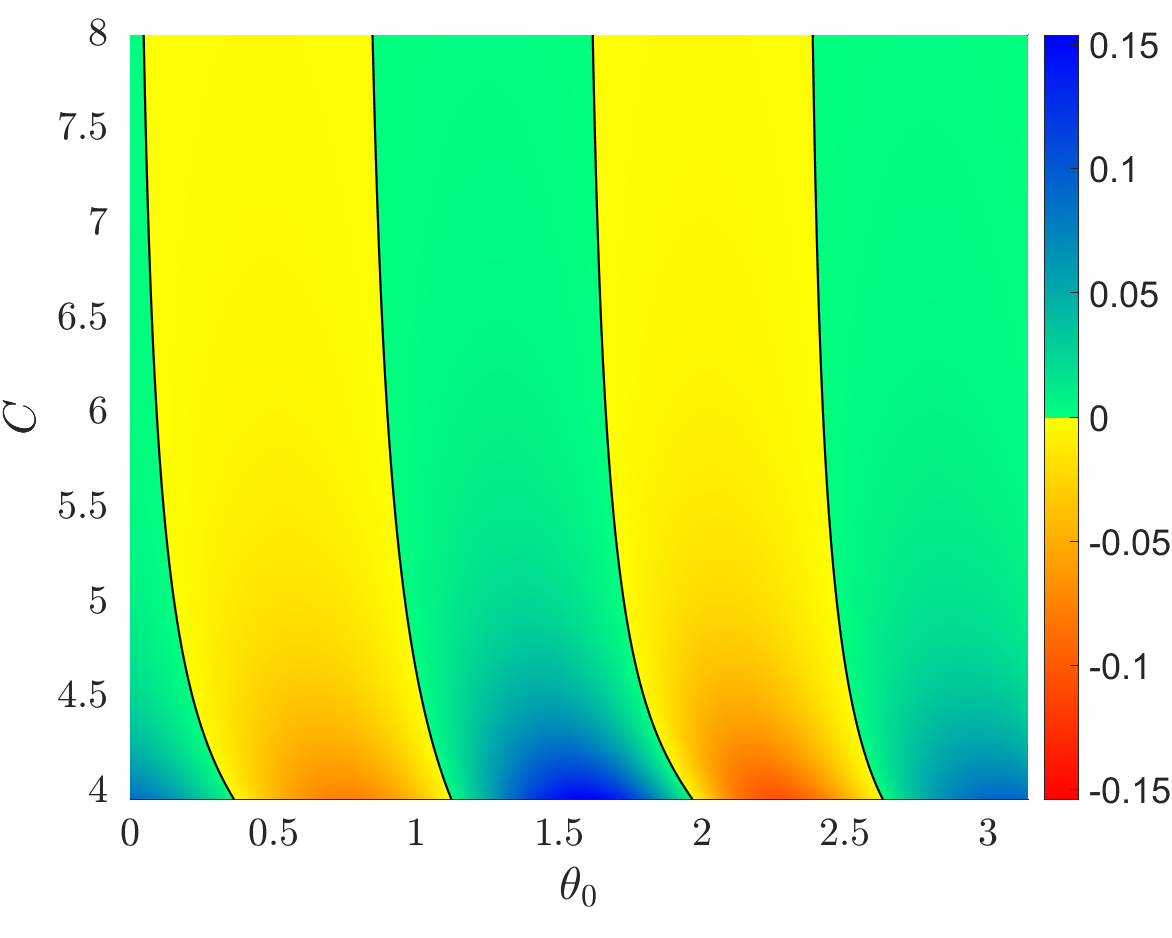}\\
    \vspace{-2mm}
    \includegraphics[width=0.44\textwidth]{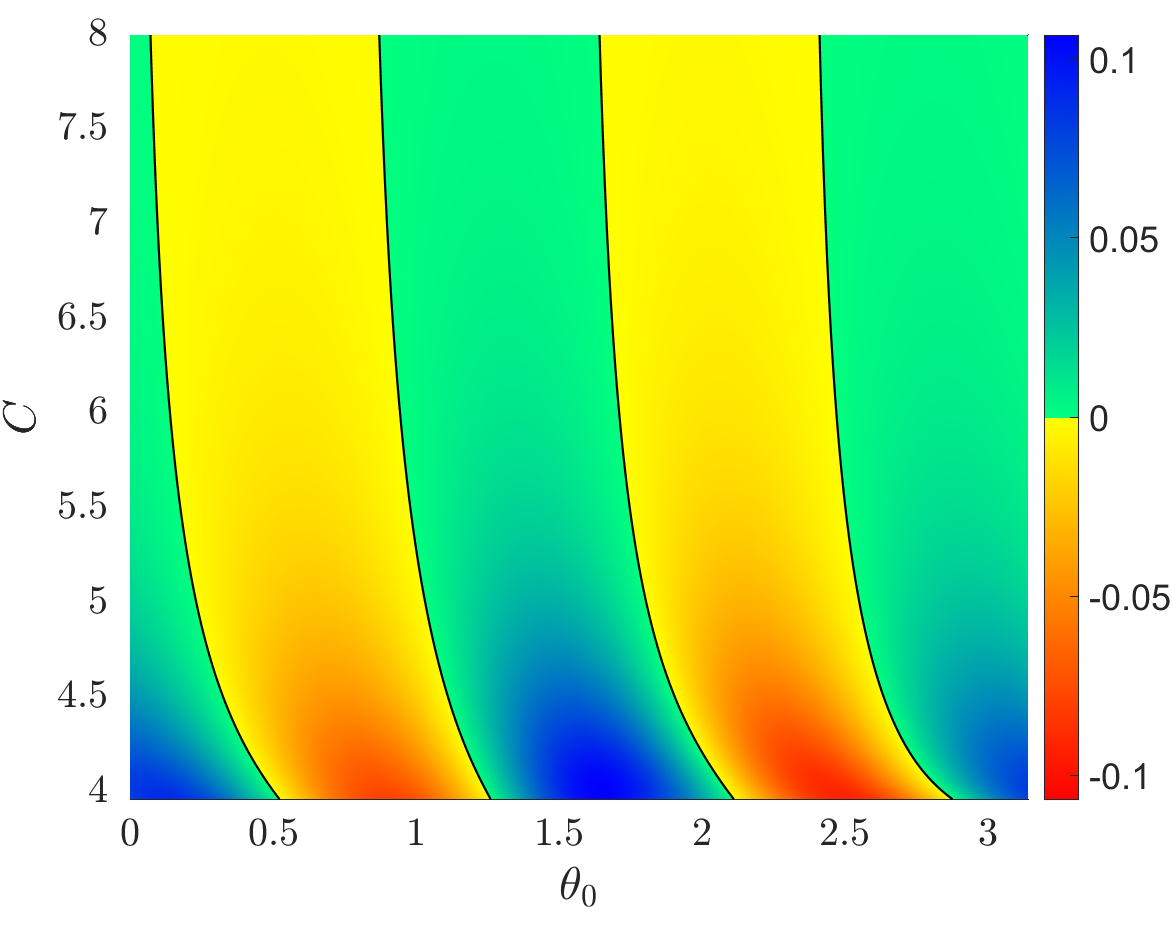}
    \includegraphics[width=0.44\textwidth]{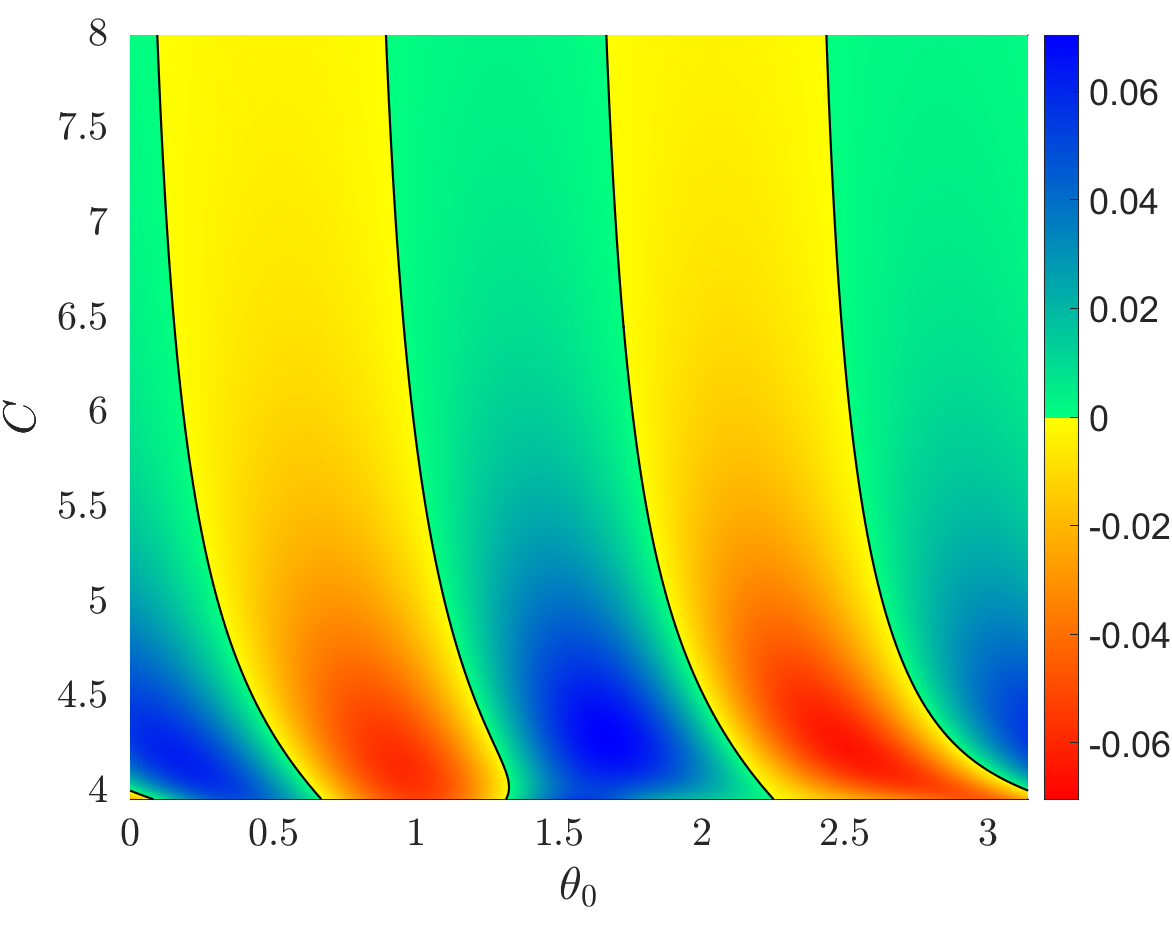}\\
    \vspace{-2mm}
    \includegraphics[width=0.44\textwidth]{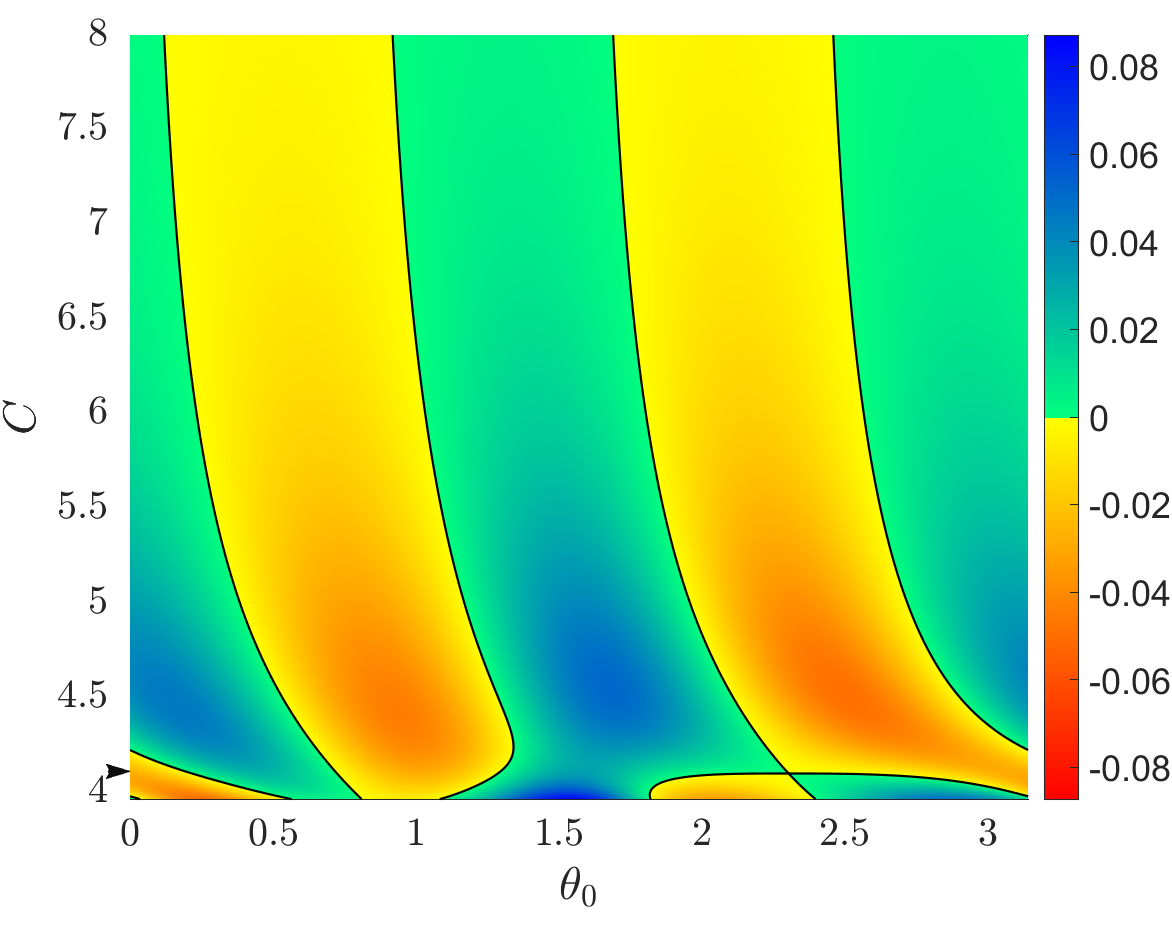}
    \includegraphics[width=0.44\textwidth]{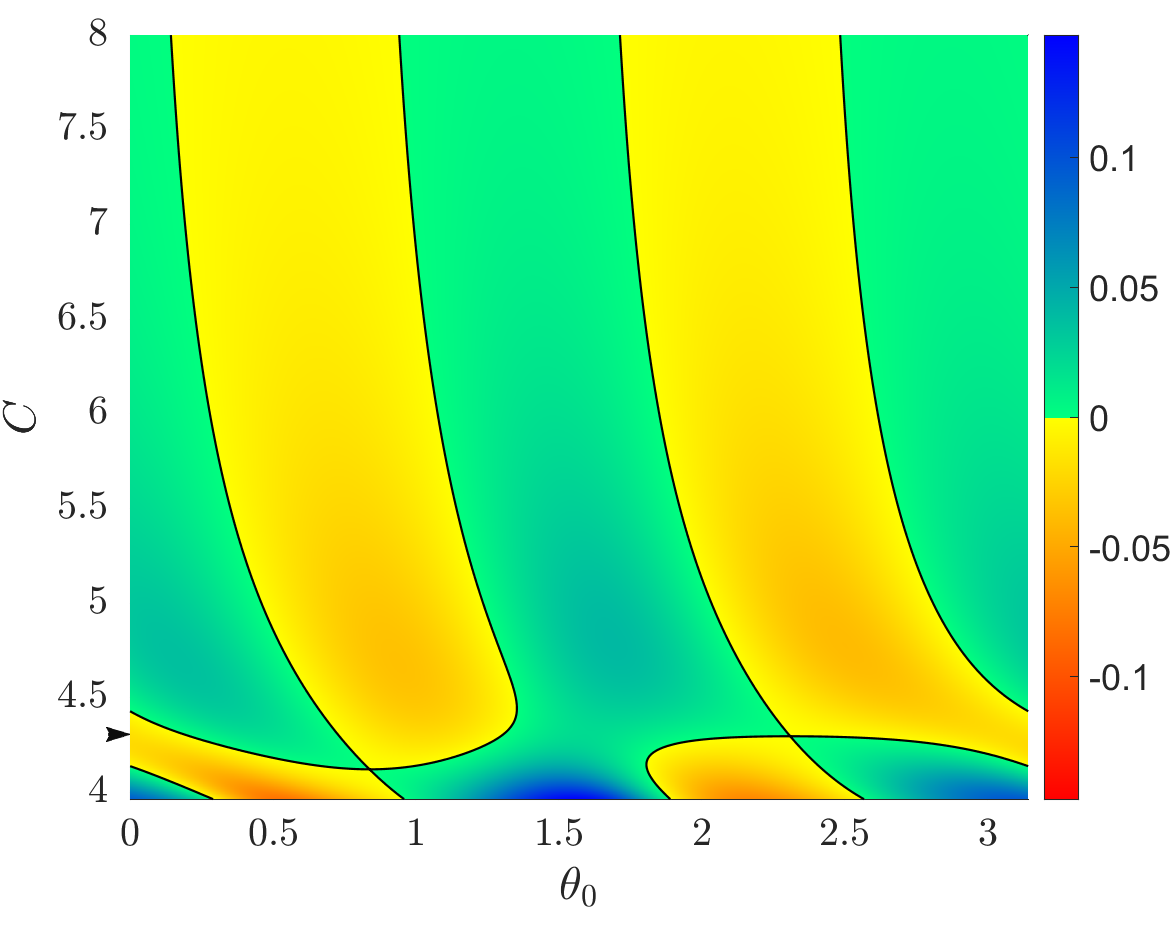}\\
    \vspace{-2mm}
    \includegraphics[width=0.44\textwidth]{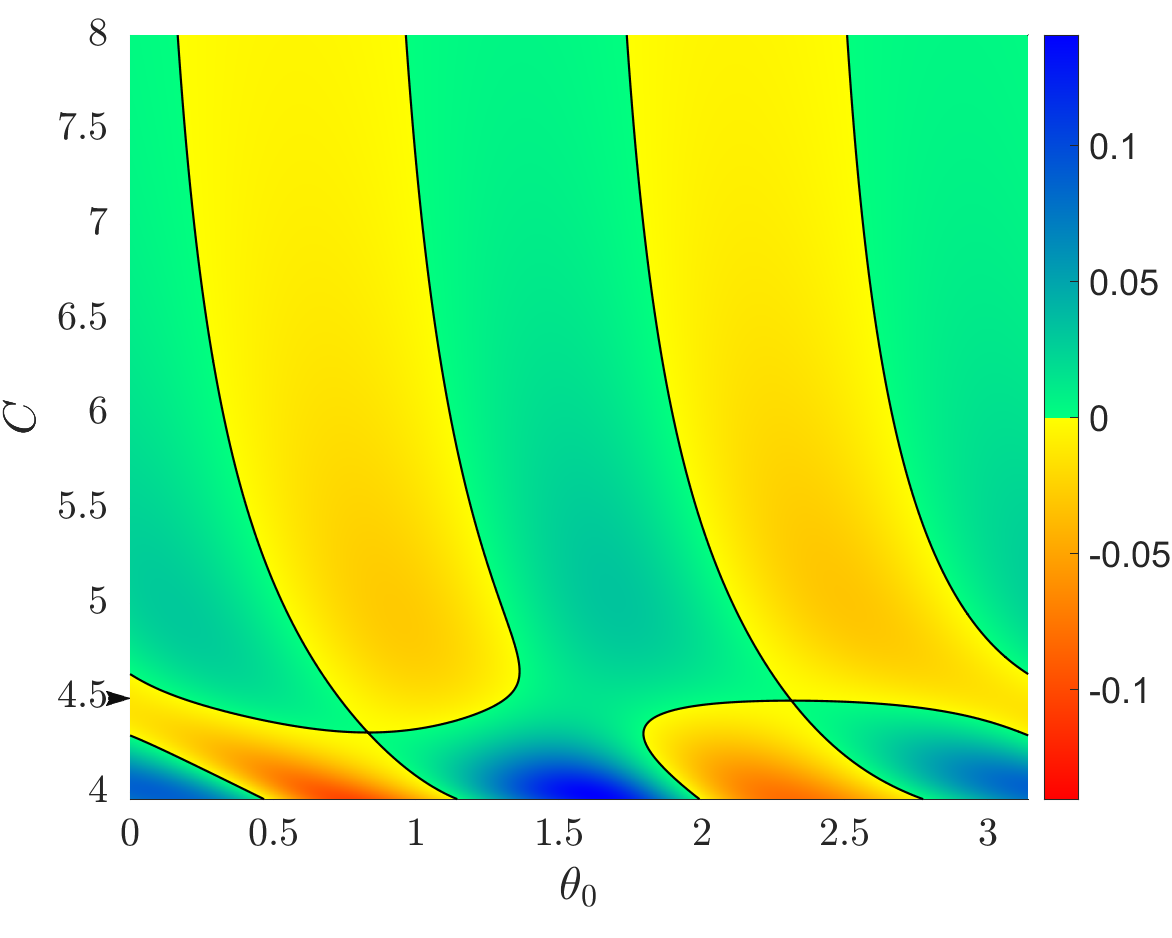}
    \includegraphics[width=0.44\textwidth]{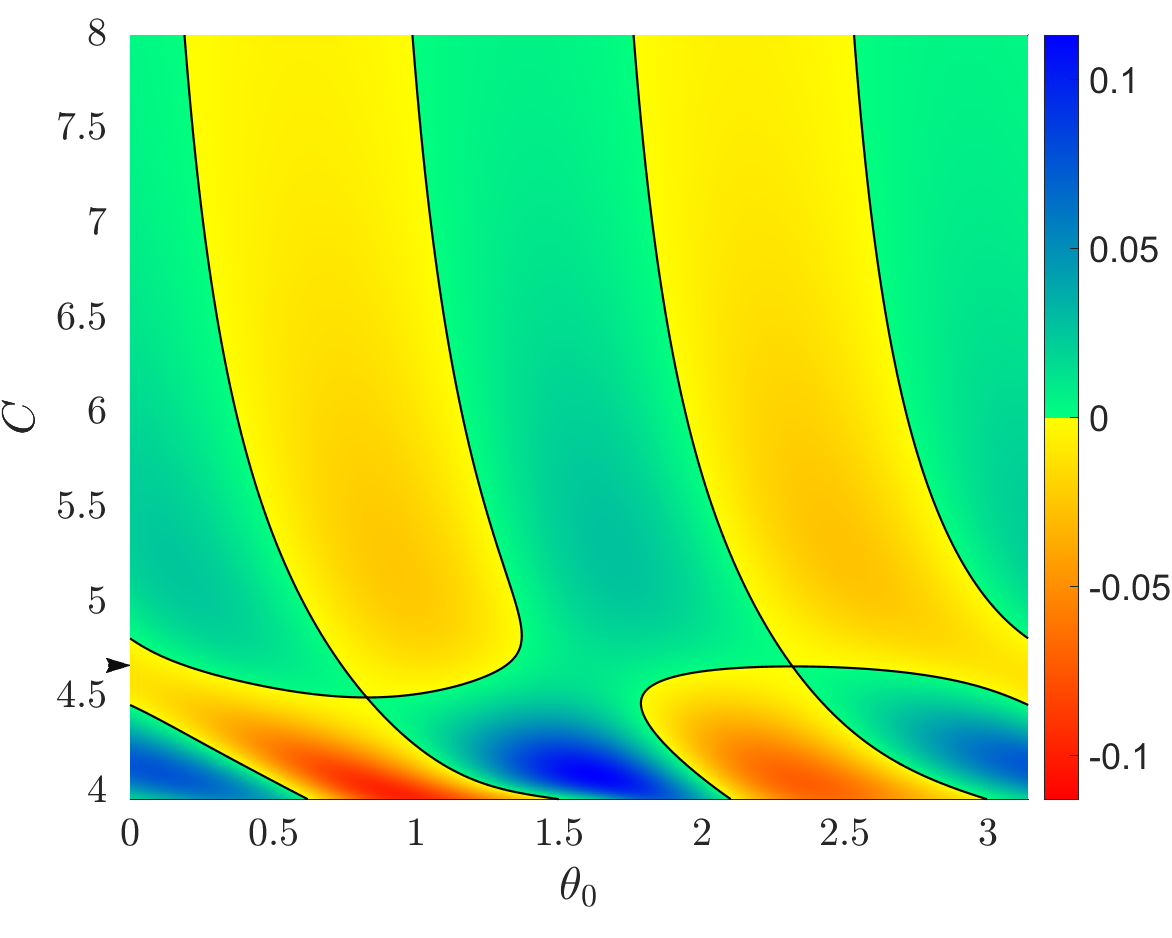}\\
    \vspace{-4mm}
    \caption{Bifurcation diagrams for $\mu=0.8$, $n=1,...,8$ and $C$ in $[C_{L_1},8]$. The value of $\hat{C}(\mu,n)$, for $\mu =0.8$ is also indicated in each plot with an arrow in the vertical axis.}
    \label{fig:evolucioMmu08}
\end{figure}

So far we have described two specific kinds of bifurcations that take place for $n=2$ and $n=3$, for $\mu =0.1$. But from the expression of the angular momentum \eqref{Maordre10} and the previous discussion, we can foresee a great and rich variety of bifurcations. To have a global and exhaustive insight, we have done massive numerical simulations in the following sense: we have fixed a value of $\mu$, and, for a range of values of $C\ge C_{L_1}$ (for example $C\in [C_{L_1},8]$) we have computed the function $M(n,\theta _0)$ for $n=1,...,8$. In Figures \ref{fig:evolucioM} and \ref{fig:evolucioMmu08} we plot the obtained results for $\mu=0.1$ and $\mu=0.8$, what we call {\sl bifurcation diagrams}. For $n$ fixed, we plot the diagram $(\theta _0,C)$ and the colour standing for the value of $M(n, \theta _0)$. The drastic change of colour (from yellow to green) describes the change of sign of  $M(n, \theta _0)$ and therefore  the existence of an $n$-EC orbit. So for any $C$ fixed, we clearly see the number of $n$-EC orbits. Some comments about Figure \ref{fig:evolucioM}  must be made: 
(i) for big values of $C$, the bifurcation diagrams show clearly four $n$-EC orbits for any value of $n$, in accordance with Theorem \ref{th:maintheorem2}. See any plot in the figure. 
(ii) In the first row, right plot, and $C$ close to 3.7, we see the first kind of bifurcation described above for $n=2$. In the second row, left plot, and $C$ close to 3.9, we recognise the second kind of bifurcation described above for $n=3$. 
(iii) It is clear from such diagrams that, when we decrease $C$ and increase the value of $n$,  several phenomena of collapse of families and bifurcation of new families are more visible.  See for example the third row plots, when decreasing $C$, for $\theta _0<\pi /2$, the collapse of two families, and the appearance of two new ones for $\theta _0>\pi /2$. Even richer are the diagrams on the last row of the figure. 
(iv) We have also plotted on each bifurcation diagram the value of the first bifurcation value of $C$ (decreasing $C$), which is precisely  the value  $\hat{C}(\mu,n)$, for $\mu =0.1$ mentioned in Theorem \ref{th:maintheorem2}. We notice in the plot how this value $\hat{C}(\mu,n)$ increases when $n$ increases.

When we take a bigger value of $\mu$, for example, $\mu=0.8$, we obtain Figure \ref{fig:evolucioMmu08}.  Comparing the plots obtained with those of Figure \ref{fig:evolucioM}, we observe two effects: the value of $\hat{C}(\mu,n)$ is smaller, for  the same value of $n$, and moreover, for $n=2,3,4$, a value of $\hat{C}(\mu,n)$ really smaller than $C_{L_1}$ is required (compare the four first plots in Figures \ref{fig:evolucioM} and \ref{fig:evolucioMmu08}). For bigger values of $\mu$ and for the same value of $C\ge C_{L_1}$, the the Hill region gets really smaller, when increasing $\mu$, so quite naturally, the probability of bifurcations decreases. On the other hand, taking $C<C_{L_1}$ represents an enlarging of the Hill's region and therefore a more powerful influence of the big primary, so an easier scenario to have bifurcations. 

{\bf Remark}. We notice that Lemma \ref{caraceco} provides a characterization of an EC orbit if $C$ is large enough. Along the numerical simulations done, where the values of $C$ are not so large, we have  also used the same characterization, but additionally checking that when $M=0$ at a minimum distance, then $U=V=0$.

\subsection{Behaviour of $\hat{C}(\mu,n)$}

As a final goal, we want to describe (numerically) the behaviour of $\hat{C}(\mu,n)=3\mu + \hat{K} (1-\mu)^{2/3}$ for any value of $\mu \in (0,1)$ and $n$. More precisely, for each value of $\mu$ and $n$, and $C$  big enough, Theorem 1 claims that there exist exactly four families of $n$-EC orbits. As discussed in the previous subsection, when decreasing $C$ bifurcations appear in a natural way. So for fixed $\mu$ and $n$, the first value of $C$ (decreasing $C$) such that there appear more than four $n$-EC orbits is precisely the value $\hat{C}(\mu,n)$ formulated in Theorem 1.

\begin{figure}[ht!]
    \centering
    \includegraphics[width=0.95\textwidth]{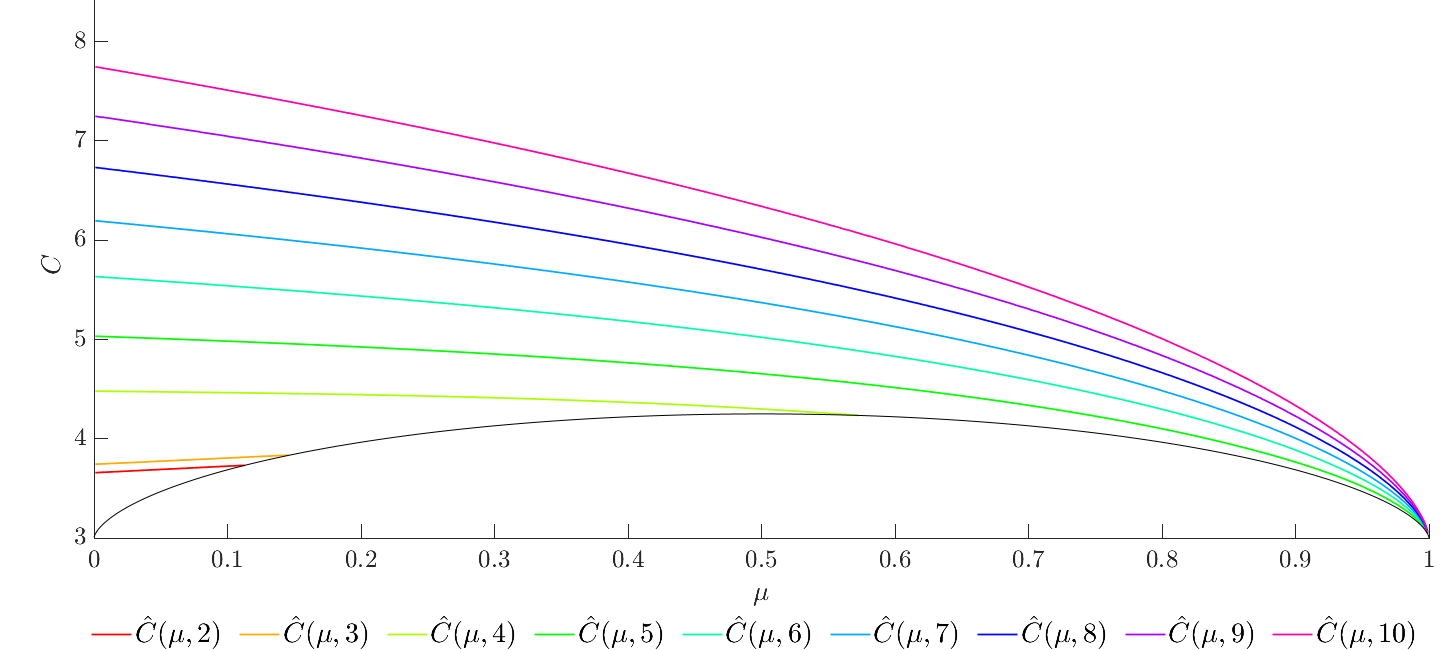}
    \caption{$\hat{C}(\mu,n)$ }
    \label{fig:CsLimit}
\end{figure}

In the previous subsection, we have computed the value $\hat{C}(\mu,n)$, just for $\mu =0.1$ and $n=1,...,8$. Our purpose now is to compute $\hat{C}(\mu,n)$ for any  $\mu \in (0,1)$ and $n$. We will always assume that any value of $C$ considered satisfies $C\ge C_{L_1}$ (recall the Hill regions in Figure \ref{fig:Hill}, there is no possible connection between $P_1$ and $P_2$, and therefore the dynamics around each primary is the simplest possible).

The strategy to  compute numerically  $\hat {C}$, for a fixed $\mu \in (0,1)$ and given $n$, is the following: we take the interval $I=[C_{L_1},C_b]$ of values of $C$, and for each $C\in I$, (starting at $C_b$) we vary $\theta _0\in [0,\pi )$ (that defines the initial conditions of an ejection orbit in synodical Levi-Civita variables) and find the four specific values of $\theta _0$ (such that $M(n,\theta _0)=0$) corresponding to the expected four $n$-EC orbits. So we have four $n$-EC orbits for that value of $C$ and decreasing $C$ we obtain four families of $n$-EC orbits. However as we decrease $C$, we find a value of $C\in I$  such that more than four $n$-EC orbits are found. This means that new families have bifurcated. Next we refine the value of $C$ such that it is the frontier before appearing new families of $n$-EC orbits. That is precisely the specific value of $\hat {C}$.

In Figure \ref{fig:CsLimit} we show the results obtained for $\mu \in (0,1)$ and $n=2,...,10$. Also the curve $(\mu ,C_{L_1})$ has been plotted (in black). Recall that, as mentioned above, we are focussed
on values of $C\ge C_{L_1}$. 
We remark that for $n=1$, the value $\hat{C}(\mu ,1)$ is less than $C_{L_1}$ and therefore is not considered.  Moreover for the specific values of $\mu =0.1$ and $\mu =0.8$ we recover the indicated values in Figures \ref{fig:evolucioM} and \ref{fig:evolucioMmu08} respectively.  From Figure \ref{fig:CsLimit}  it is clear that 
 the value of $\hat {C}(\mu ,n)$ increases when $n$ increases. This means that for higher values of $n$, that is longer time spans integrations, the effect of the other primary is more visible.

We remark that the shape of the curves in Figure \ref{fig:CsLimit} 
provide a hint about the dependence of $K(n)$ in the expression 
$\hat{C}(\mu,n)=3\mu + \hat{K}(n)(1-\mu)^{2/3}$ obtained in
Theorem \ref{th:maintheorem2}, more specifically $\hat{K}(n)=\hat{L}n^{2/3}$.
We will prove rigorously this dependence in Theorem \ref{maintheoremN} in the next Section.

%%%%%%%%%%%%%%%%%%%%%%%%%%%%%%%%%%%%%%%
%             Proof of T 1            %
%%%%%%%%%%%%%%%%%%%%%%%%%%%%%%%%%%%%%%%
\section{Proof of Theorem 1}

The proof of Theorem \ref{maintheoremN} is also based on a perturvative approach. let us introduce a new parameter $L$ defined as $K=Ln^{2/3}$ in \eqref{eq:C}. In this way, we perform the change \eqref{cvarnouUVC} and a new time $\hat{\mathcal{T}}=\tau/n$: 
\begin{equation}
\left\{
        \begin{aligned}
        u & = \frac{\sqrt{2}(1-\mu)^{1/6}}{\sqrt{L}n^{1/3}}\mathcal{U},\\
        v & = \frac{\sqrt{2}(1-\mu)^{1/6}}{\sqrt{L}n^{1/3}}\mathcal{V},\\
        \hat{\mathcal{T}} &= \frac{2\sqrt{L}(1-\mu)^{1/3}}{n^{2/3}} s=\frac{\tau}{n},\\
        C &= 3\mu + Ln^{2/3}(1-\mu)^{2/3},
        \end{aligned}
\right.
\end{equation}
where we introduce the functions in the new time: 
\[\mathcal{U}(\hat{\mathcal{T}})=U(\tau), \quad \mathcal{V}(\hat{\mathcal{T}})=V(\tau)\]

The system \eqref{LCres} becomes, denoting $\dot {}=\frac{d}{d \tau}$
\begin{equation}\label{eq:noseries}
    \left\{
    \begin{aligned}
    \ddot{\mathcal{U}} =&
     -n^2 \mathcal{U}
     +\frac{8(\mathcal{U}^2+\mathcal{V}^2)\dot{\mathcal{V}}}{L^{3/2}}
     +\frac{12(\mathcal{U}^2+\mathcal{V}^2)^2\mathcal{U}}{L^3}\\
     &+2\mu \left[
     \frac{n^{4/3}}{L(1-\mu)^{2/3}}\left(\frac{1}{r_2}-1\right)
     - \frac{4(\mathcal{U}^2+\mathcal{V}^2)^2}{L^3r_2^3}
     -\frac{2n^{2/3}(\mathcal{U}^2+\mathcal{V}^2)}{L^2(1-\mu)^{1/3}r_2^3}
     +\frac{4n^{2/3}\mathcal{U}^2}{L^2(1-\mu)^{1/3}}
     \right] \mathcal{U},\\[1.8ex]
     \ddot{\mathcal{V}} =&
     -n^2 \mathcal{V}
     -\frac{8(\mathcal{U}^2+\mathcal{V}^2)\dot{\mathcal{U}}}{L^{3/2}}
     +\frac{12(\mathcal{U}^2+\mathcal{V}^2)^2\mathcal{V}}{L^3}\\
     & + 2\mu\left[
     \frac{n^{4/3}}{L(1-\mu)^{2/3}}\left(\frac{1}{r_2}-1\right)
     - \frac{4(\mathcal{U}^2+\mathcal{V}^2)^2}{L^3r_2^3}
     +\frac{2n^{2/3}(\mathcal{U}^2+\mathcal{V}^2)}{L^2(1-\mu)^{1/3}r_2^3}
     - \frac{4n^{2/3}\mathcal{V}^2}{L^2(1-\mu)^{1/3}}
    \right]
    \mathcal{V},
    \end{aligned}
    \right.
\end{equation}
with 
$ r_2 = \sqrt{1 + \frac{4(1-\mu)^{1/3}(\mathcal{U}^2-\mathcal{V}^2)}{Ln^{2/3}} + \frac{4(1-\mu)^{2/3}(\mathcal{U}^2+\mathcal{V}^2)^2}{L^2n^{4/3}}}
$.
Let us introduce the parameter $\xi = 1/\sqrt{L}$, in this way the system \eqref{eq:noseries} becomes
\begin{equation}\label{eq:noseriesxi}
    \left\{
    \begin{aligned}
    \ddot{\mathcal{U}} =&
     -n^2 \mathcal{U}
     +8(\mathcal{U}^2+\mathcal{V}^2)\dot{\mathcal{V}}\xi^3
     +12(\mathcal{U}^2+\mathcal{V}^2)^2\mathcal{U}\xi^6\\
     &+2\mu \left[
     \frac{n^{4/3}}{(1-\mu)^{2/3}}\left(\frac{1}{r_2}-1\right)\xi^2
     - \frac{4(\mathcal{U}^2+\mathcal{V}^2)^2}{r_2^3}\xi^6
     -\frac{2n^{2/3}(\mathcal{U}^2+\mathcal{V}^2)}{(1-\mu)^{1/3}r_2^3}\xi^4
     +\frac{4n^{2/3}\mathcal{U}^2}{(1-\mu)^{1/3}}\xi^4
     \right] \mathcal{U},\\[1.8ex]
     \ddot{\mathcal{V}} =&
     -n^2 \mathcal{V}
     -8(\mathcal{U}^2+\mathcal{V}^2)\dot{\mathcal{U}}\xi^3
     +12(\mathcal{U}^2+\mathcal{V}^2)^2\mathcal{V}\xi^6\\
     & + 2\mu\left[
     \frac{n^{4/3}}{(1-\mu)^{2/3}}\left(\frac{1}{r_2}-1\right)\xi^2
     - \frac{4(\mathcal{U}^2+\mathcal{V}^2)^2}{r_2^3}\xi^6
     +\frac{2n^{2/3}(\mathcal{U}^2+\mathcal{V}^2)}{(1-\mu)^{1/3}r_2^3}\xi^4
     - \frac{4n^{2/3}\mathcal{V}^2}{(1-\mu)^{1/3}}\xi^4
    \right]
    \mathcal{V},
    \end{aligned}
    \right.
\end{equation}
with 
\begin{equation}\label{eq:r2nou}
r_2 = \sqrt{1 + \frac{4(1-\mu)^{1/3}(\mathcal{U}^2-\mathcal{V}^2)}{n^{2/3}}\xi^2 + \frac{4(1-\mu)^{2/3}(\mathcal{U}^2+\mathcal{V}^2)^2}{n^{4/3}}\xi^4}.
\end{equation}

Let us introduce the vectorial notation $\bm{\mathcal{U}} = (\mathcal{U},\mathcal{V},\dot{\mathcal{U}},\dot{\mathcal{V}})$. The system \eqref{eq:noseriesxi} can be written as
\begin{equation}\label{eqtbs}
    \dot{\bm{\mathcal{U}}} = \bm{\mathcal{F}}_0(\bm{\mathcal{U}}) + \mu\bm{\mathcal{F}}_1(\bm{\mathcal{U}}),
\end{equation}
where
\begin{equation}\label{eq:F01}
    \begin{aligned}
        \bm{\mathcal{F}}_0(\bm{\mathcal{U}}) & =
        \left(
        \begin{matrix}
            \dot{\mathcal{U}}\\
            \dot{\mathcal{V}}\\
            -n^2 \mathcal{U}
            +8(\mathcal{U}^2+\mathcal{V}^2)\dot{\mathcal{V}}\xi^3
            +12(\mathcal{U}^2+\mathcal{V}^2)^2\mathcal{U}\xi^6\\
            -n^2 \mathcal{V}
            -8(\mathcal{U}^2+\mathcal{V}^2)\dot{\mathcal{U}}\xi^3
            +12(\mathcal{U}^2+\mathcal{V}^2)^2\mathcal{V}\xi^6
        \end{matrix}
        \right),\\
        \bm{\mathcal{F}}_1(\bm{\mathcal{U}}) & =
        \left(
        \begin{matrix}
            0\\
            0\\
            2\left[
            \frac{n^{4/3}}{(1-\mu)^{2/3}}\left(\frac{1}{r_2}-1\right)\xi^2
            - \frac{4(\mathcal{U}^2+\mathcal{V}^2)^2}{r_2^3}\xi^6
            -\frac{2n^{2/3}(\mathcal{U}^2+\mathcal{V}^2)}{(1-\mu)^{1/3}r_2^3}\xi^4
            +\frac{4n^{2/3}\mathcal{U}^2}{(1-\mu)^{1/3}}\xi^4
            \right] \mathcal{U}\\[1ex]
            2\left[
            \frac{n^{4/3}}{(1-\mu)^{2/3}}\left(\frac{1}{r_2}-1\right)\xi^2
            - \frac{4(\mathcal{U}^2+\mathcal{V}^2)^2}{r_2^3}\xi^6
            +\frac{2n^{2/3}(\mathcal{U}^2+\mathcal{V}^2)}{(1-\mu)^{1/3}r_2^3}\xi^4
            - \frac{4n^{2/3}\mathcal{V}^2}{(1-\mu)^{1/3}}\xi^4
            \right]
            \mathcal{V}
        \end{matrix}
        \right).\\
    \end{aligned}
\end{equation}
{\bf Remark.}
Note that $\bm{\mathcal{F}}_1(\bm{\mathcal{U}})$ only depends on the position variables,  $\bm{\mathcal{F}}_1(\bm{\mathcal{U}}) = \bm{\mathcal{F}}_1(\mathcal{U},\mathcal{V})$.

At this point, our next goal is to find the solution as $\bm{\mathcal{U}}= \bm{\mathcal{U}}_0+\bm{\mathcal{U}}_1$ where
\begin{subequations}
    \begin{equation} \label{eq:u0sys}
        \dot{\bm{\mathcal{U}}}_0 = \bm{\mathcal{F}}_0(\bm{\mathcal{U}}_0),
    \end{equation}
    \begin{equation} \label{eq:u1sys}
        \dot{\bm{\mathcal{U}}}_1 = \mu\bm{\mathcal{F}}_1(\bm{\mathcal{U}}_0+\bm{\mathcal{U}}_1) + \bm{\mathcal{F}}_0(\bm{\mathcal{U}}_0+\bm{\mathcal{U}}_1) - \bm{\mathcal{F}}_0(\bm{\mathcal{U}}_0).
    \end{equation}
\end{subequations}

Notice that $\bm{\mathcal{U}}_0$ is the
solution of the 2-body problem ($\mu =0$) 
 in synodical (rotating) Levi-Civita coordinates.
 That is, we consider system \eqref{eqtbs} as a perturbation of the 2-body problem 
\eqref{eq:u0sys} where the perturbation parameter is $\xi$ which will be small enough
 and for any value of $\mu \in (0,1)$.
Roughly speaking, for big values of the Jacobi constant the problem is close the two body problem of the mass-less body and the collision primary, regardless the value of the mass parameter $\mu$.

Note that we are interested only in the ejection orbits $\bm{\mathcal{U}}^e=\bm{\mathcal{U}}^e_0+\bm{\mathcal{U}}^e_1$ and the initial conditions of these orbits are given by
\begin{equation}\label{ic}
    \bm{\mathcal{U}}_{0}^e(0) = (0,0,n\cos\theta_0,n\sin\theta_0)\quad \text{ and } \quad
    \bm{\mathcal{U}}_{1}^e(0) = \bm{0}.
\end{equation}

To prove the theorem we will use the same strategy of computing the angular momentum $\mathcal{M}(n,\theta_0)$ at the $n$-th minimum of the distance to the origin and find the values of $\theta_0$ such that $\mathcal{M}(n,\theta_0) = 0$. 

Thus, we will compute  $\bm{\mathcal{U}}^e$ and the time needed to reach $n$-th minimum solving $ \left[\mathcal{U}^e\dot{\mathcal{U}}^e + \mathcal{V}^e \dot{\mathcal{V}}^e\right](\theta_0,\hat{\mathcal{T}}^*) = 0$.
The last step will be to calculate $\mathcal{M}(n,\theta_0)$.

\subsection*{The unperturbed system}

As a first step we must solve system \eqref{eq:u0sys}
\begin{equation} \label{eq:KeplerNouFort}
        \left\{
        \begin{aligned}
        \ddot{\mathcal{U}}_0 &= - n^2\mathcal{U}_0 + 8(\mathcal{U}_0^2 + \mathcal{V}_0^2)\dot{\mathcal{V}}_0\xi^3 + 12 (\mathcal{U}_0^2+\mathcal{V}_0^2)^2\mathcal{U}_0   \xi^6,\\[1.7ex]
        %%%%
        \ddot{\mathcal{V}}_0 &= - n^2\mathcal{V}_0 - 8(\mathcal{U}_0^2 + \mathcal{V}_0^2)\dot{\mathcal{U}}_0\xi^3 + 12 (\mathcal{U}_0^2+\mathcal{V}_0^2)^2\mathcal{V}_0   \xi^6,
        %%%%
        \end{aligned}
        \right.
\end{equation}
with initial conditions $\bm{\mathcal{U}}_{0} (0)= (0,0,n\cos\theta_0,n\sin\theta_0)$.

In order to obtain the solution of this system, we first consider the 2-body problem in sidereal coordinates:
\begin{equation}\label{eq:KeplerNou}
\left\{
    \begin{aligned}
        \ddot{\bar{\mathcal{U}}}_0 &=
        -\left[n^2-4(\bar{\mathcal{U}}_0\dot{\bar{\mathcal{V}}}_0  - \bar{\mathcal{V}}_0\dot{\bar{\mathcal{U}}}_0)\xi^3 \right]\bar{\mathcal{U}}_0,\\[1.2ex]
        \ddot{\bar{\mathcal{V}}}_0 &=
        -\left[n^2-4(\bar{\mathcal{U}}_0\dot{\bar{\mathcal{V}}}_0  - \bar{\mathcal{V}}_0\dot{\bar{\mathcal{U}}}_0)\xi^3 \right]\bar{\mathcal{V}}_0,
    \end{aligned}
    \right.
\end{equation}
and the change of time
\begin{equation}\label{dtdtau}
    \frac{dt}{d\hat{\mathcal{T}}} = 4(\bar{\mathcal{U}}_0^2 + \bar{\mathcal{V}}_0^2)\xi^3.
\end{equation}
being $(\bar{\mathcal{U}}_0(\hat{\mathcal{T}}),\bar{\mathcal{V}_0}(\hat{\mathcal{T}})$ the associated solutions.

We recall that for the two body problem the angular momentum is constant, consequently
\begin{equation}
    \left[\bar{\mathcal{U}}_0\dot{\bar{\mathcal{V}}}_0  - \bar{\mathcal{V}}_0\dot{\bar{\mathcal{U}}}_0\right](\hat{\mathcal{T}}) = \left(\bar{\mathcal{U}}_{0}\dot{\bar{\mathcal{V}}}_{0}  - \bar{\mathcal{V}}_{0}\dot{\bar{\mathcal{U}}}_{0}\right)(0),
\end{equation}
and therefore the solution of \eqref{eq:KeplerNou} is given by
\begin{equation}\label{eq:sol2cside}
\left\{
    \begin{aligned}
        \bar{\mathcal{U}}_0(\hat{\mathcal{T}}) &=
        \bar{\mathcal{U}}_{0}(0)\cos(\omega\hat{\mathcal{T}}) + \frac{\dot{\bar{\mathcal{U}}}_{0}(0)}{\omega}\sin(\omega\hat{\mathcal{T}}),\\
        \bar{\mathcal{V}}_0(\hat{\mathcal{T}}) &=
        \bar{\mathcal{V}}_{0}(0)\cos(\omega\hat{\mathcal{T}}) + \frac{\dot{\bar{\mathcal{V}}}_{0}(0)}{\omega}\sin(\omega\hat{\mathcal{T}}),
    \end{aligned}
\right.
\end{equation}
where $\omega = \sqrt{n^2-4(\bar{\mathcal{U}}_{0}\dot{\bar{\mathcal{V}}}_{0}  - \bar{\mathcal{V}}_{0}\dot{\bar{\mathcal{U}}}_{0})(0)\xi^3}$.
Moreover the value $t(\hat{\mathcal{T}})$ is simply obtained from \eqref{dtdtau} and \eqref{eq:sol2cside}:
\begin{equation}\label{eq:timet}
\begin{aligned}
    t(\hat{\mathcal{T}}) &= 2\Bigg[\left(\bar{\mathcal{U}}_{0}^2+\bar{\mathcal{V}}_{0}^2\right)(0)
    \left(\hat{\mathcal{T}}+\frac{\cos(\omega\hat{\mathcal{T}})\sin(\omega\hat{\mathcal{T}})}{\omega}\right)
    +\frac{2\big(\bar{\mathcal{U}}_{0}\dot{\bar{\mathcal{U}}}_{0}+\bar{\mathcal{V}}_{0}\dot{\bar{\mathcal{V}}}_{0}\big)(0)}{\omega^2}\sin^2(\omega\hat{\mathcal{T}})\\
    &\hspace{10mm}+ \frac{\big(\dot{\bar{\mathcal{U}}}_{0}^2+\dot{\bar{\mathcal{V}}}_{0}^2\big)(0)}{\omega^2}
    \left(\hat{\mathcal{T}}-\frac{\cos(\omega\hat{\mathcal{T}})\sin(\omega\hat{\mathcal{T}})}{\omega}\right)
    \Bigg]\xi^3,
\end{aligned}
\end{equation}

Now we apply 
the rotation transformation to \eqref{eq:sol2cside} 
to obtain the solution of system in synodical coordinates \eqref{eq:KeplerNouFort},
\begin{equation} \label{eq:solucioUnewRot}
\left\{
    \begin{aligned}
        \mathcal{U}_0(\hat{\mathcal{T}}) &= \bar{\mathcal{U}}_0(\hat{\mathcal{T}})\cos(-t/2) - \bar{\mathcal{V}}_0(\hat{\mathcal{T}})\sin(-t/2),\\
        \mathcal{V}_0(\hat{\mathcal{T}}) &= \bar{\mathcal{U}}_0(\hat{\mathcal{T}})\sin(-t/2) + \bar{\mathcal{V}}_0(\hat{\mathcal{T}})\cos(-t/2),\\
        \dot{\mathcal{U}}_0(\hat{\mathcal{T}}) &= \left[ \dot{\bar{\mathcal{U}}}_0 + 2(\bar{\mathcal{U}}_0^2 + \bar{\mathcal{V}}_0^2)\bar{\mathcal{V}}_0\xi^3 \right]
        \cos(-t/2)
        -
        \left[ \dot{\bar{\mathcal{V}}}_0 - 2(\bar{\mathcal{U}}_0^2 + \bar{\mathcal{V}}_0^2)\bar{\mathcal{U}}_0\xi^3 \right]
        \sin(-t/2),\\
        \dot{\mathcal{V}}_0(\hat{\mathcal{T}}) &= \left[ \dot{\bar{\mathcal{U}}}_0 + 2(\bar{\mathcal{U}}_0^2 + \bar{\mathcal{V}}_0^2)\bar{\mathcal{V}}_0 \xi^3\right]
        \sin(-t/2)
        +
        \left[ \dot{\bar{\mathcal{V}}}_0 - 2(\bar{\mathcal{U}}_0^2 + \bar{\mathcal{V}}_0^2)\bar{\mathcal{U}}_0 \xi^3\right]
        \cos(-t/2),
    \end{aligned}
    \right.
\end{equation}

Notice that the relation between the sidereal initial conditions and the synodical ones
$({\mathcal{U}}_{0}{\mathcal{V}}_{0},\dot{{\mathcal{U}}}_{0}
 \dot{{\mathcal{V}}}_{0})(0)$,
are obtained simply  from \eqref{eq:solucioUnewRot} putting $\hat{\mathcal{T}}= 0$
\begin{equation} \label{eq:condicionsInicialNewU}
\left\{
    \begin{aligned}
        \mathcal{U}_{0}(0) &= \bar{\mathcal{U}}_{0}(0),\\
        \mathcal{V}_{0}(0) &=  \bar{\mathcal{V}}_{0}(0),\\
        \dot{\mathcal{U}}_{0}(0) &=\dot{\bar{\mathcal{U}}}_{0}(0) + 2(\bar{\mathcal{U}}_{0}^2 + \bar{\mathcal{V}}_{0}^2)(0)\bar{\mathcal{V}}_{0}(0)\xi^3, \\
        \dot{\mathcal{V}}_{0}(0) &= \dot{\bar{\mathcal{V}}}_{0}(0) - 2(\bar{\mathcal{U}}_{0}^2 + \bar{\mathcal{V}}_{0}^2)(0)\bar{\mathcal{U}}_{0}(0) \xi^3,
    \end{aligned}
    \right.
\qquad\qquad
\left\{
    \begin{aligned}
        \bar{\mathcal{U}}_{0}(0) &= \mathcal{U}_{0}(0),\\
        \bar{\mathcal{V}}_{0}(0) &=  \mathcal{V}_{0}(0),\\
        \dot{\bar{\mathcal{U}}}_{0}(0) &=\dot{\mathcal{U}}_{0}(0) - 2(\mathcal{U}_{0}^2 + \mathcal{V}_{0}^2)(0)\mathcal{V}_{0}(0)\xi^3, \\
        \dot{\bar{\mathcal{V}}}_{0}(0) &= \dot{\mathcal{V}}_{0}(0) + 2(\mathcal{U}_{0}^2 + \mathcal{V}_{0}^2)(0)\mathcal{U}_{0}(0) \xi^3.
    \end{aligned}
    \right.
\end{equation}

Since we are interested in the particular case of ejection orbits, which have as their initial condition
\begin{equation}
    \bar{\bm{\mathcal{U}}}_{0}(0) = (0,0,n\cos\theta_0,n\sin\theta_0),
\end{equation}
the corresponding ejection solution is given by $\bm{\mathcal{U}}_{0}^e =(\mathcal{U}_{0}^e,\mathcal{V}_{0}^e, \dot{\mathcal{U}}_{0}^e,\dot{\mathcal{V}}_{0}^e)$, where:
\begin{equation}\label{eq:solucioEcosKeplerRaro}
\left\{
\begin{aligned}
    \mathcal{U}_{0}^e(\theta_0,\hat{\mathcal{T}}) &= \big[\cos\theta_0
    \cos\left(-t/2\right)
    -
    \sin\theta_0
    \sin\left(-t/2\right)\big]\sin(n\hat{\mathcal{T}})=\cos (\theta_0-t/2)  \sin(n\hat{\mathcal{T}}),\\[1.5ex]
    \mathcal{V}_{0}^e(\theta_0,\hat{\mathcal{T}}) &= \big[\cos\theta_0
    \sin\left(-t/2\right)
    +
    \sin\theta_0
    \cos\left(-t/2\right)\big]\sin(n\hat{\mathcal{T}})=\sin (\theta_0-t/2)  \sin(n\hat{\mathcal{T}}),
\end{aligned}
\right.
\end{equation}
with
\begin{equation}\label{eq:tempsRotacio}
    t = 2\left[\hat{\mathcal{T}}-\frac{\cos(n\hat{\mathcal{T}})\sin(n\hat{\mathcal{T}})}{n}\right]\xi^3.
\end{equation}
If we denote by $\hat{\mathcal{T}}_0^*$ the time needed by $\bm{\mathcal{U}}_0^e(\hat{\mathcal{T}})$ to reach the $n$-th minimum distance to the origin, it is clear from \eqref{eq:solucioEcosKeplerRaro} that 
\begin{equation}
    \hat{\mathcal{T}}_0^* = \pi.
\end{equation}

\subsection*{The perturbed system}

In order to solve the perturbed problem  (i.e. $\mu\neq0$)  we rewrite system \eqref{eq:u1sys} as
\begin{equation} \label{eq:u1sysV2}
    \begin{aligned}
        \dot{\bm{\mathcal{U}}}_{1} &=
%        D\bm{\mathcal{F}}_0(\bm{\mathcal{U}}_{0})\bm{\mathcal{U}}_{1}
 %       + \mu\bm{\mathcal{F}}_1(\bm{\mathcal{U}}_{0}+\bm{\mathcal{U}}_{1})
  %      + \bm{\mathcal{F}}_0(\bm{\mathcal{U}}_{0}+\bm{\mathcal{U}}_{1})
   %     - \bm{\mathcal{F}}_0(\bm{\mathcal{U}}_{0})
    %    - D\bm{\mathcal{F}}_0(\bm{\mathcal{U}}_{0})\bm{\mathcal{U}}_{1}\\
     %   & =
     D\bm{\mathcal{F}}_0(\bm{\mathcal{U}}_{0})\bm{\mathcal{U}}_{1}
        + \bm{\mathcal{G}}(\bm{\mathcal{U}}_{1}),
    \end{aligned}
\end{equation}
where $\bm{\mathcal{U}}_{0}=\bm{\mathcal{U}}_{0}^e$ is the ejection solution \eqref{eq:solucioEcosKeplerRaro} of the two body problem and 
\begin{equation}\label{defG}
    \bm{\mathcal{G}}(\bm{\mathcal{U}}_{1}) = \mu\bm{\mathcal{F}}_1(\bm{\mathcal{U}}_{0}^e+\bm{\mathcal{U}}_{1})
    + \bm{\mathcal{F}}_0(\bm{\mathcal{U}}_{0}^e+\bm{\mathcal{U}}_{1})
    - \bm{\mathcal{F}}_0(\bm{\mathcal{U}}_{0}^e)
    - D\bm{\mathcal{F}}_0(\bm{\mathcal{U}}_{0}^e)\bm{\mathcal{U}}_{1}.
\end{equation}
Note that the ejection solution $\bm{\mathcal{U}}_{1}^e $ has zero initial condition and therefore  is the solution of the implicit equation
\begin{equation}\label{eqH}
\bm{\mathcal{U}}_{1}^e=\bm{\mathcal{H}}\{\bm{\mathcal{U}}_{1}^e\},
\end{equation}
where we define
\begin{equation}\label{defH}
\bm{\mathcal{H}}\{\bm{\mathcal{U}}\}(\hat{\mathcal{T}})
    = X(\hat{\mathcal{T}})\int_0^{\hat{\mathcal{T}}} X^{-1}(\hat{\mathcal{T}})\bm{\mathcal{G}}(\bm{\mathcal{U}}(\hat{\mathcal{T}}))\,d\hat{\mathcal{T}},
\end{equation}
and $X(\hat{\mathcal{T}})$ is the fundamental matrix of the linear system:
\begin{equation} \label{eq:u1sysV2linear}
\dot{\bm{\mathcal{U}}}_{1} =
D\bm{\mathcal{F}}_0(\bm{\mathcal{U}}_{0}^e)\bm{\mathcal{U}}_{1}.
\end{equation}
We will apply a Fixed Point Theorem to prove the existence of the solution $\bm{\mathcal{U}}_1^e$.
 Thus we consider the space
\[
\mathcal{\chi}=\{\bm{\mathcal{U}}:[0,T]\longrightarrow \mathbb{R}^4,\quad   
\bm{\mathcal{U}}\quad \mbox{continuous} \},
\]
 for a given $T$, for example $T=2\pi$. 

For a given function
 ${\bm{\mathcal{U}}} = (\mathcal{U},\mathcal{V},\dot{\mathcal{U}},\dot{\mathcal{V}})\in \chi$  we consider the norm:
 \begin{equation}\label{eq:norm}
 || {\bm{\mathcal{U}}}|| = \sup _{\hat{\mathcal{T}}\in[0,T]}(n|\mathcal{U}(\hat{\mathcal{T}})| + n|\mathcal{V}(\hat{\mathcal{T}}))| 
+ |\dot{\mathcal{U}}(\hat{\mathcal{T}})| + |\dot{\mathcal{V}}(\hat{\mathcal{T}})|.     
 \end{equation}
With this norm $\mathcal{\chi}$ is a Banach space.

As usual, given an $R>0$, we define the ball 
$ B_R(\bm{0}) \subset \chi $ 
as the functions $\bm{\mathcal{U}}\in \chi$ such that   $\lVert\bm{\mathcal{U}}\rVert \leq R$.

Next lemmas show  that the required hypotheses for the Fixed Point Theorem to be applied  are satisfied. 

%\textcolor{magenta}{he posat $0\le \mu \le 1$, es correcte?}

\begin{lemma}\label{Hfitatlema}
    There exist $\xi_0>0$ and a constant $M_1>0$
    %    , both independent of $n$, 
    such that, for $0<\xi<\xi_0$,  $0< \mu<  1$ and $n\in\mathbb{N}$, 
    \[
        \lVert \bm{\mathcal{H}}\{\bm{0}\} \rVert \leq M_1\mu \xi^6.
    \]
\end{lemma}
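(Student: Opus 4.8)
The plan is to reduce the whole estimate to a pointwise bound on $\bm{\mathcal{G}}(\bm{0})$ and then transport it through the integral operator \eqref{defH} using the (uniform) boundedness of the fundamental matrix $X$. First I would evaluate $\bm{\mathcal{G}}$ at the zero function. Directly from \eqref{defG}, the non-perturbative part telescopes: $\bm{\mathcal{F}}_0(\bm{\mathcal{U}}_{0}^e) - \bm{\mathcal{F}}_0(\bm{\mathcal{U}}_{0}^e) - D\bm{\mathcal{F}}_0(\bm{\mathcal{U}}_{0}^e)\bm{0} = \bm{0}$, so that $\bm{\mathcal{G}}(\bm{0}) = \mu\bm{\mathcal{F}}_1(\bm{\mathcal{U}}_{0}^e)$. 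By the Remark after \eqref{eq:F01}, $\bm{\mathcal{F}}_1$ depends only on the positions, and by \eqref{eq:solucioEcosKeplerRaro} these satisfy $(\mathcal{U}_{0}^e)^2+(\mathcal{V}_{0}^e)^2=\sin^2(n\hat{\mathcal{T}})\le 1$ with $|\mathcal{U}_{0}^e|,|\mathcal{V}_{0}^e|\le 1$; hence all arguments of $\bm{\mathcal{F}}_1$ remain in a fixed compact set, uniformly in $\mu\in(0,1)$, $n\in\mathbb{N}$, $\theta_0$ and $\hat{\mathcal{T}}\in[0,T]$.

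The heart of the matter is to show $\bm{\mathcal{F}}_1(\bm{\mathcal{U}}_{0}^e)=\mathcal{O}(\xi^6)$ with a constant independent of $\mu$ and $n$. The apparent leading contributions of the $\mathcal{U}$-bracket in \eqref{eq:F01} are of order $\xi^4$: the term carrying $\xi^2$ is really $\mathcal{O}(\xi^4)$, since $1/r_2-1$ vanishes at $r_2=1$. The key algebraic fact I would verify is that these $\xi^4$ contributions cancel identically. Introducing the reduced variable $w=(1-\mu)^{1/3}\xi^2/n^{2/3}$, the radius \eqref{eq:r2nou} becomes $r_2=(1+4(\mathcal{U}^2-\mathcal{V}^2)w+4(\mathcal{U}^2+\mathcal{V}^2)^2w^2)^{1/2}$, an analytic function of $w$ with $r_2|_{w=0}=1$ and $1/r_2-1=-2(\mathcal{U}^2-\mathcal{V}^2)w+\mathcal{O}(w^2)$. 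Multiplying by the prefactor $\frac{n^{4/3}}{(1-\mu)^{2/3}}\xi^2$ converts the factor $w$ exactly into $\frac{(1-\mu)^{1/3}\xi^2}{n^{2/3}}$, producing precisely $-\frac{2n^{2/3}(\mathcal{U}^2-\mathcal{V}^2)}{(1-\mu)^{1/3}}\xi^4$; adding the $r_2^{-3}\approx1$ part of the third term and the fourth term gives the combination $-2(\mathcal{U}^2-\mathcal{V}^2)-2(\mathcal{U}^2+\mathcal{V}^2)+4\mathcal{U}^2=0$, and the analogous sign pattern of the $\mathcal{V}$-row in \eqref{eq:F01} yields the same cancellation there. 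What remains are the genuine $\mathcal{O}(w^2)$ Taylor tails times $\frac{n^{4/3}\xi^2}{(1-\mu)^{2/3}}$ and the explicit $\xi^6$ term; each recombines, through the single variable $w$, into $\xi^6$ times a function of $(\mathcal{U}_{0}^e,\mathcal{V}_{0}^e)$ bounded on the fixed compact set. Thus $|\bm{\mathcal{F}}_1(\bm{\mathcal{U}}_{0}^e)|\le c\xi^6$ uniformly, so $\lVert\bm{\mathcal{G}}(\bm{0})(\hat{\mathcal{T}})\rVert\le 2c\mu\xi^6$ in the norm \eqref{eq:norm} (the two position components of $\bm{\mathcal{F}}_1$ vanish, so the weights $n$ play no role here).

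Finally I would feed this into \eqref{defH}. I would first record (or invoke) that $X(\hat{\mathcal{T}})$ and $X^{-1}(\hat{\mathcal{T}})$, the fundamental solution of \eqref{eq:u1sysV2linear}, are bounded operators for the weighted norm \eqref{eq:norm}, uniformly in $n$, $\mu$ and $\xi<\xi_0$ on the fixed interval $[0,T]$. This is exactly where the $n$-weighting of the positions is essential: for $\xi=0$ the matrix $X$ is the frequency-$n$ harmonic flow, whose entries are bounded precisely after this rescaling, and the $\mathcal{O}(\xi^3)$ corrections in $\bm{\mathcal{F}}_0$ keep $X$ within a bounded factor by Gronwall. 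Denoting such a bound by $M_X$, I obtain
\[
\lVert\bm{\mathcal{H}}\{\bm{0}\}\rVert \le \sup_{\hat{\mathcal{T}}\in[0,T]}\lVert X(\hat{\mathcal{T}})\rVert\int_0^{\hat{\mathcal{T}}}\lVert X^{-1}(s)\rVert\,\lVert\bm{\mathcal{G}}(\bm{0})(s)\rVert\,ds \le M_X^2\,T\,(2c\mu\xi^6),
\]
and set $M_1=2cTM_X^2$. I expect the main obstacle to be the uniformity of the $\mathcal{O}(\xi^6)$ bound on $\bm{\mathcal{F}}_1$ as $\mu\to1$ and $n\to\infty$: the prefactors $(1-\mu)^{-2/3}$, $(1-\mu)^{-1/3}$ and the positive powers of $n$ are individually dangerous, and the estimate survives only because they are exactly compensated by the $(1-\mu)$- and $n$-dependence hidden in $r_2$ through the single reduced variable $w$. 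Getting the bookkeeping of these powers right, rather than any individual size estimate, is the delicate part.
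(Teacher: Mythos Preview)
Your argument is correct and follows the same skeleton as the paper's proof: identify $\bm{\mathcal{G}}(\bm{0})=\mu\bm{\mathcal{F}}_1(\bm{\mathcal{U}}_0^e)$, verify the $\xi^4$ cancellation in \eqref{eq:F01} so that $\bm{\mathcal{F}}_1(\bm{\mathcal{U}}_0^e)=\mathcal{O}(\xi^6)$ uniformly in $\mu$ and $n$, and then push the estimate through the integral operator using bounds on $X$ and $X^{-1}$. Your use of the single reduced variable $w=(1-\mu)^{1/3}\xi^2/n^{2/3}$ to control all the dangerous $(1-\mu)^{-k/3}$ and $n^{k/3}$ prefactors at once is exactly the mechanism behind the paper's expansions \eqref{eq:F1}--\eqref{eq:F1fita}.

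The one methodological difference is how the bound on the fundamental matrix is obtained. You invoke a Gronwall argument in the weighted norm \eqref{eq:norm}, arguing that the $\xi=0$ flow is the frequency-$n$ harmonic oscillator (bounded after the $n$-rescaling) and that the $\mathcal{O}(\xi^3)$ perturbation in $D\bm{\mathcal{F}}_0(\bm{\mathcal{U}}_0^e)$ has operator norm $\mathcal{O}(\xi^3)$ in that norm. This works and is more conceptual. The paper instead writes down $X$ explicitly as $X=R^eA^e$, using that \eqref{eq:u0sys} is the two-body problem in rotating Levi-Civita coordinates and hence solvable in closed form (Lemma~\ref{le:X}); from there it reads off the componentwise structure $|X|,|X^{-1}|\le\mathcal{M}$ with the matrix \eqref{eq:1}. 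The explicit route is heavier but pays off later: the same formula for $X$ is reused to compute the actual leading term of $\bm{\mathcal{H}}\{\bm{0}\}(\hat{\mathcal{T}}_0^*)$ in Lemma~\ref{le:H0}, which a pure Gronwall bound would not give you.
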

\begin{proof}
    See Appendix \ref{app:Lema2}.
\end{proof}

\begin{lemma}\label{le:LipschitzLema}
    There exist $0<\xi_1\le \xi_0$ and a constant $M_2\ge M_1$
    %, both independent of $n$, 
    such that, for $0<\xi<\xi_1$, $0< \mu < 1$ and $n\in \mathbb{N}$, given $\bm{\mathcal{U}}_\oplus$, $\bm{\mathcal{U}}_\ominus\in B_R(\bm{0})$ with $R =2M_1\mu \xi^6$ then
    \[
        \lVert{\bm{\mathcal{H}}}\{\bm{\mathcal{U}}_\oplus\}
        -{\bm{\mathcal{H}}}\{\bm{\mathcal{U}}_\ominus\}\rVert \le M_2\mu\xi^6
        \lVert\bm{\mathcal{U}}_\oplus-\bm{\mathcal{U}}_\ominus\rVert.
    \]
\end{lemma}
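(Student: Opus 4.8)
The plan is to establish the Lipschitz estimate for the integral operator $\bm{\mathcal{H}}$ directly from its definition \eqref{defH}, controlling the difference $\bm{\mathcal{H}}\{\bm{\mathcal{U}}_\oplus\}-\bm{\mathcal{H}}\{\bm{\mathcal{U}}_\ominus\}$ by a Lipschitz bound on the nonlinear map $\bm{\mathcal{G}}$ together with bounds on the fundamental matrix $X(\hat{\mathcal{T}})$. First I would note that, since $X(\hat{\mathcal{T}})$ solves the variational equation \eqref{eq:u1sysV2linear} around the two-body ejection solution $\bm{\mathcal{U}}_0^e$, and since that solution and its derivatives are uniformly bounded for $\hat{\mathcal{T}}\in[0,T]$ (for $\xi$ small and all $n$, $\mu$), both $X$ and $X^{-1}$ are bounded on the compact interval $[0,T]$ by a constant independent of $n$, $\mu$, and $\xi$; the weighting by $n$ in the norm \eqref{eq:norm} is precisely designed to absorb the $n$-dependence coming from the $n\cos\theta_0$, $n\sin\theta_0$ initial velocities and the $\sin(n\hat{\mathcal{T}})$ oscillations. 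Taking the supremum through the integral then reduces the task to bounding $\lVert \bm{\mathcal{G}}(\bm{\mathcal{U}}_\oplus)-\bm{\mathcal{G}}(\bm{\mathcal{U}}_\ominus)\rVert$ pointwise.

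The core step is therefore a Lipschitz estimate on $\bm{\mathcal{G}}$ defined in \eqref{defG}. The key structural observation is that $\bm{\mathcal{G}}$ collects exactly the terms that vanish to first order: the piece $\bm{\mathcal{F}}_0(\bm{\mathcal{U}}_0^e+\bm{\mathcal{U}}_1)-\bm{\mathcal{F}}_0(\bm{\mathcal{U}}_0^e)-D\bm{\mathcal{F}}_0(\bm{\mathcal{U}}_0^e)\bm{\mathcal{U}}_1$ is the Taylor remainder of $\bm{\mathcal{F}}_0$, which is at least quadratic in $\bm{\mathcal{U}}_1$, while the perturbative piece $\mu\bm{\mathcal{F}}_1(\bm{\mathcal{U}}_0^e+\bm{\mathcal{U}}_1)$ carries the explicit prefactor $\mu$ and, crucially, a factor $\xi^2$ (the lowest power of $\xi$ appearing in $\bm{\mathcal{F}}_1$, see \eqref{eq:F01}). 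I would estimate the nonlinear part of $\bm{\mathcal{F}}_0$ by writing its difference as an integral of $D\bm{\mathcal{F}}_0$ along the segment joining the two arguments and using that $\bm{\mathcal{F}}_0$ is polynomial in its arguments with the nonlinear terms carrying factors $\xi^3$ and $\xi^6$; on the ball $B_R(\bm{0})$ with $R=2M_1\mu\xi^6$ every extra power of $\bm{\mathcal{U}}_1$ contributes a factor $O(\mu\xi^6)$, so these remainder contributions are of order $\mu\xi^6$ or smaller relative to $\lVert\bm{\mathcal{U}}_\oplus-\bm{\mathcal{U}}_\ominus\rVert$. For the $\mu\bm{\mathcal{F}}_1$ part I would use the mean value theorem together with the analyticity of $1/r_2$ and $1/r_2^3$ (from \eqref{eq:r2nou}, valid for $\xi$ small and bounded arguments) to get a Lipschitz constant of size $O(\mu\xi^2)$; this is the \emph{largest} of the two contributions, so it is multiplied by $\xi^4$ hidden structure or simply absorbed by taking $\xi_1$ small enough so that the whole bound is $\le M_2\mu\xi^6$.

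The delicate point, and what I expect to be the main obstacle, is verifying the uniformity of all constants in $n$. Because the frequency $n$ appears both in the linear part $-n^2\mathcal{U}$ and through the initial velocity $n\cos\theta_0$, naive estimates would pick up powers of $n$ that destroy the claimed uniform bound $M_2\mu\xi^6$. The resolution is the scaled norm \eqref{eq:norm}, in which positions are weighted by $n$; one checks that under this norm the fundamental matrix $X$, the solution $\bm{\mathcal{U}}_0^e$, and each monomial appearing in $\bm{\mathcal{F}}_0$ and $\bm{\mathcal{F}}_1$ have $n$-independent bounds, since the factors $n^{4/3}$, $n^{2/3}$ in \eqref{eq:F01} are matched by the scalings built into the change of variables. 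I would carry this out by tracking the $n$-homogeneity of each term and confirming that the weighting in \eqref{eq:norm} exactly cancels it, so that choosing $\xi_1\le\xi_0$ small enough (so the product of the operator bound on $X$, $X^{-1}$, the interval length $T$, and the pointwise Lipschitz constant of $\bm{\mathcal{G}}$ stays below $M_2\mu\xi^6$) completes the proof. The remaining computational details are routine polynomial and geometric-series estimates, which I would relegate to the appendix as the statement does.
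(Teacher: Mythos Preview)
Your overall architecture matches the paper's: split $\bm{\mathcal{G}}$ into the Taylor remainder of $\bm{\mathcal{F}}_0$ and the $\mu\bm{\mathcal{F}}_1$ piece, bound each by the mean value theorem, then push the bound through $X$, $X^{-1}$ using the weighted norm \eqref{eq:norm}. However, there is a genuine gap in your treatment of the $\mu\bm{\mathcal{F}}_1$ term.

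You assert that the Lipschitz constant of $\mu\bm{\mathcal{F}}_1$ is $O(\mu\xi^2)$, reading off the lowest explicit power of $\xi$ in \eqref{eq:F01}, and then say this is ``multiplied by $\xi^4$ hidden structure or simply absorbed by taking $\xi_1$ small enough so that the whole bound is $\le M_2\mu\xi^6$.'' Neither alternative works as stated. A bound $C\mu\xi^2$ cannot be made $\le M_2\mu\xi^6$ for all small $\xi$ with a fixed $M_2$; the exponent $6$ is not cosmetic but is exactly what makes the subsequent fixed-point contraction and the expansion of $\bm{\mathcal{U}}_1^e$ go through. What actually happens is that $\bm{\mathcal{F}}_1$ and $D\bm{\mathcal{F}}_1$ are genuinely $O(\xi^6)$: the factor $\xi^2$ in \eqref{eq:F01} multiplies $(1/r_2-1)$, which by \eqref{eq:r2nou} is itself $O(\xi^2)$, so the lowest possible order is $\xi^4$; and then the three $\xi^4$ contributions in each component of \eqref{eq:F01} (which carry the dangerous coefficients $n^{2/3}/(1-\mu)^{1/3}$) cancel \emph{exactly}. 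This cancellation is precisely what yields \eqref{eq:F1ordre}, and it is indispensable for uniformity in $n$ and in $\mu\to 1$. The paper makes this explicit by computing $D\bm{\mathcal{F}}_1$ and showing its leading term is order $\xi^6$.

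A smaller point: for the $\bm{\mathcal{G}}_0$ piece you write its difference as an integral of $D\bm{\mathcal{F}}_0$, but since $\bm{\mathcal{G}}_0$ already subtracts $D\bm{\mathcal{F}}_0(\bm{\mathcal{U}}_0^e)\bm{\mathcal{U}}_1$, a single application of the mean value theorem leaves you with $D\bm{\mathcal{F}}_0(\bm{\mathcal{U}}_0^e+\cdots)-D\bm{\mathcal{F}}_0(\bm{\mathcal{U}}_0^e)$ inside the integral; you need a second mean value step bringing in $D^2\bm{\mathcal{F}}_0$ to extract the factor $R=2M_1\mu\xi^6$. Your remark that ``every extra power of $\bm{\mathcal{U}}_1$ contributes $O(\mu\xi^6)$'' is the right intuition, but the mechanism is this double integral, as the paper does.
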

\begin{proof}
    See Appendix \ref{app:Lema3}.
\end{proof}

At this point we select $\xi_1$ s.t. $M_2\mu\xi_1^6<1/2$, so we have the following result
\begin{lemma}\label{le:fixpont}
Under the same hypotheses of Lemma \ref{le:LipschitzLema} if we reduce $\xi_1$ such that $M_2\mu\xi_1^6<1/2$, one has that
 the operator $\bm{\mathcal{H}} : B_R(\bm{0}) \rightarrow B_R(\bm{0})$ and it is a contraction and therefore there exists  a unique $\bm{\mathcal{U}}_1^e\in B_R(\bm{{0}})$ which is solution of the equation \eqref{eqH} in $\chi$.
\end{lemma}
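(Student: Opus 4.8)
The plan is to read Lemma \ref{le:fixpont} as a direct application of the Banach (contraction mapping) fixed point theorem on the closed ball $B_R(\bm{0})$ with $R = 2M_1\mu\xi^6$. Since $B_R(\bm{0})$ is a closed subset of the Banach space $\chi$ (equipped with the norm \eqref{eq:norm}), it is itself a complete metric space, so the only two things that need checking are that $\bm{\mathcal{H}}$ maps $B_R(\bm{0})$ into itself and that it is a contraction there. Both facts are essentially already packaged into Lemmas \ref{Hfitatlema} and \ref{le:LipschitzLema}, once $\xi_1$ is reduced so that $M_2\mu\xi_1^6 < 1/2$.

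First I would verify the self-mapping property. For an arbitrary $\bm{\mathcal{U}} \in B_R(\bm{0})$, I split $\bm{\mathcal{H}}\{\bm{\mathcal{U}}\} = \left(\bm{\mathcal{H}}\{\bm{\mathcal{U}}\} - \bm{\mathcal{H}}\{\bm{0}\}\right) + \bm{\mathcal{H}}\{\bm{0}\}$ and apply the triangle inequality,
\[
\lVert \bm{\mathcal{H}}\{\bm{\mathcal{U}}\}\rVert \le \lVert \bm{\mathcal{H}}\{\bm{\mathcal{U}}\} - \bm{\mathcal{H}}\{\bm{0}\}\rVert + \lVert \bm{\mathcal{H}}\{\bm{0}\}\rVert.
\]
By Lemma \ref{le:LipschitzLema}, applied with $\bm{\mathcal{U}}_\oplus = \bm{\mathcal{U}}$ and $\bm{\mathcal{U}}_\ominus = \bm{0}$, the first term is bounded by $M_2\mu\xi^6\lVert\bm{\mathcal{U}}\rVert \le M_2\mu\xi^6 R$, while by Lemma \ref{Hfitatlema} the second term is bounded by $M_1\mu\xi^6 = R/2$. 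Because $\xi < \xi_1$ and $M_2\mu\xi_1^6 < 1/2$, the first term is strictly smaller than $R/2$, and hence $\lVert\bm{\mathcal{H}}\{\bm{\mathcal{U}}\}\rVert < R$. This shows $\bm{\mathcal{H}}(B_R(\bm{0})) \subset B_R(\bm{0})$.

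Second, the contraction property is precisely Lemma \ref{le:LipschitzLema}: for $\bm{\mathcal{U}}_\oplus, \bm{\mathcal{U}}_\ominus \in B_R(\bm{0})$ one has
\[
\lVert \bm{\mathcal{H}}\{\bm{\mathcal{U}}_\oplus\} - \bm{\mathcal{H}}\{\bm{\mathcal{U}}_\ominus\}\rVert \le M_2\mu\xi^6 \lVert \bm{\mathcal{U}}_\oplus - \bm{\mathcal{U}}_\ominus\rVert,
\]
and the Lipschitz constant $\kappa := M_2\mu\xi^6 \le M_2\mu\xi_1^6 < 1/2$ is strictly less than one, uniformly in $\mu \in (0,1)$ and $n \in \mathbb{N}$. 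Consequently $\bm{\mathcal{H}}$ is a contraction on the complete metric space $B_R(\bm{0})$, and the contraction mapping principle delivers a unique fixed point $\bm{\mathcal{U}}_1^e \in B_R(\bm{0})$ solving the implicit equation \eqref{eqH}, which is exactly the claim.

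As for the main obstacle: in the statement of Lemma \ref{le:fixpont} there is in fact essentially none left, since all the genuine analytic work, namely the quantitative bound on $\bm{\mathcal{H}}\{\bm{0}\}$ and the Lipschitz estimate for the nonlinearity $\bm{\mathcal{G}}$ with constants \emph{uniform} in $\mu$ and $n$, has been isolated into Lemmas \ref{Hfitatlema} and \ref{le:LipschitzLema} (whose proofs are deferred to the appendices). The only points I would be careful to state explicitly are that $B_R(\bm{0})$ must be taken closed so that completeness holds, and that the final shrinking of $\xi_1$ to enforce $M_2\mu\xi_1^6 < 1/2$ is harmless: it merely restricts the admissible range of the small parameter $\xi = 1/\sqrt{L}$ (equivalently, requires $L$ large enough), without spoiling the uniformity in $\mu \in (0,1)$ and $n \in \mathbb{N}$ that is the whole point of Theorem \ref{maintheoremN}.
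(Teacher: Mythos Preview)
Your proof is correct and follows essentially the same approach as the paper: the same triangle-inequality splitting $\bm{\mathcal{H}}\{\bm{\mathcal{U}}\} = (\bm{\mathcal{H}}\{\bm{\mathcal{U}}\} - \bm{\mathcal{H}}\{\bm{0}\}) + \bm{\mathcal{H}}\{\bm{0}\}$, the same bounds $R/2 + R/2$ from Lemmas~\ref{Hfitatlema} and~\ref{le:LipschitzLema}, and the same appeal to the contraction mapping theorem. Your version is in fact slightly more explicit (you spell out why $B_R(\bm{0})$ is complete and why the shrinking of $\xi_1$ is harmless), but there is no substantive difference.
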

\begin{proof}
If $\bm{\mathcal{U}}\in B_R(\bm{0})$, then:
\[
    \lVert\bm{\mathcal{H}}\{\bm{\mathcal{U}}\}\rVert
    =\lVert\bm{\mathcal{H}}\{\bm{0}\}+\bm{\mathcal{H}}\{\bm{\mathcal{U}}\}-\bm{\mathcal{H}}\{\bm{0}\}\rVert
    \leq
    \lVert\bm{\mathcal{H}}\{\bm{0}\}\rVert+\lVert\bm{\mathcal{H}}\{\bm{\mathcal{U}}\}-\bm{\mathcal{H}}\{\bm{0}\}\rVert
    \leq \frac{R}{2}+\frac{R}{2} 
    =  R,
\]
and we already know by Lemma \ref{le:LipschitzLema} that $\bm{\mathcal{H}}$ is Lipschitz with Lipschitz constant $M_2\mu\xi^6<1/2$.

By the Fixed Point Theorem there exists a unique 
$\bm{\mathcal{U}}_1^e\in B_R(\bm{{0}})$ which is solution of the equation \eqref{eqH}.
\end{proof}

Observe that once we know the existence and bounds of the function $\bm{\mathcal{U}}_1^e$, its smoothness is a consequence of being solution of a smooth differential equation.

The results of the previous lemmas give us the following properties: 
\begin{itemize}[topsep=-0.5ex]
    \item
    $\lVert\bm{\mathcal{U}}_1^e\rVert \le R=2M_1\mu \xi ^6$,
    \item
    $
    ||\bm{\mathcal{U}}_1^e-\bm{\mathcal{H}}\{\bm{{0}}\}||
    =||\bm{\mathcal{H}}\{\bm{\mathcal{U}}_1^e\}-\bm{\mathcal{H}}\{\bm{0}\}||
\le M_2\mu \xi ^6    ||\bm{\mathcal{U}}_1^e||\le 2M_1M_2\mu^2 \xi^{12}.$
\end{itemize}
Writing these inequalities in components, and using the definition of the norm \eqref{eq:norm}, we have
\begin{itemize}[topsep=-0.5ex]
    \item $\mathcal{U}_1 ^e= \mathcal{H}_1\{\bm{0}\} + \cfrac{\mu^2}{n}\mathcal{O}(\xi^{12})$,
    \item $\mathcal{V}_1 ^e= \mathcal{H}_2\{\bm{0}\} + \cfrac{\mu^2}{n}\mathcal{O}(\xi^{12})$,
    \item $\dot{\mathcal{U}}_1 ^e= \mathcal{H}_3\{\bm{0}\} + \mu^2\mathcal{O}(\xi^{12})$,
    \item $\dot{\mathcal{V}}_1 ^e= \mathcal{H}_4\{\bm{0}\} + \mu^2\mathcal{O}(\xi^{12})$,
\end{itemize}
where $\bm{\mathcal{H}} = (\mathcal{H}_1,\mathcal{H}_2,\mathcal{H}_3,\mathcal{H}_4).$

\begin{lemma} \label{le:H0}
With the same hypotheses of Lemma \ref{le:fixpont}, the value of $\bm{\mathcal{H}}\{\bm{0}\}(\hat{\mathcal{T}}_0^*)=\bm{\mathcal{H}}\{\bm{0}\}(\pi)$ is given by
    \begin{equation}
\begin{aligned}
    {\mathcal{H}}_1\{\bm{0}\}(\hat{\mathcal{T}}_0^*) &= \mathcal{U}_6^e(\hat{\mathcal{T}}_0^*)\xi^6 + \frac{\mu}{n}\mathcal{O}(\xi^8),\\
    {\mathcal{H}}_2\{\bm{0}\}(\hat{\mathcal{T}}_0^*) &= \mathcal{V}_6^e(\hat{\mathcal{T}}_0^*)\xi^6 + \frac{\mu}{n}\mathcal{O}(\xi^8) ,\\
    {\mathcal{H}}_3\{\bm{0}\}(\hat{\mathcal{T}}_0^*) &= \dot{\mathcal{U}}_6^e(\hat{\mathcal{T}}_0^*)\xi^6 + \mu\mathcal{O}(\xi^8) ,\\
    {\mathcal{H}}_4\{\bm{0}\}(\hat{\mathcal{T}}_0^*) &= \dot{\mathcal{V}}_6^e(\hat{\mathcal{T}}_0^*)\xi^6 + \mu\mathcal{O}(\xi^8).
\end{aligned}
\end{equation}
where $\bm{\mathcal{U}}_6^e(\hat{\mathcal{T}}_0^*) = (\mathcal{U}_6^e,\mathcal{V}_6^e,\dot{\mathcal{U}}_6^e,\dot{\mathcal{V}}_6^e)(\hat{\mathcal{T}}_0^*)$ are the coefficients of $\bm{\mathcal{U}}_1^e$ of order 6 in $\xi$ evaluated at $\hat{\mathcal{T}}_0^*$. They are given by:
\begin{equation}
\begin{aligned}
    \mathcal{U}_6^e(\hat{\mathcal{T}}_0^*)  &= -\frac{15(-1)^n\mu\pi\cos\theta_0(2\cos^4\theta_0 - 1)}{2n},\\
    \mathcal{V}_6^e(\hat{\mathcal{T}}_0^*) &=  -\frac{15(-1)^n\mu\pi\sin\theta_0(2\sin^4\theta_0 - 1)}{2n},\\
    \dot{\mathcal{U}}_6^e(\hat{\mathcal{T}}_0^*) & =  -\frac{4(-1)^n\mu\cos\theta_0(2\cos^4\theta_0 - 1)}{n}  ,\\
    \dot{\mathcal{V}}_6^e(\hat{\mathcal{T}}_0^*) & = -\frac{4(-1)^n\mu\sin\theta_0(2\sin^4\theta_0 - 1)}{n}.
\end{aligned}
\end{equation}

\end{lemma}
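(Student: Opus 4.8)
I would begin by observing that evaluating \eqref{defG} at $\bm{\mathcal{U}}=\bm{0}$ collapses almost everything: the two copies of $\bm{\mathcal{F}}_0(\bm{\mathcal{U}}_0^e)$ cancel and the term $D\bm{\mathcal{F}}_0(\bm{\mathcal{U}}_0^e)\bm{0}$ vanishes, so $\bm{\mathcal{G}}(\bm{0})=\mu\bm{\mathcal{F}}_1(\bm{\mathcal{U}}_0^e)$. Substituting this into \eqref{defH} gives the closed expression
\[
\bm{\mathcal{H}}\{\bm{0}\}(\hat{\mathcal{T}})=\mu\,X(\hat{\mathcal{T}})\int_0^{\hat{\mathcal{T}}}X^{-1}(s)\,\bm{\mathcal{F}}_1\big(\bm{\mathcal{U}}_0^e(s)\big)\,ds ,
\]
where $\bm{\mathcal{U}}_0^e$ is the explicit two-body ejection solution \eqref{eq:solucioEcosKeplerRaro} and $X$ is the fundamental matrix of \eqref{eq:u1sysV2linear}. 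Thus the whole lemma reduces to expanding this single quadrature in $\xi$ and evaluating it at $\hat{\mathcal{T}}_0^*=\pi$.

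The decisive step is the $\xi$-expansion of $\bm{\mathcal{F}}_1$ along $\bm{\mathcal{U}}_0^e$. Using \eqref{eq:r2nou} together with $1/r_2=1-\tfrac{2(1-\mu)^{1/3}(\mathcal{U}^2-\mathcal{V}^2)}{n^{2/3}}\xi^2+\mathcal{O}(\xi^4)$ and the analogous series for $1/r_2^3$, I would first note that the only $\xi^2$-prefactored term in \eqref{eq:F01} already carries a factor $1/r_2-1=\mathcal{O}(\xi^2)$, so there is \emph{no} $\xi^2$ contribution. The genuine point is that the four $\xi^4$ contributions cancel identically as polynomials in $(\mathcal{U},\mathcal{V})$: the bracket collapses to $-(\mathcal{U}^2-\mathcal{V}^2)-(\mathcal{U}^2+\mathcal{V}^2)+2\mathcal{U}^2\equiv0$ (and its $\mathcal{V}$-analogue). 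Hence $\bm{\mathcal{F}}_1(\bm{\mathcal{U}}_0^e)=\xi^6\bm{A}(\bm{\mathcal{U}}_0^e)+\mathcal{O}(\xi^8)$, where the nonzero entries of $\bm{A}$ are $24(\mathcal{U}^4-2\mathcal{U}^2\mathcal{V}^2-\mathcal{V}^4)\mathcal{U}$ and $24(\mathcal{V}^4-2\mathcal{U}^2\mathcal{V}^2-\mathcal{U}^4)\mathcal{V}$, both independent of $\mu$. This cancellation is what forces the perturbation to enter only at order $\xi^6$, and it is the heart of the argument.

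Once the integrand is known to be $\mathcal{O}(\xi^6)$, everything else linearizes. Since $X$, $X^{-1}$ and $\bm{\mathcal{U}}_0^e$ differ from their $\xi\to0$ limits by $\mathcal{O}(\xi^3)$, I may replace $X$ by the harmonic fundamental matrix $X_0$ of frequency $n$ and $\bm{\mathcal{U}}_0^e$ by $(\cos\theta_0\sin(n s),\sin\theta_0\sin(n s),\dots)$: those replacements enter only at $\mathcal{O}(\xi^9)$, while $\bm{\mathcal{F}}_1$ itself contributes its own $\xi^8$ term, so the total remainder is $\mathcal{O}(\xi^8)$. The order-$\xi^6$ coefficient of $\bm{\mathcal{H}}\{\bm{0}\}$ is then $\mu\,X_0(\hat{\mathcal{T}})\int_0^{\hat{\mathcal{T}}}X_0^{-1}(s)\bm{A}(\bm{\mathcal{U}}_{0,0}^e(s))\,ds$; by Lemmas \ref{Hfitatlema} and \ref{le:LipschitzLema} the fixed point $\bm{\mathcal{U}}_1^e$ agrees with $\bm{\mathcal{H}}\{\bm{0}\}$ up to $\mathcal{O}(\mu^2\xi^{12})$, so this coefficient is precisely $\bm{\mathcal{U}}_6^e$. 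The $1/n$ weight on the position components, and its absence on the velocities, is inherited from the off-diagonal blocks $\tfrac{\sin(n\,\cdot)}{n}$ of $X_0$, which is also why the remainders split as $\tfrac{\mu}{n}\mathcal{O}(\xi^8)$ for positions and $\mu\,\mathcal{O}(\xi^8)$ for velocities, consistently with the norm \eqref{eq:norm}.

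It then remains to perform an elementary but careful quadrature. Substituting the harmonic ejection orbit into $\bm{A}$ produces the forcing $24\mu(2\cos^4\theta_0-1)\cos\theta_0\sin^5(n s)$ in the $\mathcal{U}$-equation, together with its $\cos\!/\sin$-swapped counterpart in the $\mathcal{V}$-equation; I would linearize $\sin^5(n s)=\tfrac58\sin(ns)-\tfrac5{16}\sin(3ns)+\tfrac1{16}\sin(5ns)$ and solve $\ddot{\mathcal{U}}_6+n^2\mathcal{U}_6=(\text{forcing})$ with zero initial data by variation of constants. At $\hat{\mathcal{T}}=\pi$ the three non-resonant harmonics vanish because $\sin(kn\pi)=0$, and only the secular part of the resonant term survives, which yields $\mathcal{U}_6^e(\pi)=-\tfrac{15(-1)^n\mu\pi\cos\theta_0(2\cos^4\theta_0-1)}{2n}$ and the analogous $\mathcal{V}_6^e(\pi)$; the velocity coefficients $\dot{\mathcal{U}}_6^e$, $\dot{\mathcal{V}}_6^e$ are read off from the third and fourth components of the same variation-of-constants expression. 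The main obstacle is not this final integration but the order bookkeeping of the previous paragraph: verifying the $\xi^4$ cancellation and confirming that the $\xi^3$-corrections to $X$ and to $\bm{\mathcal{U}}_0^e$ genuinely push their effect beyond $\mathcal{O}(\xi^6)$, so that the purely harmonic model captures the entire order-$\xi^6$ coefficient.
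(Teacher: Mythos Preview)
Your proposal is correct and follows essentially the same route as the paper: reduce $\bm{\mathcal{H}}\{\bm 0\}$ to $\mu X\int X^{-1}\bm{\mathcal{F}}_1(\bm{\mathcal{U}}_0^e)$, expand $\bm{\mathcal{F}}_1$ to expose the leading $\xi^6$ term, replace $X$, $X^{-1}$ and $\bm{\mathcal{U}}_0^e$ by their harmonic limits at that order, and integrate. Your explicit verification of the $\xi^4$ cancellation and your resonant/non-resonant decomposition of $\sin^5(ns)$ are cosmetic variants of what the paper does via \eqref{eq:F1ordre} and direct integration of $\sin^6(n\hat{\mathcal{T}})$ and $\cos(n\hat{\mathcal{T}})\sin^5(n\hat{\mathcal{T}})$.
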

\begin{proof}
    See Appendix \ref{app:Lema5}.
\end{proof}

With this notation, we have
\begin{itemize}[topsep=-0.5ex]
    \item $\mathcal{U}^e(\hat{\mathcal{T}}_0^*) = \mathcal{U}_0^e(\hat{\mathcal{T}}_0^*) + \mathcal{U}_6^e(\hat{\mathcal{T}}_0^*)\xi^6 + \cfrac{\mu}{n}\mathcal{O}(\xi^8)$,
    \item $\mathcal{V}^e(\hat{\mathcal{T}}_0^*) = \mathcal{V}_0^e(\hat{\mathcal{T}}_0^*) + \mathcal{V}_6^e(\hat{\mathcal{T}}_0^*)\xi^6 + \cfrac{\mu}{n}\mathcal{O}(\xi^8)$,
    \item $\dot{\mathcal{U}}^e(\hat{\mathcal{T}}_0^*) = \dot{\mathcal{U}}_0^e(\hat{\mathcal{T}}_0^*) + \dot{\mathcal{U}}_6^e(\hat{\mathcal{T}}_0^*)\xi^6 + \mu\mathcal{O}(\xi^8)$,
    \item $\dot{\mathcal{V}}^e(\hat{\mathcal{T}}_0^*) = \dot{\mathcal{V}}_0^e(\hat{\mathcal{T}}_0^*) + \dot{\mathcal{V}}_6^e(\hat{\mathcal{T}}_0^*)\xi^6 + \mu\mathcal{O}(\xi^8)$.
\end{itemize}

The time needed to reach the $n$ minimum in the distance with the first primary can be obtained from the following Lemma:
\begin{lemma}
With the same hypotheses of Lemma \ref{le:fixpont}, the time $\hat{\mathcal{T}}^*$ needed for the ejection solution $\bm{\mathcal{U}}^e $ to reach the $n$ minimum in the distance with the first primary is given by
$\hat{\mathcal{T}}^*=\hat{\mathcal{T}}_0^*+\hat{\mathcal{T}}_1^*$, where $\hat{\mathcal{T}}_0^*=\pi$ and: 
    \begin{equation}
    \hat{\mathcal{T}}_1^* 
    = \frac{15\mu\pi(3\cos(4\theta_0) + 1)}{8n^2}\xi^6 + \frac{\mu}{n^2}\mathcal{O}(\xi^8).
\end{equation}
\end{lemma}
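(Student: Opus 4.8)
The plan is to characterize the time $\hat{\mathcal{T}}^*$ of the $n$-th minimum distance to the origin as the root near $\hat{\mathcal{T}}_0^*=\pi$ of
\[
\Phi(\hat{\mathcal{T}},\xi):=\big(\mathcal{U}^e\dot{\mathcal{U}}^e+\mathcal{V}^e\dot{\mathcal{V}}^e\big)(\hat{\mathcal{T}})=\tfrac12\,\tfrac{d}{d\hat{\mathcal{T}}}\big(\mathcal{U}^{e\,2}+\mathcal{V}^{e\,2}\big)(\hat{\mathcal{T}})=0,
\]
and then apply the Implicit Function Theorem. First I would compute the unperturbed part: from the rotated solution \eqref{eq:solucioEcosKeplerRaro} one has the identity $(\mathcal{U}_0^e)^2+(\mathcal{V}_0^e)^2=\sin^2(n\hat{\mathcal{T}})$ (the rotation angle $t$ enters only as a common phase and cancels), so $\Phi_0(\hat{\mathcal{T}})=\tfrac{n}{2}\sin(2n\hat{\mathcal{T}})$, giving $\Phi_0(\pi)=0$ and $\partial_{\hat{\mathcal{T}}}\Phi_0(\pi)=n^2\neq0$. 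Since $\Phi$ is smooth jointly in $(\hat{\mathcal{T}},\xi)$ by Lemma~\ref{le:fixpont}, the IFT produces a unique root $\hat{\mathcal{T}}^*=\pi+\hat{\mathcal{T}}_1^*$ with $\hat{\mathcal{T}}_1^*\to0$ as $\xi\to0$, and the non-degeneracy $n^2>0$ certifies it is a genuine minimum.

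Next I would extract the leading behaviour. Writing $\mathcal{U}^e=\mathcal{U}_0^e+\mathcal{U}_1^e$ (and likewise for $\mathcal{V}^e$), setting $\Phi_1:=\Phi-\Phi_0$, and Taylor expanding $\Phi(\pi+\hat{\mathcal{T}}_1^*)=0$ about $\pi$, the cancellation $\Phi_0(\pi)=0$ yields $\hat{\mathcal{T}}_1^*=-\Phi_1(\pi)/n^2+\cdots$. The crucial simplification is that $\mathcal{U}_0^e(\pi)=\mathcal{V}_0^e(\pi)=0$ because $\sin(n\pi)=0$; hence the terms $\mathcal{U}_0^e\dot{\mathcal{U}}_1^e$ and $\mathcal{V}_0^e\dot{\mathcal{V}}_1^e$ vanish exactly and the quadratic terms $\mathcal{U}_1^e\dot{\mathcal{U}}_1^e+\mathcal{V}_1^e\dot{\mathcal{V}}_1^e$ are $\mathcal{O}(\mu^2\xi^{12})$, leaving $\Phi_1(\pi)=\big(\mathcal{U}_1^e\dot{\mathcal{U}}_0^e+\mathcal{V}_1^e\dot{\mathcal{V}}_0^e\big)(\pi)+\mu\,\mathcal{O}(\xi^8)$. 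I would then insert Lemma~\ref{le:H0}, namely $\mathcal{U}_1^e(\pi)=\mathcal{U}_6^e(\pi)\xi^6+\tfrac{\mu}{n}\mathcal{O}(\xi^8)$ with the stated $\mathcal{U}_6^e(\pi),\mathcal{V}_6^e(\pi)$, together with $\dot{\mathcal{U}}_0^e(\pi)=(-1)^n n\cos\theta_0+\mathcal{O}(n\xi^3)$ and $\dot{\mathcal{V}}_0^e(\pi)=(-1)^n n\sin\theta_0+\mathcal{O}(n\xi^3)$ read off from \eqref{eq:solucioEcosKeplerRaro}–\eqref{eq:tempsRotacio} using $t(\pi)=2\pi\xi^3$. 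The squared signs $(-1)^n(-1)^n=1$ and the factors of $n$ cancel, producing
\[
\Phi_1(\pi)=-\frac{15\mu\pi}{2}\Big[\cos^2\theta_0(2\cos^4\theta_0-1)+\sin^2\theta_0(2\sin^4\theta_0-1)\Big]\xi^6+\mu\,\mathcal{O}(\xi^8).
\]

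A short trigonometric reduction finishes the computation: the bracket equals $2(\cos^6\theta_0+\sin^6\theta_0)-1$, and with $\cos^6\theta_0+\sin^6\theta_0=1-\tfrac34\sin^2(2\theta_0)$ and $\sin^2(2\theta_0)=\tfrac12(1-\cos(4\theta_0))$ it collapses to $\tfrac14(1+3\cos(4\theta_0))$. Dividing by $-n^2$ gives exactly $\hat{\mathcal{T}}_1^*=\dfrac{15\mu\pi(3\cos(4\theta_0)+1)}{8n^2}\xi^6+\dfrac{\mu}{n^2}\mathcal{O}(\xi^8)$, as claimed. The routine algebra aside, the main obstacle is the error bookkeeping and, especially, its \emph{uniformity} in $n$ and $\mu$: I would have to verify, using the $n$-weighted norm \eqref{eq:norm} and the bounds of Lemmas~\ref{Hfitatlema}–\ref{le:fixpont}, that each discarded contribution — the $\mathcal{O}(\xi^8)$ tail of $\mathcal{U}_1^e(\pi)$, the $\mathcal{O}(n\xi^3)$ correction to $\dot{\mathcal{U}}_0^e(\pi)$ (which multiplies the $\xi^6$ term into $\mathcal{O}(\xi^9)$), the $\mathcal{O}(\mu^2\xi^{12})$ quadratic terms, and the second-order Taylor/IFT remainder (where $\partial^2_{\hat{\mathcal{T}}}\Phi\sim n^3$ but $(\hat{\mathcal{T}}_1^*)^2\sim\mu^2\xi^{12}/n^4$) — is absorbed into a single remainder $\tfrac{\mu}{n^2}\mathcal{O}(\xi^8)$ with constant independent of $\mu\in(0,1)$ and $n\in\mathbb{N}$. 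Securing the precise $1/n^2$ scaling is exactly what forces the careful use of the weighted norm rather than a naive supremum bound.
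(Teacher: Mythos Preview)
Your proposal is correct and follows essentially the same approach as the paper: set up the radial derivative $\Phi=\mathcal{U}^e\dot{\mathcal{U}}^e+\mathcal{V}^e\dot{\mathcal{V}}^e$, Taylor expand around $\hat{\mathcal{T}}_0^*=\pi$, exploit $\mathcal{U}_0^e(\pi)=\mathcal{V}_0^e(\pi)=0$ and $\partial_{\hat{\mathcal{T}}}\Phi_0(\pi)=n^2$, and read off $\hat{\mathcal{T}}_1^*=-\xi^6(\mathcal{U}_6^e\dot{\mathcal{U}}_0^e+\mathcal{V}_6^e\dot{\mathcal{V}}_0^e)(\pi)/n^2+\cdots$ using Lemma~\ref{le:H0}. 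Your write-up is in fact more explicit than the paper's on the trigonometric reduction and the uniform-in-$(\mu,n)$ error accounting via the weighted norm.
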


\begin{proof}
    In order to compute the $n$ minimum in the distance with the first primary we have to solve
    \[
    \begin{aligned}
    0 &=
    \left(\mathcal{U}^e\dot{\mathcal{U}}^e + \mathcal{V}^e\dot{\mathcal{V}}^e\right)(\hat{\mathcal{T}}^*)\\ 
    &=
    \left([\mathcal{U}_0^e + \mathcal{U}_1^e]
    [\dot{\mathcal{U}}_0^e + \dot{\mathcal{U}}_1^e] + 
    [\mathcal{V}_0^e + \mathcal{V}_1^e]
    [\dot{\mathcal{V}}_0^e + \dot{\mathcal{V}}_1^e]\right)(\hat{\mathcal{T}}^*)\\
    & =
    \left(\mathcal{U}_0^e\dot{\mathcal{U}}_0^e 
    + \mathcal{V}_0^e\dot{\mathcal{V}}_0^e\right)(\hat{\mathcal{T}}^*)
    + 
    \xi^6\left(\mathcal{U}_0^e\dot{\mathcal{U}}_6^e 
    +\mathcal{U}_6^e\dot{\mathcal{U}}_0^e 
    + \mathcal{V}_0^e\dot{\mathcal{V}}_6^e
    +\mathcal{V}_6^e\dot{\mathcal{V}}_0^e\right)(\hat{\mathcal{T}}^*) + \mu\mathcal{O}(\xi^8)\\
    &\hspace{5mm}
    +
    \xi^{12}\left(\mathcal{U}_6^e\dot{\mathcal{U}}_6^e + \mathcal{V}_6^e\dot{\mathcal{V}}_6^e\right)(\mathcal{T}^*) + \frac{\mu^2}{n}\mathcal{O}(\xi^{14})\\
   & =
    \left(\mathcal{U}_0^e\dot{\mathcal{U}}_0^e 
    + \mathcal{V}_0^e\dot{\mathcal{V}}_0^e\right)(\hat{\mathcal{T}}^*)
    + 
    \xi^6\left(\mathcal{U}_0^e\dot{\mathcal{U}}_6^e 
    +\mathcal{U}_6^e\dot{\mathcal{U}}_0^e 
    + \mathcal{V}_0^e\dot{\mathcal{V}}_6^e
    +\mathcal{V}_6^e\dot{\mathcal{V}}_0^e\right)(\hat{\mathcal{T}}^*) + \mu\mathcal{O}(\xi^8)\\
    & = 
   \left(\mathcal{U}_0^e\dot{\mathcal{U}}_0^e 
    + \mathcal{V}_0^e\dot{\mathcal{V}}_0^e\right)(\hat{\mathcal{T}}^*_0+\hat{\mathcal{T}}^*_1)
    +  
    \xi^6\left(\mathcal{U}_0^e\dot{\mathcal{U}}_6^e 
    + \mathcal{U}_6^e \dot{\mathcal{U}}_0^e 
    + \mathcal{V}_0^e\dot{\mathcal{V}}_6^e
    + \mathcal{V}_6^e \dot{\mathcal{V}}_0^e\right)(\hat{\mathcal{T}}^*_0+\hat{\mathcal{T}}^*_1)\\
    &\hspace{5mm}
    + \mu\mathcal{O}(\xi^{8})\\
    & = 
    \left(\mathcal{U}_0^e\dot{\mathcal{U}}_0^e + \mathcal{V}_0^e\dot{\mathcal{V}}_0^e\right)(\hat{\mathcal{T}}_0^*)
    + 
    \hat{\mathcal{T}}_1^*\left(\mathcal{U}_0^e\ddot{\mathcal{U}}_0^e + \dot{\mathcal{U}}_0^{e^2} +\mathcal{V}_0\ddot{\mathcal{V}}_0
    + \dot{\mathcal{V}}_0^{e^2}\right)(\hat{\mathcal{T}}_0^*)\\
    & \hspace{5mm}
    +
    \xi^6\left(\mathcal{U}_0^e\dot{\mathcal{U}}_6^e 
    + \mathcal{U}_6^e \dot{\mathcal{U}}_0^e 
    + \mathcal{V}_0^e\dot{\mathcal{V}}_6^e
    + \mathcal{V}_6^e \dot{\mathcal{V}}_0^e\right)(\hat{\mathcal{T}}_0^*)
    + \mu\mathcal{O}(\xi^8)\\
    & = 
    \hat{\mathcal{T}}_1^*n^2
    +
    \xi^6\left(
    \mathcal{U}_6^e\dot{\mathcal{U}}_0^e
    +\mathcal{V}_6^e\dot{\mathcal{V}}_0^e\right)(\hat{\mathcal{T}}_0^*)
    + \mu\mathcal{O}(\xi^8),
    \end{aligned}
\]
and therefore we have
\begin{equation}
    \mathcal{T}_1^* = \frac{\left(\mathcal{U}_6^e\dot{\mathcal{U}}_0^e
    +\mathcal{V}_6^e\dot{\mathcal{V}}_0^e\right)(\mathcal{T}_0^*)}{n^2}\xi^6 + \frac{\mu}{n^2}\mathcal{O}(\xi^8)
     = \frac{15\mu\pi(3\cos(4\theta_0) + 1)}{8n^2}\xi^6 + \frac{\mu}{n^2}\mathcal{O}(\xi^8).
\end{equation}

\end{proof}

Finally the angular momentum at $\hat{\mathcal{T}}^*$ is given by
\begin{lemma}
With the same hypotheses of Lemma \ref{le:fixpont},  the angular momentum of the ejection solution $\bm{\mathcal{U}}^e$ at time $\hat{\mathcal{T}}^*$ is given by:
\[
    \mathcal{M}(n,\theta_0) = -\frac{15\mu\pi\sin(4\theta_0)}{4}\xi^6 + \mu\mathcal{O}(\xi^8).
\]
\end{lemma}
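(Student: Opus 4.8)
The plan is to evaluate $\mathcal{M}(n,\theta_0)=(\mathcal{U}^e\dot{\mathcal{V}}^e-\mathcal{V}^e\dot{\mathcal{U}}^e)(\hat{\mathcal{T}}^*)$ by inserting the splitting $\bm{\mathcal{U}}^e=\bm{\mathcal{U}}_0^e+\bm{\mathcal{U}}_1^e$ and using bilinearity of the cross product, which produces four contributions evaluated at $\hat{\mathcal{T}}^*$: the unperturbed term $(\mathcal{U}_0^e\dot{\mathcal{V}}_0^e-\mathcal{V}_0^e\dot{\mathcal{U}}_0^e)$, the two mixed terms $(\mathcal{U}_0^e\dot{\mathcal{V}}_1^e-\mathcal{V}_0^e\dot{\mathcal{U}}_1^e)$ and $(\mathcal{U}_1^e\dot{\mathcal{V}}_0^e-\mathcal{V}_1^e\dot{\mathcal{U}}_0^e)$, and the purely perturbative term $(\mathcal{U}_1^e\dot{\mathcal{V}}_1^e-\mathcal{V}_1^e\dot{\mathcal{U}}_1^e)$. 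I would then show that only the second mixed term survives at order $\xi^6$ and compute it explicitly.

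The decisive structural fact is that $\hat{\mathcal{T}}^*=\pi+\hat{\mathcal{T}}_1^*$ with $\hat{\mathcal{T}}_1^*=\frac{\mu}{n^2}\mathcal{O}(\xi^6)$ is, to leading order, the collision instant of the unperturbed orbit. Indeed, from \eqref{eq:solucioEcosKeplerRaro} the positions are proportional to $\sin(n\hat{\mathcal{T}})$, and $\sin(n\hat{\mathcal{T}}^*)=(-1)^n\sin(n\hat{\mathcal{T}}_1^*)=\frac{\mu}{n}\mathcal{O}(\xi^6)$, so $\mathcal{U}_0^e(\hat{\mathcal{T}}^*),\mathcal{V}_0^e(\hat{\mathcal{T}}^*)=\frac{\mu}{n}\mathcal{O}(\xi^6)$. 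Together with the Fixed Point bounds $\mathcal{U}_1^e,\mathcal{V}_1^e=\frac{\mu}{n}\mathcal{O}(\xi^6)$ and $\dot{\mathcal{U}}_1^e,\dot{\mathcal{V}}_1^e=\mu\mathcal{O}(\xi^6)$ coming from Lemma \ref{le:fixpont} and the norm \eqref{eq:norm}, both the first mixed term and the purely perturbative term are $\frac{\mu^2}{n}\mathcal{O}(\xi^{12})$. For the unperturbed term I would compute it in closed form: differentiating \eqref{eq:solucioEcosKeplerRaro} and using \eqref{eq:tempsRotacio} to get $\frac12\dot t=2\sin^2(n\hat{\mathcal{T}})\xi^3$, a direct cancellation yields $(\mathcal{U}_0^e\dot{\mathcal{V}}_0^e-\mathcal{V}_0^e\dot{\mathcal{U}}_0^e)(\hat{\mathcal{T}})=-2\sin^4(n\hat{\mathcal{T}})\xi^3$, which at $\hat{\mathcal{T}}^*$ is $\mathcal{O}(\mu^4\xi^{27})$. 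Hence all three of these contributions are absorbed into the remainder $\mu\mathcal{O}(\xi^8)$.

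It then remains to evaluate the surviving term $(\mathcal{U}_1^e\dot{\mathcal{V}}_0^e-\mathcal{V}_1^e\dot{\mathcal{U}}_0^e)(\hat{\mathcal{T}}^*)$, where crucially the unperturbed \emph{velocity} does not vanish at collision: the same computation gives $\dot{\mathcal{U}}_0^e(\hat{\mathcal{T}}^*)=n(-1)^n\cos\theta_0+\mathcal{O}(\xi^3)$ and $\dot{\mathcal{V}}_0^e(\hat{\mathcal{T}}^*)=n(-1)^n\sin\theta_0+\mathcal{O}(\xi^3)$. Since $\bm{\mathcal{U}}_1^e$ and $\dot{\bm{\mathcal{U}}}_1^e$ are $\mathcal{O}(\mu\xi^6)$ and $\hat{\mathcal{T}}_1^*=\frac{\mu}{n^2}\mathcal{O}(\xi^6)$, transporting $\mathcal{U}_1^e$ from $\hat{\mathcal{T}}_0^*=\pi$ to $\hat{\mathcal{T}}^*$ changes it only by $\mathcal{O}(\xi^{12})$, so I may replace $\mathcal{U}_1^e(\hat{\mathcal{T}}^*),\mathcal{V}_1^e(\hat{\mathcal{T}}^*)$ by $\mathcal{U}_6^e(\hat{\mathcal{T}}_0^*)\xi^6,\mathcal{V}_6^e(\hat{\mathcal{T}}_0^*)\xi^6$ supplied by Lemma \ref{le:H0}. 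This yields $n(-1)^n\xi^6(\mathcal{U}_6^e\sin\theta_0-\mathcal{V}_6^e\cos\theta_0)+\mu\mathcal{O}(\xi^8)$. Substituting the explicit expressions of Lemma \ref{le:H0} gives $\mathcal{U}_6^e\sin\theta_0-\mathcal{V}_6^e\cos\theta_0=-\frac{15(-1)^n\mu\pi}{n}\sin\theta_0\cos\theta_0(\cos^4\theta_0-\sin^4\theta_0)$; using $\cos^4\theta_0-\sin^4\theta_0=\cos(2\theta_0)$, then $2\sin\theta_0\cos\theta_0=\sin(2\theta_0)$ and $2\sin(2\theta_0)\cos(2\theta_0)=\sin(4\theta_0)$, and noting $((-1)^n)^2=1$, I arrive at $\mathcal{M}(n,\theta_0)=-\frac{15\mu\pi\sin(4\theta_0)}{4}\xi^6+\mu\mathcal{O}(\xi^8)$, as claimed.

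The main obstacle is purely the order bookkeeping rather than any single hard estimate: one must simultaneously track the powers of $\xi$, the factor $\mu$, and the powers of $n$ (recalling that the norm \eqref{eq:norm} weights the positions by $n$), and verify that the near-collision smallness $\mathcal{U}_0^e,\mathcal{V}_0^e=\frac{\mu}{n}\mathcal{O}(\xi^6)$ genuinely demotes every contribution except the $(1,0)$ cross term below order $\xi^6$. The pleasant point is that, once this is established, the leading coefficient comes entirely from the explicit order-$6$ data of Lemma \ref{le:H0} paired against the non-vanishing ejection velocity $n(-1)^n(\cos\theta_0,\sin\theta_0)$, so the computation collapses to the elementary trigonometric simplification above.
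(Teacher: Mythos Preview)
Your proof is correct and follows essentially the same route as the paper: split $\bm{\mathcal{U}}^e=\bm{\mathcal{U}}_0^e+\bm{\mathcal{U}}_1^e$, use that $\mathcal{U}_0^e,\mathcal{V}_0^e$ are $\frac{\mu}{n}\mathcal{O}(\xi^6)$ at $\hat{\mathcal{T}}^*$ to discard all cross terms except $(\mathcal{U}_1^e\dot{\mathcal{V}}_0^e-\mathcal{V}_1^e\dot{\mathcal{U}}_0^e)$, and then plug in the order-$6$ data of Lemma~\ref{le:H0} against the non-vanishing ejection velocity. The only cosmetic difference is that you evaluate the unperturbed angular momentum in closed form as $-2\sin^4(n\hat{\mathcal{T}})\xi^3$ (which makes its smallness at $\hat{\mathcal{T}}^*$ transparent), whereas the paper Taylor-expands it around $\hat{\mathcal{T}}_0^*=\pi$ and uses $\mathcal{U}_0^e(\pi)=\mathcal{V}_0^e(\pi)=0$ directly; both lead to the same surviving term and the same trigonometric simplification.
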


\begin{proof}
    \[
    \begin{aligned}
    \mathcal{M}(n,\theta_0)
    & = \left(\mathcal{U}^e\dot{\mathcal{V}}^e-\mathcal{V}^e\dot{\mathcal{U}}^e\right)(\hat{\mathcal{T}}^*)\\
    & = \left(\mathcal{U}_0^e\dot{\mathcal{V}}_0^e 
    - \mathcal{V}_0^e\dot{\mathcal{U}}_0^e\right)(\hat{\mathcal{T}}^*)
    + 
    \xi^6\left(\mathcal{U}_0^e\dot{\mathcal{V}}_6^e 
    +\mathcal{U}_6^e\dot{\mathcal{V}}_0^e 
    - \mathcal{V}_0^e\dot{\mathcal{U}}_6^e
    - \mathcal{V}_6^e\dot{\mathcal{U}}_0^e\right)(\hat{\mathcal{T}}^*) + \mu\mathcal{O}(\xi^8)\\
    % T0 + T1 i expansió
    & = \left(\mathcal{U}_0^e\dot{\mathcal{V}}_0^e 
    - \mathcal{V}_0^e\dot{\mathcal{U}}_0^e\right)(\hat{\mathcal{T}}_0^*)
    + 
    \hat{\mathcal{T}}_1^*
    \left(\mathcal{U}_0^e\ddot{\mathcal{V}}_0^e - \mathcal{V}_0^e\ddot{\mathcal{U}}_0^e
    \right)
    (\hat{\mathcal{T}}_0^*)
    + \frac{\mu^2}{n}\mathcal{O}(\xi^{12})\\
    & \hspace{5mm }+ \xi^6\left(\mathcal{U}_0^e\dot{\mathcal{V}}_6^e 
    +\mathcal{U}_6^e\dot{\mathcal{V}}_0^e 
    - \mathcal{V}_0^e\dot{\mathcal{U}}_6^e
    - \mathcal{V}_6^e\dot{\mathcal{U}}_0^e\right)(\hat{\mathcal{T}}^*_0) + \mu\mathcal{O}(\xi^8)\\
    & = \left(\mathcal{U}_0^e\dot{\mathcal{V}}_0^e 
    - \mathcal{V}_0^e\dot{\mathcal{U}}_0^e\right)(\hat{\mathcal{T}}_0^*)
    + 
    \xi^6\hat{\mathcal{T}}_6^*
    \left(\mathcal{U}_0^e\ddot{\mathcal{V}}_0^e - \mathcal{V}_0^e\ddot{\mathcal{U}}_0^e
    \right)
    (\hat{\mathcal{T}}_0^*)
   \\
    & \hspace{5mm }+ \xi^6\left(\mathcal{U}_0^e\dot{\mathcal{V}}_6^e 
    +\mathcal{U}_6^e\dot{\mathcal{V}}_0^e 
    - \mathcal{V}_0^e\dot{\mathcal{U}}_6^e
    - \mathcal{V}_6^e\dot{\mathcal{U}}_0^e\right)(\hat{\mathcal{T}}^*_0) + \mu\mathcal{O}(\xi^8)\\
    & = 
    \xi^6\left( 
    \mathcal{U}_6^e\dot{\mathcal{V}}_0^e 
    - \mathcal{V}_6^e\dot{\mathcal{U}}_0^e\right)(\hat{\mathcal{T}}^*_0) + \mu\mathcal{O}(\xi^8)\\
    & = -\frac{15\mu\pi\sin(4\theta_0)}{4}\xi^6 + \mu\mathcal{O}(\xi^8) .
    \end{aligned}
\]
\end{proof}

In this way, applying the Implicit Function Theorem, we have that for $\xi\geq0$ small enough we obtain that $\mathcal{M}(n,\theta_0)$ has four and only four roots in $[0,\pi)$ given by
\[
    \theta_0 = \frac{\pi m}{4} + \mathcal{O}(\xi^2),\quad\quad m=0,1,2,3
\]
regardless of the values of the mass parameter $\mu$ and $n$. We can characterize this $n$-EC orbits in the same way as in Theorem \ref{th:maintheorem2}.

This concludes the proof of Theorem \ref{maintheoremN}.

\section{Results for the Hill problem}

As we have seen in Section \ref{sec:Hill}, Hill problem is a limit case of RTBP. In this way,
the results obtained in the previous sections can easily be extrapolated to the case of Hill  problem. In particular, if in \eqref{eq:LChill} we introduce the new variables
\begin{equation}\label{cvarnouUVHill}
    \left\{
    \begin{aligned}
    	u_h & = \sqrt{\frac{2}{K}}U_h,\\
    	v_h & = \sqrt{\frac{2}{K}}V_h,\\
    	\tau &= 2\sqrt{K} s,\\
	\end{aligned}
    \right.
\end{equation}
we obtain the system 
\begin{equation} \label{eq:sistemaHillCanvi1}
	\left\{
	\begin{aligned}
	\ddot{U} &= - U + \frac{8(U^2 + V^2)\dot{V}}{K^{3/2}} + \frac{12\left[2\left(U^4-2U^2V^2-V^4\right) + (U^2+V^2)^2\right]U}{K^3},\\[1.5ex]
	%%%%
	\ddot{V} &= - V - \frac{8(U^2 + V^2)\dot{U}}{K^{3/2}} + \frac{12\left[2\left(V^4-2U^2V^2-U^4\right) + (U^2+V^2)^2\right]V}{K^3},
	%%%%
	\end{aligned}
	\right.
\end{equation}
which is the same system of equations that we have obtained in \eqref{SistemaSerie} imposing now $\mu = 1$ and recalling that $\varepsilon = 1/\sqrt{K}$. So we already know the 
solution of system \eqref{eq:sistemaHillCanvi1} which is the one obtained for system
\eqref{SistemaSerie} with $\mu=1$.

In this way, using the extra symmetry \eqref{eq:simetriesHill2} of the Hill problem we obtain the following Corollary of Theorem \ref{th:maintheorem2}:

\vspace{1mm}
\begin{cor}
    In the Hill problem, for all $n\in\mathbb{N}$, there exists a  $\hat K(n)$ such that for  $K\ge \hat K (n)$  there exist exactly four $n$-EC orbits, which can be characterized by:
    \vspace{-3mm}
        \begin{itemize}
                \item Two $n$-EC orbits themselves symmetric with respect to the $x$ axis and one  symmetric to the other over the $y$ axis.
                \vspace{-1mm}
                \item Two $n$-EC orbits themselves symmetric with respect to the $y$ axis and one  symmetric to the other over the $x$ axis.
        \end{itemize}
        The respective families (when varying $K$) are labelled by $\alpha_n$, $\gamma_n$, $\beta_n$ and $\delta_n$.
\end{cor}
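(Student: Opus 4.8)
The plan is to reduce the statement to the already-proven Theorem~\ref{th:maintheorem2} and then to upgrade the symmetric classification using the additional symmetry of the Hill problem. As observed just before the statement, the scaling \eqref{cvarnouUVHill} turns the regularized Hill system \eqref{eq:LChill} into \eqref{eq:sistemaHillCanvi1}, which is \emph{exactly} system \eqref{SistemaSerie} evaluated at $\mu=1$ with $\varepsilon=1/\sqrt{K}$. Every step in the proof of Theorem~\ref{th:maintheorem2} --- the expansion $\bm{U}=\sum_{j\ge0}\bm{U}_j\varepsilon^j$, the order-by-order integration (only $j$ up to $6$ is needed), the characterization of EC orbits by $M=0$ at the $n$-th minimal distance (Lemma~\ref{caraceco}), and the concluding Implicit Function Theorem argument --- uses functions $\bm{U}_j$ and $\tau_j^*$ that depend \emph{polynomially} on $\mu$ up to the required order $6$. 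I would therefore simply set $\mu=1$ throughout. The dominant term of the angular momentum \eqref{eqppal} becomes $-\tfrac{15n\pi}{4}\sin(4\theta_0)$ (its $\mu$-factor equals $1$), so the Implicit Function Theorem yields exactly four simple roots $\theta_0=\tfrac{\pi m}{4}+\mathcal{O}(\varepsilon^2)$, $m=0,1,2,3$, for $K\ge\hat K(n)$, hence exactly four $n$-EC orbits. This gives the existence part with the same $\hat K(n)$ as in Theorem~\ref{th:maintheorem2}.

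The genuinely new content is the refinement of the symmetric classification, and this is where the extra symmetry \eqref{eq:simetriesHillLC3} (equivalently the physical $y$-axis symmetry \eqref{eq:simetriesHill2}), absent in the generic RTBP, enters. I would first record that at leading order ($\varepsilon=0$) the four ejection solutions with $\theta_0=0,\tfrac{\pi}{4},\tfrac{\pi}{2},\tfrac{3\pi}{4}$ are the harmonic oscillations $U_0=\cos\theta_0\sin\tau$, $V_0=\sin\theta_0\sin\tau$, which lie respectively on the lines $V=0$, $U=V$, $U=0$, $U=-V$; under the Levi-Civita map these are precisely the positive $x_h$-axis, the positive $y_h$-axis, the negative $x_h$-axis and the negative $y_h$-axis. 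Thus each leading-order EC orbit oscillates along one of the four symmetry axes, its turning points being brake points lying in the fixed sets of the reversing reflections \eqref{eq:simetriesHillLC1}--\eqref{eq:simetriesHillLC3}. I would then argue that the perturbed EC orbit keeps the symmetry type of its unperturbed limit: each reversing reflection maps the set of $n$-EC orbits onto itself, so it permutes the four orbits, and by uniqueness of the EC orbit attached to each root a fixed root gives a self-symmetric orbit while a transposed pair gives two mutually symmetric orbits.

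Carrying out this bookkeeping, the two orbits near $\theta_0=0,\tfrac{\pi}{2}$ (oscillating along $\pm x_h$) are each fixed by the $x$-axis reflection \eqref{eq:simetriesHill1}, hence symmetric with respect to the $x$-axis, while \eqref{eq:simetriesHillLC3} sends $x_h\mapsto-x_h$ and therefore swaps them: this yields the first bullet. Dually, the two orbits near $\theta_0=\tfrac{\pi}{4},\tfrac{3\pi}{4}$ (oscillating along $\pm y_h$) are each fixed by the $y$-axis reflection --- the promotion that does not occur in the RTBP, where these two were only symmetric to one another --- while the $x$-axis reflection sends $y_h\mapsto-y_h$ and swaps them: this yields the second bullet. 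Assigning the labels $\alpha_n,\gamma_n,\beta_n,\delta_n$ as in Theorem~\ref{th:maintheorem2} then finishes the proof.

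The main obstacle is precisely this last symmetry argument, not any new estimate. The regularized reflections are \emph{time-reversing} (they interchange the ejection and collision branches) and act in the rotating Levi-Civita frame, so the synodical ejection angles of the symmetric orbits are only $\mathcal{O}(\varepsilon^2)$-close to $0,\tfrac{\pi}{4},\tfrac{\pi}{2},\tfrac{3\pi}{4}$ and are \emph{not} equal to them; indeed, setting $\mu=1$ in \eqref{Maordre10} leaves a nonzero term $\tfrac{15n^2\pi^2}{2}\cos(4\theta_0)\varepsilon^9$ that shifts the roots. Consequently the self-symmetry cannot be read off from a literal value of $\theta_0$: it must be deduced from the fact that the symmetric crossing occurs at the \emph{midpoint} of the orbit (a perpendicular axis crossing, or a brake point), together with the $\mathbb{Z}_2\times\mathbb{Z}_2$ action of the two reflections on the four-element set of EC orbits. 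Verifying rigorously that this action is the claimed one --- each reflection fixing two orbits and transposing the other two --- is the step that requires care.
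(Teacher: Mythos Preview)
Your proposal is correct and follows essentially the same route as the paper: observe that the scaled Hill system \eqref{eq:sistemaHillCanvi1} coincides with \eqref{SistemaSerie} at $\mu=1$, so the angular-momentum computation and the Implicit Function Theorem argument of Theorem~\ref{th:maintheorem2} carry over verbatim, and then invoke the extra $y$-axis symmetry \eqref{eq:simetriesHill2} to upgrade the classification. Your discussion of the $\mathbb{Z}_2\times\mathbb{Z}_2$ action on the four EC orbits and the shift induced by the $\varepsilon^9$ term is more explicit than what the paper writes (it simply says ``the proof is exactly the same'' and appeals to the symmetries), but it is the same underlying argument.
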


\begin{figure}[ht!]
        \centering
        \includegraphics[trim= 2mm 0 5mm 0mm, clip,width=0.325\textwidth]{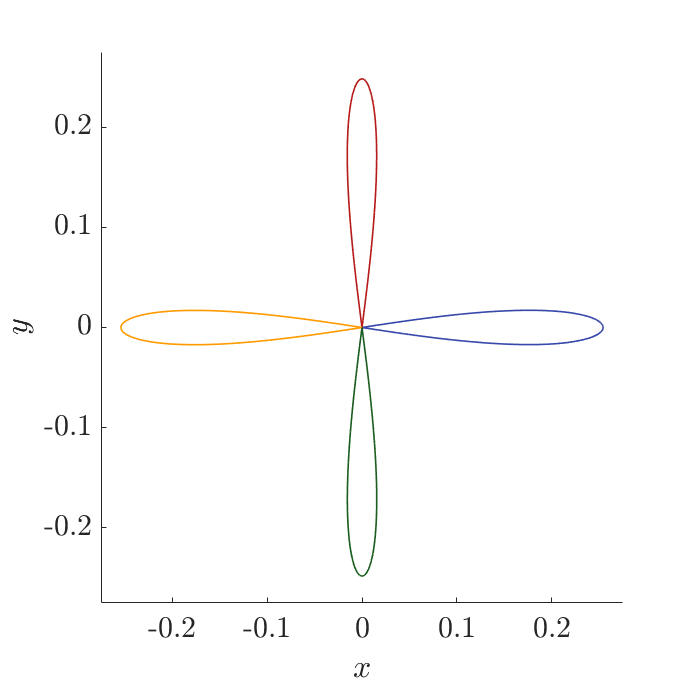}
        \includegraphics[trim= 2mm 0 5mm 0mm, clip,width=0.325\textwidth]{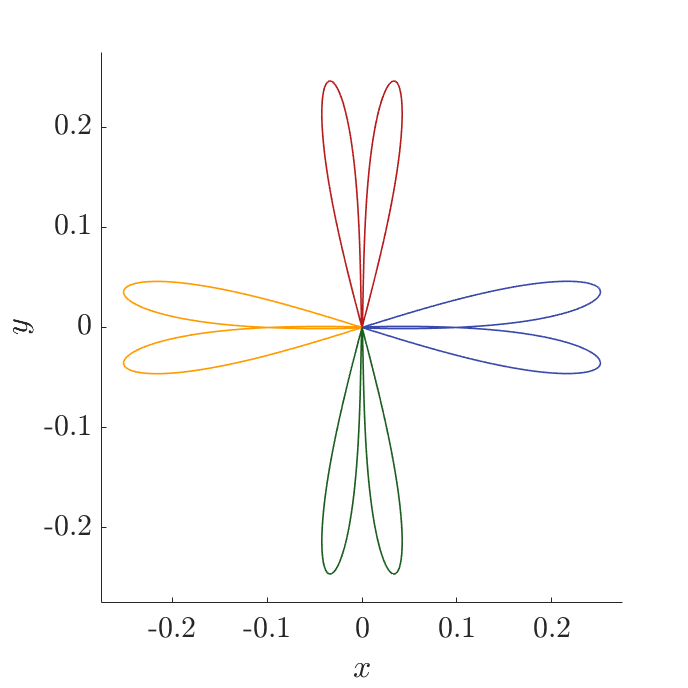}
        \includegraphics[trim= 2mm 0 5mm 0mm, clip,width=0.325\textwidth]{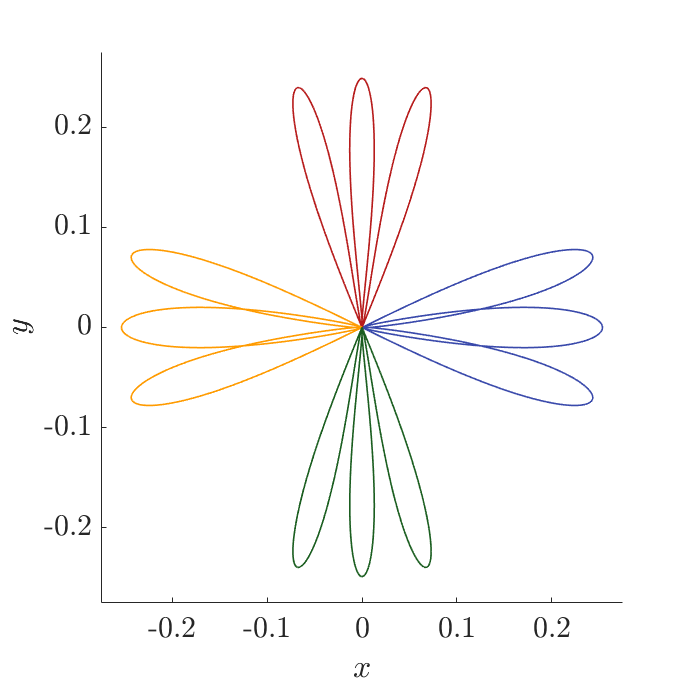}
        \caption{Trajectories of the four $n$-EC orbits $\alpha_n$ (yellow), $\beta_n$ (green), $\gamma_n$ (blue) and $\delta_n$ (red) for $n=1,\,2,\,3$ (from left to right) and $K=8$.
        }
        \label{fig:hill-nECos}
\end{figure}

It is important to note that the proof is exactly the same with the observation, as we have said before, that the families of orbits that were symmetric with respect to the $x$ axis in the RTBP ($\alpha_n$ and $\gamma_n$) are now also symmetric one of the other with respect to the $y$ axis, and the families that were symmetric one of the other in the restricted problem ($\beta_n$ and $\delta_n$) are now also symmetric themselves with respect to the $y$ axis (see Figure \ref{fig:hill-nECos}).

Furthermore, thanks to the fact that the polynomials $\bar{P}_{2k} $ and $\bar{Q}_{2k}$ disappear, it is not necessary to consider an expansion in terms of $\varepsilon = 1/\sqrt{K} $ and it can be considered directly an expansion on $\epsilon =  1/K^{3/2}$.

\begin{figure}[ht!]
    \centering
    %\includegraphics[width=0.44\textwidth]{Figures/evolucioHillN1.png}
    %\includegraphics[width=0.44\textwidth]{Figures/evolucioHillN2.png}\\
    %\vspace{-2mm}
    \includegraphics[width=0.445\textwidth]{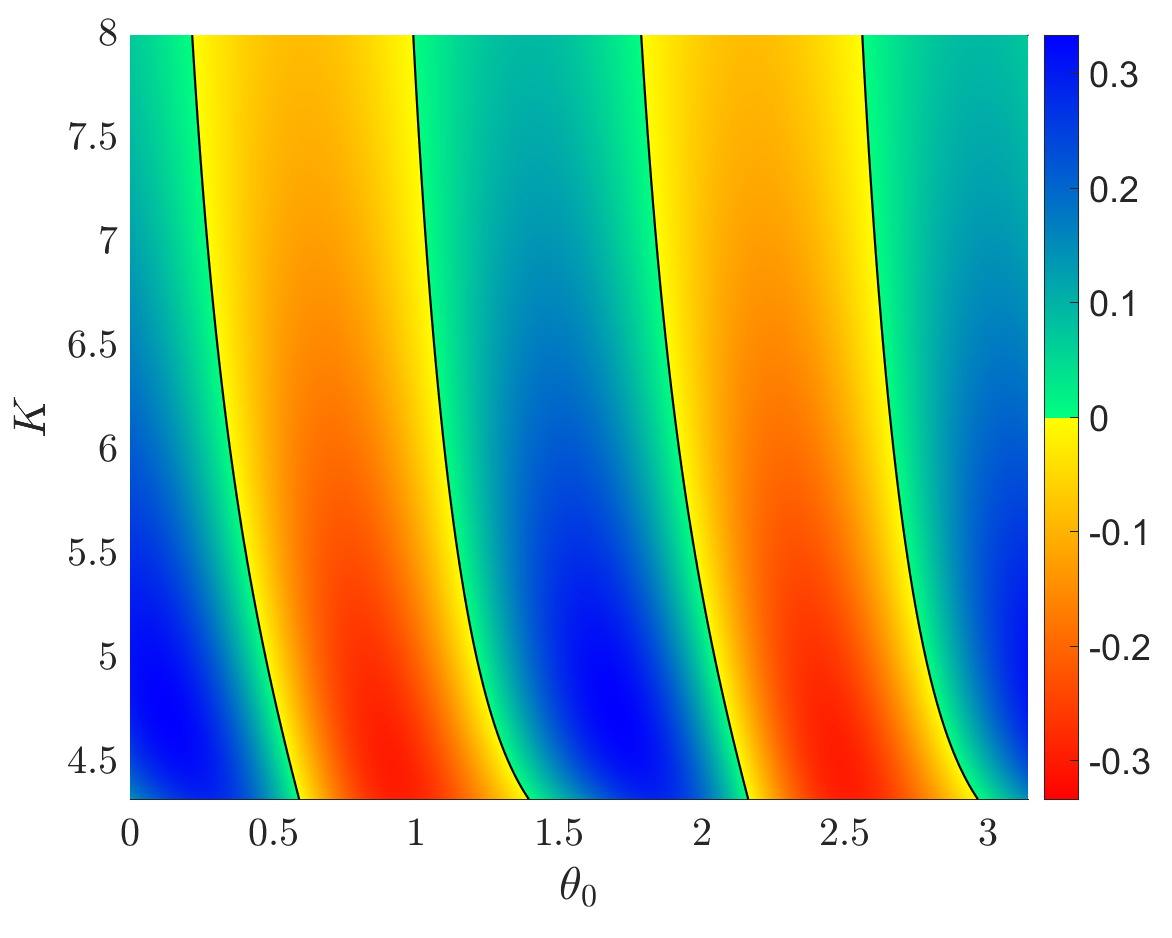}
    \includegraphics[width=0.445\textwidth]{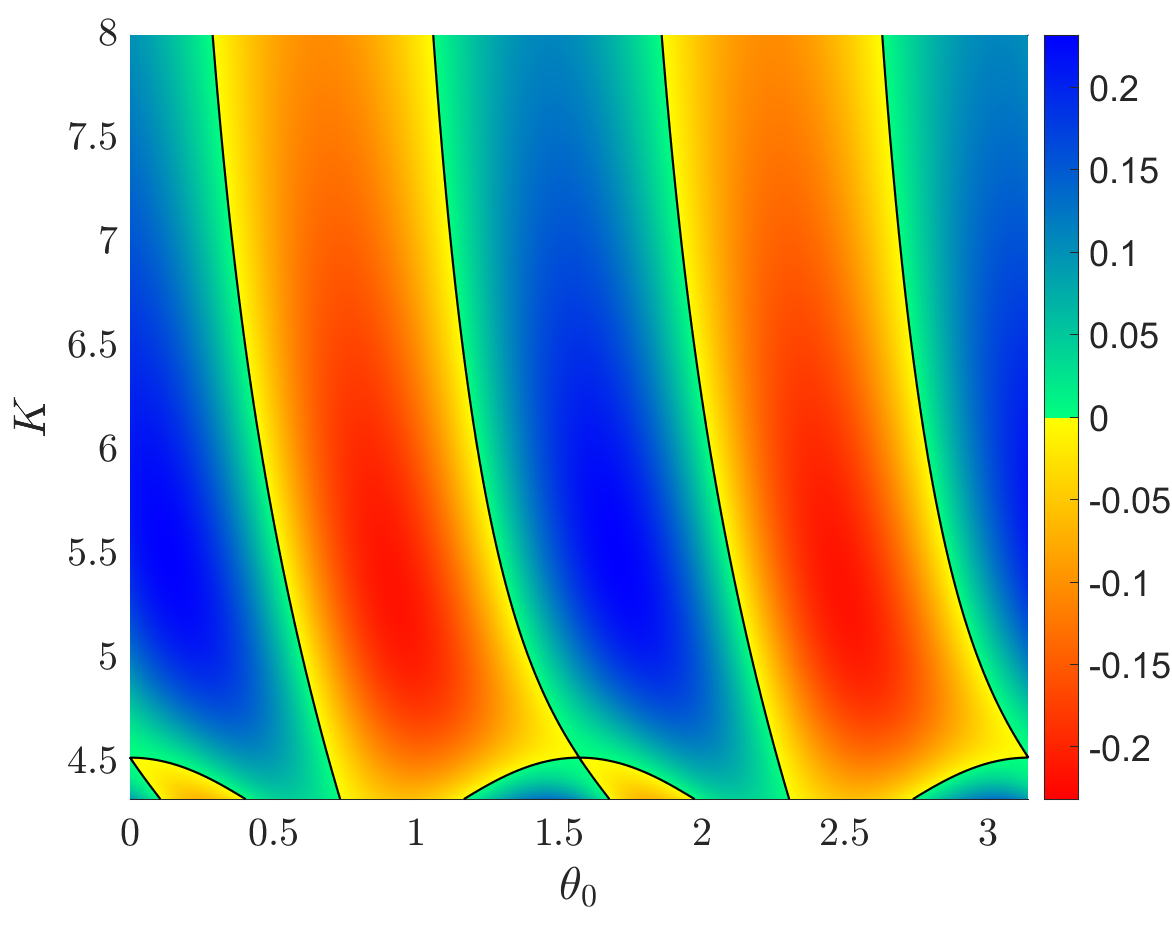}\\
    \vspace{-2mm}
    \includegraphics[width=0.445\textwidth]{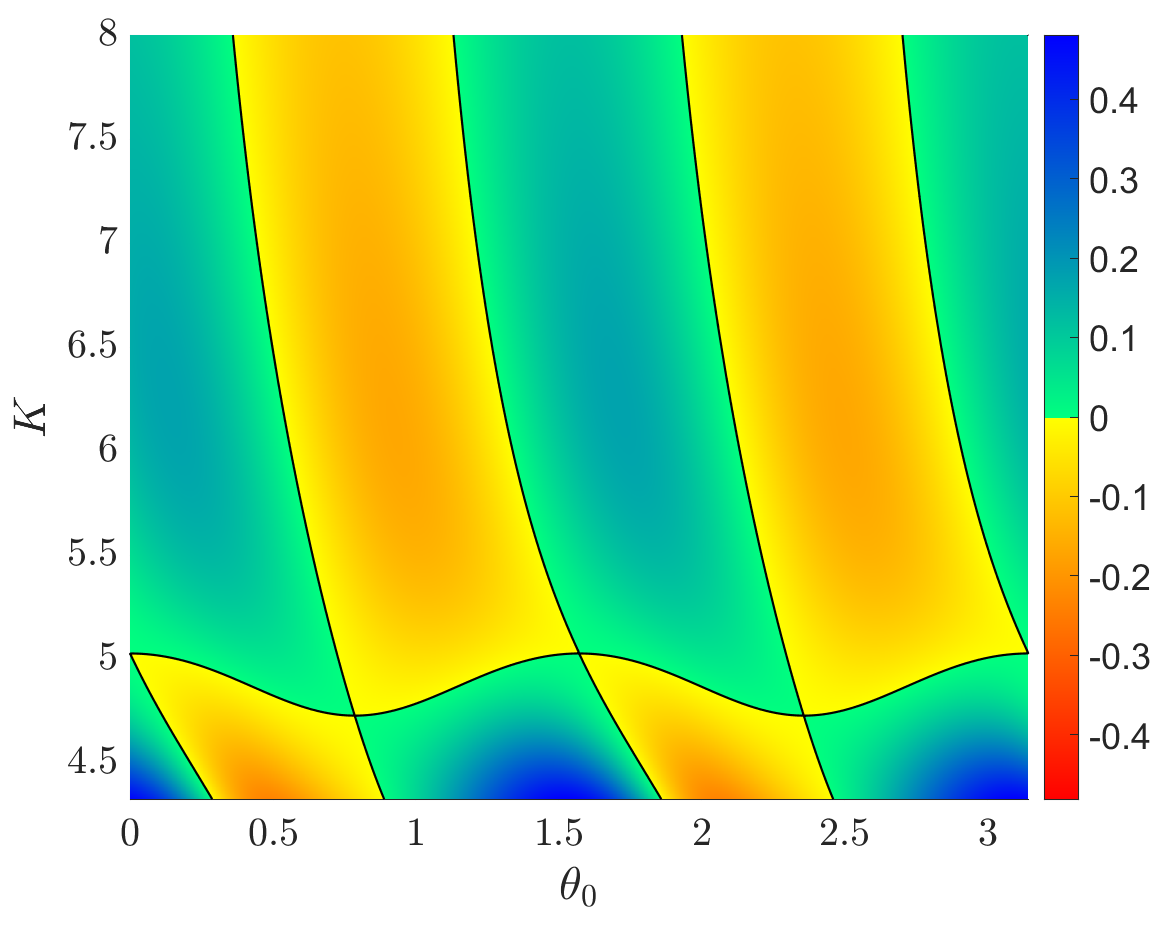}
    \includegraphics[width=0.445\textwidth]{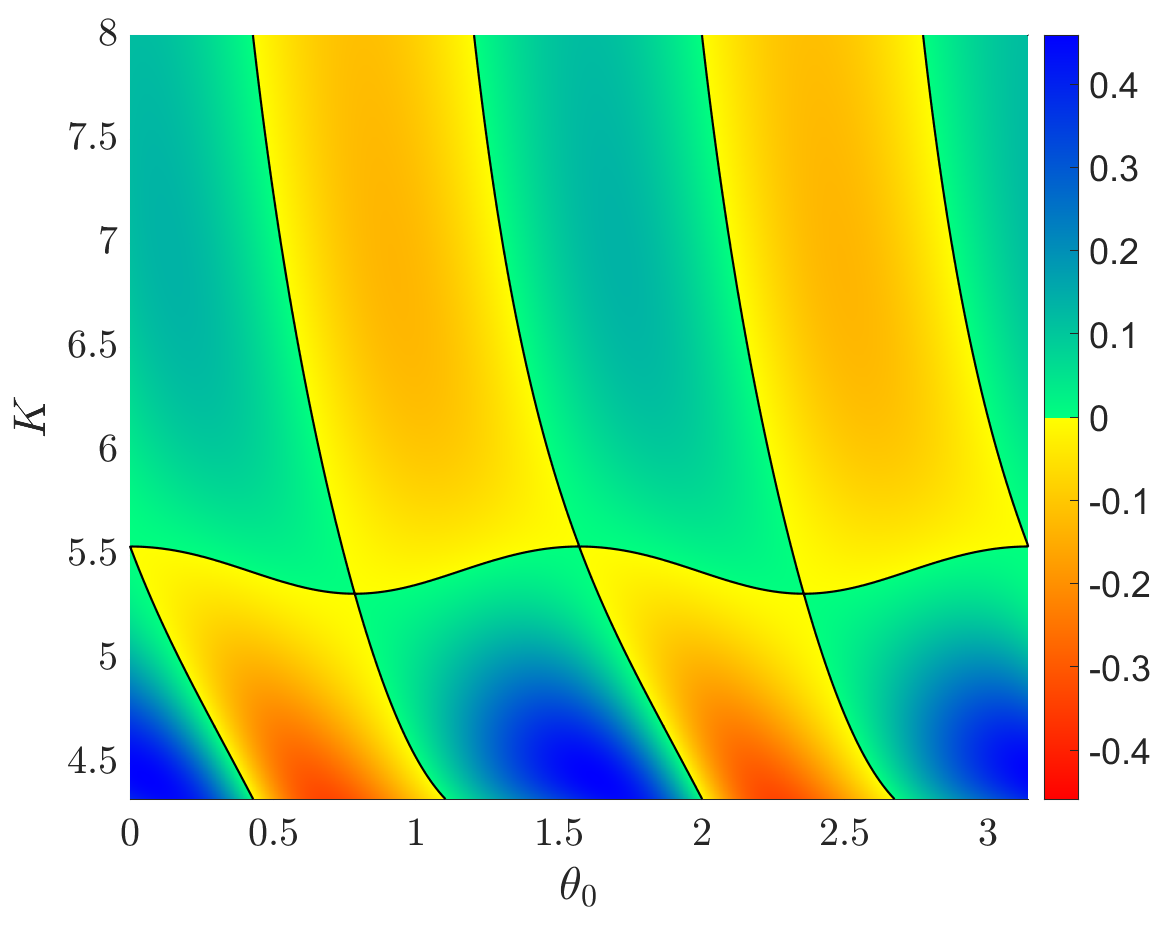}\\
    \vspace{-2mm}
    \includegraphics[width=0.445\textwidth]{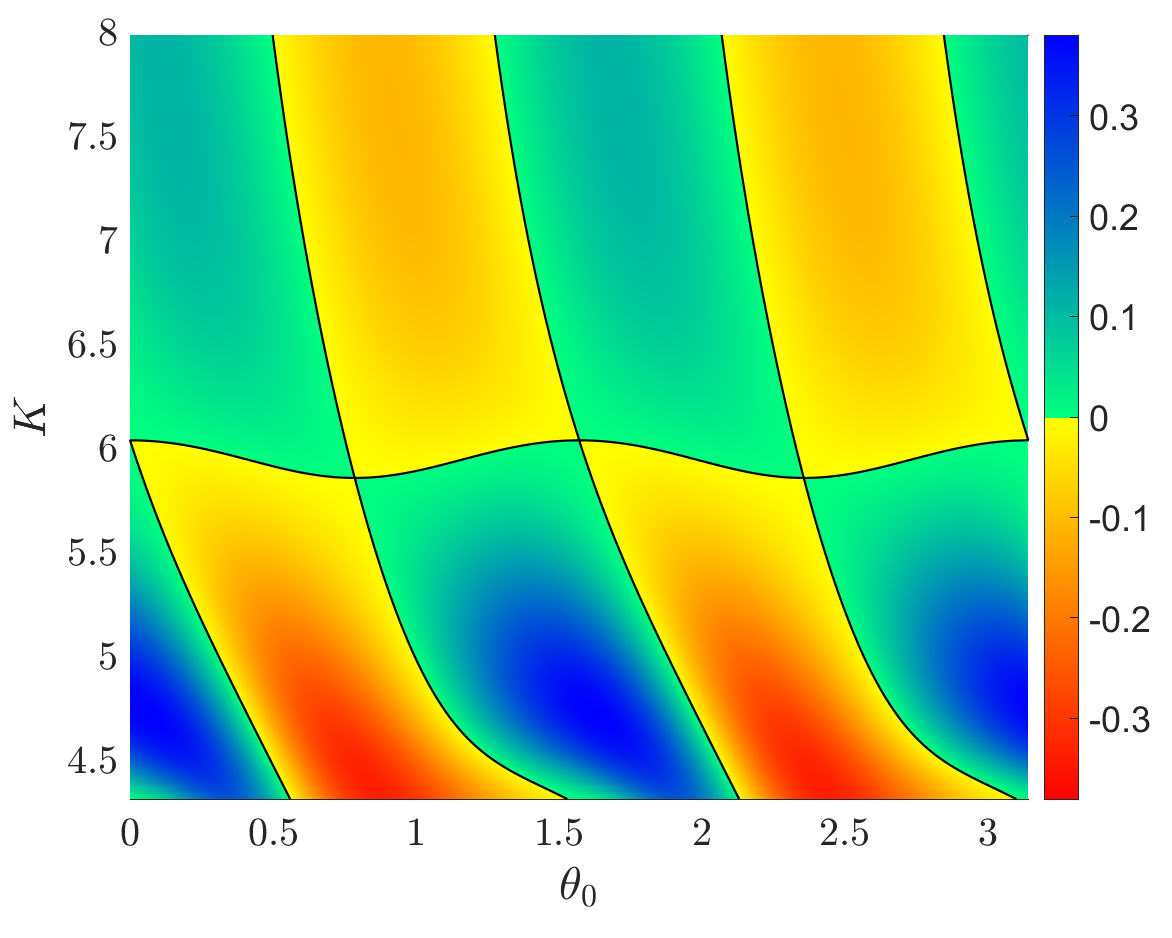}
    \includegraphics[width=0.445\textwidth]{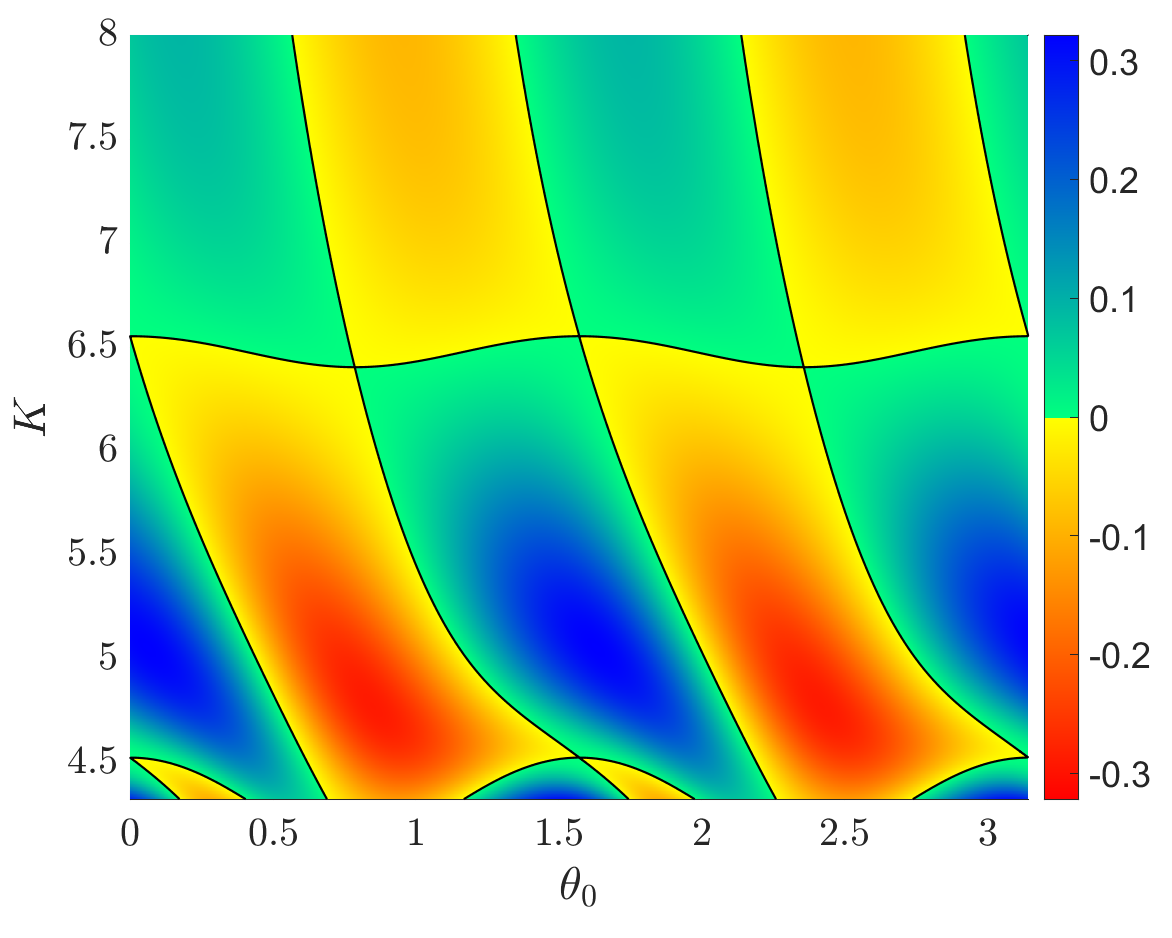}\\
    \vspace{-2mm}
    \includegraphics[width=0.445\textwidth]{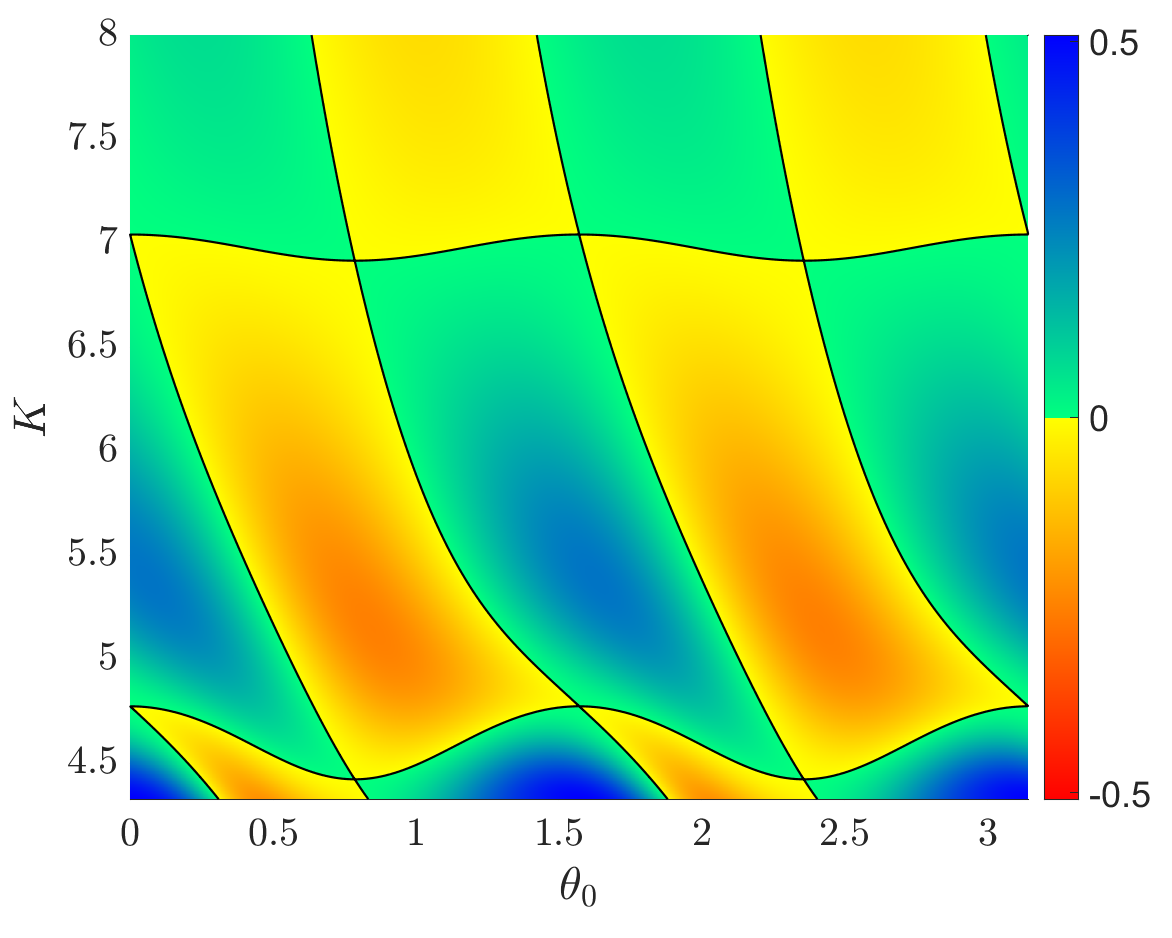}
    \includegraphics[width=0.445\textwidth]{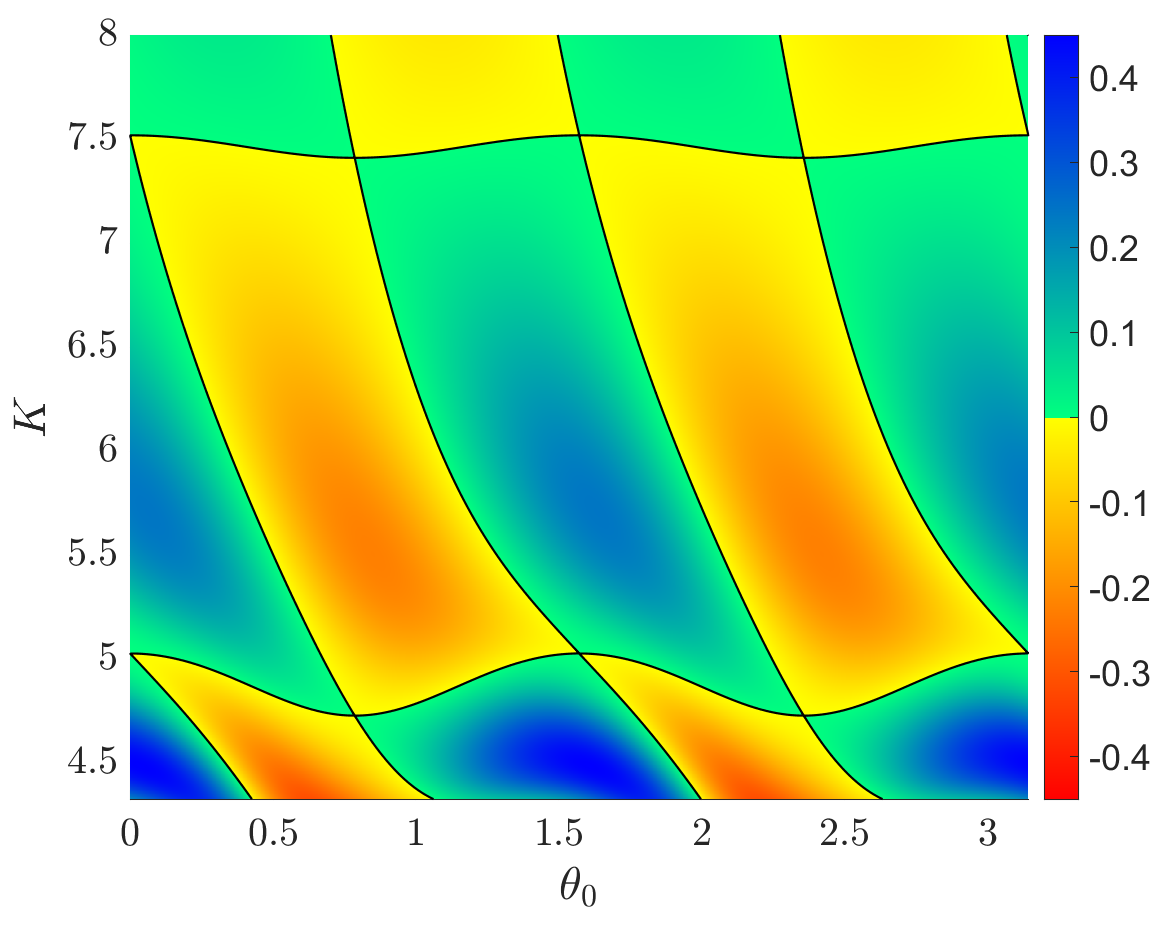}\\
    \vspace{-3mm}
    \caption{Value of the angular momentum of the ejection orbits at the $n$ intersection with $\Sigma_m$ for $K\in[K_{L_1},8]$ and $n=3,...,10$. In black the values corresponding to $n$-EC orbits.}
    \label{fig:evolucioMHill}
\end{figure}

\begin{figure}[ht!]
    \vspace{2mm}
        \centering
        \includegraphics[width=0.99\textwidth]{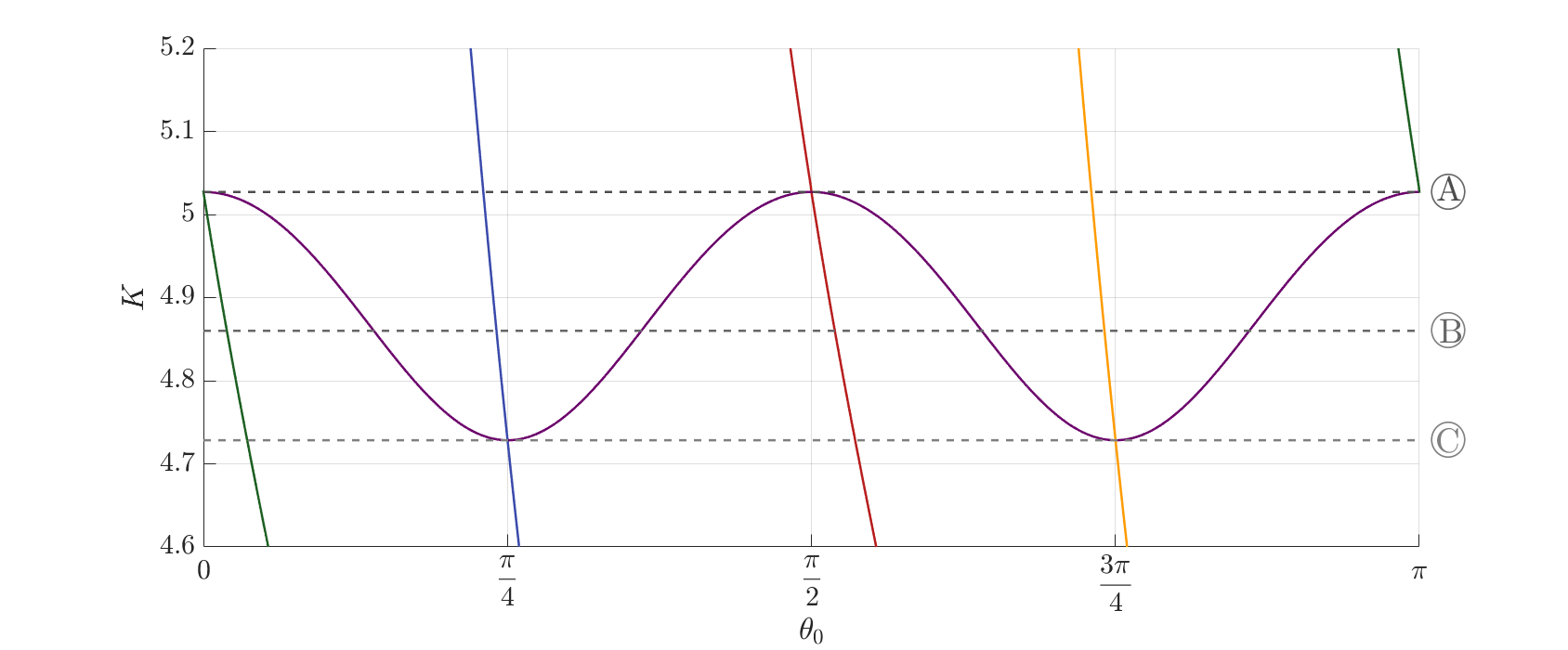}\\
        \includegraphics[trim= 2mm 0 7mm 0mm, clip,width=0.325\textwidth]{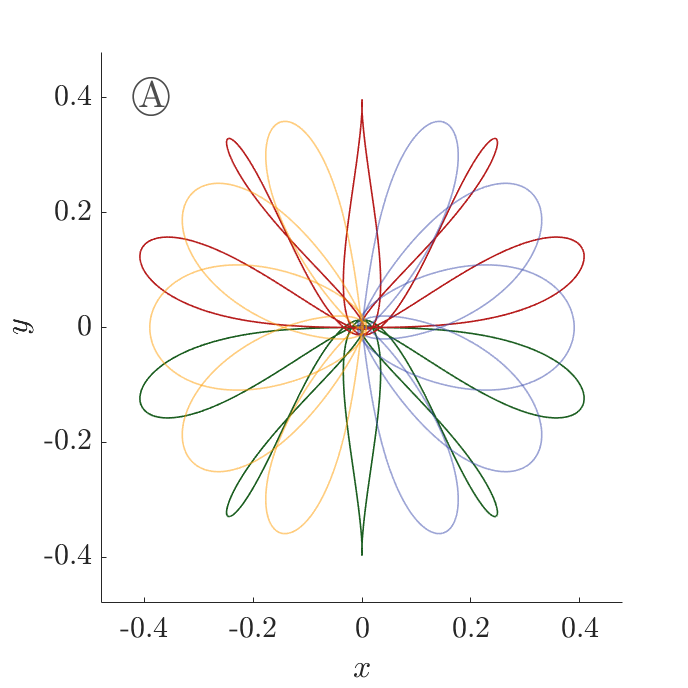}
        \includegraphics[trim= 2mm 0 7mm 0mm, clip,width=0.325\textwidth]{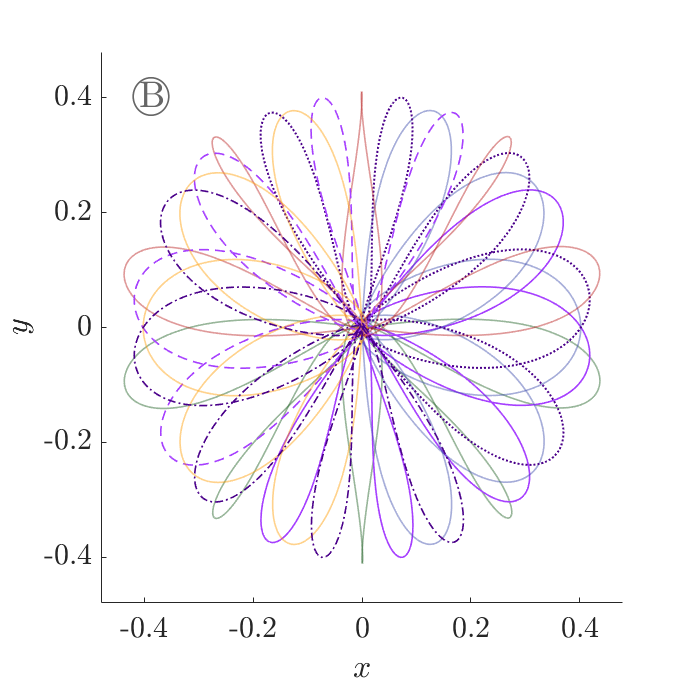}
        \includegraphics[trim= 2mm 0 7mm 0mm, clip,width=0.325\textwidth]{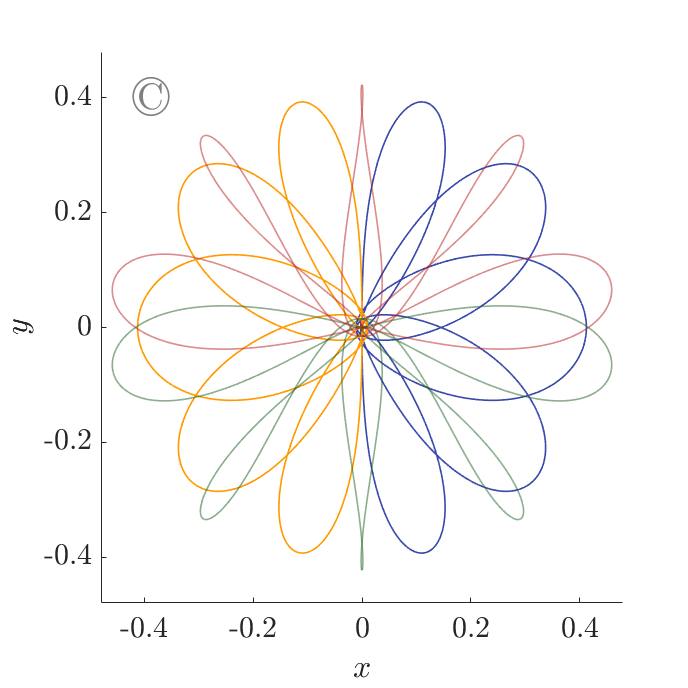}
        \caption{
        Top.
        Initial conditions for the 5-EC orbits corresponding to the families
        $\alpha_n$ (yellow), $\beta_n$ (green), $\gamma_n$ (blue), $\delta_n$ (red) and the new families of orbits (purple) as function of $K$.
        Bottom. The trajectories of the orbits (in correspondence with the previous color) that exist for the values of $K$ denoted previously.
        The values of $K$ correspond to the value of the bifurcation $K\approx5.02714993$ (left), a value where we have eight 5-EC orbits $K=4.86$ (middle) and the value of collapse $K\approx4.72835275$.
        }
        \label{fig:hill-5ECOs-bif}
\end{figure}

Similarly, if we introduce $K = Ln^{2/3}$, that is, we consider the change
\begin{equation}\label{cvarnouUVHill2}
    \left\{
    \begin{aligned}
        u_h & = \frac{\sqrt{2}}{\sqrt{L}n^{1/3}}\mathcal{U}_h,\\
        v_h & = \frac{\sqrt{2}}{\sqrt{L}n^{1/3}}\mathcal{V}_h,\\
        \hat{\mathcal{T}} &= \frac{2\sqrt{L}}{n^{2/3}} s,\\
        \end{aligned}
    \right.
\end{equation}
we obtain the same system of equations as \eqref{eq:noseriesxi} putting $\mu=1$ and considering $\xi = 1/\sqrt{L}$. In this way we can obtain the following corollary of Theorem \ref{maintheoremN}:
\vspace{2mm}

\begin{cor}\label{cor:thFort}
        There exists an $\hat{L}$ such that for $L\geq\hat{L}$ and for any value of $n\in\mathbb{N}$ and $K=Ln^{2/3}$, there exist exactly four $n$-EC orbits, which can be characterized in the same way as the previous corollary.
\end{cor}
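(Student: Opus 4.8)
The plan is to transcribe the proof of Theorem~\ref{maintheoremN} almost verbatim into the Hill setting, exploiting the fact that the regularized Hill equations are already polynomial. First I would apply the change of variables and time \eqref{cvarnouUVHill2} with $K=Ln^{2/3}$ and set $\xi=1/\sqrt{L}$. Applied directly to \eqref{eq:LChill}, this produces the system \eqref{eq:noseriesxi} specialized to $\mu=1$; since \eqref{eq:LChill} contains no $r_2$ term, the resulting vector field is a genuine polynomial perturbation of the two-body problem \eqref{eq:u0sys}, so the unperturbed ejection solution \eqref{eq:solucioEcosKeplerRaro} and the unperturbed minimum time $\hat{\mathcal{T}}_0^*=\pi$ are inherited unchanged.

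Second, I would set up the same splitting $\bm{\mathcal{U}}=\bm{\mathcal{U}}_0+\bm{\mathcal{U}}_1$ and the integral equation \eqref{eqH}, and check that the hypotheses of the Fixed Point Theorem survive at $\mu=1$. Concretely, Lemmas~\ref{Hfitatlema} and~\ref{le:LipschitzLema} rest on bounds that are uniform in $\mu\in(0,1)$ and, once $\mu$ is fixed, involve only the polynomial part of the field; evaluating them at $\mu=1$ (in fact dropping the $r_2$-dependent contributions altogether) only simplifies the estimates, so Lemma~\ref{le:fixpont} still yields a unique ejection solution $\bm{\mathcal{U}}_1^e\in B_R(\bm 0)$ with $R=2M_1\xi^6$. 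The explicit order-$\xi^6$ coefficients of Lemma~\ref{le:H0} carry over with $\mu=1$, and repeating the angular-momentum computation gives
\[
    \mathcal{M}(n,\theta_0)=-\frac{15\pi\sin(4\theta_0)}{4}\,\xi^6+\mathcal{O}(\xi^8).
\]
Applying the Implicit Function Theorem, for $\xi$ small enough (equivalently $L\ge\hat L$) this has exactly four simple roots $\theta_0=\pi m/4+\mathcal{O}(\xi^2)$, $m=0,1,2,3$, in $[0,\pi)$, producing exactly four $n$-EC orbits. Their characterization then follows from the symmetries of the regularized Hill problem: the two RTBP-inherited symmetries already classify the orbits as in Theorem~\ref{maintheoremN}, and the extra Hill symmetry \eqref{eq:simetriesHillLC3} reshuffles this into the stated form, so that the pair symmetric about the $x$ axis becomes mutually symmetric across the $y$ axis, and conversely.

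The step I expect to require the most care is the identification in the first paragraph, namely that \eqref{cvarnouUVHill2} on \eqref{eq:LChill} coincides with the $\mu=1$ limit of \eqref{eq:noseriesxi}. Passing that limit \emph{inside} the RTBP field is not a literal substitution, because several terms of $\bm{\mathcal{F}}_1$ in \eqref{eq:F01} carry the prefactor $n^{2/3}/(1-\mu)^{1/3}$ and individually blow up as $\mu\to1$; what saves the argument is that, after expanding $1/r_2$ and $1/r_2^3$ from \eqref{eq:r2nou}, these singular contributions cancel exactly at each order, leaving the clean polynomial field. Working directly from \eqref{eq:LChill}, where no such term ever appears, sidesteps this cancellation entirely and is the cleaner route; the remainder is then routine bookkeeping confirming that each lemma in the proof of Theorem~\ref{maintheoremN} specializes correctly to $\mu=1$.
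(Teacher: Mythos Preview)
Your proposal is correct and follows essentially the same route as the paper: apply the change \eqref{cvarnouUVHill2} directly to the Hill equations \eqref{eq:LChill}, identify the resulting system with \eqref{eq:noseriesxi} at $\mu=1$, and then invoke the proof of Theorem~\ref{maintheoremN} verbatim (unperturbed ejection solution, fixed-point argument, order-$\xi^6$ angular momentum, IFT, and finally the symmetry classification). The paper treats this as a one-line corollary, while you are more explicit---in particular about the $\mu\to 1$ singularities in $\bm{\mathcal{F}}_1$ and the fact that working directly from the polynomial Hill field sidesteps the cancellation issue---but the substance is the same.
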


In this way, if we do the numerical exploration to compute the $n$-EC orbits that exist for values of $K \geq K_{L}$ (see Figure \ref{fig:evolucioMHill}) we see that, as expected by the Corollary \ref{cor:thFort}, the value of $\hat{K}$ grows with $n$.

Before going into more detail on the value of $\hat{K}$ let us make a few comments about Figure \ref{fig:evolucioMHill}.
It is important to note that thanks to the extra symmetry we could only study the ejection orbits with $\theta_0 \in [0, \pi/2)$, but in order to visualize the evolution of the $n$-EC orbits we will consider the interval $\theta_0 \in [0, \pi)$ in Figure  \ref{fig:evolucioMHill}. In this figure we observe how at least the first new families of $n$-EC orbits that appear are born from two of the original families ($\alpha_n$ and $\gamma_n$, or $\beta_n$ and $\delta_n$) when the angle of ejection $\theta_0$ is $0$ and $\pi/2$ respectively (i.e. $\vartheta_0=0,\pi$) and collapse into the two other original families when the value of $\theta_0$ is $\pi/4 $ and $3\pi/4$ (i.e. $\vartheta_0=\pi/2,3\pi/2$) (see for example Figure \ref{fig:hill-5ECOs-bif}).

\begin{figure}[ht!]
        \centering
        \includegraphics[trim= 2mm 0 7mm 0mm, clip,width=0.4\textwidth]{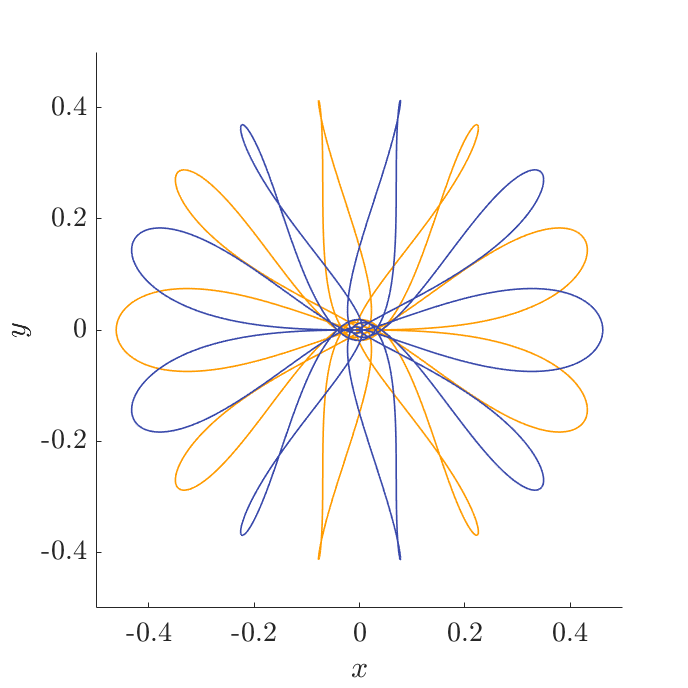}
        \includegraphics[trim= 2mm 0 7mm 0mm, clip,width=0.4\textwidth]{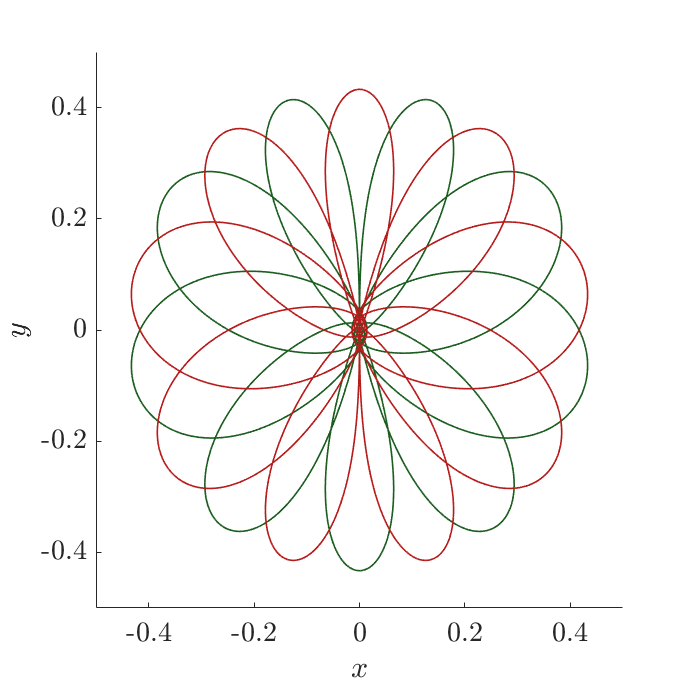}
        \caption{Trajectories of 9-EC periodic orbits associated with $\alpha_9$ (yellow) and $\gamma_9$ (blue) for $K\approx4.77318771$ (left) and $\beta_9$ (green) and $\delta_9$ (red) for $K\approx4.42215362$ (right).
        }
        \label{fig:hill-9ECOs-periodic}
\end{figure}

These respective values are very particular, since when these bifurcations take place we have that the $n$-EC orbits are periodic or are part of a periodic EC orbit. In particular we have:

\begin{itemize}
    \item If the $\theta_0$ of $\beta_n$ is $0$ or $\pi/2$ (therefore $\theta_0$ of $\delta_n$ is $\pi/2$ or $0$) then we have periodic EC orbit formed by $\beta_n$ and $\delta_n$ (see Figure \ref{fig:hill-5ECOs-bif} left). Analogously, if the $\theta_0$ of $\alpha_n$ is $\pi/4$ or $3\pi/4$ (therefore $\theta_0$ of $\gamma_n$ is $3\pi/4$ or $\pi/4$) then we have periodic EC orbit formed by $\alpha_n$ and $\gamma_n$ (see Figure \ref{fig:hill-5ECOs-bif} right).\vspace{2mm}

    \item If the $\theta_0$ of $\beta_n$ is $\pi/4$ or $3\pi/4$ (therefore $\theta_0$ of $\delta_n$ is $3\pi/4$ or $\pi/4$) then $\beta_n$ and $\delta_n$ are periodic $EC$ orbits (see Figure \ref{fig:hill-9ECOs-periodic} right). Analogously, if the $\theta_0$ of $\alpha_n$ is $0$ or $\pi/2$ (therefore $\theta_0$ of $\gamma_n$ is $\pi/2$ or $0$) then $\alpha_n$ and $\gamma_n$ are periodic EC orbits (see Figure \ref{fig:hill-9ECOs-periodic} left).
\end{itemize}

\begin{figure}[ht!]
    \centering
    \includegraphics[trim= 15mm 5mm 15mm 14mm, clip, width=0.9\textwidth]{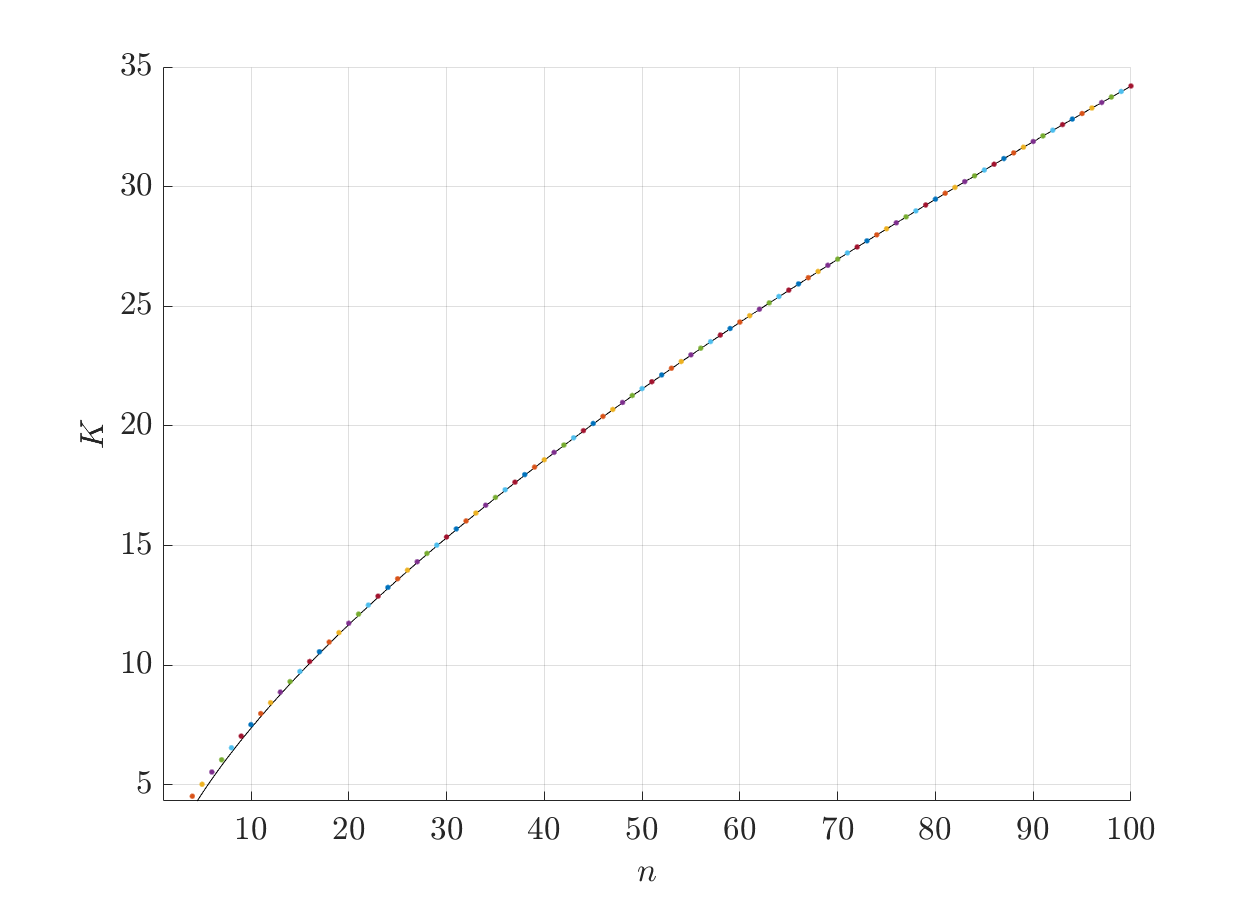}
    \caption{Dots: Values of $\hat{K}(n)$. Black line, curve $Ln^{2/3}$ with $L=2^{2/3}$.}
    \label{fig:nECOsHill}
\end{figure}

We have computed the value  $\hat{K}(n)$ for $n=1,\dots,100$. See Figure \ref{fig:nECOsHill}).
It is important to remark that the numerical value of $\hat{K}(n)$ obtained has the same shape as the analytical bound of the Corollary \ref{cor:thFort}. In particular, if we draw the curve $Ln^{2/3}$ with $L=2^{2/3}$ we can see how it practically matches the value of the numerical bound obtained for $\hat{K}$ (see Figure \ref{fig:nECOsHill}).

\begin{figure}[ht!]
    \centering
    \includegraphics[trim= 15mm 5mm 15mm 10mm, clip, width=0.9\textwidth]{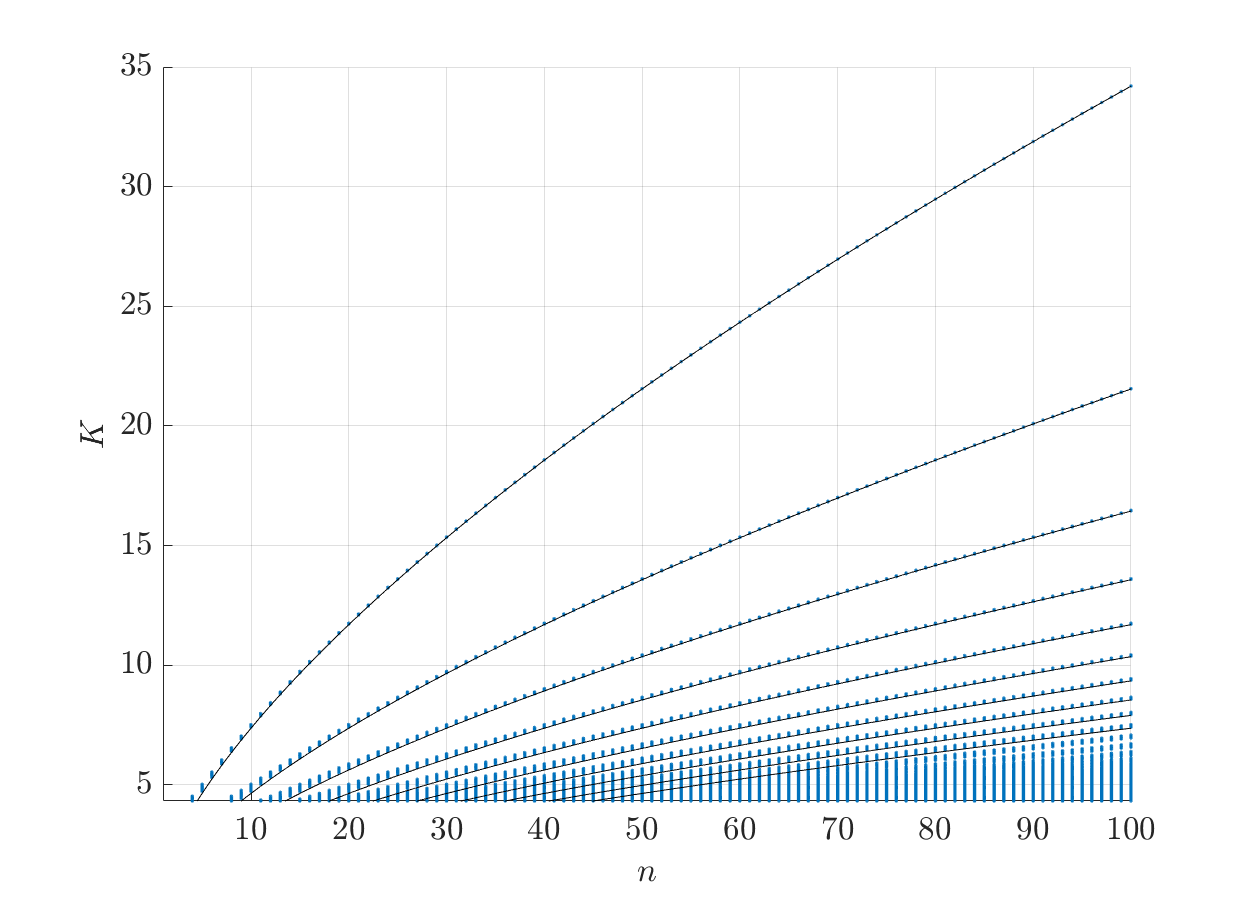}
    %\vspace{-3mm}
    \caption{In color values of $K$ where exists more than 4 $n$-EC orbits for $n=1,..,100$. The black lines correspond to the curves $Ln^{2/3}$ with  $L=(2/p)^{2/3}$ and $p = 1, ..., 10$.}
    \label{fig:nECOsHillBif}
\end{figure}

To conclude, we have seen how not only does the value of $\hat{K}(n)$ follow the curve $Ln^{2/3} $ with $L = 2^{2/3}$, but also the successive bifurcations (the values of $K$ where appear new EC orbits) are closely related to the curves $Ln^{2/3}$ with $L = (2/p)^{2/3}$ being $p$ a natural number. In particular, in Figure \ref{fig:nECOsHillBif} we can see how the value of the successive bifurcations coincides with the curves $Ln^{2/3}$ with  $L=(2/p)^{2/3}$ and $p = 1, ..., 10$.

\section*{Data availability}

The datasets generated during and/or analysed during the current study are available from the corresponding author on reasonable request.

\section{Acknowledgements}
This work is supported by the Spanish State Research Agency, through the Severo Ochoa and María de Maeztu Program for Centers and Units of Excellence in R\&D (CEX2020-001084-M). 
T. M-Seara is supported by the Catalan Institution for Research and Advanced Studies via an ICREA Academia Prize 2019 and by the Spanish MINECO/FEDER grant PGC2018-098676-B-100 (AEI/FEDER/UE).
M. Oll\'e and O. Rodríguez were supported by the Spanish MINECO/FEDER grant PGC2018-100928-B-I00 (AEI/FEDER/UE). 
J. Soler was supported by MTM2016-77278-P.

%% BIBLIO:
\printbibliography

%%%%%%%%%%%%%%%%%%%%%%%%%%%%%%
%    APENDIX:
%%%%%%%%%%%%%%%%%%%%%%%%%%%%%%
\appendix
\section{Values of the solutions}
\label{app:valorsUs}

\begingroup
\allowdisplaybreaks
\begin{align*}
	U_7(\tau,\theta_0) &= 0,\\[2.7ex]
	V_7(\tau,\theta_0) &= 0,\\[2.3ex]
	U_8(\tau,\theta_0) &= \frac{\mu(1 - \mu)^{1/3}}{6}\left[
	105\tau\cos\tau - (48 + 87\cos^2\tau - 38\cos^4\tau + 8\cos^6\tau)\sin\tau
	\right]\\
	&\hspace{5mm}*(5\cos^6\theta_0 - 6\cos^2\theta_0 + 2)\cos\theta_0,\\[2.3ex]
	V_8(\tau,\theta_0) &= -\frac{\mu(1 - \mu)^{1/3}}{6}\left[
	105\tau\cos\tau - (48 + 87\cos^2\tau - 38\cos^4\tau + 8\cos^6\tau)\sin\tau
	\right]\\
	&\hspace{5mm}*(5\sin^6\theta_0 - 6\sin^2\theta_0 + 2)\sin\theta_0,\\[2.3ex]
	U_9(\tau,\theta_0) &= -\frac{1}{24}\Big[  4(\tau-\cos\tau\sin\tau)^3\sin\tau 
	- \mu\Big( 
	3\tau(23 + 144\cos^2\tau + 8\cos^4\tau)\sin\tau\\
	&\hspace{7mm}- (379-  217\cos^2\tau - 178\cos^4\tau + 16\cos^6\tau)\cos\tau
	- 480\tau(1 + 6\cos^2\tau)\sin\tau\cos^2\theta_0\\
	&\hspace{7mm} + 32( 81 - 53\cos^2\tau - 32\cos^4\tau + 4\cos^6\tau)\cos\tau\cos^2\theta_0
	- 360\tau^2\cos\tau\cos^4\theta_0\\
	&\hspace{7mm} + 240\tau(3 + 15\cos^2\tau - \cos^4\tau)\sin\tau\cos^4\theta_0\\
	&\hspace{7mm} - 8(374 - 257\cos^2\tau - 143\cos^4\tau + 26\cos^6\tau)\cos\tau\cos^4\theta_0
	\Big) \Big]\sin\theta_0,\\[2.3ex]
	V_9(\tau,\theta_0) &= \frac{1}{24}\Big[  4(\tau-\cos\tau\sin\tau)^3\sin\tau 
	- \mu\Big( 
	3\tau(23 + 144\cos^2\tau + 8\cos^4\tau)\sin\tau\\
	&\hspace{7mm}- (379-  217\cos^2\tau - 178\cos^4\tau + 16\cos^6\tau)\cos\tau
	- 480\tau(1 + 6\cos^2\tau)\sin\tau\sin^2\theta_0\\
	&\hspace{7mm} + 32( 81 - 53\cos^2\tau - 32\cos^4\tau + 4\cos^6\tau)\cos\tau\sin^2\theta_0
	- 360\tau^2\cos\tau\sin^4\theta_0\\
	&\hspace{7mm} + 240\tau(3 + 15\cos^2\tau - \cos^4\tau)\sin\tau\sin^4\theta_0\\
	&\hspace{7mm} - 8(374 - 257\cos^2\tau - 143\cos^4\tau + 26\cos^6\tau)\cos\tau\sin^4\theta_0
	\Big) \Big]\cos\theta_0,\\
	\\[2.95ex]
	U_{10}(\tau,\theta_0) &= \frac{\mu(1-\mu)^{2/3}}{8}\Big[
	315\tau\cos\tau 
	- \big(128 + 325\cos^2\tau - 210\cos^4\tau + 88\cos^6\tau \\ &\hspace{8mm}- 16\cos^8\tau\big)\sin\tau
	\Big]
	(3 - 20\cos^2\theta_0 + 30\cos^4\theta_0 - 14\cos^8\theta_0)\cos\theta_0,\\[2.1ex]
	V_{10}(\tau,\theta_0) &= \frac{\mu(1-\mu)^{2/3}}{8}\Big[
	315\tau\cos\tau 
	- \big(128 + 325\cos^2\tau - 210\cos^4\tau + 88\cos^6\tau \\ &\hspace{8mm}- 16\cos^8\tau\big)\sin\tau
	\Big]
	(3 - 20\sin^2\theta_0 + 30\sin^4\theta_0 - 14\sin^8\theta_0)\sin\theta_0.
\end{align*}
\endgroup

Then, writing the function $U\dot{U}+V\dot{V}$  as an expansion series in $\varepsilon$ and collecting terms of the same order, we can successively find  the terms $\tau_i^*$ of order $i=7,...,10$ from $(U\dot{U}+V\dot{V})(\tau^*)=0$:
\begingroup
\allowdisplaybreaks
\begin{align*}
	\tau_7^*(n,\theta_0) &= 0 ,\\[1.5ex]
	\tau_8^*(n,\theta_0) &= -\frac{35\mu(1-\mu)^{1/3}n\pi\cos(2\theta_0)(5\cos^2(2\theta_0) - 3)}{4},\\[1.5ex]
	\tau_9^*(n,\theta_0) & = \frac{15\mu n^2\pi^2\sin(4\theta_0)}{2},\\[1.5ex]
	\tau_{10}^*(n,\theta_0) &= \frac{315\mu(1-\mu)^{2/3} n\pi(13 - 10\cos(4\theta_0) - 35\cos^2(4\theta_0))}{256}.
\end{align*}
\endgroup

Now we are ready to compute the explicit expression for the angular momentum $M(n,\theta _0)=(U\dot{V}-V\dot{U})(\tau ^*)$ up to order 10 which is the following:
\[
    \begin{aligned}
	    M(n,\theta_0) &= -\frac{15\mu n\pi\sin(4\theta_0)}{4}\varepsilon^6 + \frac{105\mu(1-\mu)^{1/3}n\pi\left(\sin(2\theta_0) + 5\sin(6\theta_0)\right)}{64}\varepsilon^8 +\frac{15\mu n^2\pi^2\cos(4\theta_0)}{2}\varepsilon^9\\
	    &\hspace{6mm}- \frac{315\mu(1-\mu)^{2/3}n\pi(2\sin(4\theta_0) + 7\sin(8\theta_0))}{128}\varepsilon^{10} +\mathcal{O}(\varepsilon^{11}).
    \end{aligned}
\]
which is precisely \eqref{Maordre10}.

\section{Proof of the auxiliary lemmas}

We must prove Lemmas \ref{Hfitatlema} and \ref{le:LipschitzLema}.  
First, let us fix some notation.
Given a matrix $A=(a_{ij})_{i,j=1,...,4}$, we denote the new matrix
\[|A|=(|a_{ij}|)_{i,j=1,...,4}.\]
Analogously for vectors ${\bm v}=(v_1,...,v_4)$: 
\[
|{\bm v}|=(|v_1|,...,|v_4|),
\]
and given two vectors ${\bm v}=(v_1,...,v_4)$, ${\bm w}=(w_1,...,w_4)$, we will say that \[{\bm v}\le {\bm w} \quad \mbox{if} \quad v_i\le w_i\quad\forall i =1,..,4. \]
Similarly with matrices $A\le B$.

With this notation we have:
\[
|A {\bm v}| \le |A||{\bm v}|.
\]
During this section we will use $M$ to denote any constant which appears in the bounds and is independent of $\xi$, $\mu$ and $n \in \mathbb{N}$.

\begin{lemma} \label{le:X}
The fundamental matrix $X$ for system \eqref{eq:u1sysV2linear} can be expressed  as
\[
    X(\hat{\mathcal{T}}) =
    \left(
    \begin{matrix}
        \cos(n\hat{\mathcal{T}}) + \mathcal{O}(\xi^3)  &   \mathcal{O}(\xi^3) &  \cfrac{\sin(n\hat{\mathcal{T}}) + \mathcal{O}(\xi^3)}{n}   &         \cfrac{1}{n}\mathcal{O}(\xi^3) \\
        \mathcal{O}(\xi^3) & \cos(n\hat{\mathcal{T}}) + \mathcal{O}(\xi^3)  &         \cfrac{1}{n}\mathcal{O}(\xi^3) &    \cfrac{\sin(n\hat{\mathcal{T}}) + \mathcal{O}(\xi^3)}{n}\\[1.5ex]
        -n\sin(n\hat{\mathcal{T}}) + n \mathcal{O}(\xi^3) & n \mathcal{O}(\xi^3)  & \cos(n\hat{\mathcal{T}})+ \mathcal{O}(\xi^3) &  \mathcal{O}(\xi^3) \\[1.5ex]
        n \mathcal{O}(\xi^3) & -n\sin(n\hat{\mathcal{T}}) + n \mathcal{O}(\xi^3)&   \mathcal{O}(\xi^3) & \cos(n\hat{\mathcal{T}})+ \mathcal{O}(\xi^3)   \\
    \end{matrix}
    \right)
    ,
\]
and its inverse matrix as
\[
    X^{-1}(\hat{\mathcal{T}}) =
    \left(
    \begin{matrix}
        \cos(n\hat{\mathcal{T}}) + \mathcal{O}(\xi^3)  &   \mathcal{O}(\xi^3) &  \cfrac{-\sin(n\hat{\mathcal{T}}) + \mathcal{O}(\xi^3)}{n}   &         \cfrac{1}{n}\mathcal{O}(\xi^3) \\
        \mathcal{O}(\xi^3) & \cos(n\hat{\mathcal{T}}) + \mathcal{O}(\xi^3)  &         \cfrac{1}{n}\mathcal{O}(\xi^3) &    \cfrac{-\sin(n\hat{\mathcal{T}}) + \mathcal{O}(\xi^3)}{n}\\[1.5ex]
        n\sin(n\hat{\mathcal{T}}) + n \mathcal{O}(\xi^3) & n \mathcal{O}(\xi^3)  & \cos(n\hat{\mathcal{T}})+ \mathcal{O}(\xi^3) &  \mathcal{O}(\xi^3) \\[1.5ex]
        n \mathcal{O}(\xi^3) & n\sin(n\hat{\mathcal{T}}) + n \mathcal{O}(\xi^3)&   \mathcal{O}(\xi^3) & \cos(n\hat{\mathcal{T}})+ \mathcal{O}(\xi^3)   \\
    \end{matrix}
    \right)
    .
\]
\end{lemma}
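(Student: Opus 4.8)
The plan is to treat \eqref{eq:u1sysV2linear} as a perturbation of a pair of decoupled harmonic oscillators and to control the $\xi$- and $n$-dependence simultaneously by a diagonal rescaling. First I would compute the coefficient matrix $A(\hat{\mathcal{T}}):=D\bm{\mathcal{F}}_0(\bm{\mathcal{U}}_0^e(\hat{\mathcal{T}}))$ directly from \eqref{eq:F01}. Differentiating, the first two rows are the constant rows $(0,0,1,0)$ and $(0,0,0,1)$, while the last two rows are $(-n^2,0,0,0)$ and $(0,-n^2,0,0)$ plus terms each carrying at least a factor $\xi^3$, coming from the $8(\mathcal{U}^2+\mathcal{V}^2)$ and $12(\mathcal{U}^2+\mathcal{V}^2)^2$ contributions. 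Hence I can write $A(\hat{\mathcal{T}})=A_0+\xi^3A_1(\hat{\mathcal{T}})$, where
\[
A_0=\begin{pmatrix}0&0&1&0\\0&0&0&1\\-n^2&0&0&0\\0&-n^2&0&0\end{pmatrix}
\]
is the generator of two uncoupled oscillators of frequency $n$, and $A_1$ collects the perturbative entries. Using the explicit ejection solution \eqref{eq:solucioEcosKeplerRaro} together with \eqref{ic}, one sees that $\mathcal{U}_0^e,\mathcal{V}_0^e=\mathcal{O}(1)$ while $\dot{\mathcal{U}}_0^e,\dot{\mathcal{V}}_0^e=\mathcal{O}(n)$; consequently the $(3,4)$ and $(4,3)$ entries of $A_1$ are $\mathcal{O}(1)$, whereas the $(3,1),(3,2),(4,1),(4,2)$ entries are $\mathcal{O}(n)$. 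The exponential $X_0(\hat{\mathcal{T}})=e^{A_0\hat{\mathcal{T}}}$ is exactly the claimed matrix with all $\mathcal{O}(\xi^3)$ terms set to zero, so it is the leading part of $X$.

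The crucial difficulty is that $A_1$ is not uniformly bounded in $n$, so a naive Gronwall estimate would lose powers of $n$. I would resolve this by the diagonal rescaling $S=\operatorname{diag}(n,n,1,1)$, setting $\tilde X=SXS^{-1}$, $\tilde A=SAS^{-1}$. A direct block computation gives $SA_0S^{-1}=nJ$ with $J=\left(\begin{smallmatrix}0&I_2\\-I_2&0\end{smallmatrix}\right)$, and since conjugation by $S$ divides the first two columns of $A_1$ by $n$ while leaving its $\mathcal{O}(1)$ entries untouched, the rescaled perturbation $\tilde A_1=SA_1S^{-1}$ becomes $\mathcal{O}(1)$ uniformly in $n$, in $\mu\in(0,1)$ and in $\hat{\mathcal{T}}\in[0,T]$. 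Because $J^2=-I_4$, the unperturbed rescaled flow is the orthogonal matrix $\tilde X_0(\hat{\mathcal{T}})=e^{nJ\hat{\mathcal{T}}}=\cos(n\hat{\mathcal{T}})I_4+\sin(n\hat{\mathcal{T}})J$, which is bounded by $1$ independently of $n$.

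With these ingredients I would apply variation of constants to $\dot{\tilde X}=(nJ+\xi^3\tilde A_1)\tilde X$, writing $\tilde X=\tilde X_0\tilde Y$ with
\[
\tilde Y(\hat{\mathcal{T}})=I_4+\xi^3\int_0^{\hat{\mathcal{T}}}\tilde X_0^{-1}(s)\,\tilde A_1(s)\,\tilde X_0(s)\,\tilde Y(s)\,ds.
\]
Since the integrand is $\mathcal{O}(1)$ uniformly, a contraction (or Gronwall) argument on $[0,T]$ yields $\tilde Y=I_4+\mathcal{O}(\xi^3)$ uniformly in $n$ and $\mu$, whence $\tilde X=\tilde X_0+\mathcal{O}(\xi^3)$ and $\tilde X^{-1}=\tilde Y^{-1}\tilde X_0^{-1}=\tilde X_0^{-1}+\mathcal{O}(\xi^3)$, with $\tilde X_0^{-1}=e^{-nJ\hat{\mathcal{T}}}=\cos(n\hat{\mathcal{T}})I_4-\sin(n\hat{\mathcal{T}})J$.

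Finally I would undo the rescaling via $X=S^{-1}\tilde XS$ and $X^{-1}=S^{-1}\tilde X^{-1}S$. Conjugating back, $S^{-1}\tilde X_0S=X_0$ reproduces the stated leading blocks, while an $\mathcal{O}(\xi^3)$ entry $m_{ij}$ of $\tilde X-\tilde X_0$ is sent to $(S^{-1})_{ii}\,m_{ij}\,S_{jj}$, i.e. multiplied by $1$, $1/n$ or $n$ according to whether $(i,j)$ lies in the position–position, position–velocity or velocity–position block. This is precisely the $\mathcal{O}(\xi^3)$, $\tfrac1n\mathcal{O}(\xi^3)$, $n\mathcal{O}(\xi^3)$ pattern recorded in the statement, and the sign flip of the $\sin(n\hat{\mathcal{T}})$ blocks in $\tilde X_0^{-1}$ produces exactly the claimed form of $X^{-1}$. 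The only genuinely delicate point is the $n$-bookkeeping, and the conjugation by $S$ is what reduces it to a uniform, $n$-independent estimate.
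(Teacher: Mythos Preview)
Your proof is correct but follows a different route from the paper. The paper exploits the complete integrability of \eqref{eq:u0sys}: since $\bm{\mathcal{F}}_0$ is the vector field of the two-body problem in rotating Levi-Civita coordinates, its general flow is known explicitly --- it is the harmonic oscillator solution \eqref{eq:sol2cside} in sidereal coordinates, pushed through the rotation \eqref{eq:solucioUnewRot}. The fundamental matrix of the variational equation is then obtained by differentiating this explicit flow with respect to the initial conditions, yielding a factorization $X=RA$ where $R$ is the rotation block matrix and $A$ is assembled from the partials $\partial\bar{\bm{\mathcal{U}}}_0/\partial\bm{\mathcal{U}}_0(0)$ and $\partial t/\partial\bm{\mathcal{U}}_0(0)$; specializing to the ejection initial data produces closed-form entries (recorded in Appendix~\ref{app:Aij}) from which the stated $\mathcal{O}(\xi^3)$ structure and the $n$-scaling pattern are simply read off.

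Your perturbative argument bypasses the explicit solvability entirely: the conjugation by $S=\operatorname{diag}(n,n,1,1)$ is precisely what isolates the $n$-scaling pattern and reduces everything to a uniform $\mathcal{O}(\xi^3)$ Gronwall/contraction estimate on a bounded time interval. The paper's approach delivers more --- exact formulas to all orders in $\xi$ --- although for the lemma itself only the leading terms are needed. Your approach, on the other hand, makes it transparent \emph{why} the bounds are uniform in $n$ and $\mu$, and it would still work if the unperturbed system were not explicitly solvable.
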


\begin{proof}
    Consider the general solution 
    $\bm{\mathcal{U}}_0$ of system \eqref{eq:u0sys} given by \eqref{eq:sol2cside}, \eqref{eq:timet} and \eqref{eq:solucioUnewRot}, we can express the fundamental matrix of the system 
    \[
\dot{\bm{\mathcal{U}}}_{1} =
D\bm{\mathcal{F}}_0(\bm{\mathcal{U}}_{0})\bm{\mathcal{U}}_{1},
\]
    as 
    \[
        X = RA,
    \]
    where
    \begin{equation}
        R =
        \left(
        \begin{matrix}
        \cos(-t/2) & -\sin(-t/2) & 0 & 0\\
        \sin(-t/2) & \cos(-t/2) & 0 & 0\\
        0 & 0 & \cos(-t/2) & -\sin(-t/2)\\
        0 & 0 & \sin(-t/2) & \cos(-t/2)\\
        \end{matrix}
        \right),
    \end{equation}
    and $A$ is the matrix with rows
    \begin{equation}
        \begin{split}
            \bm{A}_1(\hat{\mathcal{T}}) &= \left[\frac{\partial\bar{\mathcal{U}}_0}{\partial \bm{\mathcal{U}}_0(0)}+\frac{\bar{\mathcal{V}}_0}{2}\frac{\partial t}{\partial \bm{\mathcal{U}}_0(0)}\right](\hat{\mathcal{T}}),\\[1.5ex]
            \bm{A}_2(\hat{\mathcal{T}}) &= \left[\frac{\partial\bar{\mathcal{V}}_0}{\partial \bm{\mathcal{U}}_0(0)}-\frac{\bar{\mathcal{U}}_0}{2}\frac{dt}{\partial \bm{\mathcal{U}}_0(0)}\right](\hat{\mathcal{T}}),\\[1.5ex]
            \bm{A}_3(\hat{\mathcal{T}}) &= \left[\frac{\partial\dot{\bar{\mathcal{U}}}_0}{\partial \bm{\mathcal{U}}_0(0)}+2\left(\bar{\mathcal{U}}_0^2+3\bar{\mathcal{V}}_0^2\right)\xi^3\frac{\partial\bar{\mathcal{V}}_0}{\partial \bm{\mathcal{U}}_0(0)} + 4\bar{\mathcal{U}}_0\bar{\mathcal{V}}_0\xi^3\frac{\partial\bar{\mathcal{U}}_0}{\partial \bm{\mathcal{U}}_0(0)} + \frac{\dot{\bar{\mathcal{V}}}_0-2(\bar{\mathcal{U}}_0^2+\bar{\mathcal{V}}_0^2)\bar{\mathcal{U}}_0\xi^3}{2}\frac{\partial t}{\partial \bm{\mathcal{U}}_0(0)}\right](\hat{\mathcal{T}}),\\[1.5ex]
            \bm{A}_4(\hat{\mathcal{T}}) &= \left[\frac{\partial \dot{\bar{\mathcal{V}}}_0}{\partial \bm{\mathcal{U}}_0(0)}-2\left(3\bar{\mathcal{U}}_0^2+\bar{\mathcal{V}}_0^2\right)\xi^3\frac{\partial \bar{\mathcal{U}}_0}{\partial \bm{\mathcal{U}}_0(0)} - 4\bar{\mathcal{U}}_0\bar{\mathcal{V}}_0\xi^3\frac{\partial \bar{\mathcal{V}}_0}{\partial \bm{\mathcal{U}}_0(0)} - \frac{\dot{\bar{\mathcal{U}}}_0+2(\bar{\mathcal{U}}_0^2+\bar{\mathcal{V}}_0^2)\bar{\mathcal{V}}_0\xi^3}{2}\frac{\partial t}{\partial \bm{\mathcal{U}}_0(0)}\right](\hat{\mathcal{T}}),\\[1.5ex]
        \end{split}
    \end{equation}
    where $\bar{\bm{\mathcal{U}}}_0$ is given by \eqref{eq:sol2cside}.

    Note that we are interested in solving equations \eqref{eq:u1sysV2linear}, which correspond to  the ejection orbits $\bm{\mathcal{U}}_0^e$, that have initial conditions $(0,0,n\cos\theta_0,n\sin\theta_0)$. So we must compute the fundamental matrix $X$ 
    %for system \eqref{eq:u1sysV2linear} where 
    with $\bm{\mathcal{U}}_0(\hat{\mathcal{T}})=\bm{\mathcal{U}}_0^e(\hat{\mathcal{T}})$. We denote by $A^e$ and $R^e$ the corresponding matrices.
    Recall also that the expression of $t$ is given by \eqref{eq:tempsRotacio} and the explicit elements of $A^e$ are provided in Appendix \ref{app:Aij}.

    In this way, we can express $A^e$ and $R^e$ as
    \[
        A^e =
        \left(
        \begin{matrix}
            \cos(n\hat{\mathcal{T}}) + \mathcal{O}(\xi^3)  &   \mathcal{O}(\xi^3) &  \cfrac{\sin(n\hat{\mathcal{T}}) + \mathcal{O}(\xi^3)}{n}   &         \cfrac{1}{n}\mathcal{O}(\xi^3) \\
            \mathcal{O}(\xi^3) & \cos(n\hat{\mathcal{T}}) + \mathcal{O}(\xi^3)  &         \cfrac{1}{n}\mathcal{O}(\xi^3) &    \cfrac{\sin(n\hat{\mathcal{T}}) + \mathcal{O}(\xi^3)}{n}\\[1.5ex]
            -n\sin(n\hat{\mathcal{T}}) + n \mathcal{O}(\xi^3) & n \mathcal{O}(\xi^3)  & \cos(n\hat{\mathcal{T}})+ \mathcal{O}(\xi^3) &  \mathcal{O}(\xi^3) \\[1.5ex]
            n \mathcal{O}(\xi^3) & -n\sin(n\hat{\mathcal{T}}) + n \mathcal{O}(\xi^3)&   \mathcal{O}(\xi^3) & \cos(n\hat{\mathcal{T}})+ \mathcal{O}(\xi^3)   \\
        \end{matrix}
        \right)
        ,
    \]

    \[
        R^e = Id +
        \left(
        \begin{matrix}
            \mathcal{O}(\xi^6) & \mathcal{O}(\xi^3) & 0 & 0\\
            \mathcal{O}(\xi^3) & \mathcal{O}(\xi^6) & 0 & 0\\
            0 & 0 &\mathcal{O}(\xi^6) & \mathcal{O}(\xi^3) \\
            0 & 0 &\mathcal{O}(\xi^3) & \mathcal{O}(\xi^6) \\
        \end{matrix}
        \right),
    \]
    and therefore,
    \[
        X(\hat{\mathcal{T}}) =
        \left(
        \begin{matrix}
            \cos(n\hat{\mathcal{T}}) + \mathcal{O}(\xi^3)  &   \mathcal{O}(\xi^3) &  \cfrac{\sin(n\hat{\mathcal{T}}) + \mathcal{O}(\xi^3)}{n}   &         \cfrac{1}{n}\mathcal{O}(\xi^3) \\
            \mathcal{O}(\xi^3) & \cos(n\hat{\mathcal{T}}) + \mathcal{O}(\xi^3)  &         \cfrac{1}{n}\mathcal{O}(\xi^3) &    \cfrac{\sin(n\hat{\mathcal{T}}) + \mathcal{O}(\xi^3)}{n}\\[1.5ex]
            -n\sin(n\hat{\mathcal{T}}) + n \mathcal{O}(\xi^3) & n \mathcal{O}(\xi^3)  & \cos(n\hat{\mathcal{T}})+ \mathcal{O}(\xi^3) &  \mathcal{O}(\xi^3) \\[1.5ex]
            n \mathcal{O}(\xi^3) & -n\sin(n\hat{\mathcal{T}}) + n \mathcal{O}(\xi^3)&   \mathcal{O}(\xi^3) & \cos(n\hat{\mathcal{T}})+ \mathcal{O}(\xi^3)   \\
        \end{matrix}
        \right)
        .
    \]
    
    The expression for $X^{-1}(\hat{\mathcal{T}})$ can be found in a similar way.

\end{proof}

\subsection{Proof of Lemma \ref{Hfitatlema} }\label{app:Lema2}

    From \eqref{defG} and \eqref{defH} we have
    \begin{equation}\label{H(0)}
        \bm{\mathcal{H}}\{\bm{0}\}(\hat{\mathcal{T}})
        = X(\hat{\mathcal{T}})\int_0^{\hat{\mathcal{T}}} X^{-1}(\hat{\mathcal{T}})\bm{\mathcal{G}}(\bm{0})\,d\hat{\mathcal{T}}
        = \mu X(\hat{\mathcal{T}})\int_0^{\hat{\mathcal{T}}} X^{-1}(\hat{\mathcal{T}})\bm{\mathcal{F}}_1(\bm{\mathcal{U}}_0^e(\hat{\mathcal{T}}))\,d\hat{\mathcal{T}},
    \end{equation}
    so, the first step is to bound the components of $\bm{\mathcal{F}}_1(\bm{\mathcal{U}}_0^e)$ (see \eqref{eq:F01}).
    
    Concerning the expansions involving $r_2$ in \eqref{eq:r2nou}, we have
    \begin{equation}\label{eq:F1}
    \begin{aligned}
        \left(\frac{1}{r_2}-1\right) &= - \frac{2(1-\mu)^{1/3}(\mathcal{U}^2-\mathcal{V}^2)}{n^{2/3}}\xi^2
        + \frac{4(1 - \mu)^{2/3}(\mathcal{U}^4 - 4\mathcal{U}^2\mathcal{V}^2 + \mathcal{V}^4)}{n^{4/3}}\xi^4 + \frac{1}{n^2}\mathcal{O}(\xi^6),\\
        \frac{1}{r_2^3} &= 1 - \frac{6(1-\mu)^{1/3}(\mathcal{U}^2-\mathcal{V}^2)}{n^{2/3}}\xi^2 + \frac{1}{n^{4/3}}\mathcal{O}(\xi^4),
    \end{aligned}
    \end{equation}
where the symbol $\mathcal{O}$ refers to terms bounded for bounded $\bm{\mathcal{U}}$ and  any $\mu\in(0,1)$ and $n\in\mathbb{N}$.
    Thus, we obtain
    \begin{equation}\label{eq:F1ordre}
        \bm{\mathcal{F}}_1(\mathcal{U},\mathcal{V}) = \left(
        \begin{matrix}
        0\\
        0\\
        24\left(\mathcal{U}^4-2\mathcal{U}^2\mathcal{V}^2-\mathcal{V}^4\right)\mathcal{U}
        \xi^6 + \frac{1}{n^{2/3}}\mathcal{O}(\xi^8)\\
        24\left(\mathcal{V}^4-2\mathcal{U}^2\mathcal{V}^2-\mathcal{U}^4\right)\mathcal{V}
        \xi^6 + \frac{1}{n^{2/3}}\mathcal{O}(\xi^8)
        \end{matrix}
        \right).
    \end{equation}
    Let us bound $| \bm{\mathcal{F}}_1(\mathcal{U}_0^e,\mathcal{V}_0^e) |$.
    Recall that (see \eqref{eq:solucioEcosKeplerRaro}) we have that,
    \[
        {\mathcal{U}_0^e}^2(\theta_0,\hat{\mathcal{T}})+
        {\mathcal{V}_0^e}^2(\theta_0,\hat{\mathcal{T}})=\sin ^2(n\hat{\mathcal{T}})\le 1,
    \]
Therefore, $|\mathcal{U}_0^e|\le 1$, $|\mathcal{V}_0^e|\le 1$ are bounded and consequently, for $\xi$ small enough:
    \begin{equation}\label{eq:F1fita}
        |\bm{\mathcal{F}}_1(\mathcal{U}_0^e,\mathcal{V}_0^e)| \leq M \left(
        \begin{matrix}
        0\\
        0\\
        \xi^6 + \frac{1}{n^{2/3}}\mathcal{O}(\xi^8)\\
        \xi^6 + \frac{1}{n^{2/3}}\mathcal{O}(\xi^8)
        \end{matrix}
        \right)
        =
        M \left[\xi^6 + \frac{1}{n^{2/3}}\mathcal{O}(\xi^8)\right]
        \left(
        \begin{matrix}
        0\\
        0\\
        1\\
        1
        \end{matrix}
        \right)
        \le         M \xi^6
        \left(
        \begin{matrix}
        0\\
        0\\
        1\\
        1
        \end{matrix}
        \right).
    \end{equation}
    and the constant $M$ is independent of $\mu$ and $n$.
    
    By Lemma \ref{le:X} we can bound $|X|\leq \mathcal{M}$ and $|X^{-1}|\leq \mathcal{M}$ where
    \begin{equation}\label{eq:1}
        \mathcal{M} =
        \left(
        \begin{matrix}
            1 + \mathcal{O}(\xi^3) & \mathcal{O}(\xi^3) & \cfrac{1 + \mathcal{O}(\xi^3)}{n} & \cfrac{1}{n}\mathcal{O}(\xi^3)\\[1.5ex]
            \mathcal{O}(\xi^3) & 1 + \mathcal{O}(\xi^3) & \cfrac{1}{n}\mathcal{O}(\xi^3) & \cfrac{1 + \mathcal{O}(\xi^3)}{n}\\[1.5ex]
            n + n \mathcal{O}(\xi^3) & n \mathcal{O}(\xi^3)  & 1+ \mathcal{O}(\xi^3) &  \mathcal{O}(\xi^3) \\[1.5ex]
            n \mathcal{O}(\xi^3) & n + n \mathcal{O}(\xi^3)&   \mathcal{O}(\xi^3) & 1+ \mathcal{O}(\xi^3)   \\
        \end{matrix}
        \right).
    \end{equation}
    In this way, we have
    \begin{equation}\label{eq:2}
        \begin{aligned}
            |X^{-1}(\hat{\mathcal{T}})\bm{\mathcal{F}}_1(\mathcal{U}_0^e(\hat{\mathcal{T}}),
\mathcal{V}_0^e(\hat{\mathcal{T}}))| 
            & \leq |X^{-1}(\hat{\mathcal{T}})| |\bm{\mathcal{F}}_1(\mathcal{U}_0^e(\hat{\mathcal{T}}),
\mathcal{V}_0^e(\hat{\mathcal{T}}))|\\
            & \leq \mathcal{M}|\bm{\mathcal{F}}_1(\mathcal{U}_0^e(\hat{\mathcal{T}}),
\mathcal{V}_0^e(\hat{\mathcal{T}}))|\\
            & \leq %\left[24\xi^6 + \mathcal{O}(\xi^8)\right]
            M \xi^6
        \left(
        \begin{matrix}
        1/n\\
        1/n\\
        1\\
        1
        \end{matrix}
        \right).
        \end{aligned}
    \end{equation}
    And, therefore, as we have taken $T=2\pi$:
    \begin{equation}\label{eq:3}
        \int_0^T |X^{-1}(\hat{\mathcal{T}})\bm{\mathcal{F}}_1(\mathcal{U}_0^e(\hat{\mathcal{T}}),
\mathcal{V}_0^e(\hat{\mathcal{T}}))|\, d\hat{\mathcal{T}} 
    \leq
   M \xi^6 
        \left(
        \begin{matrix}
        1/n\\
        1/n\\
        1\\
        1
        \end{matrix}
        \right).
    \end{equation}
    Finally, multiplying by $\mu X$ we have
    \begin{equation}\label{eq:4}
        |\bm{\mathcal{H}}\{\bm{0}\}| \leq 
        M \mu  \xi^6
        \left(
        \begin{matrix}
        1/n\\
        1/n\\
        1\\
        1
        \end{matrix}
        \right).
    \end{equation}
    and using the definition of the norm in \eqref{eq:norm} and renaming $M_1=M$ we obtain the desired result:
    \begin{equation}\label{eq:5}
        \lVert\bm{\mathcal{H}}\{\bm{0}\}\rVert \leq 
        M \mu\xi^6 .
    \end{equation}

\subsection{Proof of Lemma \ref{le:LipschitzLema}}
\label{app:Lema3}
%% LEMA 3:

In order to bound 
$\bm{\mathcal{H}}(\bm{\mathcal{U}}_\oplus)-
\bm{\mathcal{H}}(\bm{\mathcal{U}}_\ominus)$ (see \eqref{defH}), first we need to bound $\bm{\mathcal{G}}(\bm{\mathcal{U}}_\oplus)-
\bm{\mathcal{G}}(\bm{\mathcal{U}}_\ominus)$, for 
$\bm{\mathcal{U}}_\oplus$, $\bm{\mathcal{U}}_\ominus\in B_R(\bm{0})$, where $R=2M_1\mu \xi^6$. In order to ease the computations,   let us introduce 
\begin{equation}
\bm{\mathcal{G}}(\bm{\mathcal{U}}_1) = \bm{\mathcal{G}}_0(\bm{\mathcal{U}}_1) + \bm{\mathcal{G}}_1(\bm{\mathcal{U}}_1),
\end{equation}
with
\begin{equation}
    \begin{aligned}
        \bm{\mathcal{G}}_0(\bm{\mathcal{U}}_1) & = \bm{\mathcal{F}}_0(\bm{\mathcal{U}}_0^e(\hat{\mathcal{T}})+\bm{\mathcal{U}}_1)
    -\bm{\mathcal{F}}_0(\bm{\mathcal{U}}_0^e(\hat{\mathcal{T}}))-D\bm{\mathcal{F}}_0(\bm{\mathcal{U}}_0^e(\hat{\mathcal{T}}))\bm{\mathcal{U}}_1,\\
    \bm{\mathcal{G}}_1(\bm{\mathcal{U}}_1) &=
    \mu \bm{\mathcal{F}}_1(\mathcal{U}_0^e(\hat{\mathcal{T}})+\mathcal{U}_1,\mathcal{V}_0^e(\hat{\mathcal{T}})+\mathcal{V}_1).
    \end{aligned}
\end{equation}
We will bound separately the term $\bm{\mathcal{G}}_0(\bm{\mathcal{U}}_\oplus)-
\bm{\mathcal{G}}_0(\bm{\mathcal{U}}_\ominus)$ in Lemma \ref{lemma_G0i} and  
 $\bm{\mathcal{G}}_1(\bm{\mathcal{U}}_\oplus)-
\bm{\mathcal{G}}_1(\bm{\mathcal{U}}_\ominus)$ in Lemma \ref{lemma_G1}.
\begin{lemma}\label{lemma_G0i} 
Take $\bm{\mathcal{U}}_\oplus, \bm{\mathcal{U}}_\ominus \in B_R(\bm{0})$.
Then for $0<\xi$ small enough we have that:
\[
\lVert\bm{\mathcal{G}}_0(\bm{\mathcal{U}}_\oplus)-\bm{\mathcal{G}}_0 (\bm{\mathcal{U}}_\ominus)\rVert \leq \frac{M \mu}{n} \xi^9\lVert\bm{\mathcal{U}}_\oplus-\bm{\mathcal{U}}_\ominus\rVert.
\]
\end{lemma}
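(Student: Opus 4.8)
The function $\bm{\mathcal{G}}_0(\bm{\mathcal{U}}_1) = \bm{\mathcal{F}}_0(\bm{\mathcal{U}}_0^e + \bm{\mathcal{U}}_1) - \bm{\mathcal{F}}_0(\bm{\mathcal{U}}_0^e) - D\bm{\mathcal{F}}_0(\bm{\mathcal{U}}_0^e)\bm{\mathcal{U}}_1$ is precisely the Taylor remainder of $\bm{\mathcal{F}}_0$ beyond first order at the base point $\bm{\mathcal{U}}_0^e(\hat{\mathcal{T}})$. The plan is to write its increment between $\bm{\mathcal{U}}_\oplus$ and $\bm{\mathcal{U}}_\ominus$ in integral form,
\begin{equation*}
\bm{\mathcal{G}}_0(\bm{\mathcal{U}}_\oplus) - \bm{\mathcal{G}}_0(\bm{\mathcal{U}}_\ominus) = \int_0^1 \left[ D\bm{\mathcal{F}}_0(\bm{\mathcal{U}}_0^e + \bm{w}_s) - D\bm{\mathcal{F}}_0(\bm{\mathcal{U}}_0^e) \right] (\bm{\mathcal{U}}_\oplus - \bm{\mathcal{U}}_\ominus)\, ds,
\end{equation*}
where $\bm{w}_s = \bm{\mathcal{U}}_\ominus + s(\bm{\mathcal{U}}_\oplus - \bm{\mathcal{U}}_\ominus)$ is a convex combination and therefore lies in $B_R(\bm{0})$ with $R = 2M_1\mu\xi^6$. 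The whole estimate thus reduces to bounding the matrix $B_s := D\bm{\mathcal{F}}_0(\bm{\mathcal{U}}_0^e + \bm{w}_s) - D\bm{\mathcal{F}}_0(\bm{\mathcal{U}}_0^e)$ entrywise and then controlling the matrix--vector product in the weighted norm \eqref{eq:norm}.

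The key structural observation is that the linear part of $\bm{\mathcal{F}}_0$ (see \eqref{eq:F01}) has a constant Jacobian, so it cancels in $B_s$; only the two nonlinearities survive, namely the cubic Coriolis-type term $8(\mathcal{U}^2+\mathcal{V}^2)\dot{\mathcal{V}}\xi^3$ (and its partner) carrying $\xi^3$, and the quintic term $12(\mathcal{U}^2+\mathcal{V}^2)^2\mathcal{U}\xi^6$ carrying $\xi^6$. Both only populate the third and fourth components, so $B_s$ has nonzero entries only in its last two rows and $\bm{\mathcal{G}}_0$ has vanishing first two components. I would then expand each entry of $B_s$ as a polynomial difference that vanishes at $\bm{w}_s = \bm{0}$; using the uniform bounds $|\mathcal{U}_0^e|,|\mathcal{V}_0^e|\le 1$ together with the fact that the velocity components of $\bm{\mathcal{U}}_0^e$ are $\mathcal{O}(n)$, both read off from \eqref{eq:solucioEcosKeplerRaro}, and the componentwise bounds $|\mathcal{U}_{w_s}|,|\mathcal{V}_{w_s}|\le R/n$, $|\dot{\mathcal{U}}_{w_s}|,|\dot{\mathcal{V}}_{w_s}|\le R$ coming from $\lVert\bm{w}_s\rVert \le R$, each such difference is $\mathcal{O}(R)$ with dominant contribution from the $\xi^3$ term. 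This gives entries of size $MR\xi^3$ in the position columns and $MR\xi^3/n$ in the velocity columns (the velocity columns involve only quadratic position factors of $\bm{\mathcal{U}}_0^e+\bm{w}_s$, hence an extra $1/n$).

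Assembling $B_s(\bm{\mathcal{U}}_\oplus - \bm{\mathcal{U}}_\ominus)$ and pairing each entry with the matching component of $\bm{\mathcal{U}}_\oplus - \bm{\mathcal{U}}_\ominus$ (positions weighted by $1/n$ and velocities by $1$ in \eqref{eq:norm}) produces the factor $1/n$ uniformly: the ``large'' position-column entries $MR\xi^3$ multiply the $1/n$-weighted position differences, while the velocity-column entries already carry $1/n$. Integrating in $s$ and taking the supremum in $\hat{\mathcal{T}}$ then gives $\lVert\bm{\mathcal{G}}_0(\bm{\mathcal{U}}_\oplus)-\bm{\mathcal{G}}_0(\bm{\mathcal{U}}_\ominus)\rVert \le (M/n) R\xi^3 \lVert\bm{\mathcal{U}}_\oplus-\bm{\mathcal{U}}_\ominus\rVert$, and substituting $R = 2M_1\mu\xi^6$ yields the claimed $\frac{M\mu}{n}\xi^9$. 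The main obstacle I anticipate is exactly this bookkeeping of the $n$-weighting: one must verify that every term of the matrix--vector product genuinely inherits a factor $1/n$ in the norm \eqref{eq:norm}, rather than bounding crudely and losing both the $1/n$ and the sharp power $\xi^9$. The cancellation of the constant linear Jacobian and the observation that only position variables enter quadratically in the $\xi^3$ velocity-column entries are what make the sharp estimate go through.
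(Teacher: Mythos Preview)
Your proposal is correct and follows essentially the same approach as the paper: write the increment via the Mean Value Theorem, observe that only the last two components are nonzero, and exploit the $n$-weighting of the norm together with $|\mathcal{U}_0^e|,|\mathcal{V}_0^e|\le 1$ and $\lVert\bm{w}_s\rVert\le R=2M_1\mu\xi^6$ to extract the factor $\xi^3 R/n=\mathcal{O}(\mu\xi^9/n)$. The only cosmetic difference is that the paper applies a second Mean Value Theorem and bounds the bilinear form $\bm{w}_s^t\,D^2\mathcal{F}_0^3(\bm{\mathcal{U}}_0^e+\cdot)$ explicitly, whereas you bound the polynomial increments of $D\bm{\mathcal{F}}_0$ directly; both routes rely on the same cancellations and yield the same estimate.
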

\begin{proof}
First, we observe that $\bm{\mathcal{G}}_0(\bm{\mathcal{U}}) = (\mathcal{G}_0^1,\mathcal{G}_0^2,\mathcal{G}_0^3,\mathcal{G}_0^4)(\bm{\mathcal{U}})= (0,0,\mathcal{G}_0^3,\mathcal{G}_0^4)(\bm{\mathcal{U}})$. 
Therefore we will consider the last two components. 
We will do the computations for 
$\mathcal{G}_0^3$, the ones for $\mathcal{G}_0^4$ are analogous.
Using the Mean Value Theorem we have:
    \begin{equation}\label{eq:integral1}
    \begin{aligned}
        \mathcal{G}_0^3(\bm{\mathcal{U}}_\oplus)-\mathcal{G}_0^3 (\bm{\mathcal{U}}_\ominus) 
        & = \mathcal{F}_0^3(\bm{\mathcal{U}}_0+\bm{\mathcal{U}}_\oplus) - \mathcal{F}_0^3(\bm{\mathcal{U}}_0+\bm{\mathcal{U}}_\ominus) - D\mathcal{F}_0^3(\bm{\mathcal{U}}_0) (\bm{\mathcal{U}}_\oplus-\bm{\mathcal{U}}_\ominus)\\
        & = \int_0^1 \left[D\mathcal{F}_0^3(\bm{\mathcal{U}}_0 + s\bm{\mathcal{U}}_\oplus+(1-s)\bm{\mathcal{U}}_\ominus)(\bm{\mathcal{U}}_\oplus-\bm{\mathcal{U}}_\ominus)\right]\,ds - D\mathcal{F}_0^3(\bm{\mathcal{U}}_0) (\bm{\mathcal{U}}_\oplus-\bm{\mathcal{U}}_\ominus)\\ 
        & = \left\{\int_0^1 \left[D\mathcal{F}_0^3(\bm{\mathcal{U}}_0 + s\bm{\mathcal{U}}_\oplus+(1-s)\bm{\mathcal{U}}_\ominus) - D\mathcal{F}_0^3(\bm{\mathcal{U}}_0)\right]\,ds\right\}\,(\bm{\mathcal{U}}_\oplus-\bm{\mathcal{U}}_\ominus)\\
        & = \left\{\int_0^1 \int_0^1 \left(s\bm{\mathcal{U}}_\oplus+(1-s)\bm{\mathcal{U}}_\ominus\right)^t
        D^2\mathcal{F}_0^3\left(\bm{\mathcal{U}}_0 + z\left[s\bm{\mathcal{U}}_\oplus+(1-s)\bm{\mathcal{U}}_\ominus\right]\right) \,dz\,ds\right\}\,(\bm{\mathcal{U}}_\oplus-\bm{\mathcal{U}}_\ominus).
    \end{aligned}
    \end{equation}
  %  }
%
Now we want to bound the expression appearing in the previous double integral.  
Notice that  $D^2\mathcal{F}_0^3$ (see \eqref{eq:F01}) is given by:
\[
    D^2 \mathcal{F}_0^3 =
        \begin{pmatrix}
            16\left[\dot{\mathcal{V}} + 3(5\mathcal{U}^2 + 3\mathcal{V}^2)\mathcal{U}\xi^3\right]\xi^3 &                48(3\mathcal{U}^2 + \mathcal{V}^2)\mathcal{V}\xi^6 & 0 & 16\mathcal{U}\xi^3 \\
            48(3\mathcal{U}^2 + \mathcal{V}^2)\mathcal{V}\xi^6 &
            16\left[\dot{\mathcal{V}} + 3(\mathcal{U}^2 + 3\mathcal{V}^2)\mathcal{U}\xi^3\right]\xi^3 & 0 & 16\mathcal{V}\xi^3\\
            0 & 0 & 0 & 0\\
            16\mathcal{U}\xi^3 & 16\mathcal{V}\xi^3 & 0 & 0\\
        \end{pmatrix},
\]
and thus, as (see \eqref{eq:solucioEcosKeplerRaro}):
\[
|\mathcal{U}_0^e|\le 1, |\mathcal{V}_0^e|\le 1, |\dot {\mathcal{U}}_0^e|\le n, |\dot{ \mathcal{V}}_0^e|\le n, \quad \lVert \bm{\mathcal{U}}_\otimes \rVert \le 2 M_1 \mu \xi^6,
\]
we have that:
\begin{equation}
    \left|D^2 \mathcal{F}_0^3(\bm{\mathcal{U}}_0^e+\bm{\mathcal{U}}_\otimes)\right| \leq
    M\xi^3
    \begin{pmatrix}
        n & \xi^3 & 0 & 1\\
        \xi^3 & n & 0 & 1\\
        0 & 0 & 0 & 0 \\
        1 & 1 & 0 & 0
    \end{pmatrix}.
\end{equation}
Now, as $\bm{\lVert \mathcal{U}}_\odot \rVert \le 2 M_1 \mu \xi^6$:
\begin{equation}\label{fila1}
\begin{aligned}
    |\bm{\mathcal{U}}_\odot^t D^2\mathcal{F}_0^3(\bm{\mathcal{U}}_0^e+\bm{\mathcal{U}}_\otimes)|
    &\leq|\bm{\mathcal{U}}_\odot^t|\,| D^2\mathcal{F}_0^3(\bm{\mathcal{U}}_0^e+\bm{\mathcal{U}}_\otimes)|\\
    & \leq
    2M_1\mu\xi^6
    \left(1/n,\, 1/n,\, 1,\,1\right) 
    M\xi^3
    \begin{pmatrix}
        n & \xi^3 & 0 & 1\\
        \xi^3 & n & 0 & 1\\
        0 & 0 & 0 & 0 \\
        1 & 1 & 0 & 0
    \end{pmatrix}\\
    & \leq 2M M_1 \mu \xi^9
    \left(1,\,1,\,0,\,1/n\right).
\end{aligned}
\end{equation}

Taking into account the integral expression in \eqref{eq:integral1} we obtain 
\[
|\mathcal{G}_0^3(\bm{\mathcal{U}}_\oplus)-\mathcal{G}_0^3 (\bm{\mathcal{U}}_\ominus)|
\leq 
2M M_1 \mu \xi^9
    \left(1,\,1,\,0,\,1/n\right)
    |\bm{\mathcal{U}}_\oplus-\bm{\mathcal{U}}_\ominus|   \le 
    \frac{1}{n}2M M_1 \mu \xi^9 \lVert\bm{\mathcal{U}}_\oplus-\bm{\mathcal{U}}_\ominus\rVert .
\]
We get a similar bound for the fourth components and using that the first and the second are identically zero and the definition of the norm we get the result of the lemma.
\end{proof}

The next goal is to bound $\bm{\mathcal{G}}_1(\bm{\mathcal{U}}_\oplus) - \bm{\mathcal{G}}_1(\bm{\mathcal{U}}_\ominus)$. To do so, we apply the same trick:

\begin{lemma}\label{lemma_G1}
Given $\bm{\mathcal{U}}_\oplus, \bm{\mathcal{U}}_\ominus \in B_ R(\bm 0)$. Then for $\xi >0$ small enough we have that 

\[
\lVert\bm{\mathcal{G}}_1(\bm{\mathcal{U}}_\oplus)-\bm{\mathcal{G}}_1 (\bm{\mathcal{U}}_\ominus)\rVert
\leq \frac{M \mu \xi^6}{n}
 \lVert \bm{\mathcal{U}}_\oplus-\bm{\mathcal{U}}_\ominus\rVert  .   
\]
 \end{lemma}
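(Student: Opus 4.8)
The plan is to follow the same scheme used in Lemma~\ref{lemma_G0i}, only now with a first-order (rather than second-order) expansion, since $\bm{\mathcal{G}}_1$ contains no subtracted linear term. Recall from its definition that $\bm{\mathcal{G}}_1(\bm{\mathcal{U}}_1)=\mu\,\bm{\mathcal{F}}_1(\mathcal{U}_0^e+\mathcal{U}_1,\mathcal{V}_0^e+\mathcal{V}_1)$, which depends only on the position components and whose first two entries vanish identically. Writing $\bm{\mathcal{W}}_s=\bm{\mathcal{U}}_0^e+s\bm{\mathcal{U}}_\oplus+(1-s)\bm{\mathcal{U}}_\ominus$ and applying the fundamental theorem of calculus along the segment joining the two arguments, I would express
\[
\bm{\mathcal{G}}_1(\bm{\mathcal{U}}_\oplus)-\bm{\mathcal{G}}_1(\bm{\mathcal{U}}_\ominus)
=\mu\left\{\int_0^1 D\bm{\mathcal{F}}_1(\bm{\mathcal{W}}_s)\,ds\right\}(\bm{\mathcal{U}}_\oplus-\bm{\mathcal{U}}_\ominus),
\]
so that the whole estimate reduces to bounding the Jacobian $D\bm{\mathcal{F}}_1$ along $\bm{\mathcal{W}}_s$.

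The second step is to estimate $D\bm{\mathcal{F}}_1$. Since $\bm{\mathcal{F}}_1=\bm{\mathcal{F}}_1(\mathcal{U},\mathcal{V})$ has nonzero entries only in its third and fourth components, which by \eqref{eq:F1ordre} equal $\xi^6$ times a polynomial of degree $5$ in $(\mathcal{U},\mathcal{V})$ plus $\frac{1}{n^{2/3}}\mathcal{O}(\xi^8)$, the matrix $D\bm{\mathcal{F}}_1$ is nonzero only in the positions $(3,1),(3,2),(4,1),(4,2)$, each again of the form $\xi^6$ times a degree-$4$ polynomial plus higher order terms. Because $|\mathcal{U}_0^e|\le1$ and $|\mathcal{V}_0^e|\le1$ by \eqref{eq:solucioEcosKeplerRaro}, while $\bm{\mathcal{U}}_\oplus,\bm{\mathcal{U}}_\ominus\in B_R(\bm{0})$ with $R=2M_1\mu\xi^6$ small, the position components of $\bm{\mathcal{W}}_s$ stay uniformly bounded, and for $\xi$ small enough one gets
\[
|D\bm{\mathcal{F}}_1(\bm{\mathcal{W}}_s)|\le M\xi^6
\begin{pmatrix}
0&0&0&0\\
0&0&0&0\\
1&1&0&0\\
1&1&0&0
\end{pmatrix},
\]
with $M$ independent of $\xi$, $\mu$ and $n$.

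Finally I would track the weight $n$ in the norm \eqref{eq:norm}, which is the only delicate point. Multiplying the bound above by $|\bm{\mathcal{U}}_\oplus-\bm{\mathcal{U}}_\ominus|$ shows that only the position differences $|\mathcal{U}_\oplus-\mathcal{U}_\ominus|$ and $|\mathcal{V}_\oplus-\mathcal{V}_\ominus|$ enter, and these satisfy $|\mathcal{U}_\oplus-\mathcal{U}_\ominus|+|\mathcal{V}_\oplus-\mathcal{V}_\ominus|\le\frac{1}{n}\lVert\bm{\mathcal{U}}_\oplus-\bm{\mathcal{U}}_\ominus\rVert$ precisely because positions carry weight $n$ in the norm. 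Hence the third and fourth components of $\bm{\mathcal{G}}_1(\bm{\mathcal{U}}_\oplus)-\bm{\mathcal{G}}_1(\bm{\mathcal{U}}_\ominus)$ are bounded by $\frac{M\mu\xi^6}{n}\lVert\bm{\mathcal{U}}_\oplus-\bm{\mathcal{U}}_\ominus\rVert$, the first two vanish, and since the velocity components enter the norm with weight $1$, taking the supremum yields exactly $\lVert\bm{\mathcal{G}}_1(\bm{\mathcal{U}}_\oplus)-\bm{\mathcal{G}}_1(\bm{\mathcal{U}}_\ominus)\rVert\le\frac{M\mu\xi^6}{n}\lVert\bm{\mathcal{U}}_\oplus-\bm{\mathcal{U}}_\ominus\rVert$. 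Note that, in contrast to Lemma~\ref{lemma_G0i}, no extra factor $\xi^3$ is gained here, because $\bm{\mathcal{F}}_1$ is already $\mathcal{O}(\xi^6)$ and differentiating it does not lower this order; the argument is otherwise routine, and I regard the bookkeeping of the $n$-weights as the only real point of attention.
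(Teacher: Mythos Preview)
Your proposal is correct and follows essentially the same argument as the paper: the Mean Value Theorem along the segment, the componentwise bound on $D\bm{\mathcal{F}}_1$ as $M\xi^6$ times the matrix supported on entries $(3,1),(3,2),(4,1),(4,2)$, and the observation that only position differences enter so that the weight $n$ in the norm \eqref{eq:norm} yields the factor $1/n$. The paper additionally writes out the explicit polynomial entries of $D\bm{\mathcal{F}}_1$, but the logic is identical.
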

 
 \begin{proof}
 Using again the Main Value Theorem we obtain:
 
      \begin{equation}\label{eq:integral2}
        \bm{\mathcal{G}}_1(\bm{\mathcal{U}}_\oplus) - \bm{\mathcal{G}}_1(\bm{\mathcal{U}}_\ominus)
        =  \int_0^1D\bm{\mathcal{G}}_1(s\bm{\mathcal{U}}_\oplus + (1-s)\bm{\mathcal{U}}_\ominus)\,(\bm{\mathcal{U}}_\oplus-\bm{\mathcal{U}}_\ominus)  \,ds .
\end{equation}

So we only need to bound $| D\bm{\mathcal{G}}_1(\bm{\mathcal{U}}_\odot) |$ where $\bm{\mathcal{U}}_\odot\in B_R(\bm{0})$, $=2M_1 \mu \xi ^6$
 
Let us recall that 
\[
        D\bm{\mathcal{G}}_1(\bm{\mathcal{U}}_\odot)
    = \mu D\bm{\mathcal{F}}_1(\mathcal{U}_0^e+\mathcal{U}_\odot, \mathcal{V}_0^e+\mathcal{V}_\odot),
\]
and  $\bm{\mathcal{F}}_1$ is given in \eqref{eq:F01}. Proceeding similarly as to bound
 $\bm{\mathcal{F}}_1$ we can differentiate \eqref{eq:F1} to obtain:
\[
\begin{aligned}
    D\bm{\mathcal{F}}_1(\mathcal{U}, \mathcal{V}) 
    &=
    \begin{pmatrix}
        0 & 0 & 0 & 0\\
        0 & 0 & 0 & 0 \\
        24(5\mathcal{U}^4-6\mathcal{U}^2\mathcal{V}^2-\mathcal{V}^4)\xi^6 + \frac{1}{n^{2/3}}\mathcal{O}(\xi^8)
        & -96(\mathcal{U}^2+\mathcal{V}^2)\mathcal{U}\mathcal{V}\xi^6 + \frac{1}{n^{2/3}}\mathcal{O}(\xi^8) & 0 & 0\\
        -96(\mathcal{U}^2+\mathcal{V}^2)\mathcal{U}\mathcal{V}\xi^6 + \frac{1}{n^{2/3}}\mathcal{O}(\xi^8)
        &
        24(5\mathcal{V}^4-6\mathcal{U}^2\mathcal{V}^2-\mathcal{U}^4)\xi^6 + \frac{1}{n^{2/3}}\mathcal{O}(\xi^8)
        & 0 & 0
    \end{pmatrix}\\
    &=
    24\xi^6
    \begin{pmatrix}
        0 & 0 & 0 & 0\\
        0 & 0 & 0 & 0 \\
        5\mathcal{U}^4-6\mathcal{U}^2\mathcal{V}^2-\mathcal{V}^4 + \frac{1}{n^{2/3}}\mathcal{O}(\xi^2)
        & -4(\mathcal{U}^2+\mathcal{V}^2)\mathcal{U}\mathcal{V} + \frac{1}{n^{2/3}}\mathcal{O}(\xi^2) & 0 & 0\\
        -4(\mathcal{U}^2+\mathcal{V}^2)\mathcal{U}\mathcal{V} + \frac{1}{n^{2/3}}\mathcal{O}(\xi^2)
        &
        5\mathcal{V}^4-6\mathcal{U}^2\mathcal{V}^2-\mathcal{U}^4 + \frac{1}{n^{2/3}}\mathcal{O}(\xi^2)
        & 0 & 0
    \end{pmatrix}.
\end{aligned}
\]
So, using again that  (see \eqref{eq:solucioEcosKeplerRaro}):
\[
|\mathcal{U}_0^e|\le 1, |\mathcal{V}_0^e|\le 1, \quad \lVert \bm{\mathcal{U}}_\odot \rVert \le 2 M_1 \mu \xi^6,
\]
\[
    | D\bm{\mathcal{F}}_1(\mathcal{U}_0^e+\mathcal{U}_\odot, \mathcal{V}_0^e+\mathcal{V}_\odot) | \leq
    [120\xi^6 + \frac{1}{n^{2/3}}\mathcal{O}(\xi^8)]
    \begin{pmatrix}
        0 & 0 & 0 & 0\\
        0 & 0 & 0 & 0 \\
        1 & 1 & 0 & 0 \\
        1 & 1 & 0 & 0 
    \end{pmatrix},
\]
and therefore 
\[
    | D\bm{\mathcal{G}}_1(\bm{\mathcal{U}}_\odot) | \leq
    M  \mu\xi^6
    \begin{pmatrix}
        0 & 0 & 0 & 0\\
        0 & 0 & 0 & 0 \\
        1 & 1 & 0 & 0 \\
        1 & 1 & 0 & 0 
    \end{pmatrix}.
\]

And using the integral equation \eqref{eq:integral2} and the fact that the first two rows of the previous matrix are zero we get:
\[
|\bm{\mathcal{G}}_1(\bm{\mathcal{U}}_\oplus)-\bm{\mathcal{G}}_1 (\bm{\mathcal{U}}_\ominus)|  \le M\mu \xi ^6 
\begin{pmatrix}
0 \\
0 \\
|\bm{ \mathcal{U}}_\oplus-\bm{\mathcal{V}}_\oplus | \\
|\bm{\mathcal{U}}_\oplus-\bm{\mathcal{V}}_\oplus|
\end{pmatrix}
\le  \frac{M\mu \xi ^6}{n}\lVert \bm{\mathcal{U}}_\oplus-\bm{\mathcal{U}}_\ominus\rVert    
\begin{pmatrix}
0 \\
0 \\
1\\
1
\end{pmatrix}
.
\]
Now, using the definition of the norm we get the result.
 
 \end{proof}

From the results of  lemmas \ref{lemma_G0i} and \ref{lemma_G1} we have:
\[
\lVert\bm{\mathcal{G}}(\bm{\mathcal{U}}_\oplus)-\bm{\mathcal{G}} (\bm{\mathcal{U}}_\ominus)\rVert
\leq
 \frac{M \mu \xi^6}{n}     \lVert\bm{\mathcal{U}}_\oplus-\bm{\mathcal{U}}_\ominus\rVert.
\]

Now, proceeding as we did in the proof of Lemma \ref{Hfitatlema}, we multiply 
$\bm{\mathcal{G}}(\bm{\mathcal{U}}_\oplus)-\bm{\mathcal{G}} (\bm{\mathcal{U}}_\ominus)$ by $X^{-1}$, integrate for a finite time and multiply the resulting expression by  $X$. We use that
$|X|\leq \mathcal{M}$ and $|X^{-1}|\leq \mathcal{M}$ where $\mathcal{M}$ is given in \eqref{eq:1} and proceed to bound the expression which gives ${\bm{\mathcal{H}}}\{\bm{\mathcal{U}}_\oplus\}-{\bm{\mathcal{H}}}\{\bm{\mathcal{U}}_\ominus\}$ as we did for ${\bm{\mathcal{H}}}\{\bm 0\}$ in \eqref{eq:2}, \eqref{eq:3}, \eqref{eq:4}, \eqref{eq:5}, to obtain
\begin{equation}
\rVert{\bm{\mathcal{H}}}\{\bm{\mathcal{U}}_\oplus\}
-{\bm{\mathcal{H}}}\{\bm{\mathcal{U}}_\ominus\}\lVert \le M_2\mu\xi^6
||\bm{\mathcal{U}}_\oplus-\bm{\mathcal{U}}_\ominus||.
\end{equation}
 This finishes the proof of Lemma \ref{le:LipschitzLema}.

\subsection{Proof of Lemma \ref{le:H0}}
\label{app:Lema5}

In order to compute $\bm{\mathcal{H}}\{\bm{0}\}(\hat{\mathcal{T}}_0^*)$ let us recall its expression:
\[
    \bm{\mathcal{H}}\{\bm{0}\}(\hat{\mathcal{T}}_0^*) = 
    X(\hat{\mathcal{T}}_0^*)\int_0^{\hat{\mathcal{T}}_0^*} X^{-1}(\hat{\mathcal{T}})\bm{\mathcal{G}}(\bm{0})\, d\hat{\mathcal{T}}
     = 
    \mu X(\hat{\mathcal{T}}_0^*)\int_0^{\hat{\mathcal{T}}_0^*} X^{-1}(\hat{\mathcal{T}})\bm{\mathcal{F}}_1(\bm{\mathcal{U}}_0^e(\hat{\mathcal{T}}))\, d\hat{\mathcal{T}}.
\]

From \eqref{eq:F1ordre}, substituting \eqref{eq:solucioEcosKeplerRaro} we have
\begin{equation}\label{eq:F1U0}
    \bm{\mathcal{F}}_1(\mathcal{U}_0^e,\mathcal{V}_0^e)
    =
    \begin{pmatrix}
    0\\
    0\\
    24\sin^5(n\hat{\mathcal{T}}) \cos\theta_0(2\cos^4\theta_0 - 1)\xi^6 + \mathcal{O}(\xi^8)\\
    24\sin^5(n\hat{\mathcal{T}}) \sin\theta_0(2\sin^4\theta_0 - 1)\xi^6 + \mathcal{O}(\xi^8)
    \end{pmatrix},
\end{equation}
Multiplying \eqref{eq:F1U0} by $X^{-1}$ using the expression provided in Lemma \ref{le:X} we obtain:
\[
    X^{-1}\bm{\mathcal{F}}_1(\mathcal{U}_0^e,\mathcal{V}_0^e) =
    \left(
    \begin{matrix}
        -\cfrac{24\sin^6(n\hat{\mathcal{T}})\cos\theta_0(2\cos^4\theta_0 - 1)}{n}\xi^6 + \cfrac{1}{n}\mathcal{O}(\xi^8)\\[1.2ex]
        -\cfrac{24\sin^6(n\hat{\mathcal{T}})\sin\theta_0(2\sin^4\theta_0 -  1)}{n}\xi^6 + \cfrac{1}{n}\mathcal{O}(\xi^8)\\[1.2ex]
        24\cos(n\hat{\mathcal{T}})\sin^5(n\hat{\mathcal{T}})\cos\theta_0(2\cos^4\theta_0 - 1)\xi^6 + \mathcal{O}(\xi^8)\\[1.2ex]
        24\cos(n\hat{\mathcal{T}})\sin^5(n\hat{\mathcal{T}})\sin\theta_0(2\sin^4\theta_0 -  1)\xi^6 + \mathcal{O}(\xi^8)\\
    \end{matrix}
    \right)
    .
\]
Finally, integrating until the time $\hat{\mathcal{T}}_0^*=\pi$ and multiplying by $\mu X$ using the expression of $X$ provided in Lemma \ref{le:X} we have:
\[
    \bm{\mathcal{H}}\{\bm{0}\}(\hat{\mathcal{T}}_0^*) =
    -
    \begin{pmatrix}
        \cfrac{15(-1)^n\mu\pi\cos\theta_0(2\cos^4\theta_0 - 1)}{2n}\xi^6 + \cfrac{\mu}{n}\mathcal{O}(\xi^8)\\[1.5ex]
        \cfrac{15(-1)^n\mu\pi\sin\theta_0(2\sin^4\theta_0 - 1)}{2n}\xi^6 + \cfrac{\mu}{n}\mathcal{O}(\xi^8)\\[1.5ex]
        \cfrac{4(-1)^n\mu\cos\theta_0(2\cos^4\theta_0 - 1)}{n}\xi^6 + \mu\mathcal{O}(\xi^8)\\[1.5ex]
        \cfrac{4(-1)^n\mu\sin\theta_0(2\sin^4\theta_0 - 1)}{n}\xi^6 + \mu\mathcal{O}(\xi^8)
    \end{pmatrix}
    .
\]
This finishes the proof of Lemma \ref{le:H0}.

\section{Value of the auxiliary matrix $A^e$}\label{app:Aij}

The values of the terms $(A^e_{i,j})$ are given by
\begingroup
\allowdisplaybreaks
\begin{align*}
    A^e_{11} &= \cos(n\hat{\mathcal{T}}) 
         - \sin(2\theta_0)\left[ 
         \hat{\mathcal{T}}\cos(n\hat{\mathcal{T}})
         -\sin(n\hat{\mathcal{T}})\frac{1+\sin^2(n\hat{\mathcal{T}})}{n}
         \right]\xi^3\\
         & \hspace{5mm} + \frac{2\sin^2\theta_0}{n}\left[ \hat{\mathcal{T}}\left(1 + 2\cos^2(n\hat{\mathcal{T}})\right) -  \frac{3\cos(n\hat{\mathcal{T}})\sin(n\hat{\mathcal{T}})}{n} \right]\sin(n\hat{\mathcal{T}})\xi^6,\\[4.ex]
    A^e_{12} &= 
         2\left[
         \cos^2\theta_0\hat{\mathcal{T}}\cos(n\hat{\mathcal{T}})
         - \frac{\sin(n\hat{\mathcal{T}})
         \left(\cos^2\theta_0 - \sin^2\theta_0\sin^2(n\hat{\mathcal{T}})
         \right)
         }{n}
         \right]\xi^3\\
         & \hspace{5mm} - \frac{\sin(2\theta_0)}{n}\left[ \hat{\mathcal{T}}\left(1 + 2\cos^2(n\hat{\mathcal{T}})\right) - \frac{3\cos(n\hat{\mathcal{T}})\sin(n\hat{\mathcal{T}})}{n} \right]\sin(n\hat{\mathcal{T}})\xi^6,\\[4.ex]
    A^e_{13} &= 
         \frac{\sin(n\hat{\mathcal{T}})}{n}
         + \frac{2\sin^2\theta_0}{n}\left[\hat{\mathcal{T}} - \frac{\cos(n\hat{\mathcal{T}})\sin(n\hat{\mathcal{T}})}{n}\right]
         \sin(n\hat{\mathcal{T}}) \xi^3,\\[4.ex]
    A^e_{14} &= 
          \frac{\sin(2\theta_0)}{n}\left[\hat{\mathcal{T}} - \frac{\cos(n\hat{\mathcal{T}})\sin(n\hat{\mathcal{T}})}{n}\right]
         \sin(n\hat{\mathcal{T}}) \xi^3,\\[4.ex]
    A^e_{21} &= 
         -2\left[
         \sin^2\theta_0\hat{\mathcal{T}}\cos(n\hat{\mathcal{T}})
         + \frac{\sin(n\hat{\mathcal{T}})
         \left(\sin^2\theta_0 - \cos^2\theta_0\sin^2(n\hat{\mathcal{T}})
         \right)
         }{n}
         \right]\xi^3\\
         & \hspace{5mm} - \frac{\sin(2\theta_0)}{n}\left[ \hat{\mathcal{T}}\left(1 + 2\cos^2(n\hat{\mathcal{T}})\right) - \frac{3\cos(n\hat{\mathcal{T}})\sin(n\hat{\mathcal{T}})}{n} \right]\sin(n\hat{\mathcal{T}})\xi^6,\\[4.ex]
    A^e_{22} &= \cos(n\hat{\mathcal{T}}) 
         + \sin(2\theta_0)\left[ 
         \hat{\mathcal{T}}\cos(n\hat{\mathcal{T}})
         -\sin(n\hat{\mathcal{T}})\frac{1+\sin^2(n\hat{\mathcal{T}})}{n}
         \right]\xi^3\\
         & \hspace{5mm} + \frac{2\cos^2\theta_0}{n}\left[ \hat{\mathcal{T}}\left(1 + 2\cos^2(n\hat{\mathcal{T}})\right) -  \frac{3\cos(n\hat{\mathcal{T}})\sin(n\hat{\mathcal{T}})}{n} \right]\sin(n\hat{\mathcal{T}})\xi^6,\\[4.ex]
    A^e_{23} &= 
          -\frac{2\cos^2\theta_0}{n}\left[\hat{\mathcal{T}} - \frac{\cos(n\hat{\mathcal{T}})\sin(n\hat{\mathcal{T}})}{n}\right]
         \sin(n\hat{\mathcal{T}}) \xi^3,\\[4.ex]
    A^e_{24} &= 
         \frac{\sin(n\hat{\mathcal{T}})}{n}
         - \frac{\sin(2\theta_0)}{n}\left[\hat{\mathcal{T}} - \frac{\cos(n\hat{\mathcal{T}})\sin(n\hat{\mathcal{T}})}{n}\right]
         \sin(n\hat{\mathcal{T}}) \xi^3,\\[4.ex]
    A^e_{31} &= 
         -n\sin(n\hat{\mathcal{T}})
         +\sin(2\theta_0)\sin(n\hat{\mathcal{T}})
         \left(
         n\hat{\mathcal{T}}
         +3\cos(n\hat{\mathcal{T}})\sin(n\hat{\mathcal{T}})
         \right)\xi^3\\
         & \hspace{5mm}
         + 2\bigg[
            \sin^2\theta_0 \hat{\mathcal{T}} \left(5- 8\cos^2(n\hat{\mathcal{T}})\right)\cos(n\hat{\mathcal{T}})\\
            & \hspace{12mm}
            -\frac{\sin(n\hat{\mathcal{T}})}{n}\left(
            \cos^2\theta_0
            \left(8-13\cos^2(n\hat{\mathcal{T}})+2\cos^4(n\hat{\mathcal{T}})\right)
            + 9\cos^2(n\hat{\mathcal{T}}) 
            - 6
            \right)
         \bigg]\xi^6\\
         & \hspace{5mm}
         - \frac{2\sin(2\theta_0)}{n}\left[\hat{\mathcal{T}}\left(1+2\cos^2(n\hat{\mathcal{T}})\right)
         - \frac{3\cos(n\hat{\mathcal{T}})\sin(n\hat{\mathcal{T}})}{n}
         \right]\sin^3(n\hat{\mathcal{T}})\xi^9,\\[4.ex]
    A^e_{32} &= 
         -2\left[
         n\cos^2\theta_0 \hat{\mathcal{T}}
         -\left(1+3\sin^2\theta_0\right)\cos(n\hat{\mathcal{T}})\sin(n\hat{\mathcal{T}})
         \right]\sin(n\hat{\mathcal{T}})\xi^3\\
         & \hspace{5mm}
         + \sin(2\theta_0)\left[
         \hat{\mathcal{T}}
         \left(5-8\cos^2(n\hat{\mathcal{T}})\right)
         -
         \frac{8-13\cos^2(n\hat{\mathcal{T}})+2\cos^4(n\hat{\mathcal{T}})}{n}\sin(n\hat{\mathcal{T}})
         \right]\xi^6\\
         & \hspace{5mm}
         +
         \frac{4\cos^2\theta_0}{n}\left[
         \hat{\mathcal{T}}\left(1+2\cos^2(n\hat{\mathcal{T}})\right)
         -\frac{3\cos(n\hat{\mathcal{T}})\sin(n\hat{\mathcal{T}})}{n}
         \right]\sin^3(n\hat{\mathcal{T}})\xi^9,\\[4.ex]
    A^e_{33} &= 
         \cos(n\hat{\mathcal{T}}) 
         + \sin(2\theta_0)\left[
            \hat{\mathcal{T}}\cos(n\hat{\mathcal{T}}) + \frac{2-3\cos^2(n\hat{\mathcal{T}})}{n}\sin(n\hat{\mathcal{T}})
         \right]\xi^3
         \\ 
         &\hspace{5mm}
         -\frac{4\cos^2\theta_0}{n}\left[\hat{\mathcal{T}} - \frac{\cos(n\hat{\mathcal{T}})\sin(n\hat{\mathcal{T}})}{n}\right]\sin^3(n\hat{\mathcal{T}})\xi^6,\\[4.ex]
    A^e_{34} &= 
         2\left[
         \sin^2\theta_0\left(
         \hat{\mathcal{T}} - \frac{\cos(n\hat{\mathcal{T}})\sin(n\hat{\mathcal{T}})}{n}
         \right)\cos(n\hat{\mathcal{T}})
         +\frac{2\sin^2\theta_0+1}{n}\sin^3(n\hat{\mathcal{T}})
         \right]\xi^3\\
         &\hspace{5mm}
         - \frac{2\sin(2\theta_0)}{n}\left[
         \hat{\mathcal{T}} - \frac{\cos(n\hat{\mathcal{T}})\sin(n\hat{\mathcal{T}})}{n}
         \right]\sin^3(n\hat{\mathcal{T}})\xi^6,\\[4.ex]
    A^e_{41} &= 
         2\left[
         n\sin^2\theta_0\hat{\mathcal{T}}
         -(1+3\cos^2\theta_0)\cos(n\hat{\mathcal{T}})\sin(n)\hat{\mathcal{T}}
         \right]\sin(n\hat{\mathcal{T}})\xi^3\\
         & \hspace{5mm}
         + \sin(2\theta_0)\left[
         \hat{\mathcal{T}}
         \left(5-8\cos^2(n\hat{\mathcal{T}})\right)
         -
         \frac{8-13\cos^2(n\hat{\mathcal{T}})+2\cos^4(n\hat{\mathcal{T}})}{n}\sin(n\hat{\mathcal{T}})
         \right]\xi^6\\
         & \hspace{5mm}
         -
         \frac{4\sin^2\theta_0}{n}\left[
         \hat{\mathcal{T}}\left(1+2\cos^2(n\hat{\mathcal{T}})\right)
         -\frac{3\cos(n\hat{\mathcal{T}})\sin(n\hat{\mathcal{T}})}{n}
         \right]\sin^3(n\hat{\mathcal{T}})\xi^9,\\[4.ex]
    A^e_{42} &= 
         -n\sin(n\hat{\mathcal{T}})
         - \sin(2\theta_0)\sin(n\hat{\mathcal{T}})
         \left(
         n\hat{\mathcal{T}}
         +3\cos(n\hat{\mathcal{T}})\sin(n\hat{\mathcal{T}})
         \right)\xi^3\\
         & \hspace{5mm}
         + 2\bigg[
            -\cos^2\theta_0 \hat{\mathcal{T}} \left(5- 8\cos^2(n\hat{\mathcal{T}})\right)\cos(n\hat{\mathcal{T}})\\
            & \hspace{12mm}
            -\frac{\sin(n\hat{\mathcal{T}})}{n}\left(
            \sin^2\theta_0
            \left(8-13\cos^2(n\hat{\mathcal{T}})+2\cos^4(n\hat{\mathcal{T}})\right)
            + 9\cos^2(n\hat{\mathcal{T}}) 
            - 6
            \right)
         \bigg]\xi^6\\
         & \hspace{5mm}
         + \frac{2\sin(2\theta_0)}{n}\left[\hat{\mathcal{T}}\left(1+2\cos^2(n\hat{\mathcal{T}})\right)
         - \frac{3\cos(n\hat{\mathcal{T}})\sin(n\hat{\mathcal{T}})}{n}
         \right]\sin^3(n\hat{\mathcal{T}})\xi^9,\\[4.ex]
    A^e_{43} &= 
         -2\left[
         \cos^2\theta_0\left(
         \hat{\mathcal{T}} - \frac{\cos(n\hat{\mathcal{T}})\sin(n\hat{\mathcal{T}})}{n}
         \right)\cos(n\hat{\mathcal{T}})
         +\frac{2\cos^2\theta_0+1}{n}\sin^3(n\hat{\mathcal{T}})
         \right]\xi^3\\
         &\hspace{5mm}
         - \frac{2\sin(2\theta_0)}{n}\left[
         \hat{\mathcal{T}} - \frac{\cos(n\hat{\mathcal{T}})\sin(n\hat{\mathcal{T}})}{n}
         \right]\sin^3(n\hat{\mathcal{T}})\xi^6,\\[4.ex]
    A^e_{44} &= 
         \cos(n\hat{\mathcal{T}}) 
         - \sin(2\theta_0)\left[
            \hat{\mathcal{T}}\cos(n\hat{\mathcal{T}}) + \frac{2-3\cos^2(n\hat{\mathcal{T}})}{n}\sin(n\hat{\mathcal{T}})
         \right]\xi^3
         \\ 
         &\hspace{5mm}
         -\frac{4\sin^2\theta_0}{n}\left[\hat{\mathcal{T}} - \frac{\cos(n\hat{\mathcal{T}})\sin(n\hat{\mathcal{T}})}{n}\right]\sin^3(n\hat{\mathcal{T}})\xi^6.
\end{align*}
\endgroup

\end{document}